\newtheorem{theorem}{Theorem}[section]
\newtheorem{lemma}[theorem]{Lemma}
\newtheorem{proposition}[theorem]{Proposition}
\newtheorem{corollary}[theorem]{Corollary}
\newtheorem{definition}[theorem]{Definition}
\theoremstyle{remark}
\newtheorem{remark}[theorem]{\it \bf{Remark}\/}
\numberwithin{equation}{section}
\def\section{\@startsection{section}{1}%
  \z@{1.5\linespacing\@plus\linespacing}{.5\linespacing}%
  {\normalfont\bfseries\large\centering}}
\newcommand{\be}{\begin{equation}}
\newcommand{\ee}{\end{equation}}
\newcommand{\bea}{\begin{eqnarray}}
\newcommand{\eea}{\end{eqnarray}}
\newcommand{\bee}{\begin{eqnarray*}}
\newcommand{\eee}{\end{eqnarray*}}
\def\pa{\partial}
\def\RR{\mathbb{R}}
\def\fref#1{{\rm (\ref{#1})}}
\def\supess{\mathop{\operator@font Sup\,ess}}
\def\RR{\mathbb{R}}
\def\e{\varepsilon}
\def\bar#1{{\overline #1}}
\def\fref#1{{\rm (\ref{#1})}}
\def\R2+{\RR ^2_+}
\def\pa{\partial}
\def\lim{\mathop{\rm lim}}
\def\l{\lambda}
\def\pa{\partial}
\def\pa{\partial}
\def\matchal{\mathcal}
\def\para{\parallel}
\def\ba{\begin{array}}
\def\ea{\end{array}}
\def\la{\label}
\def\bal{\begin{align*}}
\def\eal{\end{align*}}
\def\non{\nonumber}
\title[]{Dynamics near the ground state for the energy critical nonlinear heat equation in large dimensions}
\author[C.Collot]{Charles Collot}
\address{Laboratoire J.A. Dieudonn\'e, Universit\'e de Nice-Sophia Antipolis, France}
\email{ccollot@unice.fr}
\author[F.Merle]{Frank Merle}
\address{Laboratoire Laga, Universit\'e de Cergy Pontoise, France\\
\& IHES, Bures-sur-Yvette, France}
\email{merle@math.ucergy.fr}
\author[P.Rapha\"el]{Pierre Rapha\"el}
\address{Laboratoire J.A. Dieudonn\'e, Universit\'e de Nice-Sophia Antipolis, France}
\email{praphael@unice.fr}
\keywords{stability, steady state, blow-up, heat, soliton, nonlinear, non radial, energy critical}
\subjclass[2010]{primary, 35B35 35B40 35K58, secondary 35B33 35B44 35B53} 
\begin{document}

\begin{abstract} 
We consider the energy critical semilinear heat equation $$\pa_tu=\Delta u+|u|^{\frac{4}{d-2}}u, \ \ x\in \mathbb R^d$$ and give a complete classification of the flow near the ground state solitary wave $$Q(x)=\frac{1}{\left( 1+\frac{|x|^2}{d(d-2)}\right)^{\frac{d-2}{2}}}$$ in dimension $d\ge 7$, in the energy critical topology and without radial symmetry assumption. Given an initial data $Q+\e_0$ with $\|\nabla \e_0\|_{L^2}\ll 1$, the solution either blows up in the ODE type I regime, or dissipates, and these two open sets are separated by a codimension one set of solutions asymptotically attracted by the solitary wave. In particular, non self similar type II blow up is ruled out in dimension $d\ge 7$ near the solitary wave even though it is known to occur in smaller dimensions \cite{Schw}. Our proof is based on sole energy estimates deeply connected to \cite{MMR1} and draws a route map for the classification of the flow near the solitary wave in the energy critical setting. A by-product of our method is the classification of minimal elements around $Q$ belonging to the unstable manifold.
 
\end{abstract}

\maketitle


\section{Introduction}



\subsection{Setting of the problem}


We deal in this paper with the the energy critical semilinear heat equation 
\be \label{eq:NLH} (NLH)
\left\{ \begin{array}{l l} \partial_t u=\Delta u+|u|^{p-1}u, \ \ p=\frac{d+2}{d-2}\\
u(0,x)=u_0(x)\in \mathbb R
\end{array}
\right., \ \ (t,x)\in \mathbb R\times \mathbb R^d
\ee
in dimensions $d\ge 3$ and for an initial data in the energy space $$u_0\in \dot{H}^1(\mathbb R^d).$$ From standard parabolic regularity arguments, the Cauchy problem is well posed in the energy space and for all $u_0\in \dot{H}^1(\mathbb R^d)$, there exists a maximal solution $u\in \mathcal C((0,T),\dot{H}^1\cap \dot H^2\cap L^{\infty})$ with the blow up criterion 
\be
\label{beibeibvei}
T<+\infty\ \ \mbox{implies}\ \ \lim_{t\to T}\|u(t)\|_{\dot{H}^2}=+\infty \ \text{and} \ \lim_{t\to T}\|u(t)\|_{L^{\infty}}=+\infty.
\ee The dissipation of the full energy 
\be \la{eq:E}
\frac{d}{dt}E(u)=-\int_{\mathbb R^d} u_t^2 \leq 0, \ \ E(u)=\frac 12\int_{\mathbb R^d} |\nabla u|^2 -\frac{d-2}{2d}\int_{\mathbb R^d} |u|^{\frac{2d}{d-2}} 
\ee
and the scaling symmetry $$u_\l(t,x)=\l^{\frac{d-2}{2}}u(\l^2 t,\l x), \ \ E(u_\l)=E(u)$$ reflect the energy critical nature of the problem. The Aubin-Talenti ground state solitary wave 
\be
\label{eq:def Q}
Q(x)=\frac{1}{\left( 1+\frac{|x|^2}{d(d-2)}\right)^{\frac{d-2}{2}}}
\ee 
attains the best constant in the critical Sobolev estimate $\|u\|_{L^{\frac{2d}{d-2}}}\lesssim \|u\|_{\dot{H}^1}$, \cite{Au,Ta}, and is the unique \cite{Gi} up to symmetry positive decaying at infinity solution to 
\be
\label{eqellipticq}
\Delta Q+Q^{\frac{d+2}{d-2}}=0.
\ee The aim of this paper is to obtain a complete description of the flow near the ground state $Q$ in the energy critical topology in large dimensions $d\geq 7$.


\subsection{Type I and type II blow up}


Let us recall some expected dynamical behaviours near the ground state. A first well known scenario is the dissipative behavior $$\lim_{t\to +\infty}\|\nabla u(t)\|_{L^2}= 0$$ which follows from a simple upper solution argument coupled with the dissipation of the Dirichlet energy.\\
 A second well know scenario \cite{Giga,Gi1,Gi2,Gi3,Gi4} is the possibility of type I blow up \be \la{intro:def type I}\|u(t)\|_{L^{\infty}}\sim \frac{1}{(T-t)^{\frac{1}{p-1}}}, \ \ p=\frac{d+2}{d-2}.\ee More precisely, the  solution matches to leading order the ODE constant blow up solution \be \la{intro:def kappa} u(t,x)=\frac{\kappa_p}{(T-t)^{\frac{1}{p-1}}}, \ \ \kappa_p=\left(\frac{1}{p-1}\right)^{\frac 1{p-1}}\ee and the blow up profile is given by the constant self similar solution, \cite{Gi1}. These solutions do not see the energy critical structure of the problem and exist in fact for all $p>1$ in \eqref{eq:NLH}, and in particular lead to the focusing of the Dirichlet energy 
\be
\label{otherlimit}
\lim_{t\uparrow T}\|\nabla u(t)\|_{L^2}=+\infty.
\ee
The ODE type I blow up dynamic is stable in the energy critical topology, see \cite{Fe} and Section \ref{sec:type I}. In the subcritical range $1<p<\frac{d+2}{d-2}$, all blow-ups are ODE type I blow-ups \cite{Gi2,Gi4}. \\
A third expected scenario is the possibility of type II blow up, that is solutions which concentrate in finite time $T<+\infty$ but in the energy critical topology $$\limsup_{t\uparrow T}\|\nabla u(t)\|_{L^2}<+\infty.$$ The existence of such type of solutions, both in critical and super critical settings, has attracted a considerable attention for the past twenty years, both in the parabolic and dispersive settings, in connection with classical conjectures in mathematical physics, see in particular \cite{NT2, KST, KST2, MR1,MR4,MR5,MMR1,RaphRod,MRRod,MRRsur,Co,RSc1,RSc2}. After the pioneering works in the super critical setting \cite{HVsur, MaM1,MaM2, Mizo} and which relies on maximum principle based Lyapounov functionals, the first rigorous result of existence of type II blow up solutions in the parabolic energy critical setting \eqref{eq:NLH} is given in \cite{Schw} in dimension $d=4$ and follows the route map for the construction of such blow up bubbles based on sole energy estimates designed in \cite{RaphRod,MRRod,RSc1,RSc2}. In the pioneering work \cite{Fi}, the authors have predicted formally the existence of such type II bubbles near $Q$ in dimension $d\leq 6$, and the case $d\geq 7$ was left open even at the formel level. Note that in another setting like the energy critical harmonic heat flow of surfaces, type II blow up has been proved in the radially symmetric setting for small homotopy number \cite{RSc1,RSc2} and ruled out for large homotopy number \cite{NT2} which shows the dramatic\footnote{but so far poorly understood} influence of numerology in these problems.\\

\noindent Long time dynamics for global solutions are also closely related to the flow near stationary states. For the energy critical semilinear heat equation \fref{eq:NLH}, any radial global solution in the energy space decomposes as $t\rightarrow +\infty$ as a sum of decoupled ground states in the energy space $\dot H^1$, \cite{MaM3}.

\subsection{Statement of the result}

The main result of this paper is the following classification of the flow near the ground state in the energy critical topology in large dimensions $d\ge 7$.

\begin{theorem}[Classification of the flow near $Q$ for $d\ge 7$]
 \label{th:main}
Let $d\geq 7$. There exists $0<\eta\ll1$ such that the following holds. Let $u_0\in \dot H^1(\mathbb R^d)$ with
$$\|u_0-Q\|_{{\dot H}^1}<\eta,$$ then the corresponding solution $u\in \matchal C((0,T),\dot{H}^1\cap\dot{H}^2)$ to \eqref{eq:NLH} follows one of the three regimes:\\
\noindent{\em 1.( Soliton)}: the solution is global and asymptotically attracted by a solitary wave
$$
\exists(\lambda_{\infty},z_\infty)\in \mathbb R^*_+\times \mathbb R^d \ \mbox{such that}\ \ \lim_{t\to +\infty}\left\|u(t,\cdot)-\frac{1}{\l_\infty^{\frac{d-2}{2}}}Q\left(\frac{\cdot-z_\infty}{\lambda_\infty}\right)\right\|_{\dot{H}^1} =0.
$$
Moroever, $$|\l_\infty-1|+ |z_{\infty}|\to 0\ \ \mbox{as}\ \ \eta\to 0.$$
\noindent{\em 2. (Dissipation)}: the solution is  global and dissipates
$$
\lim_{t\to +\infty}\|u(t,\cdot)\|_{\dot{H}^1}=0, \ \ \lim_{t\to +\infty}\|u(t,\cdot)\|_{L^{\infty}}=0.$$
\noindent{\em 3. (Type I blow up)}: the solution blows up in finite time $0<T<+\infty$ in the ODE type I self similar blow up regime near the singularity $$
\|u(t,\cdot)\|_{L^{\infty}}\sim (T-t)^{-\frac{d-2}{4}}.
$$
There exist solutions associated to each scenario. Moreover, the scenario (Dissipation) and (Type I blow up) are stable in the energy topology.
\end{theorem}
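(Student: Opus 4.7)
The plan is to combine classical modulation theory near the ground state $Q$ with a tailored nonlinear energy estimate of the type developed in \cite{MMR1}, adapted here to the energy critical parabolic setting. The first step is the spectral analysis of the linearized operator $\L = -\Delta - pQ^{p-1}$, whose structure is known: a single simple negative eigenvalue $-e_0$ with a smooth exponentially decaying positive eigenfunction $\mathcal{Y}_-$, a kernel exactly generated by the scaling direction $\Lambda Q = \frac{d-2}{2}Q + y\cdot\nabla Q$ and the $d$ translations $\partial_i Q$ (non-degeneracy of $Q$ as a solution to \eqref{eqellipticq}), and absolutely continuous essential spectrum on $[0,+\infty)$. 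I would then set up the geometric decomposition
\[
u(t,x) = \frac{1}{\lambda(t)^{\frac{d-2}{2}}} (Q+\e)\Bigl(t, \frac{x-z(t)}{\lambda(t)}\Bigr),
\]
with $\lambda(t)>0$ and $z(t)\in\mathbb R^d$ fixed uniquely by orthogonality conditions on $\e$ against $\Lambda Q$ and $\partial_i Q$, producing modulation estimates of the form $|\lambda_t/\lambda|+|z_t|/\lambda \lesssim \|\nabla\e\|_{L^2}$.

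The second step is to isolate the unstable mode $a(t) = \langle \e(t), \mathcal{Y}_- \rangle$, which by projection of the $\e$-equation satisfies a scalar ODE $\dot a = e_0 a + O(\|\nabla\e\|_{L^2}^2)$, while the orthogonal part $\e^{\perp}$ lies in a subspace on which the quadratic form $\langle \L \e^{\perp}, \e^{\perp}\rangle$ is coercive and controls $\|\nabla\e^{\perp}\|_{L^2}^2$. Combining this coercivity with the full energy dissipation \eqref{eq:E} yields a monotonicity inequality for a nonlinear energy functional $\mathcal{F}(\e)$ obtained by correcting $\langle \L\e,\e\rangle$ with lower-order terms designed to absorb the modulation source, in the spirit of \cite{MMR1}. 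The standard exit/trapping analysis of the scalar ODE for $a$ then splits initial data into three regimes: the trapped one $|a(t)|\lesssim \|\nabla\e^{\perp}(t)\|_{L^2}$, in which the Lyapunov inequality forces $\|\nabla\e\|_{L^2}\to 0$, and two ejection regimes corresponding to the two signs of $a$.

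In the trapped regime, the monotonicity of $\mathcal{F}$ together with the modulation equations yields $\lambda(t)\to\lambda_\infty > 0$ and $z(t)\to z_\infty$, producing the (Soliton) scenario. In one ejection regime, the solution remains positive by the maximum principle and, by the sign of $E(u)-E(Q)$, its $L^\infty$ norm matches $\kappa_p(T-t)^{-\frac{1}{p-1}}$ via the ODE-dominated analysis of \cite{Giga,Gi1,Gi2,Gi3,Gi4}, producing the (Type I) blow up at rate \eqref{intro:def type I}; its stability is inherited from \cite{Fe}. In the opposite ejection regime, the solution leaves a small $\dot H^1$ neighborhood of $Q$ with energy strictly below $E(Q)$, and an upper solution argument for positive data forces $\|u(t)\|_{\dot H^1}\to 0$ and $\|u(t)\|_{L^\infty}\to 0$. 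The (Dissipation) and (Type I) sets are then manifestly open, hence stable, and (Soliton) existence follows from a one-dimensional shooting argument on $a(0)$ at fixed $\e^{\perp}(0)$, using the intermediate value theorem between the two ejection alternatives.

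The hard part is closing the nonlinear energy estimate in the trapped regime within the critical topology, since no monotonicity of $\|\nabla u\|_{L^2}$ is available and no dispersive Morawetz-type identity applies in the parabolic setting: everything must come from $\mathcal{F}$. The dimension condition $d\geq 7$ is essential precisely here: the decay $Q(y)\sim|y|^{-(d-2)}$ with $d-2>4$ makes the critical source integrals $\int Q^{p-2}|\e|^2$ and $\int Q^{p-3}|\e|^3$ absolutely convergent with favorable weights, ensures that the coercivity of $\L$ modulo $\Lambda Q, \partial_i Q, \mathcal{Y}_-$ survives the nonlinear perturbation in the $\dot H^1$ norm, and kills the slowly decaying corrections responsible for the type II bubbles of \cite{Fi,Schw} in $d\leq 6$. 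Verifying this algebraic threshold with sufficient precision, and in particular controlling the tail of $\e$ far from the scale $\lambda$ against the critical norm, is the delicate core of the argument.
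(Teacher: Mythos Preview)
Your overall architecture is close to the paper's, but the ejection analysis contains a genuine gap. You write that in one ejection regime ``the solution remains positive by the maximum principle'' and in the other you invoke an ``upper solution argument for positive data''. Neither is available: the initial datum $u_0=Q+\e_0$ with $\|\nabla\e_0\|_{L^2}\ll1$ is not assumed positive, and a small $\dot H^1$ perturbation of $Q$ need not have a sign. So you cannot apply the maximum principle to $u$ itself, nor conclude that $E(u)<E(Q)$ determines the outcome, nor invoke the type~I classification of \cite{MaM2} which requires positivity. The paper circumvents this by a two--step mechanism you are missing entirely: first it constructs explicit \emph{minimal solutions} $Q^\pm$ emanating from $Q\pm\epsilon e^{e_0 t}\mathcal Y$ backward in time, for which the maximum principle \emph{does} apply (since $\mathcal Y>0$), proving $Q^+$ blows up type~I and $Q^-$ dissipates; second, in the ejection regime it runs a quantitative comparison showing that at the exit time $T_{\rm exit}$ the solution $u$ satisfies $\|u(T_{\rm exit})-Q^\pm(T_{\rm exit})\|_{\dot H^1}\lesssim\delta$ where $\delta$ is arbitrarily small relative to the universal exit scale $\alpha$. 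The conclusion then follows from the \emph{openness} of the type~I and dissipation sets, not from any sign property of $u$. This is the Liouville/minimal--element strategy of \cite{Me9,DM2,RSmin,MMR2}, and without it your ejection step does not close.

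A secondary but important issue: your modulation bound $|\lambda_t/\lambda|+|z_t|/\lambda\lesssim\|\nabla\e\|_{L^2}$ is too weak to control the scale. The right--hand side is merely bounded, not integrable in renormalized time, so you cannot conclude $\lambda\to\lambda_\infty$. The paper works at the $\dot H^2$ level: the energy dissipation $\int u_t^2\,dt<\infty$ yields the space--time bound $\int_0^\infty(a^2+\|\e\|_{\dot H^2}^2)\,ds\lesssim\eta^2$, and a refined modulation identity of the form $\lambda_s/\lambda+\frac{d}{ds}O(\|\e\|_{\dot H^s})=O(a^2+\|\e\|_{\dot H^2}^2)$ then gives $|\lambda(s)-\lambda(0)|\lesssim\eta$. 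This $\dot H^2$ structure, together with the $\dot H^2$ and $\dot H^3$ coercivity of $H$ (not just $\dot H^1$), is where the condition $d\ge7$ truly enters: it makes $\Lambda Q\in L^2$ and the weighted Hardy inequalities close at these higher levels. Your sketch mentions only $\dot H^1$ coercivity, which is not enough.
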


In particular, type II blow up is {\it ruled out} in dimension $d\ge 7$ near the solitary wave. A by-product of our method is the following Liouville theorem which classifies global in time backwards solutions near the solitary wave.

\begin{theorem}[Liouville theorem for minimal solutions around $Q$] \label{th:liouville} Let $d\geq 7$.\\
\noindent{\em 1. Existence of backwards minimal elements}: There exist two strictly positive, $\mathcal C^{\infty}$ radial solutions of \fref{eq:NLH}, $Q^+$ and $Q^-$, defined on $(-\infty,0]\times \mathbb R^d$, which are minimal backwards in time $$\lim_{t\to -\infty}\|Q^{\pm}-Q\|_{\dot{H}^1}=0$$ and have the following forward behaviour: $Q^+$  explodes according to type I blow up at some finite later time, and $Q^-$ is global and dissipates $Q^-\rightarrow 0$ as $t\rightarrow +\infty$ in $\dot H^1(\mathbb R^d)$.\\
\noindent{\em 2. Rigidity}:  Moreover, there exists $0<\delta\ll 1$ such that if $u$ is a solution of \fref{eq:NLH} on $(-\infty,0]$ such that:
$$
\underset{t\leq 0}{\text{sup}} \underset{\lambda>0, \ z\in \mathbb R^d}{\text{inf}} \parallel u(t)-Q_{z,\lambda}\parallel_{\dot H^1}\leq \delta 
$$
then $u=Q^\pm$ or $u=Q$ up to the symmetries of the flow.
\end{theorem}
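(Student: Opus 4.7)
The plan is to exploit the spectral structure of the linearized operator $\L=-\Delta-pQ^{p-1}$ around $Q$, which (in the energy critical case $d\geq 3$) has exactly one negative simple eigenvalue $-\mu$ with smooth, positive, exponentially decaying eigenfunction $\calY$ satisfying $\L\calY=-\mu\calY$; a finite-dimensional kernel generated by the symmetry group ($\Lambda Q$ for scaling and $\pa_iQ$ for translations); and otherwise positive essential spectrum. Modulo the symmetries, the unstable manifold of $Q$ is therefore one-dimensional, and the two branches indexed by the sign of the unstable coefficient are the only candidates for the minimal elements $Q^\pm$.

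\textbf{Existence of $Q^\pm$.} For a small parameter $\alpha\neq 0$ I would construct a solution $u_\alpha$ of \fref{eq:NLH} on $(-\infty,0]$ satisfying the prescribed asymptotic behavior
\begin{equation*}
u_\alpha(t,x)=Q(x)+\alpha e^{\mu t}\calY(x)+O_{\dot H^1}(e^{2\mu t}),\qquad t\to-\infty,
\end{equation*}
by a backward-in-time fixed point argument. One writes the Duhamel formula for the perturbation $\e=u_\alpha-Q$ around the linearized semigroup $e^{-t\L}$, and works in a space weighted by $e^{-\mu t}$ on $(-\infty,T_\alpha]$ for $T_\alpha$ sufficiently negative; the contraction is provided by the strict gap between the unstable mode $e^{\mu t}$ and the nonlinear quadratic corrections of size $e^{2\mu t}$, together with the forward smoothing on the stable subspace. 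This produces two solutions $Q^+$ (for $\alpha>0$) and $Q^-$ (for $\alpha<0$), and positivity follows from the maximum principle by perturbativity since $Q,\calY>0$. Classification of the forward dynamics then invokes Theorem \ref{th:main}: each $Q^\pm$ is an initial data $\eta$-close to $Q$, so falls into one of the three open/codimension-one scenarios; the soliton regime is excluded for $\alpha\neq 0$ (it would give a continuous family of nontrivial asymptotic solitons, incompatible with the codimension-one character of that manifold), and a sub/super solution comparison distinguishes the remaining two regimes using $\pm\calY>0$.

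\textbf{Rigidity.} Let $u$ satisfy the trapping assumption. I introduce modulated coordinates
\begin{equation*}
u(t,x)=\frac{1}{\lambda(t)^{(d-2)/2}}\Big(Q+\e\Big)\Big(t,\frac{x-z(t)}{\lambda(t)}\Big),
\end{equation*}
with $(\lambda(t),z(t))$ selected by a standard implicit function theorem to enforce the orthogonality conditions $\langle\e(t),\Lambda Q\rangle=\langle\e(t),\pa_i Q\rangle=0$. Decompose spectrally $\e=a(t)\calY+\e^\perp(t)$ where $\e^\perp$ lies in the spectral subspace of positive spectrum of $\L$ (on which $\L$ is coercive, $\langle\L\e^\perp,\e^\perp\rangle\gtrsim\|\e^\perp\|_{\dot H^1}^2$, modulo the orthogonality conditions). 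The pair $(a,\e^\perp)$ satisfies a coupled system: an ODE $\dot a=\mu a+O(a^2+\|\e^\perp\|_{\dot H^1}^2)$ in rescaled time, modulation equations of the same quadratic order, and a dissipative equation for $\e^\perp$. The crucial step is a monotonicity formula for the energy functional $\mathcal F(t)=\langle\L\e^\perp(t),\e^\perp(t)\rangle$ in rescaled time $\tau$, of the form
\begin{equation*}
\frac{d}{d\tau}\mathcal F\leq-c\,\mathcal F+C\,(a^2+\mathcal F)\cdot\|\e\|_{\dot H^1},
\end{equation*}
which, combined with $\|\e\|_{\dot H^1}\leq\delta$ and integrated \emph{backward} from any $\tau_0$, forces $\mathcal F(\tau)\gtrsim\mathcal F(\tau_0)e^{c(\tau_0-\tau)/2}$ whenever $\e^\perp\not\equiv 0$; this blows up as $\tau\to-\infty$, contradicting the trapping hypothesis. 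Hence $\e^\perp\equiv 0$, so $u$ lies on the one-dimensional unstable manifold and is determined by the scalar ODE for $a$; uniqueness of backward bounded solutions of $\dot a=\mu a+O(a^2)$ then forces $a(t)=\alpha e^{\mu\tau(t)}(1+o(1))$ for a single $\alpha\in\RR$, and the three cases $\alpha=0$, $\alpha>0$, $\alpha<0$ identify $u$ with $Q$, $Q^+$, $Q^-$ respectively, up to symmetries.

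The main obstacle is the backward coercivity/monotonicity argument for $\e^\perp$: standard spectral methods alone do not suffice because the modulation vector fields $\Lambda$ and $\pa_i$ have unbounded commutators with $\L$ in the critical topology, and the quadratic error terms are critical in $\dot H^1$. Closing the estimate requires robust energy identities and repulsivity properties of $\L$ of the kind developed in \cite{MMR1}, which is the technical engine advertised in the abstract and whose adaptation to the parabolic energy critical setting in dimensions $d\geq 7$ should drive this section of the paper.
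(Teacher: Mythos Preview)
Your proposal has two genuine gaps, one in each half of the theorem.

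\textbf{Existence and forward behavior.} You invoke Theorem~\ref{th:main} to classify the forward dynamics of $Q^\pm$. This is circular: in the paper the Liouville theorem is an input to the proof of Theorem~\ref{th:main} (the minimal elements $Q^\pm$ are the attractors in the exit regime, Lemma~\ref{lem:exit}), not a consequence of it. The paper determines the forward behavior by direct arguments using only the maximum principle: since $\calY>0$, one has $Q^+\geq Q$ and $0\leq Q^-\leq Q$ for all times. For $Q^+$, Jensen's inequality applied to the convex map $x\mapsto (Q+x)^p-Q^p-pQ^{p-1}x$ gives a scalar differential inequality on $\int(Q^+-Q)\calY$ forcing finite-time blow up, and positivity together with the Matano--Merle theorem \cite{MaM2} yields Type~I. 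For $Q^-$, one has $\pa_tQ^-\leq 0$ and $0\leq Q^-\leq Q$, so $Q^-$ is global, decreases monotonically to a limit $u_\infty$ which by parabolic regularity solves the stationary equation \fref{eqnonline}; the classification \fref{eq:elliptique} and $0\leq u_\infty<Q$ then force $u_\infty=0$. Your ``sub/super solution comparison'' is headed in the right direction but the reduction to Theorem~\ref{th:main} must be removed.

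\textbf{Rigidity.} The argument you sketch cannot close. First, the monotonicity formula $\frac{d}{d\tau}\mathcal F\leq -c\,\mathcal F+\cdots$ with $\mathcal F=\langle\L\e^\perp,\e^\perp\rangle\sim\|\e^\perp\|_{\dot H^1}^2$ is false: the operator $\L$ has essential spectrum $[0,\infty)$ with no spectral gap at $0$, so parabolic dissipation only yields $\frac{d}{d\tau}\mathcal F\lesssim -\|\e^\perp\|_{\dot H^2}^2+Ca^4$ (cf.\ \fref{eq:dotH1}), and $\|\e^\perp\|_{\dot H^2}^2$ does not dominate $\mathcal F$ from below. Second, and more fundamentally, the conclusion $\e^\perp\equiv 0$ is simply false: the minimal elements $Q^\pm$ themselves carry a nontrivial $\e^\perp$ of size $O(e^{2\mu t})$ (the nonlinear correction $v$ in \fref{eq:expansion Qpm}). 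The one-dimensional unstable manifold is tangent to $\calY$ at $Q$ but is not the linear space $Q+\mathrm{span}(\calY)$. The paper's route is instead: (i) use the global energy bound \fref{eq:variation energie} to get $\int_{-\infty}^0(a^2+\|\e\|_{\dot H^2}^2)\,ds\lesssim\delta^2$ and hence a sequence $s_n\to-\infty$ with $\|\e(s_n)\|_{\dot H^2}\to 0$; (ii) combine this with the $\dot H^2$ energy bound \fref{eq:dotH2} and the modulation equation for $a$ to prove the \emph{no-return} property $\|\e\|_{\dot H^2}^2\leq |a|/K$ for all times, which after bootstrap gives the exponential asymptotics $a\sim a_0e^{e_0s}$, $\|\e\|_{\dot H^2}\lesssim e^{2e_0s}$; (iii) run a genuine contraction argument in an exponentially weighted norm comparing two such solutions with the same sign of $a_0$, concluding they coincide. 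Step (ii) is where the energy method of \cite{MMR1} enters, and it is not a simple backward Gronwall.
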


\vspace{0.5 cm}
\noindent{\it Comments on the result.}\\

\noindent{\em 1. Connection to other classification theorems.} Theorem \ref{th:main} is a classification theorem of the flow near the solitary in a situation where blow up is possible. This kind of questions has attracted a considerable attention in particular in the dispersive setting for the Non Linear Schr\"odinger equation both in the mass critical \cite{MR4,MR5,MRSz} and mass super critical \cite{NakSch} cases, the mass critical KdV equation \cite{MMR1,MMR2} and the energy critical wave equation \cite{DKM}, and also in the radial parabolic setting in connection with the large homotopy harmonic heat flow problem \cite{NT2}. Depending on the situation and the underlying algebra of the problem, the classification is more or less complete, and Theorem \ref{th:main} gives the first complete result in the non radial energy critical setting. Note that the fact that the regime (Soliton) near $Q$ is attained exactly one a codimension smooth manifold of initial data could be proved as in \cite{MMN}.\\

\noindent{\em 2. The energy method.} The proof of Theorem \ref{th:main} turns out to be deeply connected to the proof of the analogous result in \cite{MMR1,MMR2} for the critical gKdV equation and based on energy estimates and a key no return type lemma for the flow near Q inherited from the modulation equations, see the strategy of the proof of below, see also \cite{NakSch} for a related approach. The energy estimates are based on higher Sobolev norms constructed on the linearized operator close to $Q$. These norms were introduced in small dimensions in \cite{RaphRod,MRRod,RSc1,RSc2} to produce the type II blow up bubbles, and a key point of our analysis is that in dimension $d\ge 7$, the type II instabilities disappear and these norms become coercive thanks to suitable weighted Hardy type inequalities\footnote{see in particular Appendix \ref{sec:coercivite} and \eqref{coercivite}.}. This validates the functional framework of \cite{RaphRod,MRRod} and draws a route map for the derivation of related results in the dispersive setting. The non radial nature of the analysis is also an essential feature, see \cite{Co2} for further extension in the energy supercritical setting.\\

\noindent{\em 3. Minimal elements.} An essential step of the proof is the derivation of the existence and uniqueness of suitable minimal elements which connect the dynamics at $- \infty$ near $Q$ to the forward in time (Exit) regime of either dissipation or finite time type I blow up away from $Q$, Theorem \ref{th:liouville}. Such problems were addressed again in the parabolic and dispersive settings in \cite{Me9,RSmin,MMR2} and more specifically in the energy critical setting in \cite{Me,DM,DM2}. The stability of these regimes is directly connected to the long time behavior of the solution once exiting a neighborhood of the soliton. This question also relates to the more general problem of classification of minimal elements of the flow which through the general concentration compactness decomposition of the solution has become a powerful tool for the description of the flow both near the solitary wave and for large data \cite{DKM}. Note that in the parabolic setting and for radial data, there is an even more powerful tool based on the maximum principle and the intersection number Lyapounov functionals \cite{MaM1,MaM2,Mizo} which should lead to further large data radial data results, see also \cite{MeZa2,Me9} for the subcritical heat equation.\\

The main open problem after this work is the complete classification of the flow near the solitary wave in the presence of type II blow up for $3\leq d\leq 6$.

\subsection*{Acknowledgements}  F.M. would like to thanks H. Matano for stimulating discussion about this work. F.M. is partly supported by the ERC advanced grant 291214 BLOWDISOL. P.R. and C.C are supported by the ERC-2014-CoG 646650 SingWave. P.R. is a junior member of the Institut Universitaire de France.


\subsection*{Notations}

We collect the main notations used throughout the paper.\\

\noindent\emph{General notations.} We will use a generic notation for the constants, $C$, whose value can change from line to line but just depends on $d$ and not on the other variables used in the paper. The notations $a \lesssim b$ (respectively $a \gtrsim b$) means $a\leq Cb$ (respectively $b \geq Ca$) for such a constant $C$. The notation $a=O(b)$ then means $|a|\lesssim b$. We employ the Kronecker $\delta$ notation:
$$
\delta_{a,b}= 1 \ \text{if} \ a=b, \ 0 \ \text{otherwise}.
$$

\noindent \emph{PDE notations.} We let the heat kernel be:
\be \label{eq:def Kt}
K_t(x):= \frac{1}{(4\pi t)^{\frac d 2}}e^{-\frac{|x|^2}{4t}}
\ee
We recall that the elliptic equation
\be
\label{eqnonline}
\Delta u + |u|^{p-1}=0,
\ee
admits a unique \cite{Gi} up to symmetries strictly positive $\dot H^1$ solution, equivalently \be \label{eq:elliptique}
u=Q_{z,\lambda}, \ \lambda>0, \ z\in \mathbb R^d.
\ee
where the radial soliton $Q$ is explicitely given by \fref{eq:def Q}. The linearized operator for \fref{eq:NLH} close to $Q$ is the Schr\"odinger operator:
\be \label{eq:def H}
H:=-\Delta -pQ^{p-1}=-\Delta +V
\ee
for the potential:
$$
V:=-pQ^{p-1}=-\frac{d+2}{d-2} \frac{1}{\left( 1+\frac{|x|^2}{d(d-2)}\right)^2}.
$$
The operator $H$ has only one negative eigenvalue $-e_0$, $e_0>0$ with multiplicity one associated to a non negative function $\mathcal Y$ that decays exponentially fast (and also its derivatives):
$$
H \mathcal Y=-e_0\mathcal Y,\ \ \mathcal Y>0, \ \ \int_{\mathbb R^d}\mathcal Y=1,
$$
see Proposition \ref{pr:H} for more details. We denote the nonlinearity by:
$$
f(u):=|u|^{p-1}u .
$$

\noindent \emph{Invariances.}
For $\lambda>0$ and $u:\mathbb{R}^d\rightarrow \mathbb R$, we define the rescaling
\be \label{intro:eq:def ulambda}
u_{\lambda} \ : \ x \ \mapsto \ \frac{1}{\lambda^{\frac{d-2}{2}}}u\left( \frac{x}{\lambda} \right).
\ee
The infinitesimal generator of this tranformation is $$
\Lambda u:=- \frac{\partial}{\partial \lambda}(u_{\lambda})_{|\lambda=1}= \frac{d-2}{2}u+x.\nabla u .
$$
Given a point $z\in \mathbb R^d$ and a function $u:\mathbb{R}^d\rightarrow \mathbb R$, we define the translation of vector $z$ of $u$ as:
\be \label{intro:eq:def tauzu}
\tau_z u \ : \ x \ \mapsto \ u(x-z).
\ee
The infinitesimal generator of this semi group is:
$$
\left[ \frac{\partial }{\partial z} (\tau_z u)\right]_{|z=0}= - \nabla u.
$$
One has for any $\lambda>0$ and $z\in \mathbb R^d$ that $\tau_z(u_{\lambda})=\left(\tau_{\frac{z}{\lambda}} u \right)_{\lambda}$. For a function $u:\mathbb{R}^d\rightarrow \mathbb R$ we use the  condensed notation:
$$
u_{z,\lambda}:=\tau_z (u_{\lambda})=x\mapsto \frac{1}{\lambda^{\frac{d-2}{2}}}u\left( \frac{x-z}{\lambda}\right).
$$
For $\lambda>0$, the original space variable will be referred to as $x\in \mathbb R^d$, the renormalized space variable will be referred to as $y\in \mathbb R^d$, and they are related by $$y=\frac{x-z}{\lambda}.$$ The manifold of ground states, being the orbit of $Q$ under the action of the two previous groups of symmetries, is:
$$
\mathcal M:=\{Q_{z,\lambda}, \ z\in \mathbb R^d, \ \lambda>0 \}.
$$
One has the following integration by parts and commutator formulas for smooth well localized functions :
\bea
 \label{eq:ipp}
&&\int (\Lambda u) v+\int u\Lambda v +2 \int uv =0,\\
\label{eq:commutateur}
&&H\Lambda =\Lambda H +2H-(2V+x.\nabla V),\\
&& \label{eq:ipp2}
H\nabla =\nabla H -\nabla V.
\eea

\noindent \emph{Functional spaces.} We will use the standard notation $H^s$, $\dot H^s$ and $H^s_{\text{loc}}$ for inhomogeneous and homogeneous Sobolev spaces, and for the space of distributions that are locally in $H^s$. The distance between an element $u\in \dot H^1$ and a subset $X\subset \dot H^1$ is denoted by:
$$
d(u,X):=\underset{v\in X}{\text{inf}} \parallel u-v\parallel_{\dot H^1}.
$$
For $u,v\in L^2(\mathbb R^d)$ real valued, the standard scalar product is
$$
\langle u,v \rangle= \int_{\mathbb R^d} uvdx
$$
and the orthogonality $u\perp v$ means $\langle u,v\rangle =0$. We extend these notations whenever $uv\in L^1(\mathbb R^d)$. For a subspace $X\subset \mathbb R^d$ and $0<\alpha<1$, the space $C^{0,\alpha}(X)$ is the space of H\"older $\alpha$-continuous functions on $X$. For $\alpha\in \mathbb N^d$, we will use the notations for the derivatives:
$$
\partial^{\alpha}f=\partial_{x_1}^{\alpha_i}...\partial_{x_d}^{\alpha_d}f, \ \ \nabla f=(\partial_{x_i}f)_{1\leq i \leq d}, \ \ \nabla^2f=(\partial_{x_ix_j}f)_{1\leq i,j\leq d} .
$$


\subsection{Strategy of the proof}

We now display the route map for the proofs of Theorem \ref{th:main} and \ref{th:liouville}, and stress the spectacular analogy with  the scheme of proof in \cite{MMR1}.\\

\noindent{\bf step 1} Coercivity of  the linearized operator and dissipation.  The dynamics near $Q$ is driven at the linear level by the operator $H=-\Delta -pQ^{p-1}$. This Schr\"odinger operator admits only one negative eigenvalue $-e_0$, $e_0>0$, with multiplicity $1$ associated to a positive, radially symmetric and exponentially well-localized profile $\mathcal Y$. It admits $d+1$ zeros that are given by the symmetries of the flow: $\Lambda Q,\partial_{x_1}Q,...,\partial_{x_d}Q$. A fundamental observation is that under the natural orthogonality conditions $\varepsilon \in \text{Span}(\mathcal Y,\Lambda Q,\partial_{x_1}Q,...,\partial_{x_d}Q)^{\perp}$, this operator behaves like the Laplace operator for the first three homogeneous Sobolev norms build on its iterates:
\be \la{coercivite}
\int_{\mathbb R^d} \varepsilon H^{i} \varepsilon dx \approx \para \varepsilon \para_{\dot H^i(\mathbb R^d)}^2 \ \ i=1,2,3
\ee
We emphasize that this is an essential consequence of the large dimension $d\ge 7$ assumption, and that this no longer holds for lower dimensions. This designs as in \cite{RaphRod, RSc1,RSc2} a natural set of norms to quantify the dissipative properties of the flow near $Q$.\\

\noindent{\bf step 2} Modulation, energy estimates and control of the scale near $Q$. We introduce a classical nonlinear modulated decomposition of  the solution close to the manifold of solitons $\mathcal M$:
$$
u=(Q+a\mathcal Y+\varepsilon)_{z,\lambda}, \ \ \varepsilon \in  \text{Span}(\mathcal Y,\Lambda Q,\partial_{x_1}Q,...,\partial_{x_d}Q)^{\perp}
$$
introducing the central point $z(t)$, the scale $\lambda (t)$, the projection of the perturbation on the instability direction $a(t)$ and the infinite dimensional remainder $\varepsilon (t)$. One has in particular:
\be \la{h1}
|a|+\para \varepsilon \para_{\dot H^1}\approx d(u,\mathcal M), \ \ |E(u)-E(Q)|\lesssim |a|^2+\para \varepsilon \para_{\dot H^1}^2.
\ee
Introducing the renormalized time $\frac{ds}{dt}=\frac{1}{\lambda^{2}}$, the evolution of $\e$ is to leading order:
$$
(a_s-e_0a)\mathcal Y+\varepsilon_s-\frac{\lambda_s}{\lambda}\Lambda Q-\frac{z_s}{\lambda}+H\varepsilon+NL=\rm{lot}
$$
where $NL$ stands for the purely nonlinear term. The geometrical parameters are computed through the choice of orthononality conditions:
\be \la{as}
a_s=e_0a+O(a^2+\para \varepsilon \para^2_{\text{loc}}), \ \ \left| \frac{d}{ds}(\text{log}(\lambda))\right|+\left| \frac{z_s}{\lambda} \right|\lesssim a^2+\para \varepsilon \para_{\dot H^2}^2,
\ee
and the infinite dimensional part $\e$ is controlled using an energy estimate at {\it both the $\dot{H}^1$ and $\dot{H}^2$ level} adapted to the linearized operator using \eqref{coercivite}:
\be \la{e1}
\frac{d}{ds}\left(\para \varepsilon \para_{\dot H^1}^2\right) \lesssim -\para \varepsilon \para_{\dot H^2}^2+a^2, \ \ \frac{d}{ds}\left( \frac{1}{\lambda^2} \para \varepsilon \para_{\dot H^2}^2 \right) \lesssim \frac{1}{\lambda^2}\left(-\para \varepsilon \para_{\dot H^3}^2+a^4+\para \varepsilon \para_{\dot H^2}^4\right).
\ee
These informations may be coupled to the critical control of the global energy:
\be \la{e}
\frac{d}{ds}E(u) \lesssim -\left(a^2+\para \varepsilon \para_{\dot H^2}^2\right).
\ee
and the fundamental observation is that as long as the solution stays near $\mathcal M$, its energy stays bounded from \fref{h1} and is a Lyapunov functional whose time evolution is given by \fref{e}. This implies the following space-time estimate for the perturbation:
\be \la{es}
\int_0^{s}(a^2+\para \varepsilon \para_{\dot H^2}^2)d\sigma\lesssim d(u,\mathcal M) \ll 1,
\ee
which injected into the modulation equation \fref{as} implies that the scale and the central point cannot move:
$$
|\lambda (t)-\lambda (0)|+|z(t)-z(0)|\lesssim d(u,\mathcal M)\ll 1.
$$
This already rules out type II blow near $Q$ in dimension $d\ge 7$.\\

\noindent{\bf step 3} Existence of minimal backwards elements. The bounds \fref{as} and \fref{e1} imply that for solutions close to $Q$, the projection $a$ onto the unstable mode displays a linear exponentially growing behaviour up to quadratic lower order errors. We then construct $Q^+$ and $Q^-$ as the generators of the instable manifold as $t\to -\infty$, i.e. solutions defined on $(-\infty,0]$ such that $a\approx \pm e^{e_0 t}$ and the stable perturbation is quadratic $\para \varepsilon \para_{\rm loc}\lesssim e^{2e_0t}$ as $t\rightarrow -\infty$. Their construction, similar to \cite{DM2}, relies on energy estimates and a compactness argument using parabolic regularization. A comparison principle, as $\mathcal Y$ is positive, gives $Q^+>Q$ and $0<Q^-<Q$. This forces $Q^+$ to blow up with type I blow up forward in time from a convexity argument and its positivity using Theorem 1.7 in \cite{MaM2}. Using parabolic regularity, we conclude that $Q^-$ converges to some stationary state as $t\rightarrow +\infty$, and this limit has to be zero using the classification of  positive solutions to the stationary elliptic equation \eqref{eqellipticq}, implying that $Q^-$ dissipates to $0$.\\

\noindent{\bf step 4} Uniqueness of minimal backwards elements. We now claim following \cite{Me9,DM2,RSmin,MMR2} the Liouville Theorem \ref{th:liouville} of classification of minimal backwards solutions. Indeed, let $u$ be a solution staying close to $Q$ as $t\rightarrow -\infty$, then the space-time integrability \fref{es} implies that $a\rightarrow 0$ and $\varepsilon \rightarrow 0$ as $t\rightarrow -\infty$. However, the dissipation \fref{e1} of the stable part $\varepsilon$ prevents it to go to zero if the size of $a$ is not comparable with $\varepsilon$. From the modulation equation \fref{as} for $a$ and the energy bounds \fref{e}, this forces the solution to have an exponential behavior: $a\approx e^{e_0t}$ and $\para \varepsilon \para_{\rm loc}\lesssim e^{2e_0t}$ as $t\rightarrow -\infty$, similar to the one of $Q^+$ and $Q^-$. This allows us to run a contraction argument in the space of solutions decaying exponentially as $t\to -\infty$, proving that either $u=Q^+$ or $u=Q^-$.\\

\noindent{\bf step 5} Stability of minimal solutions and (Exit) regimes. $Q_{\pm}$ design two mechanisms to exit a neighborhood of the solitary wave. We claim that these (Exit) dynamics are stable in the energy space. This is easily proven for the dissipative behaviour $Q^-$. For $Q^+$, this follows more generally from the stability of Type I blow up which is proven in \cite{Fe} in the subcritical regime $1<p<\frac{d+2}{d-2}$, and the arguments adapt to the energy critical case, Section \ref{sec:type I}.\\

\noindent{\bf step 6} Trapping and no return dynamics. We now turn to the heart of the analysis which is the ejection lemma. Let us introduce two characteristic smallness constants $0< \delta \ll \alpha\ll 1$ which measure respectively the distance to $Q$ of the initial data, and a universal tube of radius $\alpha$ around $\mathcal M$. We assume that the data $u(0)$ is at distance $\delta^4$ of $Q$ in the $\dot H^1$ topology, and we define the instable time $T_{\text{ins}}$ as the first time for which the instability is non negligible compared to the stable perturbation:
$$
|a(T_{\text{ins}})|\gtrsim \para \varepsilon (T_{\text{ins}}) \para_{\dot H^2}^2 .
$$
The energy estimates \fref{e}, the modulation equations \fref{as} and the space-time integrability \fref{es} ensure that the solution stays at distance $\delta^4$ of $Q$ in the $\dot H^1$ topology in $[0,T_{\text{ins}}]$. If $T_{\text{ins}}=+\infty$, then the stable part of the perturbation dominates for all times, the solution is trapped near $Q$ and is in fact asymptotically attracted to a possibility slightly shifted $Q$, this is the (Soliton) codimension one instable manifold. A contrario, if $T_{\text{ins}}<+\infty$, then the solution {\it enters a new regime} where $a$ will start to grow until the transition time $T_{\text{trans}}$ at which the stable perturbation is of quadratic size: 
$$
a^2(T_{\text{trans}})\gtrsim \para \varepsilon (T_{\text{trans}}) \para_{\dot H^2} .
$$
At this time, $a$ is approximately of size $\delta$ and $\varepsilon$ of size $\delta^2$. The solution then enters an exponential regime: $a\approx \pm \delta e^{e_0 (t-T_{\text{trans}})}$ and $\para \varepsilon \para\lesssim \delta^2e^{2e_0(t-T_{\text{trans})}}$ which will drive the solution out of the ball of size $\alpha$ around $Q$ at an exit time $T_{\text{exit}}$. In $[T_{\text{trans}},T_{\text{exit}}]$, we may compare the solution to the minimal elements $Q_{\pm}$ depending on the sign of $a(T_{\text{trans}})$ thanks to the exponential bounds and obtain:
$$
\para Q^{\pm}(T_{\text{exit}})-u(T_{\text{exit}})\para_{\dot H^1}\lesssim \delta .
$$
Since $Q^{\pm}(T_{\text{exit}})$ is by definition of size $\alpha$ which is universal, the solution is trapped at $T_{\text{exit}}$ in a neighborhood of $Q_\pm$ of size $\delta$ arbitrarily small, and hence step 5 ensures that the solution will either blow up type I or dissipate to zero forward in time  after $T_{\rm exit}$.\\

The paper is organized as follows. In Section \ref{sec:Q} we recall the standard results on the semilinear heat equation and on $Q$, leading to a preliminary study of solutions near $Q$. The local well-posedness is addressed in Proposition \ref{pr:cauchy}. In Proposition \ref{pr:H} we describe the spectral structure of the linearized operator. In particular, for functions orthogonal to its instable eigenfunction and to its kernel, it displays some coercivity properties stated in Lemma \ref{lem:coercivite}. The nonlinear decomposition Lemma \ref{lem:decomposition} then allows to decompose in a suitable way solutions around $Q$. Under this decomposition, the adapted variables are defined in Definition \ref{def:trapped}, the variation of the energy is studied in Lemma \ref{lem:variation energie}, the modulation equations are established in Lemma \ref{lem:modulation} and the energy bounds for the remainder on the infinite dimensional subspace are stated in Lemma \ref{lem:methode energie}. A direct consequence is the non-degeneracy of the scale and the central point, subject of the Lemma \ref{lem:lambda} ending the section. In the following one, Section \ref{sec:min}, we first construct the minimal solutions in Proposition \ref{pr:minimal} and then give the proof of the Liouville-type theorem for minimal solutions \ref{th:liouville}. Finally, in Section \ref{sec:th} we give the proof of the classification Theorem \ref{th:main}. In Lemma \ref{lem:caracterisation Tins} we characterize the instability time. If this time never happens, the solution is proved to dissipate to a soliton in Lemma \ref{lem:soliton}. If it happens, then it blows up with type I blow up or dissipate according to Lemma \ref{lem:exit}. Eventually, Section \ref{sec:type I} contains the proof of the stability of type I blow up.\\

The Appendix is organized as follows. First in Section \ref{sec:H} we study the kernel of the linearized operator in Lemma \ref{lem:zeros}. Then in Section \ref{sec:coercivite} we give the proof of the coercivity Lemma \ref{lem:coercivite}. In Section \ref{sec:decomposition} we prove the decomposition Lemma \ref{lem:decomposition}. Next, in Section \ref{sec:NL}, we state some useful estimate on the purely nonlinear estimate as $1<p<2$ in Lemma \ref{lem:NL}. In Section \ref{sec:para} we give certain parabolic type results, especially Lemma \ref{para:lem:max} that allows to propagate exponential bounds.


\section{Estimates for solutions trapped near $\mathcal M$} \la{sec:Q}


This section is devoted of the study of solutions which are globally trapped near the solitary wave. The heart of the proof is the coupling of the dissipative properties of the flow with coercivity estimates for the linearized operator $H$ and energy bounds which imply the control of the scaling parameter, Lemma \ref{lem:lambda}, and hence the impossibility of type II blow up near $Q$.


\subsection{Cauchy theory}


We first recall the standard Cauchy theory which takes care of the lack of differentiability of the nonlinearity in high dimensions. The Cauchy problem for \fref{eq:NLH} is well posed in the critical Lebesgue space $L^{\frac{2d}{d-2}}$, see \cite{Br} \cite{We} \cite{La}. A direct adaptation of the arguments used there implies that it is also well posed in $\dot H^1$. \fref{eq:NLH} being a parabolic evolution equation, it also possesses a regularizing effect. Namely, any solution starting from a singular initial datum will have an instantaneous gain of regularity, directly linked to the regularity of the nonlinearity which is only $C^1$ here as $1<p<2$. The proof of the following proposition is classical using for example the space time bounds of Lemma \ref{lem:strichartz}, and the details are left to the reader.

\begin{proposition}[Local well posedness of the energy critical semilinear heat equation in $\dot H^1$ and regularizing effects] \label{pr:cauchy}

Let $d\geq 7$. For any $u_0\in \dot H^1(\mathbb R^d)$ there exists $T(u_0)>0$ and a weak solution $u\in \mathcal C([0,T(u_0)),\dot H^1(\mathbb R^d))$ of \fref{eq:NLH}. In addition the following regularizing effects hold:
\begin{itemize}
\item[(i)] $u\in C^{(\frac 3 2,3)}((0,T_{u_0})\times \mathbb R^d)$, $u$ is a classical solution of \fref{eq:NLH} on $(0,T_{u_0})\times \mathbb R^d$. 
\item[(ii)] $u\in C((0,T(u_0)),W^{3,\infty}(\mathbb R^d))$.
\item[(iii)] $u\in C((0,T(u_0)),\dot H^3(\mathbb R^d))$, $u\in C^1((0,T(u_0)),\dot H^1 )$.
\end{itemize}
For any $0<t_1<t_2<T_{u_0}$ the solution mapping is continuous from $\dot H^1$ into $C^{(\frac 3 2,3)}([t_1,t_2]\times \mathbb R^d)$, $C([t_1,t_2],W^{3,\infty})$, $C((t_1,t_2),\dot H^1\cap \dot H^3)$ and $C^1((t_1,t_2),\dot H^1 )$ at $u_0$. 
\end{proposition}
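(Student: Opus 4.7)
The plan is to follow the classical Weissler framework for the energy critical semilinear heat equation in $\dot H^1$, then bootstrap the parabolic smoothing to obtain the higher regularity, stopping at $C^{(3/2,3)}$ because the exponent $p=\frac{d+2}{d-2}<2$ in dimension $d\geq 7$ means the nonlinearity $f(u)=|u|^{p-1}u$ is only $C^{1,p-1}$.

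\textbf{Step 1: local well-posedness in $\dot H^1$.} Set up a contraction argument for the Duhamel map
$$\Phi(u)(t):=e^{t\Delta}u_0+\int_0^t e^{(t-s)\Delta}f(u)(s)\,ds$$
in a space-time Banach space which scales like $\dot H^1$. A convenient choice is
$$X_T:=L^\infty_t([0,T],\dot H^1)\cap L^q_t([0,T],L^r_x),$$
where $(q,r)$ is an $\dot H^1$-admissible pair for the heat semigroup. Using the Strichartz-type bounds of Lemma \ref{lem:strichartz}, the homogeneous term $e^{t\Delta}u_0$ lies in $X_T$ with norm going to $\|u_0\|_{\dot H^1}$, and the Sobolev embedding $\dot H^1\hookrightarrow L^{2d/(d-2)}$ combined with Hölder in time and space controls the inhomogeneous term. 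A standard contraction on a small ball of $X_T$ yields local existence and uniqueness for $T=T(u_0)>0$ (the critical nature being reflected in the fact that $T$ depends on the profile of $u_0$, not only on $\|u_0\|_{\dot H^1}$), together with Lipschitz continuity of the flow map $\dot H^1\ni u_0\mapsto u\in X_T$.

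\textbf{Step 2: instantaneous smoothing to $L^\infty$ and then $W^{3,\infty}$.} I would next run a bootstrap on the Lebesgue exponent. Starting from $u(t)\in\dot H^1\hookrightarrow L^{2d/(d-2)}$ and using the dispersive estimate
$$\|e^{t\Delta}g\|_{L^{r'}}\lesssim t^{-\frac{d}{2}(\frac{1}{r}-\frac{1}{r'})}\|g\|_{L^r},\qquad r\leq r',$$
applied to the Duhamel identity with base time $t_0\in(0,T(u_0))$ and to the nonlinearity $f(u)\in L^{r/p}$, finitely many iterations raise the Lebesgue integrability until $u\in L^\infty_{\mathrm{loc}}((0,T(u_0)],L^\infty)$. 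Once $u$ is locally bounded in space-time, so is $f(u)$, and one more application of $e^{(t-s)\Delta}$ to $f(u)$ gains arbitrarily many spatial derivatives up to $W^{3,\infty}$; the equation $\partial_t u=\Delta u+f(u)$ then translates this into the time regularity of part (iii).

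\textbf{Step 3: sharp parabolic Hölder regularity $C^{(3/2,3)}$.} Here the limit $p<2$ becomes the pinch point: since $p-1=4/(d-2)\in(0,1)$, the map $v\mapsto f(v)$ is $C^{1,p-1}$ but not better, so classical parabolic Schauder estimates applied to
$$\partial_t u-\Delta u=f(u)$$
on relatively compact subsets of $(0,T(u_0))\times\mathbb R^d$ give exactly the parabolic Hölder class $C^{(3/2,3)}$ and no more. This step is the one where the most bookkeeping is needed, in particular to propagate the weights in $t$ coming from the initial singularity at $t=0$.

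\textbf{Step 4: $\dot H^3$ and $\dot H^1$ time-regularity.} Differentiating the Duhamel formula twice in space gives
$$\nabla^2 u(t)=e^{(t-t_0)\Delta}\nabla^2 u(t_0)+\int_{t_0}^t e^{(t-s)\Delta}\nabla^2 f(u)(s)\,ds,$$
and $\nabla^2 f(u)=p(p-1)|u|^{p-3}u(\nabla u)^2+p|u|^{p-1}\nabla^2 u$, which is estimated in $L^2$ using the already obtained $L^\infty$ bound on $u$ and $\nabla u$. Applying the Strichartz-type estimate of Lemma \ref{lem:strichartz} at base time $t_0>0$ with data $\nabla^2 u(t_0)\in L^2$ (a consequence of step 2) propagates the $\dot H^3$ regularity on $(t_0,T(u_0))$; the equation then gives $\partial_t u\in C((t_0,T(u_0)),\dot H^1)$. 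Continuous dependence on $u_0$ for the higher regularity norms on $[t_1,t_2]\subset(0,T(u_0))$ follows by applying these same linear parabolic estimates to the difference of two nearby solutions, which already sit in $C^{(3/2,3)}$ on $[t_1,t_2]$ by step 3.

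\textbf{Expected main obstacle.} Beyond the contraction in step 1, which is standard, the delicate point is threading the bootstrap of steps 2–4 through time-weighted norms so that all estimates degenerate correctly as $t\downarrow 0$ while staying uniform on $[t_1,t_2]$ for the continuity statement; the ceiling at $C^{(3/2,3)}$ is forced, not a technical choice, by the $C^{1,p-1}$ regularity of the nonlinearity in the regime $d\geq 7$.
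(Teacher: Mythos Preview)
Your outline is essentially the classical argument the paper has in mind: the authors give no proof, stating only that the result is standard using the Strichartz-type bounds of Lemma~\ref{lem:strichartz} and leaving the details to the reader, and your Steps 1--3 flesh this out correctly.

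One caveat on Step 4: the pointwise formula $\nabla^2 f(u)=p(p-1)|u|^{p-3}u(\nabla u)^2+p|u|^{p-1}\nabla^2 u$ is not legitimate, since $p<2$ makes $|u|^{p-3}u=|u|^{p-2}\mathrm{sgn}(u)$ singular at zeros of $u$ --- this is precisely the $C^{1,p-1}$ obstruction you correctly identified in Step 3. To reach $\dot H^3$ you should instead keep only one derivative on the nonlinearity, writing $\nabla f(u)=p|u|^{p-1}\nabla u$ (which is well defined and H\"older by Step 3), and place the remaining two derivatives on the heat kernel; the resulting $(t-s)^{-1}$ singularity is then handled by the H\"older regularity of $\nabla f(u)$ via a Schauder-type argument, consistent with the ceiling you already noted.
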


Here $u\in C^{(\frac 3 2,3)}((0,T_{u_0})\times \mathbb R^d)$ means that $u$ is in the H\"older space $C^{\frac 3 2}((0,T_{u_0})\times \mathbb R^d)$ and that $u$ is three time differentiable with respect to the space variable $x$. The maximal time of existence of the solution $u$ will then be denoted by $T_{u_0}$. From Proposition \ref{pr:cauchy} we will always assume without loss of generality that the initial datum $u_0$ of a solution $u$ of \fref{eq:NLH} belongs to $\dot H^1\cap \dot H^3\cap W^{3,+\infty}$.


\subsection{The linearized operator $H$}


The spectral properties of $H=-\Delta-pQ^{p-1}$, the linearized operator of \fref{eq:NLH} close to $Q$, are well known, see \cite{Sc} and references therein. 

\begin{proposition}[Spectral theorem for the linearized operator] \label{pr:H}
Let $d\geq3$. Then $H:H^2\rightarrow L^2$ is self adjoint in $L^2$. It admits only one negative eigenvalue denoted by $-e_0$ where $e_0>0$, of multiplicity $1$, associated to a profile $\mathcal Y>0$  which decays together with its derivatives exponentially fast. For $d\geq 5$, $H$ admits $d+1$ zeros in $H^2$ given by the invariances of \eqref{eqnonline}:
\be \la{eq:ker H}
\text{Ker} H =\text{Span}(\Lambda Q,\partial_{x_1}Q,...,\partial_{x_d}Q).
\ee
The rest of the spectrum is contained in $[0,+\infty)$.
\end{proposition}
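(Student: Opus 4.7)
The potential $V=-pQ^{p-1}$ is bounded and decays polynomially at infinity, so $V$ is a Kato-class, relatively $-\Delta$-compact perturbation; this immediately gives self-adjointness of $H$ on the domain $H^2(\mathbb R^d)$, and by Weyl's theorem the essential spectrum is $\sigma_{\mathrm{ess}}(H)=[0,+\infty)$. Only the discrete negative spectrum and the $L^2$-kernel remain to be analyzed.

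The existence of at least one negative eigenvalue comes from evaluating $H$ on the soliton: since $\Delta Q+Q^{p}=0$, one has $HQ=-(p-1)Q^{p}$ and so $\langle HQ,Q\rangle=-(p-1)\int Q^{p+1}<0$. Finiteness of the number of negative eigenvalues is standard: by min-max combined with a Birman--Schwinger / CLR bound applied to the bounded, fast-decaying potential $V$. To see that there is exactly one negative eigenvalue and to identify the kernel, I would decompose $L^{2}(\mathbb R^{d})$ into spherical harmonics. Since $V$ is radial, $H$ commutes with rotations and splits as a direct sum of one-dimensional Schr\"odinger operators
\begin{equation*}
H_{\ell}=-\partial_{r}^{2}-\tfrac{d-1}{r}\partial_{r}+\tfrac{\ell(\ell+d-2)}{r^{2}}+V(r),\qquad \ell=0,1,2,\ldots
\end{equation*}
acting on the radial part of each angular sector. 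Note that $H_{\ell}\ge H_{1}+\tfrac{\ell(\ell+d-2)-(d-1)}{r^{2}}\ge H_{1}$ for $\ell\ge 1$.

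The zero-energy solutions of $H_{0}$ and $H_{1}$ are known from the symmetries of \eqref{eqnonline}: $\Lambda Q$ lies in the kernel of $H_{0}$ (scaling), and $Q'(r)$ lies in the kernel of $H_{1}$ (translations, since $\partial_{x_{i}}Q=Q'(r)x_{i}/r$). One then invokes Sturm oscillation theory on the half-line. The function $\Lambda Q$ has exactly one interior zero --- indeed $Q$ is positive, smooth, decreasing, with $\Lambda Q(0)=\tfrac{d-2}{2}Q(0)>0$ and $\Lambda Q(r)<0$ for $r$ large --- which forces $H_{0}$ to admit exactly one strictly negative eigenvalue, simple, with a radial, sign-definite ground state $\mathcal Y$. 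On the other hand, $Q'(r)<0$ for $r>0$ is of constant sign, so by Sturm theory it is the ground state of $H_{1}$, whence $H_{1}\ge 0$. This already establishes that $-e_{0}<0$ is the only negative eigenvalue of $H$, simple, with radial positive eigenfunction $\mathcal Y$. Exponential decay of $\mathcal Y$ and its derivatives then follows from standard Agmon estimates, since $-e_{0}$ is strictly below the essential spectrum $[0,+\infty)$ and $V\to 0$ at infinity; elliptic regularity bootstraps this to all derivatives.

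For the kernel, the same decomposition reduces the problem sector by sector. For $\ell\ge 2$ the strict lower bound $H_{\ell}\ge H_{1}$ with a positive $r^{-2}$ correction (or a direct Hardy-type argument) gives $H_{\ell}>0$, so the kernel is trivial in these sectors. For $\ell=1$, Sturm theory already identified $Q'(r)$ as the unique (up to scale) zero-energy $L^{2}$ solution, contributing the $d$ elements $\partial_{x_{i}}Q$. For $\ell=0$, one has to show that $\Lambda Q$ is the only $L^{2}$ radial zero-energy solution; the second linearly independent solution of the ODE is obtained by the Wronskian formula and can be shown to grow at infinity and hence lie outside $L^{2}$. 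Here the assumption $d\ge 5$ enters naturally: since $Q\sim c|x|^{-(d-2)}$ at infinity, one finds $\Lambda Q\sim c'|x|^{-(d-2)}$, which belongs to $L^{2}(\mathbb R^{d})$ precisely when $2(d-2)>d$, i.e.\ $d\ge 5$. The main delicate step is precisely this last one: checking both the behavior at $r=0$ (to rule out the singular companion solution) and at $r=+\infty$ of the two-dimensional ODE solution space for $H_{0}u=0$, which requires either an explicit computation using the known form of $Q$ or a careful variation of constants argument on the radial ODE.
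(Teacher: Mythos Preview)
Your proposal is correct and follows essentially the same architecture as the paper --- spherical harmonic decomposition of the radial Schr\"odinger operator --- but the two arguments diverge in how they handle the sectors.

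For $\ell\ge 2$ you use the simple comparison $H_\ell = H_1 + \frac{\ell(\ell+d-2)-(d-1)}{r^2}$ with a strictly positive coefficient, together with $H_1\ge 0$, to conclude $H_\ell$ is positive with trivial kernel. The paper instead carries out a detailed ODE construction (Lemma~\ref{lem:zeros}): it builds the regular-at-zero solution $T^{(n)}$ and, via a planar dynamical-systems trapping argument, shows $T^{(n)}\sim r^n$ at infinity for $n\ge 2$, so that no $L^2$ zero mode exists. The paper then factorizes $H^{(n)}=(A^{(n)})^*A^{(n)}$ with $A^{(n)}=-\partial_r+\partial_r\log T^{(n)}$. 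Your route is cleaner for the bare statement of the Proposition; the paper's route is longer but is not wasted effort, since the explicit positive solutions $T^{(n)}$ and the resulting factorization are reused essentially in the proof of the coercivity Lemma~\ref{lem:coercivite}.

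For $\ell=0,1$ and the negative spectrum, your Sturm-oscillation argument (counting that $\Lambda Q$ has exactly one interior zero while $Q'$ has none) is correct; the single zero of $\Lambda Q$ at $r=\sqrt{d(d-2)}$ can be read off the explicit formula for $Q$. The paper largely cites \cite{Sc} for these facts. One small imprecision in your write-up: the second radial zero-energy solution for $\ell=0$ does not grow at infinity but rather tends to a nonzero constant (the Wronskian computation you sketch gives $u_2\sim 1$ as $r\to\infty$); it is also singular like $r^{-(d-2)}$ at the origin, which is how the paper rules it out. Either behavior suffices to exclude it from $L^2$, so your conclusion stands.
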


Our aim being to study perturbations of $Q$, we will decompose such perturbations in three pieces: a main term in $\mathcal M$, a part on the instable direction $\mathcal Y$, and a remainder being "orthogonal" to these latter. Unfortunately, in dimensions $7\leq d \leq 10$ the functions in the kernel do not decay quickly enough at infinity, what forces us to localize the natural orthogonality condition $u\in Ker(H)^{\perp}$. To do so we define:
\be \label{eq:def Psi0}
\Psi_0:=\chi_M \Lambda Q-\langle \chi_M \Lambda Q, \mathcal Y \rangle \mathcal Y,
\ee
\be \label{eq:def Psii}
\Psi_i:=\chi_M \partial_{x_i} Q, \ \ \text{for} \ 1\leq i \leq d .
\ee
for a constant $M\gg 1$ that can be chosen independently of the sequel. $\Psi_0,\Psi_1,...,\Psi_d$ look like $\Lambda Q, \partial_{x_1}Q,...,\partial_{x_d}Q$ for they satisfy the following orthogonality relations:
\be \label{eq:orthogonalite Psii}
\begin{array}{l l}
\langle \Psi_i,\mathcal Y \rangle=0, \ \ \text{for}  \ 0\leq i \leq d, \\
\langle \Psi_0,\Lambda Q  \rangle=\int \chi_M (\Lambda Q)^2, \ \ \text{and} \ \text{for} \ 1\leq i \leq d, \ \langle \Psi_i,\Lambda Q  \rangle=0, \\
\langle \Psi_i, \partial_{x_j} Q\rangle = \frac{1}{d} \int \chi_M |\nabla Q|^2 \delta_{i,j}  \ \ \text{for} \ 0\leq i \leq d, \ 1\leq j \leq d.
\end{array}
\ee
The potential $V=-pQ^{p-1}$ is in the Kato class, implying that the essential spectrum of $H$ is $[0,+\infty)$, the same as the one of the laplacian $-\Delta$. Consequently, $H$ is not $L^2$-coercive for functions orthogonal to its zeros and $\mathcal Y$. However, a weighted coercivity holds, similar to the one of the laplacian given by the Hardy inequality and its higher Sobolev versions.

\begin{lemma}[Weighted coercivity for $H$ on the stable subspace] 
\label{lem:coercivite}

Let $d\geq 7$. There exists a constant $C>0$ such that for all $v\in \dot H^1(\mathbb R^d)$ satisfying
\be \label{eq:orthogonalite}
v\in \text{Span}(\Psi_0,\Psi_1,...,\Psi_d,\mathcal Y)^{\perp} .
\ee
the following holds:\\
\noindent{\em (i) Energy bound}:
\be \label{eq:coerciviteA}
\frac 1 C \para v \para_{\dot H^1}^2\leq \int_{\mathbb R^d} |\nabla v|^2-pQ^{p-1}v^2 \leq C  \para v \para_{\dot H^1}^2
\ee
\noindent{\em (ii) $\dot{H}^2$ bound}: If $v\in \dot H^2(\mathbb R^d)$, then 
\be \label{eq:coercivite} 
\frac 1 C \para v \para_{\dot H^2}^2\leq \int_{\mathbb R^d} |Hv|^2 \leq C  \para v \para_{\dot H^2}^2
\ee
\noindent{\em (iii) $\dot{H}^3$ bound}: If $v\in \dot H^3(\mathbb R^d)$, then 
\be \label{eq:coercivite H3}
\frac 1 C \para v \para_{\dot H^3}^2 \leq \int_{\mathbb R^d} |\nabla Hv|^2-pQ^{p-1}|Hv|^2 \leq C \para v \para_{\dot H^3}^2
\ee
\end{lemma}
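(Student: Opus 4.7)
The potential $V=-pQ^{p-1}$ satisfies $|V|\lesssim \langle x\rangle^{-4}$ and $|\nabla V|\lesssim \langle x\rangle^{-5}$, so Hardy in dimension $d\ge 3$ gives the upper bound in (i) directly, via $\int Q^{p-1}v^2\lesssim \int v^2/\langle x\rangle^4\lesssim \|v\|_{\dot H^1}^2$. For (ii), I would write $\|Hv\|_{L^2}^2\le 2\|\Delta v\|_{L^2}^2+2\|Vv\|_{L^2}^2$ and bound $\|Vv\|_{L^2}^2\lesssim \int v^2/\langle x\rangle^8$ by the Hardy--Rellich inequality $\int v^2/|x|^4\lesssim \|\Delta v\|_{L^2}^2$, valid since $d\ge 5$. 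The upper bound in (iii) follows from the same strategy applied to $\nabla Hv$ and to $Hv$.

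\medskip

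\noindent\textbf{Lower bound in (i).} The plan is to prove coercivity first on the exact orthogonal complement $\{\mathcal Y,\Lambda Q,\partial_{x_1}Q,\ldots,\partial_{x_d}Q\}^\perp$, then transfer to the localized orthogonality against $\Psi_0,\ldots,\Psi_d$. For the exact case I would argue by compactness-contradiction using the spectral content of Proposition \ref{pr:H}: a $\dot H^1$-normalized sequence $v_n$ with $\langle Hv_n,v_n\rangle\to 0$ has a weak limit $v_\infty\in\dot H^1$; by local compactness $\dot H^1\hookrightarrow L^2_{\mathrm{loc}}$ and the decay of $V$, one passes $\int Vv_n^2\to \int Vv_\infty^2$, forcing $\langle Hv_\infty,v_\infty\rangle\le 0$. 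As $v_\infty$ inherits the orthogonality conditions, Proposition \ref{pr:H} gives $v_\infty=0$, whence $\int Vv_n^2\to 0$ and $\|v_n\|_{\dot H^1}\to 0$, a contradiction. The transfer to the localized orthogonality uses that for $M$ large the Gram matrix pairing $\{\mathcal Y,\Lambda Q,\partial_{x_i}Q\}$ with $\{\mathcal Y,\Psi_0,\ldots,\Psi_d\}$ is a small perturbation of a nondegenerate block-diagonal matrix, by \eqref{eq:orthogonalite Psii}, parity of $\mathcal Y$, and the fact that $\Lambda Q,\partial_{x_i}Q\in L^2$ for $d\ge 5$. Hence any $v\perp \{\mathcal Y,\Psi_0,\ldots,\Psi_d\}$ decomposes as $v=\tilde v+\alpha\Lambda Q+\sum \beta_i\partial_{x_i}Q$ with $\tilde v$ in the exact orthogonal and coefficients $|\alpha|+\sum|\beta_i|\le C(M)\|v\|_{\dot H^1}$; since $H\Lambda Q=H\partial_{x_i}Q=0$, $\langle Hv,v\rangle=\langle H\tilde v,\tilde v\rangle\gtrsim \|\tilde v\|_{\dot H^1}^2\gtrsim \|v\|_{\dot H^1}^2$ for $M$ fixed large enough.

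\medskip

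\noindent\textbf{Lower bounds in (ii) and (iii), and main obstacle.} For (ii), I would repeat the compactness-contradiction scheme one derivative higher. A $\dot H^2$-normalized sequence $v_n$ with $\|Hv_n\|_{L^2}\to 0$ has a weak limit $v_\infty$ with $Hv_\infty=0$, so $v_\infty\in\ker H\cap \{\mathcal Y,\Psi_0,\ldots,\Psi_d\}^\perp=\{0\}$ by the Gram argument above. Writing $\|\Delta v_n\|_{L^2}\le \|Hv_n\|_{L^2}+\|Vv_n\|_{L^2}$, splitting $\int V^2 v_n^2$ between $\{|x|\le R\}$ (where compactness of $\dot H^2\hookrightarrow L^2_{\mathrm{loc}}$ gives vanishing as $n\to\infty$) and $\{|x|>R\}$ (where Hardy--Rellich gives $\lesssim R^{-4}$), then taking $R$ large, produces the contradiction. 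For (iii), I would apply (i) to $w=Hv$. The delicate point is that while $\langle Hv,\mathcal Y\rangle=-e_0\langle v,\mathcal Y\rangle=0$ and $\langle Hv,\Lambda Q\rangle=\langle Hv,\partial_{x_i}Q\rangle=0$ since $H$ annihilates its kernel, the inner products $\langle Hv,\Psi_i\rangle=\langle v,H\Psi_i\rangle$ are nonzero: however, $H\Psi_i$ is supported in $\{|x|\sim M\}$ since its leading term vanishes ($H\Lambda Q=H\partial_{x_i}Q=0$) and only the commutator $[H,\chi_M]$ survives. Decomposing $w$ as an orthogonalized part $w_\perp\in\{\mathcal Y,\Psi_0,\ldots,\Psi_d\}^\perp$ plus a correction of size $O(M^{-\kappa}\|v\|_{\dot H^2})$ and applying (i) to $w_\perp$ yields $\langle Hw,w\rangle\gtrsim \|Hv\|_{\dot H^1}^2$; a final bootstrap analogous to the (ii) argument converts $\|Hv\|_{\dot H^1}^2$ into $\|v\|_{\dot H^3}^2$, via $\|v\|_{\dot H^3}^2\le 2\|Hv\|_{\dot H^1}^2+2\|\nabla(Vv)\|_{L^2}^2$ and the weighted inequalities on $\nabla V$ and $V$. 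The main technical obstacle throughout is precisely this passage from exact kernel orthogonality to the localized orthogonality: it closes because the commutator $[H,\chi_M]$ applied to the kernel is small in $M$, and because $d\ge 7$ ensures that the required Hardy and Hardy--Rellich inequalities and the $L^2$-integrability of $\Lambda Q$ are all available.
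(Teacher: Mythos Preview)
Your approach is broadly correct and takes a genuinely different route from the paper. The paper proves (iii) directly: it first establishes a subcoercivity estimate of the form
\[
\int |\nabla Hu|^2 - pQ^{p-1}|Hu|^2 \;\ge\; \tfrac1C\Bigl(\|u\|_{\dot H^3}^2 + \int\tfrac{|\nabla^2 u|^2}{|x|^2} + \int\tfrac{|\nabla u|^2}{|x|^4} + \int\tfrac{u^2}{|x|^6}\Bigr) - C\!\int \tfrac{|\nabla^2 u|^2+|\nabla u|^2+u^2}{1+|x|^{\ge 4}},
\]
then runs the compactness--contradiction argument at the $\dot H^3$ level, and kills the nontrivial limit $u_\infty$ by decomposing it on spherical harmonics and invoking the explicit ODE classification of $\ker H^{(n)}$ (Lemma~\ref{lem:zeros}) on each harmonic. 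It then says (i) and (ii) are analogous. Your scheme is more modular: you prove (i) first and then derive (iii) by applying (i) to $w=Hv$, which is cleaner once (i) is in hand. Both strategies ultimately rest on the same spectral input, but the paper front-loads the spherical-harmonics machinery while you try to hide it behind Proposition~\ref{pr:H}.

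There is one genuine soft spot in your argument for (i). When you reach $\langle Hv_\infty,v_\infty\rangle\le 0$ with $v_\infty\in\dot H^1$ satisfying the exact orthogonality, you invoke Proposition~\ref{pr:H} to conclude $v_\infty=0$. But Proposition~\ref{pr:H} is an $L^2$ spectral statement, and $v_\infty$ need not lie in $L^2$; you still owe the step ``$\langle Hv_\infty,v_\infty\rangle=0\Rightarrow Hv_\infty=0$'' on $\dot H^1\cap\mathcal Y^\perp$. This can be filled either by a variational/Euler--Lagrange argument (the quadratic form attains its minimum $0$ at $v_\infty$ on $\mathcal Y^\perp$, so $Hv_\infty=\lambda\mathcal Y$, and pairing with $\mathcal Y$ forces $\lambda=0$), or by the factorization formula \eqref{eq:formule H}; in either case the identification of smooth $\dot H^1$ solutions of $Hv=0$ with $\mathrm{Span}(\Lambda Q,\partial_{x_i}Q)$ still needs Lemma~\ref{lem:zeros} (Proposition~\ref{pr:H} only asserts this in $H^2$). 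A second minor point: in your transfer step the decomposition $v=\tilde v+\alpha\Lambda Q+\sum\beta_i\partial_{x_i}Q$ cannot in general achieve $\tilde v\perp\mathcal Y$ because $\langle\Lambda Q,\mathcal Y\rangle\ne 0$; you must allow a $\gamma\mathcal Y$ component as well, with $|\gamma|\lesssim M^{-\kappa}\|v\|_{\dot H^1}$, and then check that the resulting $-e_0\gamma^2\|\mathcal Y\|_{L^2}^2$ contribution is harmless for $M$ large.
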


This result follows the scheme of proof in \cite{RaphRod} and details are given in Appendix \ref{sec:coercivite}.


\subsection{Geometrical decomposition of trapped solutions} 
From now on and for the rest of this section, we study solutions that are trapped near the ground state manifold $\mathcal M$.

\begin{definition}[Trapped solutions] \label{def:trapped}

Let $I \subset \mathbb R$ be a time interval containing $0$ and $0<\eta\ll1$. A solution $u$ of \fref{eq:NLH} is said to be trapped at distance $\eta$ on $I$ if:
\be \label{eq:proximite}
\underset{t\in I}{\text{sup}} \ d\left(u(t),\mathcal M\right)\leq \eta .
\ee
\end{definition}

A classical consequence of the orbital stability assumption \eqref{eq:proximite} is the existence of a geometrical decomposition of the flow adapted to the spectral structure of the linearized operator $H$ stated in Proposition \ref{pr:H} and Lemma \ref{lem:coercivite}. 

\begin{lemma}[Geometrical decomposition] \label{lem:decomposition}
Let $d\geq 7$. There exists $\delta>0$ such that for any $u_0\in \dot H^1(\mathbb R^d)$ with $\parallel u_0-Q\parallel_{\dot H^1}< \delta$, the following holds. If the solution of \fref{eq:NLH} given by Proposition \ref{pr:cauchy} is defined on some time interval $[0,T)$ and satisfies:
$$
\underset{0\leq t< T}{\text{sup}}d(u(t),\mathcal M)< \delta ,
$$
then there exists three functions $\lambda:[0,T)\rightarrow (0,+\infty)$, $z:[0,T)\rightarrow \mathbb R^d$, and $a:[0,T)\rightarrow \mathbb R$ that are $\mathcal C^1$ on $(0,T)$, such that
\be \label{eq:decomposition}
u=(Q+a\mathcal Y+v)_{z,\lambda},
\ee
where the function $v$ satisfies the orthogonality conditions \fref{eq:orthogonalite}. Moreover, one has the following estimate for each time $t\in [0,T)$:
\be \label{eq:estimation dotH1}
|a|+\parallel v \parallel_{\dot H^1}\lesssim \underset{\lambda>0, z\in \mathbb R^d}{\text{inf}}\parallel u-Q_{z,\lambda}\parallel_{\dot H^1} .
\ee

\end{lemma}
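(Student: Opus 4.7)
The statement is a standard modulation decomposition, and the natural route is the implicit function theorem (IFT) combined with the scaling/translation invariance of $\dot H^1$.

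First I would work near the reference point $(u,\lambda,z,a)=(Q,1,0,0)$ and define the nonlinear map
\[
\Phi:\dot H^1(\mathbb R^d)\times (0,\infty)\times \mathbb R^d\times \mathbb R\longrightarrow \mathbb R^{d+2},
\]
\[
\Phi(u,\lambda,z,a):=\bigl(\langle v,\Psi_0\rangle,\,\langle v,\Psi_1\rangle,\dots,\langle v,\Psi_d\rangle,\,\langle v,\mathcal Y\rangle\bigr),
\qquad v(y):=\lambda^{\frac{d-2}{2}}u(\lambda y+z)-Q(y)-a\mathcal Y(y).
\]
Since $\Psi_0,\dots,\Psi_d$ are smooth and compactly supported and $\mathcal Y$ is exponentially decaying, each component is well-defined for $u\in\dot H^1$ and continuous in all variables; differentiability in $(\lambda,z,a)$ is easy. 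At the base point $v=0$ and the partial derivatives of $v$ are
\[
\partial_\lambda v\big|_{(Q,1,0,0)}=\Lambda Q,\qquad \partial_{z_i}v\big|_{(Q,1,0,0)}=\partial_{x_i}Q,\qquad \partial_a v\big|_{(Q,1,0,0)}=-\mathcal Y.
\]
The orthogonality relations \eqref{eq:orthogonalite Psii} together with $\langle \mathcal Y,\mathcal Y\rangle>0$ then make the Jacobian of $\Phi$ with respect to $(\lambda,z,a)$ block diagonal with nonzero diagonal blocks $\int\chi_M(\Lambda Q)^2$, $\frac{1}{d}\int\chi_M|\nabla Q|^2\cdot I_d$ and $-\|\mathcal Y\|_{L^2}^2$. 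Hence the IFT furnishes $\mathcal C^1$ maps $u\mapsto (\lambda(u),z(u),a(u))$ on some $\dot H^1$-neighbourhood of $Q$ such that $\Phi=0$, which is exactly the orthogonality \eqref{eq:orthogonalite} for the associated remainder $v$.

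Next I would use the scaling/translation invariance to upgrade this to a decomposition near the whole manifold $\mathcal M$. If $u$ is at $\dot H^1$-distance $<\delta$ from $Q_{z_0,\lambda_0}$, then $\tilde u(y):=\lambda_0^{(d-2)/2}u(\lambda_0 y+z_0)$ is at the same distance from $Q$, so the local IFT applies to $\tilde u$ and yields $(\tilde\lambda,\tilde z,\tilde a)$; pulling back gives the parameters $(\lambda,z,a)=(\lambda_0\tilde\lambda,\,z_0+\lambda_0\tilde z,\,\tilde a)$ for $u$. Choosing $\delta>0$ smaller than the radius of validity of the IFT and using the covering of an $\eta$-tube around $\mathcal M$ by such neighbourhoods together with the continuity of the parameters ensures consistency of the construction for all $t\in[0,T)$. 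Time regularity of $(\lambda,z,a)$ follows by composing the $\mathcal C^1$ IFT map with $t\mapsto u(t)$, which is $\mathcal C^1((0,T),\dot H^1)$ by Proposition \ref{pr:cauchy}; continuity at $t=0$ follows from continuity of $u(\cdot)$ into $\dot H^1$.

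For the quantitative bound \eqref{eq:estimation dotH1}, fix $t$ and pick $(\lambda_*,z_*)$ realizing (up to a factor $2$) the infimum $d(u,\mathcal M)$; then $\tilde u$ above is within $2\,d(u,\mathcal M)$ of $Q$ in $\dot H^1$, and the Lipschitz continuity of the IFT map at $Q$ yields $|\tilde\lambda-1|+|\tilde z|+|\tilde a|\lesssim d(u,\mathcal M)$ and hence $\|v\|_{\dot H^1}\lesssim d(u,\mathcal M)$ by a direct expansion of $v=\tilde u-Q-\tilde a\mathcal Y$ (using scaling-invariance of $\dot H^1$ to pass between $u$ and $\tilde u$). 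This gives exactly \eqref{eq:estimation dotH1}.

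The main technical points to watch are (i) checking that in dimensions $d=7,\dots,10$, where $\Lambda Q$ and $\partial_{x_i}Q$ are not in $L^2$, the use of the truncated $\Psi_i$ is indeed what makes every scalar product above well defined and the Jacobian nondegenerate (this is precisely why the $\chi_M$ cutoff was introduced), and (ii) verifying that the decomposition obtained near $Q_{z_0,\lambda_0}$ coincides with the one obtained near any other nearby base point, which reduces to uniqueness in the IFT. Apart from these bookkeeping issues the argument is routine.
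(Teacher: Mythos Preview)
Your proof is correct and follows essentially the same approach as the paper: implicit function theorem applied to the map $\Phi$ built from the scalar products against $\Psi_0,\dots,\Psi_d,\mathcal Y$, with the Jacobian rendered block-diagonal and invertible by the orthogonality relations \eqref{eq:orthogonalite Psii}, followed by globalization via the scaling/translation invariance of $\dot H^1$ and time regularity via Proposition~\ref{pr:cauchy}. You are in fact slightly more explicit than the paper on the quantitative bound \eqref{eq:estimation dotH1}, which the paper leaves implicit in the smoothness of the IFT map.
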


The proof of this result is standard and sketched in section \ref{sec:decomposition} for the sake of completeness. We therefore introduce the $C^1$ in time decomposition:
$$
u=(Q+a\mathcal Y+\varepsilon)_{z,\lambda} 
$$
with $\varepsilon$ satisfying the orthogonality conditions \fref{eq:orthogonalite}. We define the renormalized time $s=s(t)$ is by 
\be \label{eq:def s}
s(0)=0, \ \ \frac{d s}{d t}=\frac{1}{\lambda^2} .
\ee
and obtain the evolution equation in renormalized time
\be \label{eq:evolution}
a_s \mathcal Y+\varepsilon_s-\frac{\lambda_s}{\lambda}\left( \Lambda Q+a\Lambda \mathcal Y +\Lambda \varepsilon \right)-\frac{z_s}{\lambda}.\nabla \left( Q+a \mathcal Y + \varepsilon \right)=e_0 a\mathcal Y-H\varepsilon +NL 
\ee
where the nonlinear term is defined by:
$$
NL:= f(Q+a\mathcal Y+\varepsilon)-f(Q)-f'(Q)(a\mathcal Y+\varepsilon).
$$


\subsection{Modulation equations}


We start the analysis of the flow near $\mathcal M$ by the computation of the modulation equations.

\begin{lemma}[Modulation equations] \label{lem:modulation}
Let $d\geq 7$ and $I$ be a time interval containing $0$. There exists $0<\eta^*\ll 1$ such that if $u$ is trapped at distance $\eta$ for $0<\eta<\eta^*$ on $I$, then the following holds on $s(I)$:\\
\noindent{\em 1. Modulation equations}\footnote{The two "=O()" on each line represent two estimates, the first one being an explicit control in function of $a$ and $\varepsilon$, and the second one a uniform bound related to the distance to the manifold of ground states.}:
\bea
 \label{eq:modulation a}
&&a_s-e_0a=O(a^2+\parallel \varepsilon \parallel_{\dot H^2}^2)=O(\eta^2),\\
\label{eq:modulation lambda}
&&\frac{\lambda_s}{\lambda}=O(a^2+\parallel \varepsilon \parallel_{\dot H^2})=O(\eta ),\\
\label{eq:modulation z}
&&\frac{z_s}{\lambda} =O( a^2+\parallel \varepsilon \parallel_{\dot H^2})=O(\eta ).
\eea
\noindent{\em 2. Refined identities}\footnote{The notation $\frac{d}{ds}O(\cdot)$ means the derivative with respect to the renormalized time $s$ of a quantity satisfying the estimate $O(\cdot)$.}:
\be \label{eq:modulation lambda 2}
\left| \frac{\lambda_s}{\lambda}+\frac{d}{ds}O(\parallel \varepsilon \parallel_{\dot H^s})\right|\lesssim a^2+\parallel \varepsilon \parallel_{\dot H^2}^2 \ \ \text{for} \ \text{any} \ 1\leq s \leq \frac 4 3,
\ee
\be \label{eq:modulation z 2}
\left| \frac{z_s}{\lambda} +\frac{d}{ds}O(\parallel \varepsilon \parallel_{\dot H^s})\right|\lesssim a^2+\parallel \varepsilon \parallel_{\dot H^2}^2 \ \ \text{for} \ \text{any} \ 1\leq s \leq 2.
\ee
\end{lemma}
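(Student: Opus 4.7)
The approach is to derive the modulation equations by pairing the evolution equation \eqref{eq:evolution} against the fixed test functions $\mathcal Y, \Psi_0, \Psi_1,\ldots,\Psi_d$. Differentiating the orthogonality conditions \eqref{eq:orthogonalite} in $s$ kills the $\varepsilon_s$ term from each scalar product; the self-adjointness of $H$ with $H\mathcal Y=-e_0\mathcal Y$ gives $\langle H\varepsilon,\mathcal Y\rangle=0$; and the eigenspace orthogonality $\mathcal Y\perp \ker H$ together with the cross relations \eqref{eq:orthogonalite Psii} turns the pairings into a nearly diagonal linear system for $(a_s-e_0a,\ \lambda_s/\lambda,\ z_s/\lambda)$ with diagonal entries $\|\mathcal Y\|_{L^2}^2$, $\int \chi_M(\Lambda Q)^2$ and $\frac{1}{d}\int \chi_M|\nabla Q|^2$, all positive constants independent of $\eta$. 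The nonlinear remainder $NL = f(Q+a\mathcal Y+\varepsilon)-f(Q)-f'(Q)(a\mathcal Y+\varepsilon)$ is of quadratic type through Lemma \ref{lem:NL} (by Taylor expansion and Sobolev embedding), and its pairings against the smooth localized test functions are bounded by $O(a^2+\|\varepsilon\|_{\dot H^2}^2)$ using the coercivity \eqref{eq:coerciviteA}--\eqref{eq:coercivite}. Inverting the nearly diagonal system, using the smallness of the perturbative off-diagonal contributions $a\Lambda \mathcal Y$, $\Lambda \varepsilon$, $a\nabla\mathcal Y$, $\nabla\varepsilon$, yields the bounds \eqref{eq:modulation a}--\eqref{eq:modulation z}.

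For the refined identities \eqref{eq:modulation lambda 2}--\eqref{eq:modulation z 2}, the strategy is to display the linear-in-$\varepsilon$ contribution in the modulation equation for $\lambda$ (respectively $z$) as a perfect $s$-derivative plus a quadratic remainder. Since $\Lambda Q,\partial_{x_i}Q\in\ker H$, the commutator $[H,\chi_M]=-2\nabla\chi_M\cdot\nabla-\Delta\chi_M$ shows that $H\Psi_0$ and $H\Psi_i$ are smooth and compactly supported in $\{\nabla \chi_M\neq 0\}$, up to harmless $\mathcal Y$ components killed by $\varepsilon\perp\mathcal Y$. Moreover $H\Psi_0$ and $H\Psi_i$ are orthogonal to $\ker H\oplus\mathrm{Span}(\mathcal Y)$ by self-adjointness of $H$, so Lemma \ref{lem:coercivite} provides unique auxiliary profiles $G_0, G_i$ solving the elliptic equations $HG_0=H\Psi_0$ and $HG_i=H\Psi_i$ inside this orthogonal complement. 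Choosing the $G$'s in the complement forces $\langle \mathcal Y, G\rangle = \langle \Lambda Q, G\rangle = \langle \partial_{x_j}Q, G\rangle = 0$; substituting the $\varepsilon$-equation $\varepsilon_s=-H\varepsilon+\mathrm{mod.}+NL$ into
\[
\langle \varepsilon, H\Psi_0\rangle = \langle H\varepsilon, G_0\rangle = -\langle \varepsilon_s, G_0\rangle + \langle \mathrm{mod.}+NL, G_0\rangle
\]
then makes every linear-in-$\varepsilon$ modulation contribution vanish by these orthogonalities, leaving $-\tfrac{d}{ds}\langle\varepsilon, G_0\rangle$ plus a genuinely quadratic remainder controlled by \eqref{eq:modulation a}--\eqref{eq:modulation z} and Lemma \ref{lem:NL}.

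The main obstacle is the decay analysis of $G_0$ and $G_i$ that dictates the admissible Sobolev range in \eqref{eq:modulation lambda 2}--\eqref{eq:modulation z 2}. By duality $|\langle\varepsilon, G\rangle|\lesssim \|\varepsilon\|_{\dot H^s}\|G\|_{\dot H^{-s}}$, so what matters is how fast $G$ decays at infinity. Since $[H,\chi_M]\Lambda Q$ is compactly supported, the Newton-potential asymptotics for $H=-\Delta+V$ with fast-decaying $V$ yield $G_0\sim |x|^{2-d}$ at infinity, hence $G_0\in L^{\frac{2d}{d+2s}} \hookrightarrow \dot H^{-s}$ precisely for $s<d/2-2$; this admits $s\leq 4/3$ in every dimension $d\geq 7$ as claimed in \eqref{eq:modulation lambda 2}. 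For the translation directions the odd parity of $\partial_{x_i}Q$ forces $\int [H,\chi_M]\partial_{x_i}Q=0$, so the leading Newton term cancels and $G_i$ decays one extra power, $|x|^{1-d}$, yielding $\dot H^{-s}$ regularity up to $s<d/2-1$ and hence the wider range $s\leq 2$ stated in \eqref{eq:modulation z 2}. This asymptotic analysis is the second place where the large-dimension hypothesis $d\geq 7$ enters crucially.
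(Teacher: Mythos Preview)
Your treatment of Part 1 is correct and matches the paper: project \eqref{eq:evolution} onto $\mathcal Y,\Psi_0,\Psi_i$, get a nearly diagonal system, invert.

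For Part 2 your auxiliary-profile construction is not really a different route. Since $H\Lambda Q=0$ and $H\partial_i Q=0$, the equation $HG_0=H\Psi_0$ with $G_0\perp\ker H$ forces $G_0=\Psi_0-c\,\Lambda Q$ for some constant $c$ (and $G_i=\Psi_i-c'\,\partial_{x_i}Q$). Because $\varepsilon\perp\Psi_0,\Psi_i$, your boundary terms $\langle\varepsilon,G_0\rangle$ and $\langle\varepsilon,G_i\rangle$ are exactly $-c\langle\varepsilon,\Lambda Q\rangle$ and $-c'\langle\varepsilon,\partial_{x_i}Q\rangle$, which is precisely what the paper differentiates. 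So the decay claims $G_0\sim|x|^{2-d}$, $G_i\sim|x|^{1-d}$ are correct, but for the elementary reason that $G_0$ and $G_i$ \emph{are} (up to compactly supported pieces) the kernel elements themselves; the Newton-potential and parity arguments are unnecessary.

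There is, however, a genuine gap in your quadratic-remainder claim for \eqref{eq:modulation lambda 2}. When you pair the evolution equation with $G_0$ you pick up the cross-term
\[
\frac{\lambda_s}{\lambda}\,\langle\Lambda\varepsilon,G_0\rangle
= -\frac{\lambda_s}{\lambda}\,\langle\varepsilon,(\Lambda+2)G_0\rangle,
\]
and $(\Lambda+2)G_0$ still decays only like $|x|^{2-d}$. Hence $|\langle\varepsilon,(\Lambda+2)G_0\rangle|$ is controlled by $\|\varepsilon\|_{\dot H^s}$ only for $s<\tfrac{d-4}{2}$, which for $d=7$ means $s<\tfrac32$---you \emph{cannot} reach $s=2$. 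The product $(a^2+\|\varepsilon\|_{\dot H^2})\cdot\|\varepsilon\|_{\dot H^{4/3}}$ is therefore not $O(a^2+\|\varepsilon\|_{\dot H^2}^2)$: by interpolation it is at best $\eta^{2/3}\|\varepsilon\|_{\dot H^2}^{4/3}$, which does not close. The paper confronts exactly this term and resolves it by an extra layer of time integration by parts, using the algebraic identity $\Lambda^2 Q+2\Lambda Q=\tilde c\,(\Lambda Q+R)$ with $R$ enjoying the improved decay $|x|^{-d}$; the bad piece is then absorbed into additional boundary terms $\tfrac{d}{ds}\big[\langle\varepsilon,\Lambda Q+R\rangle^2/(\ldots)\big]$, etc. Your sketch omits this step entirely. (For \eqref{eq:modulation z 2} the analogous cross-term is harmless because $G_i\sim|x|^{1-d}$ gives an extra power and one does reach $\|\varepsilon\|_{\dot H^2}$ directly.)
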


\begin{proof}[Proof of Lemma \ref{lem:modulation}] We compute the modulation equations as a consequence of the orthogonality conditions \eqref{eq:orthogonalite} for $\e$. However, since in dimension $7\leq d\le 10$ the $\Psi_j$ are not exact elements of the kernel of $H$, we will need the refined space time bounds \eqref{eq:modulation lambda 2}, \eqref{eq:modulation z 2}, see \cite{MR4,RSc2} for related issued.\\

\noindent{\bf step 1} First algebraic identities. We first prove \fref{eq:modulation a}, \fref{eq:modulation lambda} and \fref{eq:modulation z}.\\

\noindent\emph{First identity for $a$}. First, taking the scalar product between \fref{eq:evolution} and $\mathcal Y$ yields, using the orthogonality \fref{eq:orthogonalite}, \fref{eq:ker H}, the pointwise estimate for the nonlinearity \fref{eq:NL2}, the generalized Hardy inequality \fref{eq:hardy}, the integration by parts formula \fref{eq:ipp} and the fact that $\mathcal Y$ decays exponentially fast:
\be \label{eq:modulation a1}
\begin{array}{r c l}
a_s-e_0a & = & \frac{1}{\parallel \mathcal Y \parallel_{L^2}^2}\left(\frac{\lambda_s}{\lambda}\int (a\Lambda \mathcal Y+\Lambda \varepsilon )\mathcal Y+\int NL\mathcal Y +\int \frac{z_s}{\lambda}.\nabla \varepsilon \mathcal Y \right) \\
&=& \frac{\lambda_s}{\lambda} O\left( |a|\int e^{-C|x|}+\int |\varepsilon|e^{-C|x}\right)+O\left(\left| \frac{z_s}{\lambda}\right|\int e^{-C|x|}|\varepsilon| \right)\\
&&+O\left(a^2\int e^{-C|x|} Q^{p-2}+\int e^{-C|x|}Q^{p-2} |\varepsilon |^2 \right)\\
&=& O\left( \left| \frac{\lambda_s}{\lambda} \right| (a+\parallel \varepsilon \parallel_{\dot H^s})\right)+O(a^2+\parallel \varepsilon \parallel_{\dot H^s}^2)+O\left(\left| \frac{z_s}{\lambda}\right|\parallel \varepsilon \parallel_{\dot H^s} \right).\\
\end{array}
\ee
for $s\in [1,2]$. Taking $s=1$ and injecting \fref{eq:estimation dotH1} gives:
\be \label{eq:modulation adelta}
a_s-e_0a = \frac{\lambda_s}{\lambda}O(\eta)+O(\eta^2)+O\left(\left| \frac{z_s}{\lambda}\right|\eta \right).\\
\ee

\noindent \emph{First identity for $\lambda$}.Taking the scalar product between \fref{eq:evolution} and $\Psi_0$, using \fref{eq:orthogonalite}, \fref{eq:orthogonalite Psii} gives:
\be \label{eq:modulation lambda 7 expression}
-\frac{\lambda_s}{\lambda}\int \left[\Lambda Q+a\Lambda \mathcal Y+\Lambda \varepsilon \right]\Psi_0-\int \frac{z_s}{\lambda}.\nabla \varepsilon \Psi_0  = \int NL\Psi_0-\int H\varepsilon \Psi_0 
\ee
where we used the fact that $\int \partial_{x_i}\mathcal Y\Psi_0=0$ as $\mathcal Y$ and $\Psi_0$ are radial. \fref{eq:ipp} and the Hardy inequality \fref{eq:hardy} give, as $\Psi_0$ is exponentially decaying:
$$
\int \Lambda \varepsilon \Psi_0 =O\left(\int |\varepsilon| e^{-C|x|} \right)=O\left(\para \varepsilon \para_{\dot H^1} \right)=O(\eta).
$$
Hence from \fref{eq:estimation dotH1} and \fref{eq:orthogonalite Psii} the first term in the left hand side of \fref{eq:modulation lambda 7 expression} is:
$$
-\frac{\lambda_s}{\lambda}\int \left[\Lambda Q+a\Lambda \mathcal Y+\Lambda \varepsilon \right]\Psi_0 =\frac{\lambda_s}{\lambda}\left(-\int \chi_M (\Lambda Q)^2 +O(\eta) \right).
$$
For the second term in the left hand side of \fref{eq:modulation lambda 7 expression} one uses Hardy inequality \fref{eq:hardy} for $1\leq i \leq d$ and the fact that $\Psi_0$ is exponentially decreasing:
$$
\int \partial_{y_i} \varepsilon \Psi_0   = O(\para \varepsilon \para_{\dot H^1}) =O(\eta).
$$
For the right hand side we use the pointwise estimate \fref{eq:NL2} on the nonlinearity and the Hardy inequality \fref{eq:hardy} to obtain, as $\Psi_0$ is exponentially decreasing:
$$
\int NL\Psi_0 = O\left(\int |\Psi_0|Q^{p-2}|a\mathcal Y+\varepsilon|^2 \right) = O\left(a^2+\parallel \varepsilon \parallel^2_{\dot H^s} \right)
$$
for $s\in [1,2]$. The linear term is estimated using \fref{eq:coerciviteA}, \fref{eq:coercivite} and the fact that $\Psi_0$ is exponentially decreasing:
$$
\left| \int H\varepsilon \Psi_0 \right|=\left| \int \varepsilon H\Psi_0 \right|\lesssim \parallel \varepsilon \parallel_{\dot H^s}
$$
for $s\in [1,2]$. The five previous equations give then:
\be \label{eq:modulation lambda1}
\frac{\lambda_s}{\lambda}=O\left( a^2+\parallel \varepsilon \parallel_{\dot H^s} \right)+O\left(\eta \left| \frac{z_s}{\lambda}\right| \right).
\ee
for $s\in [1,2]$. Taking $s=1$ and injecting \fref{eq:estimation dotH1} one obtains:
\be \label{eq:modulation lambdadelta}
\frac{\lambda_s}{\lambda}=O\left( \eta \right)+O\left(\eta \left| \frac{z_s}{\lambda}\right| \right).
\ee

\noindent \emph{First identity for $z$}. We take the scalar product between \fref{eq:evolution} and $\Psi_i$ for $1\leq i \leq d$, using \fref{eq:orthogonalite} and \fref{eq:orthogonalite Psii}:
$$
-\frac{\lambda_s}{\lambda}\int \Lambda \varepsilon \Psi_i- \frac{z_{i,s}}{\lambda}\int \partial_{y_i} (Q+a\mathcal Y+\varepsilon) \Psi_i-\sum_{i\neq j} \frac{z_{j,s}}{\lambda} \int \partial_{x_j}\varepsilon \Psi_i  = \int NL\Psi_i-\int H\varepsilon \Psi_i .
$$
For the first term, using \fref{eq:ipp} and Hardy inequality \fref{eq:hardy} as $\Psi_i$ is compactly supported one finds:
$$
\int \Lambda \varepsilon \Psi_i =O(\eta ).
$$
For the second term, using Hardy inequality and \fref{eq:orthogonalite Psii}, as $\Psi_i$ is compactly supported one gets:
$$
\begin{array}{l l l l}
&\int \partial_{y_i} (Q+a\mathcal Y+\varepsilon) \Psi_i & = & \int \chi_M(\partial_{y_i}Q)^2+O(|a|)-\int \varepsilon \partial_{x_i}\Psi_i \\
 = & \int \chi_M(\partial_{y_i}Q)^2+O(\eta)+O\left(\parallel \varepsilon \parallel_{\dot H^1} \right) & = & \int \chi_M(\partial_{y_i}Q)^2+O(\eta)
\end{array}
$$
and similarly for $1\leq j \leq d$ with $j\neq i$:
$$
\int \partial_{y_j}\varepsilon \Psi_i=O(\eta).
$$
For the nonlinear term we use the pointwise estimate \fref{eq:NL2} on the nonlinearity, the Hardy inequality \fref{eq:hardy} and the fact that $\Psi_i$ is compactly supported:
$$
\begin{array}{r c l}
\int NL\Psi_i & = & O\left(\int |\Psi_i|Q^{p-2}|a\mathcal Y+\varepsilon|^2 \right) = O\left(a^2+\int \varepsilon ^2 |\Psi_i|Q^{p-2} \right)\\
&=& O\left(a^2+\parallel \varepsilon \parallel^2_{\dot H^s} \right)
\end{array}
$$
for $s\in [1,2]$. For the linear term, as $\Psi_i$ is compactly supported, using Hardy inequality \fref{eq:hardy}:
$$
\int H\varepsilon \Psi_i=O(\parallel \varepsilon \parallel_{\dot H^s})
$$
for $s\in [1,2]$. The four previous equations give then:
\be \label{eq:modulation z1}
\frac{z_{i,s}}{\lambda}=O\left( a^2+\parallel \varepsilon \parallel_{\dot H^s} \right)+\sum_{j\neq i} O\left( \eta \left| \frac{z_{j,s}}{\lambda}\right| \right)+O\left( \eta \left| \frac{\lambda_s}{\lambda}\right| \right).
\ee
for $s\in[1,2]$.Taking $s=1$ and injecting \fref{eq:estimation dotH1} one gets:
\be \label{eq:modulation zdelta}
\frac{z_{i,s}}{\lambda}=O\left( \eta \right)+\sum_{j\neq i} O\left( \eta \left| \frac{z_{j,s}}{\lambda}\right| \right)+O\left( \eta \left| \frac{\lambda_s}{\lambda}\right| \right).
\ee

\noindent \emph{Conclusion}. We gather the primary identities \fref{eq:modulation a1}, \fref{eq:modulation adelta}, \fref{eq:modulation lambda1}, \fref{eq:modulation lambdadelta}, \fref{eq:modulation z1}, \fref{eq:modulation zdelta} and bootstrap the information they contained for the mixed terms, yielding the identities \fref{eq:modulation a}, \fref{eq:modulation lambda} and \fref{eq:modulation z}.\\

\noindent{\bf step 2} Refined identities. We now show \fref{eq:modulation lambda 2} and \fref{eq:modulation z 2}. We perform an integration by part in time, improving the modulation equations by removing the derivative with respect to time of the projection of $\varepsilon$ onto $\Lambda Q$ and $\nabla Q$. Note that this procedure \emph{requires} sufficient decay of $\Lambda Q$ and hence the assumption $d\ge 7$.\\

\noindent \emph{Improved modulation equation for $z$}. For $1\leq i \leq d$ the quantity $\int \varepsilon \partial_{y_i} Q$ is well defined for $7\leq d$ via Sobolev embedding. We compute the following identity:
\be \label{eq:improved z expression}
\begin{array}{r c l}
\frac{d}{ds} \left[\frac{\langle \varepsilon,\partial_{y_i}Q\rangle}{\langle \partial_{y_i}(Q+a\mathcal Y+\varepsilon),\partial_{y_i}Q\rangle} \right]= \frac{\langle \varepsilon_s,\partial_{y_i}Q\rangle}{\langle \partial_{y_i}(Q+a\mathcal Y+\varepsilon),\partial_{y_i}Q\rangle}-\frac{\langle \varepsilon,\partial_{y_i}Q\rangle \langle \partial_{y_i}(a_s\mathcal Y+\varepsilon_s),\partial_{y_i}Q\rangle}{\langle \partial_{y_i}(Q+a\mathcal Y+\varepsilon),\partial_{y_i}Q\rangle^2 }
\end{array}
\ee
Using Hardy inequality \fref{eq:hardy} and \fref{eq:estimation dotH1} one has as $ 7\leq d$:
\be \label{eq:numerateur z}
\begin{array}{r c l}
\left| \langle \varepsilon,\partial_{y_i}Q\rangle \right| \leq \int \frac{|\varepsilon|}{1+|y|^{d-1}} &\leq& \left( \int \frac{|\varepsilon|^2}{1+|y|^4}\right)^{\frac 1 2 }\left( \int \frac{1}{1+|y|^{2d-6}}\right)^{\frac 1 2 }\\
&=&O(\parallel \varepsilon \parallel_{\dot H^s}) \ \text{for} \ \text{any} \ 1\leq s \leq 2,
\end{array}
\ee
\be \label{eq:denominateur z}
\langle \partial_{y_i}(Q+a\mathcal Y+\varepsilon),\partial_{y_i}Q\rangle= \int (\partial_{y_i}Q)^2+O(\eta) ,
\ee
and therefore the quantity on the left hand side of \fref{eq:improved z expression} is for $\eta^*$ small enough:
\be \label{eq:improved z 1}
\frac{\langle \varepsilon,\partial_{y_i}Q\rangle}{\langle \partial_{y_i}(Q+a\mathcal Y+\varepsilon),\partial_{y_i}Q\rangle}= O(\parallel \varepsilon \parallel_{\dot H^s}) \ \text{for} \ \text{any} \ 1\leq s \leq 2
\ee
From \fref{eq:evolution} the numerator of the first quantity in the right hand side of \fref{eq:improved z expression} is:
$$
\begin{array}{r c l}
\langle \varepsilon_s,\partial_{y_i}Q\rangle&=& \frac{\lambda_s}{\lambda} \langle \Lambda \varepsilon , \partial_{y_i}Q\rangle+\frac{z_{i,s}}{\lambda}\langle \partial_{y_i}(Q+a\mathcal Y+\varepsilon),\partial_{y_i} Q\rangle \\
&&+\sum_{j=1, \ j\neq i}^d \frac{z_{j,s}}{\lambda}\langle \partial_{y_j}(a\mathcal Y+\varepsilon),\partial_{y_i} Q\rangle-\langle H\varepsilon,\partial_{y_i}Q \rangle +\langle NL,\partial_{y_i}Q \rangle.
\end{array}
$$
Using \fref{eq:modulation lambda}, \fref{eq:ipp}, \fref{eq:estimation dotH1} and Hardy inequality \fref{eq:hardy} one gets for the first term:
$$
\begin{array}{r c l}
\left| \frac{\lambda_s}{\lambda} \langle \Lambda \varepsilon , \partial_{y_i}Q\rangle \right| & \leq & \left|\frac{\lambda_s}{\lambda} \right| \int \frac{|\varepsilon|}{1+|y|^{d-1}} \lesssim  (a^2+\parallel \varepsilon \parallel_{\dot H^2} ) \left( \int \frac{|\varepsilon|^2}{1+|y|^4}\right)^{\frac 1 2 } \left( \int \frac{1}{1+|y|^{2d-6}}\right)^{\frac 1 2 } \\
&\lesssim & a^2+\parallel \varepsilon \parallel_{\dot H^2}^2 .
\end{array}
$$
Similarly for the third term using \fref{eq:modulation z}, \fref{eq:estimation dotH1} and Hardy inequality \fref{eq:hardy}:
$$
\begin{array}{l l l l}
&\left| \sum_{j=1, \ j\neq i}^d \frac{z_{j,s}}{\lambda}\langle \partial_{y_j}(a\mathcal Y+\varepsilon),\partial_{y_i} Q\rangle \right| & \leq & \left|\frac{z_s}{\lambda} \right| (|a|+\int \frac{|\varepsilon|}{1+|y|^{d}}) \\
\lesssim & (a^2+\parallel \varepsilon \parallel_{\dot H^2} ) \left( |a|+ \int \frac{|\varepsilon|^2}{1+|y|^4}\right)^{\frac 1 2 } \left( \int \frac{1}{1+|y|^{2d-4}}\right)^{\frac 1 2 } &\lesssim & a^2+\parallel \varepsilon \parallel_{\dot H^2}^2 .
\end{array}
$$
For the fourth term, performing an integration by parts:
$$
\langle H\varepsilon,\partial_{y_i}Q\rangle =\int \varepsilon H\partial_{y_i}Q=0.
$$
For the last term, the nonlinear one, one computes using the hardy inequality \fref{eq:hardy}:
$$
\begin{array}{r c l}
\int NL\partial_{y_i}Q & = & O\left(\int |\partial_{y_i}Q|Q^{p-2}|a\mathcal Y+\varepsilon|^2 \right) = O\left(a^2+\int \varepsilon ^2 |\partial_{y_i}Q|Q^{p-2} \right)\\
&=&O\left(a^2+\int \frac{\varepsilon ^2}{1+|y|^{5}}  \right)  =  O\left(a^2+\parallel \varepsilon \parallel^2_{\dot H^s} \right)
\end{array}
$$
The five identities above plus \fref{eq:denominateur z} imply that the second term in \fref{eq:improved z expression} is:
\be \label{eq:improved z 2}
\frac{\langle \varepsilon_s,\partial_{y_i}Q\rangle}{\langle \partial_{y_i}(Q+a\mathcal Y+\varepsilon),\partial_{y_i}Q\rangle}= \frac{z_{i,s}}{\lambda}+O(a^2+\parallel \varepsilon \parallel_{\dot H^2}^2).
\ee
We now turn to the third term in \fref{eq:improved z expression}. One computes from \fref{eq:evolution}, using \fref{eq:modulation lambda} and \fref{eq:modulation z} that\footnote{We do not redo here the application of Hardy inequality and of \fref{eq:NL3} to control the linear and nonlinear term as we have used them numerous times before in the proof.}:
$$
\int \partial_{y_i}(a_s \mathcal Y+\varepsilon_s)\partial_{y_i}Q=O\left(\left| \frac{\lambda_s}{\lambda}\right|+\left| \frac{z_s}{\lambda}\right|+|a|+\parallel \varepsilon \parallel_{\dot H^2} \right)=O(|a|+\parallel \varepsilon \parallel_{\dot H^2}).
$$
With \fref{eq:denominateur z} and \fref{eq:numerateur z} the above estimate yields for the third term in \fref{eq:improved z expression}:
\be \label{eq:improved z 3}
\left| \frac{\langle \varepsilon,\partial_{y_i}Q\rangle \langle \partial_{y_i}(a_s\mathcal Y+\varepsilon_s),\partial_{y_i}Q\rangle}{\langle \partial_{y_i}(Q+a\mathcal Y+\varepsilon),\partial_{y_i}Q\rangle^2 } \right|\lesssim a^2+\parallel \varepsilon \parallel_{\dot H^2}^2.
\ee
We now come back to the identity \fref{eq:improved z expression}, and inject the estimates \fref{eq:improved z 1}, \fref{eq:improved z 2} and \fref{eq:improved z 3} we have found for each term, yielding the improved modulation equation \fref{eq:modulation z 2} claimed in the lemma.\\

\noindent \emph{Improved modulation equation for $\lambda$}. The quantity $\int \varepsilon \Lambda Q$ is well defined for $d\geq 7$ from the Sobolev embedding of $\dot H^1$ into $L^{\frac{2d}{d-2}}$. From \fref{eq:evolution} one computes first the identity:
\be \label{eq:improved lambda expression}
\begin{array}{r c l}
\frac{d}{ds} \left[\frac{\langle \varepsilon,\Lambda Q\rangle}{\langle \Lambda(Q+\varepsilon),\Lambda Q\rangle} \right]= \frac{\langle \varepsilon_s,\Lambda Q\rangle}{\langle \Lambda(Q+\varepsilon),\Lambda Q\rangle}-\frac{\langle \varepsilon,\Lambda Q\rangle \langle \Lambda(\varepsilon_s),\Lambda Q\rangle}{\langle \Lambda(Q+\varepsilon),\Lambda Q\rangle^2 }
\end{array}
\ee
We now compute all the terms in the previous identity. 

\noindent $\circ $ \emph{Left hand side of \fref{eq:improved lambda expression}}. One computes:
\be \label{eq:numerateur lambda}
\begin{array}{r c l}
\left| \langle \varepsilon,\Lambda Q\rangle \right| &\lesssim& \int \frac{|\varepsilon |}{1+|y|^{d-2}} \lesssim \left(\int \frac{|\varepsilon |^2}{1+|y|^{2+\frac 2 3}} \right)^{\frac 1 2} \left(\int \frac{1}{1+|y|^{2d -\frac{20}{3}}} \right)^{\frac 1 2}\\
&\lesssim& \parallel \varepsilon \parallel_{\dot H^s} \ \text{for} \ \text{any} \ 1\leq s \leq 1+\frac 1 3
\end{array}
\ee
as  since $d\geq 7$ one has indeed $\frac{1}{1+|x|^{2d -\frac{20}{3}}}\in L^1$. For the denominator one computes using \fref{eq:ipp}, Hardy inequality \fref{eq:hardy} and \fref{eq:estimation dotH1}:
\bea
\label{eq:denominateur lambda}
\nonumber \langle \Lambda(Q+\varepsilon),\Lambda Q\rangle & = & \int (\Lambda Q)^2-\int \varepsilon \Lambda^2Q-2\int \varepsilon Q=\int (\Lambda Q)^2+O(\parallel \varepsilon \parallel_{\dot H^1}) \\
& = & \int (\Lambda Q)^2+O(\eta).
\eea
We then conclude that the quantity in the left hand side of \fref{eq:improved lambda expression} is:
\be \label{eq:improved lambda 1}
\frac{\langle \varepsilon,\Lambda Q\rangle}{\langle \Lambda(Q+\varepsilon),\Lambda Q\rangle}=O(\parallel \varepsilon \parallel_{\dot H^s}) \ \ \text{for} \ \text{any} \ 1\leq s \leq 1+\frac 1 3 .
\ee

\noindent $\circ $ \emph{First term in the right hand side of \fref{eq:improved lambda expression}}. Using \fref{eq:evolution} one has:
$$
\begin{array}{r c l}
\langle \varepsilon_s,\Lambda Q\rangle &=& \frac{\lambda_s}{\lambda}\langle \Lambda (Q+a\mathcal Y+\varepsilon),\Lambda Q)-\langle H\varepsilon, \Lambda Q\rangle \\
&&+\langle \frac{z_s}{\lambda}.\nabla (a\mathcal Y+\varepsilon),\Lambda Q\rangle+\langle NL,\Lambda Q \rangle .
\end{array}
$$
For the second term of the right hand side we perform an integration by parts:
$$
\langle H\varepsilon, \Lambda Q\rangle=\langle \varepsilon,H\Lambda Q\rangle =0.
$$
For the third term, performing an integration by parts, using \fref{eq:modulation z} and Hardy inequality \fref{eq:hardy} one gets:
$$
\begin{array}{r c l}
&\left| \int \frac{z_s}{\lambda}.\nabla (a\mathcal Y+\varepsilon) \Lambda Q\right| \\
\leq & \left|\frac{z_s}{\lambda} \right| \int (|a|\mathcal Y+|\varepsilon|)|\nabla \Lambda Q| \lesssim  (a^2+\parallel \varepsilon \parallel_{\dot H^2}) (|a|+\int \frac{|\varepsilon|}{1+|y|^{d-1}}) \\
\lesssim & (a^2+\parallel \varepsilon \parallel_{\dot H^2}) \left(|a|+\left( \int \frac{|\varepsilon|^2}{1+|y|^{2}}\right)^{\frac 1 2}\left( \int \frac{1}{1+|y|^{2d-6}}\right)\right)^{\frac 1 2} \lesssim  a^2+\parallel \varepsilon \parallel_{\dot H^2}^2.
\end{array}
$$
For the nonlinear term using the Hardy inequality \fref{eq:hardy}
$$
\begin{array}{r c l}
\int NL\Lambda Q & = & O\left(\int |\Lambda Q|Q^{p-2}|a\mathcal Y+\varepsilon|^2 \right) = O\left(a^2+\int \varepsilon ^2 |\Lambda Q|Q^{p-2} \right)\\
&=&O\left(a^2+\int \frac{\varepsilon ^2}{1+|y|^{4}}  \right)  =  O\left(a^2+\parallel \varepsilon \parallel^2_{\dot H^2} \right)
\end{array}
$$
The four identities above, plus \fref{eq:modulation lambda}, and \fref{eq:denominateur lambda} give that the first term in the right hand side of \fref{eq:improved lambda expression} is:
\be \label{eq:improved lambda 2}
\frac{\langle \varepsilon_s,\Lambda Q\rangle}{\langle \Lambda(Q+\varepsilon),\Lambda Q\rangle}=\frac{\lambda_s}{\lambda}+O(a^2+\parallel \varepsilon \parallel_{\dot H^2}^2).
\ee

\noindent $\circ$ \emph{Second term in the right hand side of \fref{eq:improved lambda expression}}. From the asymptotic of the solitary wave \fref{eq:def Q} there exists a constant $\tilde c\in \mathbb R$ such that:
$$
\Lambda^2 Q+2\Lambda Q=\tilde c (\Lambda Q+R)
$$
where $R$ is a function satisfying an improved decay at infinity:
$$
|\partial^\alpha R|\lesssim \frac{1}{1+|x|^{d+|\alpha|}}.
$$
Using \fref{eq:ipp} one then computes that:
\bea
 \label{eq:improved lambda 3 expression}
\nonumber&&- \frac{\langle \varepsilon,\Lambda Q \rangle \langle \Lambda(\varepsilon_s),\Lambda Q\rangle}{\langle \Lambda(Q+\varepsilon),\Lambda Q\rangle^2 } \non  =\tilde c \frac{\langle \varepsilon ,\Lambda Q+R\rangle \langle \varepsilon_s ,\Lambda Q+R\rangle }{\int (\Lambda Q)^2+\langle \varepsilon ,\Lambda Q+R\rangle}-\tilde c \frac{\langle \varepsilon ,R\rangle \langle \varepsilon_s ,\Lambda Q+R\rangle }{\int (\Lambda Q)^2+\langle \varepsilon ,\Lambda Q+R\rangle} \\
\nonumber&= &\frac{\tilde c}{2}\frac{d}{ds}\left[ \frac{\langle \varepsilon ,\Lambda Q+R\rangle^2 }{\int (\Lambda Q)^2+\langle \varepsilon ,\Lambda Q+R\rangle}\right]+\frac{\tilde c}{2}\frac{\langle \varepsilon ,\Lambda Q+R\rangle^2\langle \varepsilon_s ,\Lambda Q+R\rangle }{\left(\int (\Lambda Q)^2+\langle \varepsilon ,\Lambda Q+R\rangle\right)^2} \\
&&-\tilde c \frac{\langle \varepsilon ,R\rangle \langle \varepsilon_s ,\Lambda Q+R\rangle }{\int (\Lambda Q)^2+\langle \varepsilon ,\Lambda Q+R\rangle} \\
\nonumber&= &\frac{\tilde c}{2}\frac{d}{ds}\left[ \frac{\langle \varepsilon ,\Lambda Q+R\rangle^2 }{\int (\Lambda Q)^2+\langle \varepsilon ,\Lambda Q+R\rangle}+\frac 1 3 \frac{\langle \varepsilon ,\Lambda Q+R\rangle^3 }{\left(\int (\Lambda Q)^2+\langle \varepsilon ,\Lambda Q+R\rangle\right)^2}\right]  \\
&+&\frac{\tilde c}{3}\frac{\langle \varepsilon ,\Lambda Q+R\rangle^3\langle \varepsilon_s ,\Lambda Q+R\rangle }{\left(\int (\Lambda Q)^2+\langle \varepsilon ,\Lambda Q+R\rangle\right)^3} -\tilde c \frac{\langle \varepsilon ,R\rangle \langle \varepsilon_s ,\Lambda Q+R\rangle }{\int (\Lambda Q)^2+\langle \varepsilon ,\Lambda Q+R\rangle}.
\eea
Using the Hardy inequality and \fref{eq:improved lambda 1} the first term in the right hand side is:
$$
\left| \frac{\langle \varepsilon ,\Lambda Q+R\rangle^2 }{\int (\Lambda Q)^2+\langle \varepsilon ,\Lambda Q+R\rangle}+\frac 1 3 \frac{\langle \varepsilon ,\Lambda Q+R\rangle^3 }{\left(\int (\Lambda Q)^2+\langle \varepsilon ,\Lambda Q+R\rangle\right)^2}\right| \lesssim \eta \parallel \varepsilon \parallel_{\dot H^s}
$$
for any $1\leq s \leq 1+\frac 1 3$. Next one computes from \fref{eq:evolution} using \fref{eq:modulation a}, \fref{eq:modulation lambda} and \fref{eq:modulation z} that:
$$
\left| \langle \varepsilon_s, \Lambda Q+R \rangle \right|\lesssim a^2+\parallel \varepsilon \parallel_{\dot H^2}.
$$
Using Hardy inequality \fref{eq:hardy} one has:
$$
\left|\int \varepsilon R\right|\lesssim \int \frac{|\varepsilon|}{1+|y|^{d}}\lesssim \left( \int \frac{|\varepsilon|}{1+|y|^4}\right)^{\frac 1 2}\left( \int \frac{1}{1+|y|^{2d-4}}\right)^{\frac 1 2}\lesssim \parallel \varepsilon \parallel_{\dot H^2}.
$$
Using the generalized Hardy inequality, interpolation and \fref{eq:estimation dotH1} one finds:
$$
\begin{array}{r c l}
\left| \langle \varepsilon ,\Lambda Q+R\rangle \right| & \lesssim & \int \frac{|\varepsilon|}{1+|y|^{d-2}} \lesssim \left(\int \frac{|\varepsilon |^2}{1+|y|^{2+\frac 2 3}} \right)^{\frac 1 2} \left(\int \frac{1}{1+|y|^{2d -\frac{20}{3}}} \right)^{\frac 1 2}\\
&\lesssim& \parallel \varepsilon \parallel_{\dot H^{1+ \frac 1 3}} \lesssim  \parallel \varepsilon \parallel_{\dot H^1}^{\frac{2}{3}} \parallel \varepsilon \parallel_{\dot H^2}^{\frac{1}{3}} \lesssim \eta^{\frac 2 3} \parallel \varepsilon \parallel_{\dot H^2}^{\frac 1 3}.
\end{array}
$$
Injecting the four equations above in \fref{eq:improved lambda 3 expression} one finds that the second term in the right hand side of \fref{eq:improved lambda expression} can be rewritten as:
\be \label{eq:improved lambda 3}
- \frac{\langle \varepsilon,\Lambda Q \rangle \langle \Lambda(\varepsilon_s),\Lambda Q\rangle}{\langle \Lambda(Q+\varepsilon),\Lambda Q\rangle^2 } = \frac{d}{ds}O\left( \eta \parallel \varepsilon \parallel_{\dot H^s} \right)+O(a^2+\parallel \varepsilon \parallel_{\dot H^2}^2).
\ee

\noindent $\circ$ \emph{End of the proof of the improved modulation equation for $\lambda$.} We now come back to the identity \fref{eq:improved lambda expression}. We computed each term appearing in \fref{eq:improved lambda 1}, \fref{eq:improved lambda 2} and \fref{eq:improved lambda 3}, implying the improved modulation equation \fref{eq:modulation lambda 2} claimed in the lemma and ending its proof.

\end{proof}


\subsection{Energy bounds for trapped solutions}


We now provide the necessary pointwise and space time parabolic bounds on the flow which will allow us to close the control of the modulation equations of Lemma \ref{lem:modulation} in the trapped regime near $\mathcal M$.\\
We first claim a global space time energy bound.

\begin{lemma}[Global energy bound] \label{lem:variation energie}
Let $d\geq 7$ and $I$ be a time interval containing $0$. There exists $0<\eta^*\ll 1$ such that if $u$ is trapped at distance $\eta$ for $0<\eta<\eta^*$ on $I$ then:
\be \label{eq:variation energie}
\int_{s(I)} \left( \parallel \varepsilon \parallel_{\dot H^2}^2+a^2 \right) ds\lesssim \eta ^2 
\ee
\end{lemma}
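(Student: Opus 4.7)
The proof rests on the dissipative nature of the energy: $E(u)$ is a Lyapunov functional for \eqref{eq:NLH}, and for trapped solutions its deviation from $E(Q)$ is quadratic in $\eta$. This gives a spacetime integrability for $u_t$, which we then relate to $a^2 + \|\varepsilon\|_{\dot H^2}^2$ via the coercivity of $H$.

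First, I would establish the uniform bound $|E(u(t)) - E(Q)| \lesssim \eta^2$ for $t\in I$. By scale invariance $E(u) = E(Q + a\mathcal Y + \varepsilon)$, and setting $h := a\mathcal Y + \varepsilon$, a Taylor expansion around $Q$ gives
$$E(u) - E(Q) = \tfrac12 \langle Hh, h\rangle + O\bigl(\|h\|_{L^{p+1}}^{p+1}\bigr),$$
the linear term vanishing because $-\Delta Q = Q^p$. The quadratic form is controlled by $\|h\|_{\dot H^1}^2 \lesssim \eta^2$ through Hardy, while Sobolev $\dot H^1\hookrightarrow L^{p+1}$ bounds the remainder by $\|h\|_{\dot H^1}^{p+1} = O(\eta^{p+1}) = O(\eta^2)$ since $p+1>2$. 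As $\tfrac{d}{dt}E(u) = -\int u_t^2 \leq 0$, integrating over $I$ yields
$$\int_I \int_{\mathbb R^d} u_t^2 \, dx \, dt \;=\; E(u(\inf I)) - E(u(\sup I)) \;\lesssim\; \eta^2.$$

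Second, I would rewrite this dissipation in renormalized variables. Applying $\partial_t u = \Delta u + |u|^{p-1}u$ directly to the decomposition $u = (Q+a\mathcal Y + \varepsilon)_{z,\lambda}$ and using $\Delta Q + Q^p = 0$ together with $H\mathcal Y = -e_0\mathcal Y$, one obtains the pointwise identity
$$u_t(t,x) = \frac{1}{\lambda^{(d+2)/2}}\bigl(a e_0 \mathcal Y - H\varepsilon + NL\bigr)(y), \qquad y = \tfrac{x-z}{\lambda}.$$
Changing variables in space and using $dt = \lambda^2 ds$ converts the bound of the first step into
$$\int_{s(I)} \int_{\mathbb R^d} (a e_0\mathcal Y - H\varepsilon + NL)^2 \, dy \, ds \;\lesssim\; \eta^2.$$

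Third, I would establish a coercive lower bound for the integrand. The orthogonality $\varepsilon\perp\mathcal Y$ from \eqref{eq:orthogonalite} combined with $H\mathcal Y = -e_0\mathcal Y$ gives $\langle H\varepsilon, \mathcal Y\rangle = 0$, so that the cross term vanishes and
$$\int (a e_0\mathcal Y - H\varepsilon)^2 \, dy = e_0^2 a^2 \|\mathcal Y\|_{L^2}^2 + \|H\varepsilon\|_{L^2}^2 \;\gtrsim\; a^2 + \|\varepsilon\|_{\dot H^2}^2$$
by the $\dot H^2$ coercivity \eqref{eq:coercivite}. Using $(A+B)^2 \geq \tfrac12 A^2 - B^2$ one infers
$$\int (a e_0\mathcal Y - H\varepsilon + NL)^2 \, dy \;\gtrsim\; a^2 + \|\varepsilon\|_{\dot H^2}^2 - \|NL\|_{L^2}^2.$$

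The main technical obstacle is to absorb $\|NL\|_{L^2}^2$ into the main quadratic quantity. Using the pointwise bound $|NL| \lesssim |a\mathcal Y + \varepsilon|^p$ (Lemma \ref{lem:NL}) together with Gagliardo--Nirenberg interpolation, one checks that for $p = \tfrac{d+2}{d-2}$ and $d\geq 7$,
$$\|NL\|_{L^2}^2 \;\lesssim\; \|a\mathcal Y + \varepsilon\|_{L^{2p}}^{2p} \;\lesssim\; \|a\mathcal Y + \varepsilon\|_{\dot H^1}^{8/(d-2)} \|a\mathcal Y + \varepsilon\|_{\dot H^2}^2 \;\lesssim\; \eta^{8/(d-2)}\bigl(a^2 + \|\varepsilon\|_{\dot H^2}^2\bigr),$$
since $\|a\mathcal Y + \varepsilon\|_{\dot H^1}\lesssim \eta$ from \eqref{eq:estimation dotH1}. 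Choosing $\eta^*$ small enough the $NL$ term is absorbed, yielding pointwise $a^2 + \|\varepsilon\|_{\dot H^2}^2 \lesssim \int(ae_0\mathcal Y - H\varepsilon + NL)^2 dy$. Integrating in $s$ and combining with the second step gives \eqref{eq:variation energie}.
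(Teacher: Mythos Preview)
Your proof is correct and follows essentially the same approach as the paper: energy dissipation gives a spacetime $L^2$ bound on $u_t$, which in renormalized variables becomes a bound on $\|ae_0\mathcal Y - H\varepsilon + NL\|_{L^2}^2$, and coercivity of $H$ together with the Gagliardo--Nirenberg control of $NL$ (yielding the same exponent $8/(d-2)$) allows the nonlinear term to be absorbed for $\eta$ small. The only cosmetic difference is that the paper separates $a\mathcal Y$ from $\varepsilon$ before interpolating, whereas you interpolate $a\mathcal Y+\varepsilon$ directly; both are equivalent since $\mathcal Y$ is Schwartz.
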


\begin{remark}
In dimension $d\geq 11$, one does not need to localize the orthogonality conditions in \fref{eq:orthogonalite} and $Q$ decays faster. One can then obtain simpler and stronger estimates, i.e. for \fref{eq:modulation lambda} and \fref{eq:modulation z} one would have $\para \varepsilon \para_{\dot H^2}^2$ instead of $\para \varepsilon \para_{\dot H^2}$, and for \fref{eq:modulation lambda 2} and \fref{eq:modulation z 2} one would remove the boundary term $\frac{d}{ds}O(\parallel \varepsilon \parallel_{\dot H^s})$, easing the sequel.
\end{remark}

\begin{proof}[Proof of Lemma \ref{lem:variation energie}]

As $Q$ solves $\Delta Q+Q^p=0$, from \fref{eq:NL10}, Sobolev embedding and Hardy inequality, for any $v\in \dot H^1$:
\bee
& & |E(Q+v)-E(Q)| \\
&=& \left| \frac{1}{2}\int |\nabla (Q+v)|^2-\frac{1}{p+1}\int |Q+v|^{p+1}-\frac{1}{2}\int |\nabla Q|^2+\frac{1}{p+1}\int Q^{p+1}  \right| \\
&\leq & \left| -\int (\Delta Q+Q^p) v+\frac 1 2 \int|\nabla v|^2-\frac{1}{p+1} \int (|Q+v|^{p+1}-Q^{p+1}-(p+1)Q^pv) \right| \\
&\leq & \frac 1 2 \int|\nabla v|^2+\frac{1}{p+1} \int ||Q+v|^{p+1}-Q^{p+1}-(p+1)Q^pv|  \\
&\leq & C \para v \para_{\dot H^1}^2+C \int |Q^{p-1}v^2|+|v|^{p+1} \leq  C \para v \para_{\dot H^1}^2+C \int \frac{v^2}{1+|x|^4}+\para v\para^{p+1}_{\dot H^1}  \\
&\leq & C \para v \para_{\dot H^1}^2+\para v\para^{p+1}_{\dot H^1}. \\
\eee
From the above identity, the closeness assumption \fref{eq:proximite} and the invariance of the energy via scale and translation, we then estimate:
$$
\forall t\in I, \ |E(u)-E(Q)|\lesssim \eta^2 .
$$
Using the regularizing effects from Proposition \ref{pr:cauchy} and \fref{eq:E} one computes:
$$
\int_I \parallel u_t \parallel_{L^2}^2 dt = \underset{t\rightarrow \text{inf}(I)}{\text{lim}} E(u(t))-\underset{t\rightarrow \text{sup}(I)}{\text{lim}} E(u(t)) \lesssim \eta ^2 .
$$
From \fref{eq:def s} and \fref{eq:evolution}, in renormalized variables, the left hand side is
\bee
&&\int_I \parallel u_t \parallel_{L^2}^2 dt = \int_I \int_{\mathbb R^d} \left(\Delta u +|u|^{p-1}u \right)^2dt\\
&=& \int_I \int_{\mathbb R^d} \left(\Delta (Q+a\mathcal Y+\varepsilon)_{z,\lambda} +|(Q+a\mathcal Y+\varepsilon)_{z,\lambda}|^{p-1}(Q+a\mathcal Y+\varepsilon)_{z,\lambda} \right)^2dt \\
&=& \int_I \int_{\mathbb R^d} \frac{1}{\lambda^2} \left(e_0a\mathcal Y-H\varepsilon +NL \right)^2dt= \int_{s(I)}  \para e_0a\mathcal Y-H\varepsilon +NL \para_{L^2}^2ds.
\eee
Therefore the two above equations imply:
\be \label{eq:energie0}
\int_{s(I)} \parallel e_0 a\mathcal Y-H\varepsilon +NL \parallel_{L^2}^2 ds \lesssim \eta ^2 .
\ee
We now show that the contribution of the two linear terms are decorelated, and that they control the nonlinear one. Young's inequality $ab\leq \frac{a}{4}+2b$ yields:
\begin{align*}
\parallel e_0 a\mathcal Y-H\varepsilon +NL \parallel_{L^2}^2 
=& \int \left( e_0 a\mathcal Y-H\varepsilon +NL \right)^2  \\
=& \int \left( e_0 a\mathcal Y-H\varepsilon \right)^2 +2 \int \left( e_0 a\mathcal Y-H\varepsilon\right) NL  +\int NL^2  \\
\geq & \frac{1}{2} \int (e_0a\mathcal Y-H\varepsilon)^2-\int (NL)^2.
\end{align*}
From the orthogonality \fref{eq:orthogonalite} and the coercivity estimate \fref{eq:coercivite} the linear term controls the following quantities:
$$
\int \left( e_0 a\mathcal Y-H\varepsilon \right)^2 \gtrsim a^2 + \parallel \varepsilon \parallel_{\dot H ^2}^2.
$$
From the estimate \fref{eq:NL1} on the nonlinearity, Sobolev embedding, interpolation and \fref{eq:estimation dotH1}:
$$
\begin{array}{r c l}
\int NL^2 & \lesssim & \int |a\mathcal Y+\varepsilon |^{2p} \lesssim  \int a^{2p}\mathcal Y^{2p}+|\varepsilon|^{2p} \lesssim   a^{2p}+\parallel \varepsilon \parallel^{2p}_{\dot H^{\frac{2d}{d+2}}} \\
&\lesssim  & a^{2p}+\parallel \varepsilon \parallel^{2p\times \frac{4}{d+2}}_{\dot H^1}\parallel \varepsilon \parallel^{2p\times \frac{d-2}{d+2}}_{\dot H^2} \lesssim   a^2\delta^{2(p-1)}+\delta^{\frac{8p}{d+2}}\parallel \varepsilon \parallel_{\dot H^2}^2.
\end{array}
$$
For $\delta$ small enough, the three previous equations give:
$$
\parallel e_0 a\mathcal Y-H\varepsilon +(a\mathcal Y+\varepsilon)^2 \parallel_{L^2}^2 \gtrsim a^2+\parallel \varepsilon \parallel_{\dot H^2}^2.
$$
In turn, injecting this estimate in the variation of energy formula \fref{eq:energie0} yields the identity \fref{eq:variation energie} we claimed.\\

\end{proof}

The $\dot{H}^1$ bound of Lemma \ref{lem:variation energie} is not enough to control the modulation equations of Lemma \ref{lem:modulation}, and we claim as a consequence of the coercivity bounds of Lemma \ref{lem:coercivite} higher order $\dot{H^2}, \dot{H}^3$ bounds which lock the dynamics.

\begin{lemma}[Higher order energy bounds] \label{lem:methode energie}
Let $d\geq 7$ and $I$ be a time interval containing $0$. There exists $0<\eta^*\ll 1$ such that if $u$ is trapped at distance $\eta$ for $0<\eta<\eta^*$ on $I$ then the following holds on $s(I)$.
\begin{itemize}
\item[(i)] \emph{$\dot H^1$ monotonicity:}
\be \label{eq:dotH1}
\frac{d}{ds}\left[\frac 1 2 \int \varepsilon H\varepsilon \right] \leq -\frac{1}{C} \int (H\varepsilon)^2+Ca^4 .
\ee
\item[(ii)] \emph{$\dot H^2$ monotonicity:} 
\be \label{eq:dotH2}
\begin{array}{r c l}
\frac{d}{ds} \left[\frac 1 2 \int (H \varepsilon)^2 \right] -\frac{\lambda_s}{\lambda} \int |H\varepsilon |^2 \leq -\frac{1}{C}\int H\varepsilon H^2\varepsilon +Ca^4+Ca^2\parallel \varepsilon \parallel^2_{\dot H^2}
\end{array}
\ee
\end{itemize}

\end{lemma}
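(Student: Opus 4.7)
The plan is to prove both bounds by direct energy computation. In each case we differentiate the chosen functional in the renormalized time $s$, substitute the evolution equation \eqref{eq:evolution}, and process each resulting term using the orthogonality conditions \eqref{eq:orthogonalite}, the commutator formulas \eqref{eq:ipp}--\eqref{eq:ipp2}, the coercivity estimates of Lemma \ref{lem:coercivite}, the modulation bounds of Lemma \ref{lem:modulation}, and the pointwise bounds on the nonlinear remainder $NL$ provided by Lemma \ref{lem:NL}.

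For part (i), since $H$ is self-adjoint and time-independent, $\frac{d}{ds}[\frac12 \int \varepsilon H\varepsilon] = \int (H\varepsilon)\varepsilon_s$. Substituting from \eqref{eq:evolution} produces the main dissipation $-\int (H\varepsilon)^2$ plus five categories of error. The eigenmode projections $(e_0 a - a_s)\int(H\varepsilon)\mathcal Y$ vanish since $\int(H\varepsilon)\mathcal Y = -e_0\langle \varepsilon,\mathcal Y\rangle = 0$ by orthogonality. The scaling and translation pieces carrying $\Lambda Q$ and $\nabla Q$ vanish after integration by parts because $\Lambda Q,\nabla Q\in\ker H$ (justified by the $|y|^{-(d-2)}$ decay for $d\ge 7$). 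The remaining modulation terms with $a\Lambda\mathcal Y,\,a\nabla\mathcal Y,\,\Lambda\varepsilon,\,\nabla\varepsilon$ are controlled by Cauchy--Schwarz, the exponential decay of $\mathcal Y$, and \eqref{eq:modulation lambda}--\eqref{eq:modulation z}. Finally, $\int(H\varepsilon)\,NL$ is split into an exponentially localized $a\mathcal Y$-only piece of $L^2$ size $O(a^2)$ and an $\varepsilon$-dependent piece, bounded pointwise by $Q^{p-2}|\varepsilon|^2+Q^{p-2}|a\mathcal Y||\varepsilon|+|\varepsilon|^p$, whose $L^2$ norm is $\lesssim \eta\|\varepsilon\|_{\dot H^2}$ by Hardy and Sobolev. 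Young's inequality absorbs all errors into $\frac12\int(H\varepsilon)^2$ and leaves $Ca^4$.

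For part (ii), $\frac{d}{ds}[\frac12\int(H\varepsilon)^2]=\int(H^2\varepsilon)\varepsilon_s$ by self-adjointness, and the main term $-\int(H\varepsilon)(H^2\varepsilon)$ supplies the $\dot H^3$ dissipation through \eqref{eq:coercivite H3}. The key algebraic calculation concerns the $(\lambda_s/\lambda)\Lambda\varepsilon$ contribution: applying the commutator \eqref{eq:commutateur} and then \eqref{eq:ipp},
\be
\int (H^2\varepsilon)\Lambda\varepsilon = \int (H\varepsilon)\bigl[\Lambda(H\varepsilon)+2H\varepsilon-(2V+y\cdot \nabla V)\varepsilon\bigr] = \int (H\varepsilon)^2 + O(\|\varepsilon\|^2_{\dot H^2}),
\ee
which produces exactly the boundary term $\frac{\lambda_s}{\lambda}\int|H\varepsilon|^2$ on the left-hand side of \eqref{eq:dotH2}. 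The scaling contribution in $\Lambda Q$ again vanishes by $H\Lambda Q=0$, while the translation piece $\frac{z_s}{\lambda}\int(H^2\varepsilon)\nabla\varepsilon$ reduces via \eqref{eq:ipp2} to $-\frac{z_s}{\lambda}\int (H\varepsilon)(\nabla V)\varepsilon$, handled by Hardy and \eqref{eq:modulation z}. The eigenmode contributions $(e_0 a-a_s)\int(H^2\varepsilon)\mathcal Y$ vanish by orthogonality, and the exponentially localized $a\Lambda\mathcal Y,\,a\nabla\mathcal Y$ terms are bounded by two integrations by parts using the exponential decay of $H^2(\Lambda\mathcal Y)$ and $H^2(\nabla\mathcal Y)$. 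For the nonlinearity, we write $NL=NL^{(a)}+NL^{(\varepsilon)}$ with $NL^{(a)}=f(Q+a\mathcal Y)-f(Q)-f'(Q)a\mathcal Y$; the exponentially localized piece $\int(H^2\varepsilon)NL^{(a)}$ is rewritten as $\int\varepsilon\,H^2(NL^{(a)})$ and Cauchy--Schwarz combined with Hardy yields a bound $\lesssim a^2\|\varepsilon\|_{\dot H^2}\le Ca^4+Ca^2\|\varepsilon\|_{\dot H^2}^2$, while the $\varepsilon$-dependent piece is treated as $\int(H\varepsilon)H(NL^{(\varepsilon)})$, expanding $H(NL^{(\varepsilon)})=-\Delta NL^{(\varepsilon)}+V\,NL^{(\varepsilon)}$ and using smallness $\|\varepsilon\|_{\dot H^1}\lesssim\eta$ together with Hardy to extract a factor that Young's inequality absorbs into $\frac{1}{C}\int(H\varepsilon)(H^2\varepsilon)$.

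The main obstacle is precisely the nonlinear contribution in part (ii): naively $\int(H^2\varepsilon)NL$ carries four derivatives on $\varepsilon$, one more than the dissipation $\int(H\varepsilon)(H^2\varepsilon)$ can control. Closing the estimate requires splitting $NL$ into its $a$-only and $\varepsilon$-dependent parts and carefully choosing whether to move zero, one, or two factors of $H$ onto the nonlinear side -- using the exponential localization of $NL^{(a)}$ for the first piece and the smallness $\eta\ll 1$ for the second -- together with the commutator identity producing the $\frac{\lambda_s}{\lambda}\int|H\varepsilon|^2$ correction on the left-hand side, which are the two decisive algebraic inputs of the proof.
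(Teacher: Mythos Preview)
Your strategy matches the paper's: differentiate the energy, substitute \eqref{eq:evolution}, kill the $\Lambda Q$, $\nabla Q$, $\mathcal Y$ contributions by orthogonality, extract the $\frac{\lambda_s}{\lambda}\int(H\varepsilon)^2$ term via the commutator \eqref{eq:commutateur}, and absorb the rest into the dissipation using Hardy, the modulation bounds, and smallness of $\eta$. The paper does not split $NL=NL^{(a)}+NL^{(\varepsilon)}$ but instead writes $\int(H\varepsilon)H(NL)=\int(H\varepsilon)V\,NL+\int\nabla(H\varepsilon)\cdot\nabla NL$ and decomposes $\nabla NL$ according to which factor the gradient hits; this leads to the same endpoint bound $a^2\|\varepsilon\|_{\dot H^3}+(\eta^{\frac{2}{d-2}}+a)\|\varepsilon\|_{\dot H^3}^2$.

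There is, however, one concrete error and one imprecision in your part (ii). The inequality you invoke for the $NL^{(a)}$ piece, $a^2\|\varepsilon\|_{\dot H^2}\le Ca^4+Ca^2\|\varepsilon\|_{\dot H^2}^2$, is false (divide by $a^2$ and send $\|\varepsilon\|_{\dot H^2}\to 0$). The fix is immediate: since $H^2(NL^{(a)})$ is exponentially localized you may use the full weight $(1+|y|)^{-6}$ in Hardy to obtain $\int\varepsilon\,H^2(NL^{(a)})\lesssim a^2\|\varepsilon\|_{\dot H^3}$, and then Young gives $\kappa^{-1}a^4+\kappa\|\varepsilon\|_{\dot H^3}^2$, which absorbs. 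Similarly, your commutator remainder $\int(H\varepsilon)(2V+y\cdot\nabla V)\varepsilon=O(\|\varepsilon\|_{\dot H^2}^2)$ is correct but not sharp enough: multiplied by $\lambda_s/\lambda=O(\eta)$ it yields $\eta\|\varepsilon\|_{\dot H^2}^2$, which does not fit into the right-hand side of \eqref{eq:dotH2}. The paper instead splits the $(1+|y|)^{-4}$ decay of $V$ as $(1+|y|)^{-1}$ against $H\varepsilon$ and $(1+|y|)^{-3}$ against $\varepsilon$, giving via Hardy and \eqref{eq:coercivite H3} the bound $\eta\int H\varepsilon H^2\varepsilon$, which absorbs directly.
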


\begin{remark} The notation $\int H^2\varepsilon H\varepsilon $ means $\int |\nabla H\varepsilon|^2-\int V |H\varepsilon|^2$, this later formula given by an integration by parts makes sense from Proposition \ref{pr:cauchy} whereas the first one does not, but we keep it to ease notations. One also has from Proposition \ref{pr:cauchy} that $u\in C^1((0,T),\dot H^1)$, hence the identity \fref{eq:dotH1} makes sense. For \fref{eq:dotH2}, the quantity $\int |H \varepsilon |^2$ is well defined from Proposition \ref{pr:cauchy} but this does not give its time differentiability. \fref{eq:dotH2} should then be understood as an abuse of notation for its integral version using a standard procedure of regularization of the nonlinearity.
\end{remark}

\begin{proof}[Proof of Lemma \ref{lem:methode energie}]

\noindent{\bf step 1} Proof of the $\dot H^1$ bound. One computes from \fref{eq:evolution} using the orthogonality conditions \fref{eq:orthogonalite}, \fref{eq:commutateur}, \fref{eq:ipp} and \fref{eq:ipp2}:
\be \label{eq:dotH1 expression}
\begin{array}{r c l}
\frac{d}{ds}\left[\frac 1 2 \int \varepsilon H\varepsilon \right] &= & -\int (H\varepsilon)^2+\int NL H\varepsilon+\frac{1}{2} \frac{\lambda_s}{\lambda} \int \varepsilon^2 (V+y.\nabla V) \\
&&-\int \varepsilon ^2 \frac{z_s}{\lambda}.\nabla V+a\frac{\lambda_s}{\lambda} \int H \varepsilon \Lambda \mathcal Y+a\int H\varepsilon \frac{z_s}{\lambda}.\nabla \mathcal Y 
\end{array}
\ee
and we now estimate each term in the right hand side. Using Young inequality, the estimate \fref{eq:NL3} on the nonlinearity, Sobolev embedding, interpolation, \fref{eq:estimation dotH1} and the fact that $\mathcal Y$ is exponentially decreasing one has for any $0<\kappa \ll 1$:
\bee
&& \left| \int NL H\varepsilon \right| \leq   \int \frac{\kappa (H\varepsilon)^2}{2}+ \frac{NL^2}{2\kappa} \lesssim  \frac{\kappa}{2} \int (H\varepsilon)^2+ \int \frac{a^4}{2\kappa}\mathcal Y^4 Q^{2(p-2)}+\frac{1}{2\kappa} \int \varepsilon^{2p} \\
&\lesssim & \frac{\kappa}{2} \int (H\varepsilon)^2+\frac{a^4}{2\kappa} +\frac{1}{2\kappa} \parallel \varepsilon \parallel_{\dot H^{\frac{2d}{d+2}}}^{2p} \lesssim  \frac{\kappa}{2} \int (H\varepsilon)^2+\frac{a^4}{2\kappa} +\frac{1}{2\kappa} \parallel \varepsilon \parallel_{\dot H^1}^{\frac{8p}{d+2}}\parallel \varepsilon \parallel_{\dot H^2}^2 \\
&\lesssim & \frac{\kappa}{2} \int (H\varepsilon)^2+\frac{a^4}{2\kappa} +\frac{\eta^{\frac{8p}{d+2}}}{2\kappa} \int (H\varepsilon)^2 .
\eee
Now using \fref{eq:modulation lambda}, \fref{eq:modulation z}, and the coercivity estimate \fref{eq:coerciviteA}:
$$
 \left| \frac{1}{2} \frac{\lambda_s}{\lambda} \int \varepsilon (V+y.\nabla V)\varepsilon -\int \varepsilon ^2 \frac{z_s}{\lambda}.\nabla V \right| \lesssim  \eta \int \frac{|\varepsilon|^2}{1+|y|^4} \lesssim \eta \int (H\varepsilon)^2 .
$$
Using Young inequality, the estimates \fref{eq:modulation lambda}, \fref{eq:modulation z}, and \fref{eq:estimation dotH1} one has for any $0<\kappa \ll 1$:
$$
\ba{r c l}
\left| a\frac{\lambda_s}{\lambda} \int H \varepsilon \Lambda \mathcal Y+a\int H\varepsilon \frac{z_s}{\lambda}.\nabla \mathcal Y  \right| & \lesssim & \frac{a^2}{\kappa} \left( \left| \frac{\lambda_s}{\lambda} \right|^2+\left| \frac{z_s}{\lambda} \right|^2\right)+\kappa \int (H\varepsilon)^2 \\
& \lesssim & \frac{a^6}{\kappa} +\left( \frac{\eta^2}{\kappa}+\kappa\right) \int (H\varepsilon)^2
\ea
$$
We now inject the three estimates above in the identity \fref{eq:dotH1 expression}. One can choose $\kappa $ small enough independently of $\eta^*$ and then $\eta^*$ small enough such that the desired identity \fref{eq:dotH1} holds.\\

\noindent{\bf step 2} Proof of the $\dot H^2$ bound. One computes first the following identity using the evolution equation \fref{eq:evolution}, the orthogonality \fref{eq:orthogonalite}, the identities \fref{eq:ipp}, \fref{eq:commutateur} and $H\nabla=\nabla H-\nabla V$:
\bea \label{eq:dotH2 expression}
\nonumber &&\frac{d}{ds} \left(\frac 1 2 \parallel H\varepsilon \parallel_{L^2}^2 \right)\\
\nonumber  & = & \frac{\lambda_s}{\lambda} \int H\varepsilon H(a\Lambda \mathcal Y+\Lambda \varepsilon)+\int H\varepsilon H(\frac{z_s}{\lambda}.\nabla(a\mathcal Y+\varepsilon))-\int \varepsilon H^3\varepsilon+\int H\varepsilon H(NL)\\
\nonumber &=& \frac{\lambda_s}{\lambda} \int |H\varepsilon |^2+\frac{\lambda_s}{\lambda} a \int H \varepsilon H\Lambda \mathcal Y-\int \varepsilon H^3\varepsilon+\int H\varepsilon H(NL)\\
&+& \frac{\lambda_s}{\lambda}\int H\varepsilon (2V+y.\nabla V)\varepsilon +a\int H\varepsilon H(\frac{z_s}{\lambda}.\nabla \mathcal Y)-\int H\varepsilon \frac{z_s}{\lambda}.\nabla V \varepsilon.
\eea
\noindent \emph{Estimate for the nonlinear term}. One first computes the influence of the nonlinear term. Performing an integration by parts we write:
\be \label{eq:bound NL expression}
\int H\varepsilon H(NL) = \int H \varepsilon VNL + \int \nabla H\varepsilon .\nabla (NL) .
\ee
The potential term is estimated directly as it has a very strong decay, using the estimate \fref{eq:NL3} for the nonlinear term, the coercivity \fref{eq:coercivite}, \fref{eq:estimation dotH1}, Sobolev embedding, interpolation and the fact that $\mathcal Y$ is exponentially decaying:
\bea \label{eq:bound NL 1}
\non && \left|  \int H(\varepsilon)VNL \right| \lesssim  \int \frac{|H\varepsilon||NL|}{1+|y|^4} \lesssim  \int \frac{|H\varepsilon|(|a|^2\mathcal Y^2Q^{p-2}+|\varepsilon|^p)}{1+|y|^4} \\
\non & \lesssim & \parallel \varepsilon \parallel_{\dot H^3}a^2+\left( \int \frac{|H\varepsilon|^2}{1+|y|^2}\right)^{\frac 1 2} \left( \int \frac{|\varepsilon|^{2p}}{1+|y|^6} \right)^{\frac 1 2} \\
\non & \lesssim & \parallel \varepsilon \parallel_{\dot H^3}a^2+\left(\int H\varepsilon H^2\varepsilon \right)^{\frac 1 2} \left( \parallel |\varepsilon|^{2p} \parallel_{L^{\frac{d}{d-2}}}\parallel \frac{1}{1+|y|^{6}}\parallel_{L^{\frac d 2}} \right)^{\frac 1 2} \\
\non & \lesssim & \parallel \varepsilon \parallel_{\dot H^3}a^2+\left(\int H\varepsilon H^2\varepsilon \right)^{\frac 1 2} \parallel \varepsilon \parallel_{L^{\frac{2d(d+2)}{(d-2)^2}}}^p \\
\non&\lesssim &  \parallel \varepsilon \parallel_{\dot H^3}a^2+\left(\int H\varepsilon H^2\varepsilon \right)^{\frac 1 2} \parallel \varepsilon \parallel_{\dot H^{3-\frac{8}{d+2}}}^p  \\
\non & \lesssim & \parallel \varepsilon \parallel_{\dot H^3}a^2+\left(\int H\varepsilon H^2\varepsilon \right)^{\frac 1 2} \parallel \varepsilon \parallel_{\dot H^1}^{\frac{4}{d+2}p}\parallel \varepsilon \parallel_{\dot H^3}^{\frac{d-2}{d+2}p} \\
\non&\lesssim & \parallel \varepsilon \parallel_{\dot H^3}a^2+\left(\int H\varepsilon H^2\varepsilon \right)^{\frac 1 2} \eta^{\frac{4}{d+2}p} \parallel \varepsilon \parallel_{\dot H^3}\\
& \lesssim & \parallel \varepsilon \parallel_{\dot H^3}a^2+\eta^{\frac{4}{d+2}p}\int H\varepsilon H^2\varepsilon . 
\eea
Now for the other term one first computes the first derivatives of the nonlinear term:
\be \label{eq:bound NL 2 expression}
\begin{array}{r c l}
\nabla (NL)&=&p\left( |Q+a\mathcal Y+\varepsilon|^{p-1}-Q^{p-1} \right)\nabla (a\mathcal Y+\varepsilon)\\
&&+p\left(|Q+a\mathcal Y+\varepsilon|^{p-1}-Q^{p-1}-(p-1)Q^{p-2}(a\mathcal Y+\varepsilon) \right)\nabla Q \\
&=:& A_1+A_2 .
\end{array}
\ee
We estimate the first term pointwise using the estimates on the nonlinearity \fref{eq:NL4} and \fref{eq:NL5}:
\bee
|A_1|&=&p\left| \left( |Q+a\mathcal Y+\varepsilon|^{p-1}-Q^{p-1} \right)\nabla (a\mathcal Y+\varepsilon) \right| \\
&\leq & \left( Q^{p-2}|a| \mathcal Y +|\varepsilon|^{p-1}|\right)|a||\nabla \mathcal Y|+\left( Q^{p-2}|a| \mathcal Y +|\varepsilon|^{p-1}|\right)|\nabla \varepsilon | \\
&\lesssim & \left( Q^{p-2}|a| \mathcal Y +|\varepsilon||\right)|a||\nabla \mathcal Y|+\left( Q^{p-2}|a| \mathcal Y +|\varepsilon|^{p-1}|\right)|\nabla \varepsilon | \\
&\lesssim & Q^{p-2}a^2 \mathcal Y |\nabla \mathcal Y| +Q^{p-2}|\varepsilon| |a| |\nabla \mathcal Y |+Q^{p-2}|a| \mathcal Y |\nabla \varepsilon |+|\varepsilon|^{p-1}|\nabla \varepsilon |
\eee
Hence, using the coercivity \fref{eq:coercivite H3} for the second and third terms, and Sobolev embedding plus \fref{eq:estimation dotH1} for the last one:
\bea
 \label{eq:bound NL 2 1}
\nonumber \int |A_1|^2 &\lesssim & \int Q^{2(p-2)}a^4 \mathcal Y^2 |\nabla \mathcal Y|^2 +Q^{2(p-2)}|\varepsilon|^2 a^2 |\nabla \mathcal Y |^2\\
\nonumber &+&\int Q^{2(p-2)}a^2 \mathcal Y^2 |\nabla \varepsilon |^2+|\varepsilon|^{2(p-1)}|\nabla \varepsilon |^2 \\
\nonumber &\lesssim & a^4+a^2 \parallel \varepsilon \parallel_{\dot H^3}^2+\parallel | \nabla \varepsilon |^2 \parallel_{L^{\frac{d}{d-4}}} \parallel |\varepsilon |^{2(p-1)}\parallel_{L^{\frac{d}{4}}} \\
\nonumber &\lesssim & a^4+a^2 \parallel \varepsilon \parallel_{\dot H^3}^2+\parallel \nabla \varepsilon \parallel_{L^{\frac{2d}{d-4}}}^2 \parallel \varepsilon \parallel_{L^{\frac{2d}{d-2}}}^{2(p-1)} \\
\nonumber &\lesssim & a^4+a^2 \parallel \varepsilon \parallel_{\dot H^3}^2+\parallel  \varepsilon \parallel_{\dot H^3}^2 \parallel \varepsilon \parallel_{\dot H^1}^{2(p-1)}\\
&  \lesssim  & a^4+(a^2+\eta^{2(p-1)}) \parallel \varepsilon \parallel_{\dot H^3}^2
\eea
We now turn to the second term $A_2$. We use the pointwise estimate for the nonlinearity \fref{eq:NL6}, yielding:
$$
|A_2|\lesssim Q^{p-2-\frac{2}{d-2}}|\varepsilon|^{1+\frac{2}{d-2}}|\nabla Q|+a^2\mathcal Y^2Q^{p-3}|\nabla Q| .
$$
We then estimate $A_2$ using Sobolev embedding, the coercivity estimate \fref{eq:coercivite H3} and \fref{eq:estimation dotH1}:
\be \label{eq:bound NL 2 2}
\begin{array}{r c l}
\int |A_2|^2 & \lesssim &  a^4 +\int \frac{|\varepsilon |^{\frac{2d}{d-2}}}{1+|y|^6} \lesssim  a^4+\parallel |\varepsilon |^{\frac{2d}{d-2}} \parallel_{L^{\frac{d}{d-4}}}\parallel \frac{1}{1+|y|^{6}}\parallel_{L^{\frac{d}{4}}} \\
&\lesssim & a^4+\parallel \varepsilon \parallel_{L^{\frac{2d^2}{(d-2)(d-4)}}}^{\frac{2d}{d-2}} \lesssim  a^4+\parallel \varepsilon \parallel_{\dot H^{3-\frac 4 d}}^{\frac{2d}{d-2}} \\
&\lesssim & a^4+\parallel \varepsilon \parallel_{\dot H^1}^{\frac{2d}{d-2}\frac{2}{d}}\parallel \varepsilon \parallel_{\dot H^3}^{\frac{2d}{d-2}\frac{d-2}{d}}  \lesssim a^4+\eta^{\frac{4}{d-2}}\parallel \varepsilon \parallel_{\dot H^3}^2.
\end{array}
\ee
Putting together the two estimates \fref{eq:bound NL 2 1} and \fref{eq:bound NL 2 2} we have found for the two terms in \fref{eq:bound NL 2 expression} yields:
$$
\int |\nabla NL |^2\lesssim a^4+(a^2+\eta^{\frac{4}{d-2}})\parallel \varepsilon \parallel_{\dot H^3}^2.
$$
We now come back to the original nonlinear term \fref{eq:bound NL expression} we had to treat, and inject the above estimate and the estimate \fref{eq:bound NL 1} we just found for each term, yielding:
\bea
 \label{eq:bound NL}
\nonumber&& | \int H\varepsilon H(NL)|\lesssim a^2 \parallel \varepsilon \parallel_{\dot H^3} +(a+\eta^{\frac{2}{d-2}})\parallel \varepsilon \parallel_{\dot H^3}^2\\
&\lesssim& \frac{a^4}{\kappa } +(a+\eta^{\frac{2}{d-2}}+\kappa)\parallel \varepsilon \parallel_{\dot H^3}^2
\eea
where we used Young inequality with a small parameter $0< \kappa \ll1$ to be chosen later on.\\

\noindent\emph{Remainders from scale and space change}. We put some upper bounds on the following term appearing in \fref{eq:dotH2 expression}. From \fref{eq:modulation lambda} and the coercivity estimate \fref{eq:coercivite H3}:
\bea \label{eq:bound remainderlambdas}
\non \left |\frac{\lambda_s}{\lambda}\int H\varepsilon (2V+y.\nabla V)\varepsilon \right|&  \lesssim & \eta \int \frac{|\varepsilon||H\varepsilon|}{1+|y|^4} \leq  \eta \left(\int \frac{|\varepsilon|^2}{1+|y|^6}\right)^{\frac 1 2}\left(\int \frac{|H\varepsilon|^2}{1+|y|^2}\right)^{\frac 1 2} \\
&\lesssim &  \eta \int H\varepsilon H^2\varepsilon .
\eea
Similarly, from \fref{eq:modulation z}:
\be \label{eq:bound remainderzs}
\left| \int H\varepsilon \frac{z_s}{\lambda}.\nabla V \varepsilon \right| \lesssim \eta \int \frac{|H\varepsilon |\varepsilon}{1+|x|^5}  \lesssim \eta^2 \int H\varepsilon H^2\varepsilon .
\ee
Now, from the fact that $\mathcal Y$ decays exponentially fast and the coercivity estimate \fref{eq:coercivite H3}, one has for any $0<\kappa\ll1$ to be chosen later:
\bea
 \label{eq:bound force}
&&\left| \frac{\lambda_s}{\lambda} a \int H \varepsilon H\Lambda \mathcal Y+a\int H\varepsilon H(\frac{z_s}{\lambda}.\nabla \mathcal Y)\right|\\
\nonumber  & \lesssim & \frac{a^2}{\kappa}\left(\left| \frac{\lambda_s}{\lambda} \right|^2+\left| \frac{z_s}{\lambda} \right|^2 \right)+\kappa \int H\varepsilon H^2\varepsilon .
\eea

\noindent\emph{End of the proof of the energy identity}. We come back to the identity \fref{eq:dotH2 expression} and inject all the bounds we found so far, \fref{eq:bound NL}, \fref{eq:bound remainderlambdas}, \fref{eq:bound remainderzs} and \fref{eq:bound force}. Using the two different types of estimates for the modulation of $\lambda$ and $z$, \fref{eq:modulation lambda} and \fref{eq:modulation z}, one sees that there exists $\kappa$ and $\eta^*$ such that the desired bound \fref{eq:dotH2} holds.
\end{proof}


\subsection{No type II blow up near the soliton}


A spectacular consequence of Lemma \ref{lem:modulation}, Lemma \ref{lem:variation energie} and Lemma \ref{lem:methode energie} is the uniform control of the scale for trapped solutions.

\begin{lemma}[Non degeneracy of the scale for trapped solutions] \label{lem:lambda}

Let $d\geq 7$ and $I$ be a time interval containing $0$. There exists $0<\eta^*\ll 1$ such that if $u$ is trapped at distance $\eta$ for $0<\eta<\eta^*$ on $I$, then:
\be \label{eq:controle lambda}
\lambda (t)=\lambda (0)(1+O(\eta ))
\ee

\end{lemma}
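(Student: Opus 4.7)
The plan is to integrate the refined modulation equation \eqref{eq:modulation lambda 2} in renormalized time and exploit the space-time energy bound \eqref{eq:variation energie}. Apply \eqref{eq:modulation lambda 2} with $s=1$: there exists a scalar function $G(\sigma)$, defined on $s(I)$, such that
\be \la{eq:plan1}
|G(\sigma)|\lesssim \|\varepsilon(\sigma)\|_{\dot H^1}\qquad\text{and}\qquad \left|\frac{\lambda_\sigma}{\lambda}+\frac{dG}{d\sigma}\right|\lesssim a^2+\|\varepsilon\|_{\dot H^2}^2.
\ee
Integrating \eqref{eq:plan1} from $0$ to $\sigma\in s(I)$ gives
\be \la{eq:plan2}
\log\frac{\lambda(\sigma)}{\lambda(0)} = -\bigl(G(\sigma)-G(0)\bigr)+\int_0^{\sigma} O\!\left(a^2+\|\varepsilon\|_{\dot H^2}^2\right)d\tau.
\ee

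The two terms on the right-hand side of \eqref{eq:plan2} are then controlled independently. For the boundary term, the trapping hypothesis \eqref{eq:proximite} combined with \eqref{eq:estimation dotH1} yields $\|\varepsilon(\sigma)\|_{\dot H^1}\lesssim \eta$ uniformly in $\sigma\in s(I)$, hence $|G(\sigma)|+|G(0)|\lesssim \eta$. For the integral term, Lemma \ref{lem:variation energie} gives
$$
\int_{s(I)}\bigl(a^2+\|\varepsilon\|_{\dot H^2}^2\bigr)d\tau\lesssim \eta^2,
$$
and in particular the integral in \eqref{eq:plan2} is $O(\eta^2)$, independently of $\sigma\in s(I)$ and of the sign of $\sigma$ (the renormalized time $s$ is monotone in $t$ but the estimate is unsigned). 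Combining these two bounds,
$$
\left|\log\frac{\lambda(\sigma)}{\lambda(0)}\right|\lesssim \eta+\eta^2 \lesssim \eta
$$
for $\eta^\ast$ sufficiently small, which upon exponentiating is exactly the claim \eqref{eq:controle lambda}.

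The whole argument is essentially a bookkeeping exercise once the refined modulation identity \eqref{eq:modulation lambda 2} and the global energy dissipation bound \eqref{eq:variation energie} are in hand; no further analytical input is needed. The only mild subtlety, and the reason the proof is not completely trivial, is that one must use the \emph{refined} modulation equation rather than the pointwise bound \eqref{eq:modulation lambda}. Indeed, \eqref{eq:modulation lambda} only gives $|\lambda_s/\lambda|\lesssim a^2+\|\varepsilon\|_{\dot H^2}$, whose linear dependence on $\|\varepsilon\|_{\dot H^2}$ is not integrable in time from Lemma \ref{lem:variation energie}; one would only control $\int|\lambda_s/\lambda|\,d\sigma$ by $\eta\sqrt{|s(I)|}$, which is useless on long intervals. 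The integration by parts in time built into \eqref{eq:modulation lambda 2}, which requires the decay $d\ge 7$ to make $\langle\varepsilon,\Lambda Q\rangle$ well defined, is precisely what converts the troublesome linear $\|\varepsilon\|_{\dot H^2}$ term into a harmless boundary contribution plus an integrable quadratic remainder. This is the conceptual content of the lemma: in dimension $d\ge 7$ the scale simply cannot move once the solution stays near $\mathcal M$, ruling out type II blow up near $Q$.
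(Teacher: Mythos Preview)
Your proof is correct and follows essentially the same route as the paper: integrate the refined modulation identity \eqref{eq:modulation lambda 2} (at $s=1$) in renormalized time, bound the boundary term by $O(\eta)$ via \eqref{eq:estimation dotH1} and the integral term by $O(\eta^2)$ via \eqref{eq:variation energie}, then exponentiate. Your closing discussion of why the refined identity (rather than \eqref{eq:modulation lambda}) is needed is a welcome addition that the paper only makes implicitly.
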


In particular, this rules out type II blow up near the solitary wave $Q$ as in \cite{NT2} for the large homotopy number corotational harmonic heat flow.

\begin{proof}[Proof of Lemma \ref{lem:lambda}]

We reason in renormalized time. The modulation equation \fref{eq:modulation lambda} for $\lambda$ can be written as:
$$
\left| \frac{d}{ds}\left[ \text{log}(\lambda)+O(\parallel \varepsilon (s) \parallel_{\dot H^1}) \right] \right| \lesssim a^2+\parallel \varepsilon \parallel_{\dot H^2}^2
$$
on $s(I)$. We now reintegrate it in time:
$$
\left| \text{log}\left(\frac{\lambda (s)}{\lambda (0)}\right)+\parallel \varepsilon (s) \parallel_{\dot H^1}+O(\parallel \varepsilon (0) \parallel_{\dot H^1}) \right| \lesssim \int_0^s a^2+\parallel \varepsilon \parallel_{\dot H^2}^2
$$ 
and inject the estimate \fref{eq:variation energie} for the right hand side coming from the variation of energy and \fref{eq:estimation dotH1}, yielding:
$$
\lambda (s)=\lambda (0) e^{O(\eta)} =\lambda (0) (1+O(\eta))
$$
and \eqref{eq:controle lambda} is proved.
\end{proof}

\begin{remark} The estimate \fref{eq:controle lambda} implies that for solutions trapped at distance $\eta$ on $I$, the original time variable $t$ is equivalent to the renormalized time variable $s$, namely there exists $c>0$ such that for all $t\in I$:
\be \label{eq:equivalence temps}
\lambda (0)(1-c\eta)t\leq s \leq \lambda (0)(1+c\eta)t .
\ee
\end{remark}


\section{Existence and uniqueness of minimal solutions} \la{sec:min}


This section is devoted to the proof of existence and uniqueness of the minimal elements which are trapped backwards in time near the soliton manifold, together with the complete description of their forward behaviour. 


\subsection{Existence} 


The existence of $Q^\pm$ follows from a simple brute force fixed point argument near $-\infty$, while the derivation of their forward (exit) behavior follows from the maximum principle. We may here relax the dimensional assumption and assume $d\ge 3$ only.

\begin{proposition}[Existence of $Q^\pm$] \label{pr:minimal}
Let $d\geq 3$. There exists two strictly positive, $\mathcal C^{\infty}$ and radial solutions of \fref{eq:NLH}, $Q^+$ and $Q^-$, defined on $(-\infty,0]\times \mathbb R^d$, such that 
$$\lim_{t\to-\infty}\|Q^{\pm}-Q\|_{\dot{H}^1}=0.$$ Moreover:\\
{\em 1. Trapping near $Q$ for $t\leq 0$}: there holds the expansion for $t\in (-\infty,0]$:
\be \label{eq:expansion Qpm}
Q^{\pm}(t)=Q\pm \epsilon e^{e_0 t} \mathcal Y+v, \ \ \parallel v \parallel_{\dot H^1}+\parallel v \parallel_{L^{\infty}}\lesssim \epsilon^2 e^{2e_0 t}
\ee
for some $0< \epsilon \ll 1$.\\
{\em 2. Forward (exit)}: $Q^+$ explodes forward in finite time in the ODE type I blow up regime, while $Q^-$ is global and dissipates $\lim_{t\to+\infty}\|Q^-\|_{\dot{H}^1}=0.$
\end{proposition}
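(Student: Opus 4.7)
\textbf{Proof plan for Proposition \ref{pr:minimal}.}

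The plan is to construct $Q^{\pm}$ by a compactness argument on a sequence of backwards-truncated problems, then use the comparison principle for the forward dynamics. Fix a small parameter $\epsilon>0$ and, for each integer $n\geq 1$, consider the solution $u_n$ of \eqref{eq:NLH} on $[-n,0]$ (flowing forward from time $-n$) whose decomposition at $t=-n$ is tuned so that $a_n(-n)=\pm\epsilon e^{-e_0 n}$, $\varepsilon_n(-n)=0$, $\lambda_n(-n)=1$, $z_n(-n)=0$. I would run the bootstrap
\begin{equation*}
|a_n(t)|\leq 2\epsilon e^{e_0 t},\qquad \|\varepsilon_n(t)\|_{\dot H^1}+\|\varepsilon_n(t)\|_{\dot H^2}\leq C\epsilon^2 e^{2e_0 t}
\end{equation*}
on $[-n,0]$. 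The equation $a_s=e_0 a+O(a^2+\|\varepsilon\|_{\dot H^2}^2)$ from \fref{eq:modulation a} together with the $\dot H^1$ and $\dot H^2$ monotonicity formulas \fref{eq:dotH1} and \fref{eq:dotH2} give, after Gronwall in backward/forward time and using the quadratic right-hand side, a strict improvement of both bounds provided $\epsilon$ is small. The modulation equations \fref{eq:modulation lambda} and \fref{eq:modulation z} then yield $|\log\lambda_n(t)|+|z_n(t)|\lesssim \epsilon^2$ uniformly in $n$.

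Next I would extract $Q^{\pm}$ by compactness. The uniform $\dot H^1\cap \dot H^2$ bound on $\varepsilon_n$, combined with the parabolic regularizing effects from Proposition \ref{pr:cauchy}, gives uniform higher regularity on $[-n+1,0]$. A diagonal subsequence $u_{n_k}$ therefore converges in $C^{1,1/2}_{\mathrm{loc}}((-\infty,0]\times\mathbb R^d)$ to a classical solution $Q^{\pm}$ of \eqref{eq:NLH} satisfying the expansion \eqref{eq:expansion Qpm}; radial symmetry is preserved by symmetrizing the initial data, which is consistent with the orthogonality conditions since $\mathcal Y$, $\Psi_0$ are radial and $\Psi_i$ ($1\leq i\leq d$) vanish on the radial sector. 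The bound $\|v\|_{L^\infty}\lesssim \epsilon^2 e^{2e_0 t}$ follows from bootstrapping $u=Q+v$ against the Duhamel formula using the heat semigroup and the regularizing effect.

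For the sign and the forward behaviour I would invoke the maximum principle. Since $\mathcal Y>0$, the expansion \eqref{eq:expansion Qpm} gives, for $t$ negative enough, $Q^+>Q$ and $0<Q^-<Q$; comparison with the stationary solution $Q$ propagates both inequalities to all times where the solutions exist. For $Q^+$: differentiating \eqref{eq:NLH} in time, $w:=\partial_t Q^+$ satisfies $\partial_t w=\Delta w+pQ^{+\,p-1}w$; the expansion \eqref{eq:expansion Qpm} gives $w>0$ at some time $t_0$, and the maximum principle yields $\partial_t Q^+\geq 0$ thereafter. Combined with $Q^+>Q$ at time $t_0$, the positivity-plus-monotonicity argument of Theorem 1.7 in \cite{MaM2} forces type~I blow-up in finite forward time. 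For $Q^-$: the same argument gives $\partial_t Q^-\leq 0$ on the existence interval, hence $Q^-$ is globally defined, radial, positive, monotone decreasing in $t$, and bounded by $Q$. Parabolic regularity then ensures $Q^-(t)\to Q^-_\infty$ along a subsequence as $t\to+\infty$ with $Q^-_\infty$ a non-negative bounded radial solution of \eqref{eqellipticq}; the Gidas uniqueness result forces $Q^-_\infty\in\{0\}\cup\{Q_{0,\lambda}\}_{\lambda>0}$, and the pointwise bound $Q^-_\infty\leq Q$ together with the strict inequality $Q^-(t_0)<Q$ preserved by the flow rule out any $Q_{0,\lambda}$, leaving $Q^-_\infty=0$.

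The main obstacle in this plan is the last step: ensuring that the monotonicity + positivity argument for $Q^+$ really falls within the hypotheses of Theorem 1.7 of \cite{MaM2} (for which one needs the full strength of $\partial_t Q^+\geq 0$ and suitable radial decay to invoke the classification of nontrivial limiting profiles), and, symmetrically, that the $L^\infty$ convergence of $Q^-$ at infinity is strong enough (as opposed to merely locally uniform) to rule out concentration of a bubble $Q_{0,\lambda_\infty}$ with $\lambda_\infty\to 0$ or $+\infty$; the $\dot H^1$-smallness supplied by the energy dissipation \eqref{eq:E} is what prevents any such bubbling.
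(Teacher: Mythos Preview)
Your overall architecture (approximate at time $-n$, bootstrap forward, pass to a limit, then maximum principle for the forward dynamics) matches the paper, but there are two genuine gaps.

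First, and most seriously, your bootstrap relies on the modulation equations \eqref{eq:modulation a} and the energy identities \eqref{eq:dotH1}, \eqref{eq:dotH2}, all of which are proved in Section~2 under the standing assumption $d\geq 7$ (they use the coercivity Lemma~\ref{lem:coercivite} and the decomposition Lemma~\ref{lem:decomposition}, both stated only for $d\geq 7$). Proposition~\ref{pr:minimal} is asserted for $d\geq 3$, and the paper's proof is deliberately more elementary so as to cover this range: it writes $u_n^+=Q+\epsilon(1+a)e^{e_0 t}\mathcal Y+\epsilon v$ with the single orthogonality $v\perp\mathcal Y$, and bootstraps $|a|\lesssim\epsilon e^{e_0 t}$, $\|v\|_{L^2}+\|v\|_{L^\infty}\lesssim\epsilon e^{2e_0 t}$ using only the nonnegativity $\int vHv\geq 0$ (valid for all $d\geq 3$ by Proposition~\ref{pr:H}) and a direct Duhamel estimate for the $L^\infty$ bound. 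No $\dot H^2$ control, no coercivity, no $(\lambda,z)$ modulation is needed.

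Second, your forward argument for $Q^+$ is incomplete. Theorem~1.7 of \cite{MaM2}, as invoked in the paper, says that a \emph{positive} solution which blows up does so with type~I; it does not by itself force blow-up. The paper supplies the missing step by a Jensen/convexity argument: since $\mathcal Y>0$ with $\int\mathcal Y=1$ and $g(x)=(Q+x)^p-Q^p-pQ^{p-1}x$ is convex on $[0,\infty)$, one gets the differential inequality $\partial_t\langle Q^+-Q,\mathcal Y\rangle\geq e_0\langle Q^+-Q,\mathcal Y\rangle+g(\langle Q^+-Q,\mathcal Y\rangle)$, which forces $\langle Q^+-Q,\mathcal Y\rangle\to\infty$ in finite time because $g(x)\sim x^p$. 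Only then does \cite{MaM2} apply. Your identified ``obstacle'' about hypotheses of \cite{MaM2} is therefore misdiagnosed: the real missing piece is this blow-up criterion. (For $Q^-$, your worry about bubbling is also resolved more simply than you suggest: two distinct radial ground states $Q_{\lambda_1},Q_{\lambda_2}$ always intersect, so a pointwise limit $u_\infty$ with $u_\infty\leq Q$ and $u_\infty\neq Q$ cannot equal any $Q_\lambda$; no $\dot H^1$ smallness is needed at that stage.)
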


\begin{proof}[Proof of Proposition \ref{pr:minimal}]

We prove the existence of $Q^+$ and $Q^-$ via a compactness argument. Let $0<\epsilon \ll 1$ be a small enough strictly positive constant. We look at two solutions $u_n^{\pm}$ of \fref{eq:NLH} with initial data at the initial time $t_0(n)=-n$:
\be \la{Q+:eq:def un+}
u^{\pm}_n(-n)=Q\pm \epsilon e^{-ne_0}\mathcal Y .
\ee
The sign $+$ (resp. $-$) will give an approximation of $Q^+$ (resp. $Q^-$). The results and techniques employed in this step being exactly the same for the two cases, we focus on the $+$ sign case. As $u^{\pm}_n(-n)$ is radial, $u^{\pm}_n$ is radial for all times.\\

\noindent {\bf step 1} Forward propagation of smallness. We claim that there exists constants $C_a, \  C_2, \ C_{\infty}>0$ such that for $\epsilon$ small enough for any $n\in \mathbb N$ the solution is at least defined on $[-n,0]$ and can be written on this interval under the form:
\be \la{Q+:eq:id un+}
u^+_n(t)=Q+ \epsilon (1+a) e^{te_0}\mathcal Y+\epsilon v, \ \ a:[-n,0]\rightarrow \mathbb R, \ \ v\in L^2\cap L^{\infty}, \ \ v\perp \mathcal Y
\ee
where the corrections $a$ and $v$ satisfy the following bounds on $[-n;0]$:
\be \label{eq:bounds bootstrap}
\parallel a \parallel_{L^{\infty}}\leq \epsilon C_a e^{e_0 t}, \ \ \parallel v \parallel_{L^2}\leq \epsilon C_2 e^{2 t e_0}, \ \ \parallel v \parallel_{L^{\infty}}\leq \epsilon C_{\infty} e^{2te_0} .
\ee
Indeed, the decomposition and the regularity of the solution is ensured by the fact that one can solve the Cauchy problem for $H^1\cap L^{\infty}$ perturbations of $Q$ arguing like for the proof of Proposition \ref{pr:cauchy}. We now prove \eqref{eq:bounds bootstrap} using a bootstrap argument. Let $\mathcal T\subset [-n,0]$ be the set of times $-n\leq t \leq 0$ such that \fref{eq:bounds bootstrap} holds on $[-n,t]$. $\mathcal T$ is not empty as it contains $-n$, and it is closed from a continuity argument. We now show that it is open, implying $\mathcal T=[-n,0]$ using connectedness. To do so, we are going to show that under some compatibility conditions on the constants $C_a, \ C_2, \ C_{\infty}$ and for $\epsilon$ sufficiently small, the inequalities in \fref{eq:bounds bootstrap} are strict in $\mathcal T$, implying that this latter is open by continuity. From \fref{eq:NLH} the time evolution of $v$ is given by:
\be \la{Q+:eq:vt}
v_t+Hv=-a_te^{te_0}\mathcal Y+NL, \ \ NL:= \frac{1}{\epsilon}\left[f(u_n^+)-p\epsilon Q^{p-1}(e^{te_0}(1+a)\mathcal Y+ v) \right].
\ee
Using the bound \fref{eq:NL3} on the nonlinearity, the bootstrap bounds \fref{eq:bounds bootstrap}, the fact that $\mathcal Y$ decays exponentially and interpolation, one gets for $q\in [2,+\infty]$ and $t\in \mathcal T$:
\be \la{Q+:eq:bd NL}
\ba{r c l}
\parallel NL \parallel_{L^q}&\leq & C \para Q^{p-2}\epsilon e^{2e_0t}(1+a)^2\mathcal Y^2+\epsilon^{p-1}|v|^{p} \para_{L^q} \\
&\leq & C\epsilon e^{2e_0t}+CC_a^2\epsilon^2e^{4e_0t}+C\epsilon^{2p-1}C_2^{\frac 2 q}C_{\infty}^{\frac{q-2}{q}} \\
&\leq & C\epsilon e^{2te_0}
\ea
\ee
for a constant $C$ independent of the others, for any choice of $C_a$, $C_2$ and $C_{\infty}$ if $\epsilon$ is chosen small enough, as $t\leq 0$ and $p>1$.\\

\noindent \emph{Bound for $a$}. Projecting \fref{Q+:eq:vt} onto $\mathcal Y$, using the orthogonality $v\perp \mathcal Y$ and the above estimate then yields that for $t\in \mathcal T$:
\be \la{Q+:eq:bd at}
|a_t|\leq C\epsilon e^{e_0t}
\ee
for $C$ independent of $C_a, \ C_2, \ C_{\infty}, \ \epsilon$. Reintegrated in time this gives:
\be \la{Q+:eq:bd a}
|a(t)|\leq |a(-n)|+C\epsilon e^{e_0t}\leq C\epsilon e^{e_0t}<C_a e^{e_0t} \ \ \text{if} \ \ C<C_a
\ee
for $C$ independent of $C_a, \ C_2, \ C_{\infty}, \ \epsilon$ for any $t\in \mathcal T$ as $a(-n)=0$ from \fref{Q+:eq:def un+} and \fref{Q+:eq:id un+}.\\

\noindent \emph{$L^2$ bound for $v$}. We multiply \fref{Q+:eq:vt}, by $v$ and integrate using \fref{Q+:eq:bd NL}, \fref{Q+:eq:bd at}, Cauchy-Schwarz inequality and the fact $\int vHv\geq 0$ from Proposition \ref{pr:H} to obtain the following energy estimate on $\mathcal T$:
$$
\ba{r c l}
\frac{d}{dt}\parallel v \parallel_{L^2}^2 & = & -2\int vHv+2\int v(-a_te^{te_0}\mathcal Y+NL) \leq C\para v \para_{L^2}(\para NL \para_{L^2}+|a_t|) \\
&\leq & CC_2 \epsilon e^{e_0t} .
\ea
$$
Reintegrated in time, as $v(-n)=0$ from \fref{Q+:eq:def un+} and \fref{Q+:eq:id un+} one obtains that for $t\in \mathcal T$:
\be \la{Q+:eq:bd v L2}
\parallel v \parallel_{L^2} \leq C\sqrt{C_2}\epsilon e^{e_0t}<C_2\epsilon e^{e_0t} \ \ \text{if} \ \ C<C_2
\ee 
for $C$ independent of $C_a, \ C_2, \ C_{\infty}, \ \epsilon$.\\

\noindent\emph{$L^{\infty}$ bound for $v$}. Using Duhamel formula one has:
\be \la{Q+:id v}
v(t)=\int_{-n}^t K_{t-t'}*(-a_te^{t'e_0}\mathcal Y+NL+Vv)dt',
\ee
where $K_t$ is defined by \fref{eq:def Kt}. For the first two terms, thanks to the bounds \fref{Q+:eq:bd NL} and \fref{Q+:eq:bd at} on $a_t$ and $NL$ and the fact that $K_t*:L^{\infty}\rightarrow L^{\infty}$ is unitary, one gets:
$$
\parallel \int_{-n}^t K_{t-t'}*(-a_te^{t'e_0}\mathcal Y+NL) \parallel_{L^{\infty}}\leq \int_{-n}^t C\epsilon e^{2t'e_0} \leq C\epsilon e^{2te_0}
$$
for $C$ independent of $C_a, \ C_2, \ C_{\infty}, \ \epsilon$ and $t\in \mathcal T$. Now for the last term, for $\delta=\frac{1}{2\parallel V \parallel_{L^{\infty}}}$, making the abuse of notation $t-\delta=\text{min}(t-\delta,-n)$, using H\"older inequalities, \fref{eq:bd Kt}, the fact that $\para K_t\para_{L^1}=1$ for all $t\geq 0$ and interpolation one computes for $t\in \mathcal T$:
\bee
&& \parallel \int_{-n}^t K_{t-t'}*(Vv)dt' \parallel_{L^{\infty}} \\
& \leq & \int_{-n}^{t-\delta} \parallel K_{t-t'}*(Vv)\parallel_{L^{\infty}}dt' +\int_{t-\delta}^{t} \parallel K_{t-t'}*(Vv)\parallel_{L^{\infty}}dt' \\
& \leq &  \int_{-n}^{t-\delta} \parallel K_{t-t'}\parallel_{L^{\frac{d}{d-1}}}\parallel Vv \parallel_{L^d}dt' +\int_{t-\delta}^{t} \parallel K_{t-t'}\parallel_{L^{1}}\parallel Vv\parallel_{L^{\infty}}dt' \\
& \leq &  \int_{-n}^{t-\delta}\frac{C}{|t-t'|^{\frac 1 2}}\parallel v \parallel_{L^{2d}}\parallel V \parallel_{L^{2d}} dt' +\int_{t-\delta}^{t} \parallel V\parallel_{L^{\infty}} \parallel v\parallel_{L^{\infty}}dt' \\
& \leq &  \int_{-n}^{t-\delta} \frac{C}{|t-t'|^{\frac 1 2}} \epsilon e^{2e_0t'} C_2^{\frac 1 d} C_{\infty}^{1-\frac 1 d}dt'  +\int_{t-\delta}^{t} \parallel V\parallel_{L^{\infty}} \epsilon C_{\infty}e^{2e_0t'}dt' \\
& \leq & C\epsilon C_2^{\frac 1 d}C_{\infty}^{1-\frac 1 d} e^{2e_0t}+\delta \parallel V\parallel_{L^{\infty}} \epsilon C_{\infty}e^{2e_0t} \leq  \epsilon e^{2e_0t} (CC_2^{\frac 1 d}C_{\infty}^{1-\frac 1 d} +\frac 1 2 C_{\infty}).
\eee
The three previous equations then imply that for $t\in \mathcal T$:
\be \la{Q+:eq:bd v Linfty}
\parallel v \parallel_{L^{\infty}} \leq \epsilon e^{2e_0t} (C+CC_2^{\frac 1 d}C_{\infty}^{1-\frac 1 d} +\frac 1 2 C_{\infty})<\epsilon C_{\infty}e^{2e_0t}C 
\ee
 provided $C+CC_2^{\frac 1 d}C_{\infty}^{1-\frac 1 d}< C_{\infty}$.\\

\noindent\emph{Conclusion}. The estimates \fref{Q+:eq:bd a}, \fref{Q+:eq:bd v L2} and \fref{Q+:eq:bd v Linfty} ensure the existence of $\epsilon_0>0$, $C_a,C_2,C_{\infty}>0$ such that for $0<\epsilon\leq \epsilon_0$, \fref{eq:bounds bootstrap} is strict on $\mathcal T$ hence this latter is open, and \eqref{eq:bounds bootstrap} is proved.\\

\noindent{\bf step 2} Propagation of smoothness. We claim that one has the following additional bounds on $[-n+1,0]$:
\be \la{Q+:eq:bd v W2infty}
\parallel  v \parallel_{W^{2,\infty}} +\parallel  v \parallel_{\dot H^2} \leq  \epsilon e^{te_0},
\ee
\be \la{Q+:eq:bd v holder}
\parallel  \nabla^2 v \parallel_{C^{0,\frac 1 4}(\mathbb R^d\times [-n+1,0])}+\parallel  \partial_t v \parallel_{L^{\infty}(\mathbb R^d\times [-n+1,0])}+\parallel  \partial_t v \parallel_{C^{0,\frac 1 4}(\mathbb R^d\times [-n+1,0])}\leq C
\ee
and that for all $t\in[-n,0]$:
\be \la{Q+:eq:bd v dotH1}
\para v \para_{\dot H^1}\leq C \epsilon e^{2e_0t}.
\ee
where $C^{0,\frac 1 4}$ denotes the H\"older $\frac 1 4$-norm, for $C$ independent of $n$ and $\epsilon$. The first two bounds \fref{Q+:eq:bd v W2infty} and \fref{Q+:eq:bd v holder} are direct consequences of \fref{Q+:eq:id un+}, \fref{eq:bounds bootstrap} and the parabolic estimates \fref{para:eq:bd v W2infty} and \fref{para:eq:bd v holder} from Lemma \ref{para:lem:para}. To prove the last bound \fref{Q+:eq:bd v dotH1} one computes using \fref{Q+:id v}, \fref{Q+:eq:bd at}, \fref{eq:bounds bootstrap}, \fref{eq:NL1}, Young and H\"older inequalities that for $t\in [-n,0]$:
\bee
 \para v(t)\para_{\dot H^1} & \leq &  \int_{-n}^t \para  K_{t-t'}\para_{L^1}\para a_te^{t'e_0}\mathcal Y\para_{\dot H^1}dt'\\
&&+ \int_{-n}^t \para \nabla K_{t-t'}\para_{L^1} (\para NL\para_{L^2}+\para Vv\para_{L^2})dt'\\
& \leq &  C\epsilon \int_{-n}^t \left(e^{2e_0t'}+\frac{ e^{2e_0 t'}}{|t-t'|^{\frac 1 2}}+\frac{e^{2e_0t'}}{|t-t'|^{\frac 1 2}}\right)dt'  \leq C\epsilon e^{2e_0t}
\eee
for $C>0$ independent of the other constants, ending the proof of the claim.\\

\noindent{\bf step 3} Maximum principle for $u_n^{\pm}$. We claim that 
\be \la{min:prop un}
\left|\begin{array}{ll} u_n^+\geq Q, \ \ \partial_t(u_n^+)\geq 0\\ 0\leq u_n^-\leq Q \ \ \text{and} \ \ \partial_t(u_n^-)\leq 0\end{array}\right.
\ee
We prove it for $u_n^+$, the proof being similar for $u_n^-$. This is true at initial time $-n$, because $\mathcal Y>0$ and $\epsilon>0$, implying $u_n^+(-n)-Q=\epsilon e^{-ne_0}\mathcal Y>0$ and 
$$
\begin{array}{r c l}
\partial_t (u_n^+)(-n) & = & -H(\epsilon e^{-ne_0}\mathcal Y)+[f(Q+\epsilon e^{-n e_0}\mathcal Y)-f(Q)-f'(Q)\epsilon e^{-n e_0}\mathcal Y] \\
& = & e_0\epsilon e^{-ne_0}\mathcal Y+[f(Q+\epsilon e^{-n e_0}\mathcal Y)-f(Q)-f'(Q)\epsilon e^{-n e_0}\mathcal Y] \\
&>&0
\end{array}
$$
as the nonlinearity $f$ is strictly convex on $[0,+\infty)$ from $p>1$. Therefore, from the maximum principle, see Lemma \ref{para:lem:max}, one has that $u_n^+\geq 0$ and $\partial_t u_n^+\geq 0$ on $[-n,0]$. \\

\noindent {\bf step 4} Compactness. From \fref{eq:bounds bootstrap}, \fref{Q+:eq:bd at}, \fref{Q+:eq:bd v W2infty}, \fref{Q+:eq:bd v holder}, Arzela-Ascoli theorem and a diagonal argument, there exist subsequences of $(u_n^+)_{n\in \mathbb N}$ and $(u_n^-)_{n\in \mathbb N}$ that converge in $C^{1,2}_{\text{loc}}((-\infty,0]\times \mathbb R^d)$ toward some functions that we call $Q^+$ and $Q^-$ respectively. The equation \fref{eq:NLH} then passes to the limit using \eqref{Q+:eq:bd v W2infty}, \eqref{Q+:eq:bd v holder}, implying that $Q^+$ and $Q^-$ are also solutions of \fref{eq:NLH} on $(-\infty,0]$. For each time $t\le 0$, the $L^{\infty}$ bound in \fref{eq:bounds bootstrap} and \fref{Q+:eq:bd v W2infty} passes to the limit via pointwise convergence, and the $\dot H^1$ bound \fref{Q+:eq:bd v dotH1} passes to the limit via lower semi- continuity of the $\dot H^1$ norm. This implies that $Q^+$ and $Q^-$ can be decomposed according to:
\be \la{min:id Qpm}
Q^{\pm}=Q\pm \epsilon e^{te_0}\mathcal Y+a \epsilon e^{te_0}\mathcal Y+\epsilon v
\ee
with $v\perp \mathcal Y$ satisfying:
\be \label{eq:bound Qpm}
|a|\lesssim \epsilon e^{2te_0}, \ \ \parallel v \parallel_{\dot H^1} +\parallel v \parallel_{L^{\infty}} \lesssim \epsilon e^{2te_0}.
\ee
Moreover, as $u^{\pm}_n$ is radial, so is $Q^\pm$.\\

\noindent{\bf step 5} $Q^+$ blows up forward Type I. The estimate \fref{min:prop un} ensures $Q^+(t)\geq Q(t)$ on the whole space-time domain $(-\infty,0]\times \mathbb R^d$. This then propagates forward in time according to the maximum principle, see Lemma \ref{para:lem:max}, giving that $Q^+\geq Q$ on $[0,T)$ where $T$ denotes the maximal time of existence of $Q^+$. We recall that $\mathcal Y>0$, $\int \mathcal Y=1$, and hence since the function $g:[0,+\infty)\rightarrow \mathbb R$ defined by $g(x)=(Q+x)^p-Q^p-pQ^{p-1}x$ is convex from $p>1$, Jensen inequality implies:
$$
g \left( \int (Q^+-Q)\mathcal Y\right) \leq \int g(Q^+-Q)\mathcal Y .
$$
This gives the following polynomial lower bound for the derivative of the component along $\mathcal Y$:
$$
\begin{array}{r c l}
\partial_t \left( \int (Q^+-Q)\mathcal Y \right) &=& e_0\int (Q^+-Q)\mathcal Y+ \int g(Q^+-Q) \mathcal Y \\
&\geq & e_0\int (Q^+-Q)\mathcal Y+g \left( \int (Q^+-Q)\mathcal Y\right) \\
\end{array}
$$
As this quantity is strictly positive at time $0$ from \fref{min:id Qpm} and \fref{eq:bound Qpm}, this implies that $Q^+$ blows up in finite time because $g(x)\sim x^p$ as $x\rightarrow +\infty$. Moreover, from Theorem 1.7 in \cite{MaM2}, $u >0$ implies that the blow up is of type $1$. Hence $Q^+$ blows up with a type I blow up forward in time.\\

\noindent{\bf step 6} $Q^-$ dissipates forward. As $\para u_n^-(0)- Q\para_{L^{\infty}}\geq \frac{\epsilon}{2}$ from \fref{Q+:eq:id un+} and \fref{eq:bounds bootstrap}, $0\leq u_n^-(0)\leq Q$ and $\partial_tu_n^-(0)\leq 0$, in the limit one obtains that $Q^-\neq Q$, $Q^-\neq 0$, $\partial_t Q^-(0)\leq 0$ and $0\leq Q^-(0)\leq Q$. Using the maximum principle for $Q^-$ and $\partial_t Q^-$, see Lemma \ref{para:lem:max}, one has that $0\leq Q^-\leq Q$ and $\partial_t Q^-\leq 0$ for all times $t\in [0,T)$ where $T$ is the maximal time of existence of $Q^-$. The $L^{\infty}$ Cauchy theory then ensures that $Q^-$ is a global solution. As $0\leq Q^- \leq Q$ on $(0,+\infty)$, from the parabolic estimates \fref{para:eq:bd v W2infty} and \fref{para:eq:bd v holder} one deduces that for any $t>1$, 
\be \label{eq:bound Q-}
\parallel Q^- \parallel_{W^{2,\infty}}+\parallel \partial_t Q^-\para_{L^{\infty}} \lesssim 1 ,
\ee
\be \label{eq:bound Q-2}
\parallel \nabla^2 Q^- \parallel_{C^{0,\frac 1 4}([t,t+1]\times \mathbb R^d)}+\parallel \partial_t Q^-\para_{C^{0,\frac 1 4}([t,t+1]\times \mathbb R^d)} \lesssim 1 
\ee
where $C^{0,\frac 1 4}$ denotes the H\"older norm. We define $u_{\infty}(x):=\underset{t\rightarrow +\infty}{\text{lim}}Q^-(t,x)$ which exists as $\partial_t Q^-\leq 0$ and $Q^-\geq 0$, satisfies $0\leq u_{\infty}\leq Q$, $u_{\infty}\neq Q$ and is radial. For $n\in \mathbb N$ let the sequence of functions 
\be \la{min:def vn}
v_n(t,x)=Q^-(n+t,x), \ \ (t,x)\in (0,1)\times \mathbb R^d.
\ee
As $Q^-$ is decreasing, for any $0\leq t_1\leq t_2\leq 1$ and $x\in \mathbb R^d$ there holds 
$$
Q(n+t_1,x)=v_n(t_1,x)\geq v_n(t_2,x)=Q(n+t_2,x)
$$
which implies:
$$
\underset{n\rightarrow +\infty}{\text{lim}} v_n(t_1,x) =\underset{n\rightarrow +\infty}{\text{lim}} v_n(t_2,x) = \underset{t\rightarrow +\infty}{\text{lim}} Q^-(t,x)= u_{\infty}(x),
$$
meaning that $v_n$ converges to the constant in time function $u_{\infty}$. From its definition \fref{min:def vn} and the bounds \fref{eq:bound Q-} and \fref{eq:bound Q-2}, using Arzela-Ascoli theorem, $u_{\infty}\in W^{2,\infty}$ and $v_n\rightarrow u_{\infty}$ in $C^{1,2}_{\text{loc}}((0,1)\times \mathbb R^d)$. From \fref{min:def vn}, $v_n$ solves $\partial_t v_n=\Delta v_n+|v_n|^{p-1}v_n$, and therefore at the limit one obtains:
$$
0=\partial_t u_{\infty}=\Delta u_{\infty}+|u_{\infty}|^{p-1}u_{\infty},
$$
$u_{\infty}$ is consequently a stationary solution of \fref{eq:NLH}. From the classification of all the radial solutions \fref{eq:elliptique}, one has that either $u_{\infty}=Q_{\lambda}$ for some $\lambda>0$, or $u_{\infty}=0$. For $\lambda_1>\lambda_2>0$ from the formula \fref{eq:def Q} one sees that the radial $Q_{\lambda_1}$ and $Q_{\lambda_2}$ must intersect at some radius $r^*$ and that one is strictly above the other for $0<r<r^*$ and conversely for $r>r^*$. Since $u_{\infty}<Q$ necessarily $u_{\infty}=0$. One has proven that for any $x\in \mathbb R^d$, $Q ^-(t,x)\rightarrow 0$.

\noindent One notices that as $Q^-\rightarrow 0$ and $0<Q^-<Q$, the nonlinear term $(Q^-)^p$ is in every Lebesgue space $L^p$ for $p\geq 1$ in which it then converges to zero as $t\rightarrow +\infty$ from Lebesgue's dominated convergence theorem. For any $T>0$ and $t>T$, we write using Duhamel formula and Young inequality:
$$
\begin{array}{r c l}
\nabla(Q^-) & = & \nabla(K_t *Q^-(0))+\int_0^t (\nabla K_{t-t'})*(Q^-)^pdt' \\
& = & o_{L^2, \ t\rightarrow +\infty}(1)+\int_0^{t-T} (\nabla K_{t-t'})*(Q^-)^pdt' +\int_{t-T}^{t} (\nabla K_{t-t'})*(Q^-)^pdt' \\
& = & o_{L^2, \ t\rightarrow +\infty}(1)+\int_0^{t-T}O_{L^2}\left( \parallel \nabla K_{t-t'}\parallel_{L^2}\parallel (Q^-)^p\parallel_{L^1} \right)dt' \\
&&+\int_{t-T}^{t} O_{L^2}\left( \parallel \nabla K_{t-t'}\parallel_{L^1}\parallel (Q^-)^p\parallel_{L^2}\right)dt' \\
& = & o_{L^2, \ t\rightarrow +\infty}(1)+\int_0^{t-T}O_{L^2}\left( \frac{1}{(t-t')^{\frac{d+2}{4}}}\times C  \right)dt' \\
&&+\int_{t-T}^{t} O_{L^2}\left( \frac{1}{\sqrt{t-t'}} \underset{t'\in [t-T,t]}{\text{sup}}\parallel (Q^-)^p\parallel_{L^2}\right) dt'\\
& = & o_{L^2, \ t\rightarrow +\infty}(1)+O_{L^2}(T^{-\frac{d-2}{4}} ) + O_{L^2}\left( \underset{t'\in [t-T,t]}{\text{sup}}\parallel Q^-\parallel_{L^{2p}}^p\right)\\
& = & O_{L^2}(T^{-\frac{d-2}{4}} ) \ \ \text{as} \ \ t\rightarrow +\infty.
\end{array}
$$
As this is valid for any $T>0$, this implies that $\nabla Q^-$ tends to $0$ in $L^2$. Hence $\lim_{t\to +\infty}\parallel Q^-(t)\parallel_{\dot H^1}= 0$.

\end{proof}


\subsection{Uniqueness}


We now conlude the proof of Theorem \ref{th:liouville} by proving that $Q^\pm$ are the only solutions uniformly trapped near $\mathcal M$ on $(-\infty,0]$ in the sense of Definition \ref{def:trapped}, up to the symmetries of the equation.

\begin{proof}[Proof of Theorem \ref{th:liouville}] let $u$ be trapped at distance $0<\delta\ll 1$ of $\mathcal M$ on $(-\infty,0]$ in the sense of Definition \ref{def:trapped}. We follow the strategy designed in \cite{RSmin,MMR2}, First we use a dissipation argument to show that the instability must be the main term at $-\infty$, the stable part of the perturbation being of quadratic size. This then implies an exponential decay of the whole perturbation, which hence enters the regime where we constructed the solution, and a simple contraction like argument will close the proof.\\ 

The primary information to notice is that the scale cannot diverge as $t\rightarrow -\infty$ from \fref{eq:controle lambda}, i.e. there exists $0<\lambda_1<\lambda_2$ such that for $t\leq 0$:
$$
\lambda_1\leq \lambda \leq \lambda_2.
$$
We then rewrite the $\dot H^2$ energy bound \fref{eq:dotH2} as:
$$
\frac{d}{ds}\left[\frac{1}{\lambda^2}\int (H \varepsilon)^2 \right]\leq -\frac{1}{C\lambda^2}\int H\varepsilon H^2\varepsilon+\frac{Ca^4}{\lambda^2}+\frac{Ca^2}{\lambda^2} \parallel \varepsilon \parallel_{\dot H^2}^2 .
$$
From the fact \fref{eq:controle lambda} that $\lambda$ does not diverge we rewrite it as:
\be \label{eq:modifieddotH2}
\frac{d}{ds}\left[\frac{1}{\lambda^2}\parallel H \varepsilon \parallel_{L^2}^2\right]\leq C(a^4+a^2\parallel \varepsilon \parallel_{\dot H^2}^2) -\frac{1}{C}\int \varepsilon H^3\varepsilon
\ee
Finally the global energy bound \eqref{eq:variation energie} ensures 
\be
\label{integrability}
\int_{-\infty}^0 (\|\e(s)\|_{\dot{H}^2}^2+a^2(s))ds\lesssim \delta^2.
\ee

\noindent{\bf step 1} Dominance of the instability. We claim that there for any constant $K\gg 1$ for any $0< \delta \ll 1$ small enough there holds the following bound:
\be \label{eq:domination a}
\forall t<0, \ \ \parallel \varepsilon \parallel_{\dot H^2}^2(t)\leq \frac{|a(t)|}{K} .
\ee
\noindent\emph{Sequential control}. We first claim that \eqref{eq:domination a} holds on a subsequence in time $t_n\to -\infty$. We argue by contradiction and assume that there exists $s_0\leq 0$ such that:
\be \label{eq:dominance a contradiction}
\forall s\leq s_0, \ \ \parallel \varepsilon (s) \parallel_{\dot H^2}^2> \frac{|a(s)|}{4K}.
\ee
On the one hand, there exists $s_n\to-\infty$ such that $\lim_{s_n\to -\infty}\|\e(s_n)\|_{\dot{H}^2}=0$ from \eqref{integrability}. On the other hand, injecting \fref{eq:dominance a contradiction} in the energy identity \fref{eq:modifieddotH2}, using \fref{eq:controle lambda} and \fref{eq:estimation dotH1}, yields:
$$
\ba{r c l}
\frac{d}{ds}\left[\frac{1}{\lambda^2}\parallel H\varepsilon \parallel_{L^2}^2\right] \leq  C(K|a|^3+a^2) \parallel H \varepsilon \parallel_{L^2}^2  \leq  C(K\delta+1)a^2 \parallel H \varepsilon \parallel_{L^2}^2 \leq Ca^2 \frac{\parallel H \varepsilon \parallel_{L^2}^2}{\lambda^2}
\ea
$$
which, integrated in time on $[s,s_0]$ for any $s\leq s_0$, using the integrability of $a^2$ coming from the variation of the energy \fref{eq:variation energie}, gives:
$$
\text{log}\left(\frac{1}{\lambda^2(s_0)} \para H\varepsilon (s_0) \para_{L^2}^2 \right)-\text{log}\left(\frac{1}{\lambda^2(s)} \para H\varepsilon (s) \para_{L^2}^2 \right) \leq C\int_s^{s_0}a^2ds\leq C \delta^2.
$$
This can be rewritten, for any $s\leq s_0$ as:
$$
\para H\varepsilon(s) \para_{L^2}^2\geq e^{-C\delta^2} \frac{\lambda^2 (s)}{\lambda^2 (s_0)} \para H \varepsilon (s_0)\para_{L^2}^2\geq c >0
$$
for some constant $c>0$, from \fref{eq:dominance a contradiction} and as the scale does not diverge from \fref{eq:controle lambda}. The above identity then contradicts the convergence to $0$ of $H\varepsilon$ along a subsequence $s_n\rightarrow -\infty$ and gives the desired contradiction.\\

\noindent\emph{No return}. We claim that there exists $K,\delta_0>0$ such that for all $0<\delta\leq \delta_0$, if at some time $s_0$ there holds:
\be \la{min:comparaison varepsilon dotH2 a}
\parallel \varepsilon \parallel_{\dot H^2}^2(s_0)\leq \frac{|a(s_0)|}{4K},
\ee
then 
\be \la{min:comparaison varepsilon dotH2 a 2}
\forall s\in (s_0,0), \ \ \parallel \varepsilon \parallel_{\dot H^2}^2(s)\leq \frac{|a(s)|}{K}
\ee
which concludes the proof of  \eqref{eq:domination a} given that we just showed \fref{min:comparaison varepsilon dotH2 a} for a sequence of times $s_n\rightarrow -\infty$. Indeed, if $a(s_0)=0$, then the solution is $Q$ up to symmetries and the claim follows. We now assume $a(s_0)\neq 0$. We look at $\mathcal S$, the set of times $s_0\leq s \leq 0$ such that:
$$
|a(s)|\geq \frac{2K}{\lambda^2(s)} \parallel \varepsilon \parallel_{\dot H^2}^2
$$
$\mathcal S$ is closed and non empty as it contains $s_0$ from \fref{min:comparaison varepsilon dotH2 a} and as $|\lambda (s_0)-1|\lesssim \delta$ from \fref{eq:controle lambda}. Now inside $\mathcal S$ we compute from the modulation equation \fref{eq:modulation a} for $a$, the modified energy estimate \fref{eq:modifieddotH2} for the error, the size estimate $|a|\lesssim \delta$ and the non divergence of the scale \fref{eq:controle lambda}:
$$
\begin{array}{r c l}
& \frac{d}{ds}\left( |a(s)|-K\frac{2}{\lambda^2(s)}\parallel \varepsilon \parallel_{\dot H^2}^2(s)\right) \\
 \geq  & |a(s)| \left(e_0 -C\delta-\frac{C}{K}-KC\delta^3-C\delta^2 \right) > 0
\end{array}
$$
for $K$ large enough and $\delta $ small enough. Consequently $\mathcal S$ is open, and via connectedness $\mathcal S=[s_0,0]$. One then has on $[s_0,s]$ the bound $\parallel \varepsilon \parallel_{\dot H^2}^2\leq \frac{\lambda^2(s)}{2K}|a(s)|$ which gives \fref{min:comparaison varepsilon dotH2 a 2} from \fref{eq:controle lambda}.\\

\noindent {\bf step 2} Primary refined asymptotics at $-\infty$. We claim that the solution enters the regime of exponential smallness: there exists $\lambda_{\infty}>0$ and $z_{\infty}\in \mathbb R^d$ such that:
\be \label{eq:decroissance exponentielle}
a=a_0e^{\frac{e_0}{\lambda_{\infty}^2} t}+O(\delta^2e^{\frac{2e_0}{\lambda_{\infty}^2} t}), \ \ \parallel \varepsilon \parallel_{\dot H^2}\lesssim \delta^2 e^{\frac{2e_0}{\lambda_{\infty}^2} t}, \ \ 0\neq |a_0|\lesssim \delta ,
\ee
\be \label{eq:decroissance exponentielle2}
\lambda = \lambda_{\infty}+O(\delta^2e^{\frac{2e_0}{\lambda_{\infty}^2} t}), \ \ z=z_{\infty}+O(\delta^2e^{\frac{2e_0}{\lambda_{\infty}^2}t}).
\ee
\noindent \emph{Intermediate estimate}. We first claim the intermediate estimate:
\be \label{eq:decroissance exponentielle inter}
a(s)=a_0e^{e_0 s}+O(\delta^2e^{2e_0 s}), \ \ \parallel \varepsilon \parallel_{\dot H^2}\lesssim \delta^2e^{2e_0 s}, \ \ 0\neq |a_0|\lesssim \delta ,
\ee
which we will now prove by bootstraping the informations one can obtain from \fref{eq:modulation a} and \fref{eq:modifieddotH2} starting from the first bound \fref{eq:domination a}. Taking $K$ large enough and $\delta$ in a small enough range, \fref{eq:modulation a} and \fref{eq:domination a} yields:
$$
\frac{d}{ds}|a(s)|>\frac{2e_0}{3}|a(s)|.
$$
Integrating this on $[s,0]$ using the global bound $|a(s)|\lesssim\delta$ from \eqref{eq:estimation dotH1} ensures:
 $$|a(s)|e^{-\frac{2e_0}3s}\lesssim \delta$$ and hence
\be \label{eq:a 1}
|a|\lesssim \delta e^{\frac{2e_0}{3}s}.
\ee
 From \fref{eq:domination a} this implies 
\be \label{eq:e 1}
\parallel \varepsilon \parallel_{\dot H^2}^2\leq C\delta e^{\frac{2e_0}{3}s}.
\ee
We inject these two estimates in \fref{eq:modifieddotH2} to find:
$$
\frac{d}{ds}\left[O(1)\parallel H\varepsilon \parallel_{L^2}^2 \right]\lesssim \delta^3 e^{2e_0s}
$$
which implies the refined estimate using the coercivity \fref{eq:coercivite} 
\be \label{eq:e 2}
\parallel \varepsilon \parallel_{\dot H^2}\lesssim \delta^{\frac{3}{2}}e^{e_0s}.
\ee
In a similar way, we inject this and \fref{eq:a 1} in \fref{eq:modulation a} to find after reintegration on $[s,0]$:
\be \label{eq:a 2}
a=a_0e^{e_0s}+O(\delta^2e^{\frac{4}{3}e_0s}+\delta^{\frac 3 2}e^{2e_0s}), \ \ a_0=O(\delta).
\ee
Again, injecting the above estimate and \fref{eq:e 2} in \fref{eq:modifieddotH2} gives:
\be \label{eq:e 3}
\parallel \varepsilon \parallel_{\dot H^2}\lesssim \delta^2e^{2e_0s}.
\ee
Injecting this estimate and \fref{eq:a 2} in \fref{eq:modulation a} gives:
\be \label{eq:a 3}
a=a_0e^{e_0s}+O(\delta^2e^{2e_0s}), \ \ a_0=O(\delta).
\ee
The above two estimates are the bound \fref{eq:decroissance exponentielle inter} we had to show.\\

\noindent\emph{Conclusion}. We rewrite the modulation equations \fref{eq:modulation lambda}, and \fref{eq:modulation z} and for $\lambda$ and $z$ using the exponential decay in renormalized time \fref{eq:decroissance exponentielle inter} as:
$$
\frac{\lambda_s}{\lambda}=O(\delta^2 e^{2e_0s}), \ \ \frac{z_s}{\lambda}=O(\delta^2 e^{2e_0s}) .
$$
This implies that there exists $\lambda_{\infty}$ and $z_{\infty}$ such that:
$$
\lambda=\lambda_{\infty}+O(\delta^2 e^{2e_0s}), \ \ z=z_{\infty}+O(\delta^2 e^{2e_0s}).
$$
From the fact that $s$ solves $\frac{ds}{dt}=\lambda^{-2}$, one gets $t=\lambda_{\infty}^2s+O(\delta^2e^{2e_0s})$. The primary exponential bound \fref{eq:decroissance exponentielle inter} in renormalized time $s$ and the above identity then become the desired bounds \fref{eq:decroissance exponentielle} and \fref{eq:decroissance exponentielle2} in original time $t$.\\

\noindent{\bf step 3} Additional exponential decay. Up to scale change and translation, one can assume that $\lambda_{\infty}=1$ and $z_{\infty}=0$. Up to time translation\footnote{As a consequence the solution is maybe no more defined on $(-\infty,0[$ but just on $(-\infty,t_0)$ for some $t_0\in \mathbb R$, which is not a problem as we are working at the asymptotic $t\rightarrow -\infty$.} one can assume that $u$ is not a ground state and that $a_0=\pm 1$ in \fref{eq:decroissance exponentielle}. We write the solution $u$ as:
\be \la{liou:id u}
u(t)=Q\pm e^{e_0 t}\mathcal Y+\tilde ae^{e_0 t}\mathcal Y+v, \ \ v\perp \mathcal Y.
\ee
Then from \fref{eq:decroissance exponentielle}, \fref{eq:decroissance exponentielle2} and \fref{eq:modulation lambda} one has for small enough times:
\be \label{eq:exp}
|\tilde a|\lesssim e^{e_0 t}, \ \ |\tilde a_t|\lesssim e^{e_0 t}, \ \ \parallel v \parallel_{\dot H^1}\lesssim \delta, \ \ \parallel v \parallel_{\dot H^2}\lesssim e^{2e_0t}.
\ee
We claim that in addition\footnote{The $L^{\infty}$ norm being finite from Proposition \ref{pr:cauchy}} that for small enough times:
\be \label{eq:exp2}
\parallel v \parallel_{L^{\infty}}\lesssim e^{e_0t}, \ \ \parallel v\parallel_{\dot H^1}\lesssim e^{2e_0t}.
\ee
From \fref{eq:exp} and Sobolev embedding $v$ satisfies $\para v \para_{L^{\frac{2d}{d-4}}}\lesssim e^{2e_0t}$. This, with \fref{liou:id u} and \fref{eq:exp} implies that the whole perturbation satisfies $\para u-Q\para_{L^{\frac{2d}{d-4}}}\lesssim e^{e_0 t}$. Therefore, one can apply the parabolic estimate of Lemma \ref{para:lem:para}, \fref{para:eq:bd v W2infty}, to obtain the $L^{\infty}$ estimate in \fref{eq:exp2} on some time interval $(-\infty,T]$ for $T$ small enough. We now prove the $\dot H^1$ bound in \fref{eq:exp2}. The evolution equation for $v$ is:
\be \label{eq:evolution v}
v_t+Hv=-\tilde a _te^{te_0}\mathcal Y+NL, \ \ NL:=f(u)-f(Q)-f'(Q)u.
\ee
The Duhamel formula for the solution of \fref{eq:evolution v} yields for $t$ small enough and $t_0>0$:
\be \label{eq:duhamel v}
\nabla v(t)=(\nabla K_{t_0})*v(t-t_0)+\int_{t-t_0}^t (\nabla K_{t-\tau}* (-Vv-\tilde a _te^{\tau e_0}\mathcal Y+NL)d\tau.
\ee
We estimate the nonlinear term using Young inequality, the estimate \fref{eq:NL3} on the nonlinearity, Sobolev embedding, interpolation, \fref{eq:exp},\fref{eq:estimation dotH1} and the fact that $\mathcal Y$ is exponentially decreasing:
$$
\begin{array}{r c l}
\int NL^2 &\lesssim &  \int (e^{e_0t}+\tilde a)^4\mathcal Y^4 Q^{2(p-2)}+\int \varepsilon^{2p} \lesssim  e^{4e_0t}+ \parallel \varepsilon \parallel_{\dot H^{\frac{2d}{d+2}}}^{2p} \\
&\lesssim &  e^{4e_0t}+ \parallel \varepsilon \parallel_{\dot H^1}^{\frac{8p}{d+2}}\parallel \varepsilon \parallel_{\dot H^2}^2 \lesssim  e^{4e_0t} +\delta^{\frac{8p}{d+2}}e^{4e_0 t} .
\end{array}
$$
We come back to the above Duhamel formula. For the first term we use H\"older inequality and \fref{eq:exp}, for the second the fact that $\mathcal Y$ is exponentially decaying and \fref{eq:exp} and for the third the estimate we just proved, yielding:
\bee
 && \parallel \int_{t-t_0}^t (\nabla K_{t-\tau})* (-Vv-\tilde a _te^{\tau e_0}\mathcal Y+NL)d\tau \parallel_{L^2} \\
& \leq & \int_{t-t_0}^t \parallel \nabla K_{t-\tau} \parallel_{L^1}\parallel-Vv-\tilde a _te^{\tau e_0}\mathcal Y+NL\parallel_{L^2}d\tau \\
& \lesssim & \int_{t-t_0}^t \frac{1}{\sqrt{t-\tau}} \left(\parallel V\parallel_{L^{\frac d 2}}\parallel v\parallel_{L^{\frac{2d}{d-4}}}+|\tilde a _t|e^{e_0 \tau}+\parallel NL \parallel_{L^{2p}}^{p}\right)d\tau \\
& \lesssim & \int_{t-t_0}^t \frac{e^{2e_0 \tau }}{\sqrt{t-\tau}}d\tau  \lesssim  e^{2e_0 t}.
\eee
The very same computations shows that the second term in \fref{eq:duhamel v} converges strongly at speed $e^{2e_0 t}$ in $L^2$ as $t_0\rightarrow +\infty$. This implies that $(\nabla K_{t_0})*v(t-t_0)$ converges strongly in $L^2$ as $t_0 \rightarrow +\infty$. As $v$ is uniformly bounded in $L^4$, one has that $(\nabla K_{t_0})*v(t-t_0)$ converges weakly to $0$ as $t_0\rightarrow +\infty$. Therefore, $(\nabla K_{t_0})*v(t-t_0)$ converges strongly in $L^2$ to $0$ as $t_0 \rightarrow +\infty $ and one has the following formula:
$$
\nabla v(t)=\int_{-\infty }^t (\nabla K_{t-\tau}* (-Vv-\tilde a _te^{\tau e_0}\mathcal Y+NL)d\tau =O(e^{2e_0 t})
$$
where the upper bound is implied by the above estimate. \\

\noindent{\bf step 5} Uniqueness via contraction argument. Let $u_1$ and $u_2$ be two solutions of \fref{eq:NLH} that are trapped on $(-\infty,0]$ at distance $\delta$ and that are not ground states. From all the previous results of the previous steps, we assume that they have been renormalized and from \fref{eq:exp} and \fref{eq:exp2} they can be decomposed as:
\be \label{eq:orthogonalite v1-v2}
u_i=Q \pm e^{e_0t}\mathcal Y+\tilde a_i e^{e_0 t}\mathcal Y+v_i+b_i\Lambda Q+z_i.\nabla Q, \ \ v_i\in \text{Span} (\mathcal Y,\Psi_0,\Psi_1,...,\Psi_d)^{\perp},
\ee
the profiles $\Psi_0,...,\Psi_i$ being defined by \fref{eq:def Psi0} and \fref{eq:def Psii} with
\be \label{eq:decroissance expo totale}
|b_i|+|z_i|+|\tilde a_i|e^{e_0t}+\parallel v_i \parallel_{\dot H^1}+\parallel v_i \parallel_{\dot H^2}\lesssim e^{2e_0t}, \ \ \ \parallel v_i \parallel_{L^{\infty}}\lesssim e^{e_0t}.
\ee
Then if they have the same sign at first order on their projection onto the unstable mode (the $\pm$ in \fref{eq:orthogonalite v1-v2}) we claim $u_1=u_2$. This will end the proof of the proposition as $Q^-$ and $Q^+$ are indeed trapped at any distance of $\mathcal M$ as $t\rightarrow -\infty$ from \fref{eq:bound Qpm}. Without loss of generality we chose a $+$ sign. For $T\ll 0$ we define the following norm for the difference:
\be \label{eq:def norme u1-u2}
\begin{array}{r c l}
\parallel u_1-u_2 \parallel_T &:=& \underset{t\leq T}{\text{sup}} \ e^{-2e_0t}\parallel v_1-v_2\parallel_{\dot H^1}+\underset{t\leq T}{\text{sup}} \ e^{-e_0t}|\tilde a_1-\tilde a_2|\\
&&+\underset{t\leq T}{\text{sup}} \ e^{-2e_0t}|b_1-b_2|+\underset{t\leq T}{\text{sup}} \ e^{-2e_0t}|z_1-z_2|
\end{array}
\ee
which is finite from \fref{eq:decroissance expo totale}. The evolution of the difference $u_1-u_2$ is given by:
\be \label{eq:evolution v1-v2}
(v_1-v_2)_t+H(v_1-v_2)=-(\tilde a_1-\tilde a_2)_te^{e_0t}\mathcal Y-(b_1-b_2)_t\Lambda Q-(z_1-z_2)_t.\nabla Q+NL_1-NL_2,
\ee
where:
$$
NL_i:=f(u_i)-f(Q)-f'(Q)u_i.
$$
From \fref{eq:decroissance expo totale}, \fref{eq:NL7}, H\"older inequality and Sobolev embedding one gets the following bounds for the nonlinear term for $t\leq T$:
\bee \label{eq:bound NL1-NL2}
&&\parallel NL_1-NL_2 \parallel_{L^2}\\
&\lesssim & \Bigl\Vert |(\tilde a_1-\tilde a_2) e^{e_0 t}\mathcal Y+v_1-v_2+(b_1-b_2)\Lambda Q+(z_1-z_2).\nabla Q| \\
&& \times (|\tilde a_1 e^{e_0 t}\mathcal Y+v_1+b_1\Lambda Q+z_1.\nabla Q |^{p-1}+|\tilde a_2 e^{e_0 t}\mathcal Y+v_2+b_2\Lambda Q+z_2.\nabla Q |^{p-1}\Bigr\Vert_{L^2} \\
&\lesssim & \para (\tilde a_1-\tilde a_2) e^{e_0 t}\mathcal Y+v_1-v_2+(b_1-b_2)\Lambda Q+(z_1-z_2).\nabla Q\para_{L^{\frac{2d}{d-2}}} \\
&& \times \Bigl( \para (1+\tilde a_1) e^{e_0 t}\mathcal Y+v_1+b_1\Lambda Q+z_1.\nabla Q \para_{L^{(p-1)d}}^{p-1}\\
&&+\para \tilde (1+\tilde a_2) e^{e_0 t}\mathcal Y+v_2+b_2\Lambda Q+z_2.\nabla Q \para_{L^{(p-1)d}}^{p-1}\Bigr) \\
&\lesssim & \para (\tilde a_1-\tilde a_2) e^{e_0 t}\mathcal Y+v_1-v_2+(b_1-b_2)\Lambda Q+(z_1-z_2).\nabla Q\para_{\dot H^1}  \times e^{(p-1)t} \\
&\lesssim & \parallel u_1-u_2 \parallel_T e^{(p+1)t} . \\
\eee

\noindent \emph{Energy estimate for the difference of errors}. From \fref{eq:evolution v1-v2}, the orthogonality conditions \fref{eq:orthogonalite v1-v2} and the bound \fref{eq:bound NL1-NL2} on the nonlinear term one gets the following energy estimate:
$$
\begin{array}{r c l}
 \frac{d}{dt}\left[\int (v_1-v_2)H(v_1-v_2) \right] &=& 2 \int H(v_1-v_2)(NL_1-NL_2)-2\int H(v_1-v_2)^2 \\
 &\leq & \parallel NL_1-NL_2\parallel_{L^2}^2 \lesssim  e^{2(p+1)e_0 t} \parallel u_1-u_2 \parallel_T.
\end{array}
$$
From the coercivity property of the linearized operator \fref{eq:coercivite} one has:
$$
\int (v_1-v_2)H(v_1-v_2)\lesssim \parallel v_1-v_2\parallel_{\dot H^1}^2 \lesssim \int (v_1-v_2)H(v_1-v_2).
$$
Therefore, $ \int (v_1-v_2)H(v_1-v_2)$ goes to zero as $t\rightarrow -\infty$, and reintegrating in time the energy estimate gives from the coercivity:
\be \label{eq:contraction 2}
\underset{t\leq T}{\text{sup}}  \parallel v_1-v_2 \parallel_{\dot H^1}e^{-2e_0t}  \lesssim e^{(p-1)e_0T}\parallel u_1-u_2\parallel_T .
\ee

\noindent \emph{Modulation equations for the differences of parameters}. We take the scalar products of \fref{eq:evolution v1-v2} with $\Lambda Q$, $\mathcal Y$ and $\nabla Q$, using the bound \fref{eq:bound NL1-NL2} for the nonlinear term and obtain for any $t\leq T$
$$
\ba{r c l}
 \left|\frac{d}{dt}\left[b_1-b_2+\int (v_1-v_2)\Lambda Q \right] \right|+\left|\frac{d}{dt}\left[z_1-z_2+\int (v_1-v_2) \nabla Q \right] \right| \\
+|\frac{d}{dt}(\tilde a_1-\tilde a_2)|e^{e_0t}  \lesssim   e^{(p+1)e_0 t} \parallel u_1-u_2 \parallel_T .
\ea
$$
The boundary term involving $v_1-v_2$ satisfies from Sobolev embedding as $d\geq 7$:
$$
\ba{r c l}
&\left|\int (v_1-v_2)\Lambda Q  \right|+\left|\int (v_1-v_2)\nabla Q  \right|  \lesssim  \int \frac{|v_1-v_2|}{1+|x|^{d-2}} \\
\lesssim & \para v_1-v_2\para_{L^{\frac{2d}{d-2}}} \para (1+|x|)^{-d+2}\para_{L^{\frac{2d}{d+2}}} \lesssim \para v_1-v_2\para_{\dot H^1}.
\ea
$$
The two above equations, after reintegration in time, as the left hand side goes to $0$ as $t\rightarrow -\infty$, imply:
\bea \label{eq:contraction 1}
\non & \underset{t\leq T}{\text{sup}} (|b_1-b_2|+|z_1-z_2|+|\tilde a_1-\tilde a_2|e^{e_0t})e^{-2e_0t} \\
\lesssim & e^{(p-1)e_0T} \parallel u_1-u_2 \parallel_T+ \underset{t\leq T}{\text{sup}} \para v \para_{\dot H^1}e^{-2e_0t}  \lesssim e^{(p-1)e_0T} \parallel u_1-u_2 \parallel_T
\eea
where we used the estimate \fref{eq:contraction 2}.

\noindent \emph{Conclusion}. From the definition \fref{eq:def norme u1-u2} of the norm of the difference that is adapted to the exponential decay, and the estimates \fref{eq:contraction 1} and \fref{eq:contraction 2} one obtains:
$$
\parallel u_1-u_2\parallel_{T} \lesssim e^{(p-1)e_0T} \parallel u_1-u_2\parallel_{T} .
$$
For $T\ll 0$ small enough this implies: $\parallel u_1-u_2\parallel_{T}=0 $. This means that the solutions $u_1$ and $u_2$ are equal.

 \end{proof}


\section{Classification of the flow near the ground state} \la{sec:th}


We are now in position to conclude the proof of Theorem \ref{th:main}.


\subsection{Set up} \la{sub:strategy}


Let
$$
0<\delta \ll \alpha \ll \alpha^* \ll 1.
$$
be three small strictly positive constants to be fixed later on. Let $u_0\in \dot H^1$ with
\be \label{eq:distance u0 delta2}
\parallel u_0-Q\parallel_{\dot H^1}\leq \delta^4
\ee
and let $u$ be the solution of \fref{eq:NLH} starting from $u_0$ (see Proposition \ref{pr:cauchy} for the local wellposedness result) with maximal time of existence $T_{u_0}$. To prove Theorem \ref{th:main} we are going to study $u$ for times where it is close to the manifold of ground states $\mathcal M$, that is to say in the set
$$
\left\{v\in \dot H^1, \ d(v,\mathcal M)=\underset{\lambda>0, \ z\in \mathbb R^d}{\text{inf}}\para v-Q_{z,\lambda}\para_{\dot H^1}\leq \alpha^* \right\}
$$
using the variables $\lambda$, $s$, $z$, $a$ and $\varepsilon$ introduced in Definition \ref{def:trapped} to decompose it in a suitable way. We introduce three particular times related to the trajectory of the solution $u$ starting from $u_0$. For a constant $K \gg 1$ big enough to be fixed later we define
\be \label{eq:def Tins}
\begin{array}{r c l}
T_{\text{ins}}  := \text{sup} \Bigl\{ & 0\leq t <T_{u_0}, \ \ \underset{0\leq t'\leq t}{\text{sup}} \ d(u(t'),\mathcal M) \leq \delta^2,  &  \\
& \forall t'\in [0,t], \ \ \parallel \varepsilon (t') \parallel_{\dot H^2}^2> \frac{|a(t')|}{K} &\Bigr\} .
\end{array}
\ee
Let $\tilde K >0$ be a constant, independent of the other constants, such that for any $\nu>0$ with $0<\tilde K \nu \leq \alpha^* $ and  $v\in \dot H^1$,
\be \label{eq:def tildeK}
\tilde K \nu \leq d(v,\mathcal M) \leq \alpha^* \ \ \Rightarrow \ \ \text{either} \ \   \parallel \varepsilon \parallel_{\dot H^1}\geq \nu  \ \ \text{or} \ \ |a|\geq\nu .
\ee
Such a constant $\tilde K$ exists since for $v\in \dot H^1$ with  $d(v,\mathcal M) \leq \alpha$ one has from \fref{eq:decomposition}
$$
d(v,\mathcal M)\leq \parallel \varepsilon \parallel_{\dot H^1}+C|a| .
$$
We then define:
\be \label{eq:def Ttrans}
\begin{array}{r c l}
T_{\text{trans}}  := \text{sup} \Bigl\{ & T_{\text{ins}} \leq t <T_{u_0}, \ \ \underset{T_{\text{ins}}\leq t'\leq t}{\text{sup}} \ d(u(t'),\mathcal M)\leq \tilde K \delta,  &  \\
& \forall t'\in [0,t], \ \ |a(t')|\leq \delta &\Bigr\} ,
\end{array}
\ee
\be \label{eq:def Texit}
\begin{array}{r c l}
T_{\text{exit}}  := \text{sup} \Bigl\{ & T_{\text{trans}}\leq t <T_{u_0}, \ \ \underset{T_{\text{trans}}\leq t'\leq t}{\text{sup}} \ d(u(t'),\mathcal M)\leq  \tilde K \alpha,  &  \\
& \forall t'\in [0,t], \ \ |a(t')|\leq \alpha &\Bigr\} .
\end{array}
\ee
with the convention that $\text{sup}(\emptyset)=T_{u_0}$. Our strategy of the proof is the following. In Lemma \ref{lem:caracterisation Tins} we characterize $T_{\text{ins}}$ as the time, if not $+\infty$, for which the instability has started to take control over the solution. In Lemma \ref{lem:soliton} we show that if it never happens, i.e. $T_{\text{ins}}=+\infty$, then the solution converges to some soliton. Indeed, in that case the main part of the renormalized perturbation is located on the stable infinite dimensional direction of perturbation $\approx (\Lambda Q,\partial_{x_1}Q,...,\partial_{x_d}Q,\mathcal Y)^{\perp}$ and undergoes dissipation. In Lemma \ref{lem:exit} we show that if it happens, i.e. $T_{\text{ins}}<+\infty$, then the instability will drive the solution toward type I blow up or dissipation. The analysis is done in three times. First we characterize the time interval $[T_{\text{ins}},T_{\text{trans}}]$ as the transition period in which the solution stays trapped at distance $\delta^2$ and is such that at $T_{\text{trans}}$ the stable perturbation is quadratic compared to the instability. For later times, on $[T_{\text{trans}},T_{\text{exit}}]$ this implies an exponential growth of the instability, with a stable perturbation being still quadratic. In this exponential instable regime we can compare the solution with the minimal solutions $Q^{\pm}$ introduced in Proposition \ref{pr:minimal} and compute that they are close at the exit time $T_{\text{exit}}$. As Type I blow up and dissipation are stable behaviors, $u$ will undergo one or the other.\\

We now proceed to the detailed proof of Theorem \ref{th:main}.


\subsection{Caracterization of $T_{\text{ins}}$}


 We first characterize the time $T_{\text{ins}}$.

\begin{lemma}[$T_{\text{ins}}$ as the instability time] \label{lem:caracterisation Tins}
There exists $K^*\gg 1$, such that for any $K\geq K^*$, there exists $0<\delta^*(K)\ll 1$, such that for any $0<\delta <\delta^*(K)$, on $[0,T_{\text{ins}})$ there holds:
\be \label{eq:bound delta2 Tins}
|a(t)|\lesssim K\delta^4, \ \ \parallel \varepsilon (t) \parallel_{\dot H^1}\lesssim \delta^4.
\ee
Moreover, if $T_{\text{ins}}<T_{u_0}$ then:
\be \label{eq:caracterisation Tins}
\parallel \varepsilon (T_{\text{ins}}) \parallel_{\dot H^2}^2= \frac{|a(T_{\text{ins}})|}{K}.
\ee
\end{lemma}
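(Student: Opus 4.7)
The plan is to first unpack the two defining conditions of $T_{\text{ins}}$. On $[0, T_{\text{ins}})$, the distance condition combined with \eqref{eq:estimation dotH1} gives the rough bounds $|a(t)| + \parallel \varepsilon(t) \parallel_{\dot H^1} \lesssim \delta^2$, while the second condition supplies the key structural inequality $|a(t)| \leq K \parallel \varepsilon(t) \parallel_{\dot H^2}^2$, which says that on this interval the unstable mode is always dominated by the dissipative part. At the initial time, \eqref{eq:distance u0 delta2} together with Lemma \ref{lem:decomposition} gives $|a(0)| + \parallel \varepsilon(0) \parallel_{\dot H^1} \lesssim \delta^4$, so both quantities start of order $\delta^4$.

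The core of the proof is a Lyapunov functional combining the $\dot H^1$ energy and the unstable mode,
\[
N(s) := \frac{1}{2} \int \varepsilon H \varepsilon \, + \mu \, a^2,
\]
for a small constant $\mu > 0$ to be chosen. Using the $\dot H^1$ monotonicity \eqref{eq:dotH1} and the modulation equation \eqref{eq:modulation a}, a direct computation gives
\[
\frac{dN}{ds} \leq - c \parallel \varepsilon \parallel_{\dot H^2}^2 + C a^4 + 2 e_0 \mu a^2 + C\mu |a|\bigl(a^2 + \parallel \varepsilon \parallel_{\dot H^2}^2\bigr).
\]
The structural bound $|a| \leq K \parallel \varepsilon \parallel_{\dot H^2}^2$ combined with $|a| \lesssim \delta^2$ then dominates each error term by the dissipation: one has $a^2 \lesssim K\delta^2 \parallel \varepsilon \parallel_{\dot H^2}^2$, $a^4 \lesssim K\delta^6 \parallel \varepsilon \parallel_{\dot H^2}^2$ and $|a|\parallel \varepsilon \parallel_{\dot H^2}^2 \lesssim \delta^2 \parallel \varepsilon \parallel_{\dot H^2}^2$. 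Fixing $\mu$ independently of $K$, then taking $\delta \leq \delta^*(K)$ small enough, makes $dN/ds \leq 0$. Since $N(0) \lesssim \delta^8$ from the initial bounds, $N$ stays bounded by $C\delta^8$ on $[0,T_{\text{ins}})$, and the coercivity \eqref{eq:coerciviteA} then delivers $\parallel \varepsilon(t) \parallel_{\dot H^1} \lesssim \delta^4$ and $|a(t)| \lesssim \delta^4$, proving \eqref{eq:bound delta2 Tins} (in fact with a $K$-independent constant, but the statement allows a factor $K$).

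For the characterization \eqref{eq:caracterisation Tins}, assume $T_{\text{ins}} < T_{u_0}$. By continuity the bounds extend to $t = T_{\text{ins}}$, giving $d(u(T_{\text{ins}}), \mathcal M) \lesssim \delta^4 \ll \delta^2$, so the distance constraint in the definition of $T_{\text{ins}}$ is not saturated. Consequently it must be the instability condition $\parallel \varepsilon(t) \parallel_{\dot H^2}^2 > |a(t)|/K$ that fails at $T_{\text{ins}}$; the continuity of $a$ and of $\parallel \varepsilon \parallel_{\dot H^2}$ on $(0,T_{u_0})$ provided by the regularizing effects of Proposition \ref{pr:cauchy}, together with the strict inequality on $(0, T_{\text{ins}})$, forces equality at $t = T_{\text{ins}}$. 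The main obstacle is the positive term $2 e_0 \mu a^2$ in the Lyapunov estimate, which has a priori the wrong sign: absorbing it by the dissipation is only possible thanks to the structural bound $|a| \leq K \parallel \varepsilon \parallel_{\dot H^2}^2$ built into the definition of $T_{\text{ins}}$, and requires choosing $\mu$ before $\delta$ so that $\delta^*(K)$ is small enough relative to $K$. This is the algebraic heart of the argument.
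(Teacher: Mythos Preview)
Your proof is correct. It differs from the paper's in that the paper treats $a$ and $\varepsilon$ separately rather than through a combined Lyapunov functional. For $a$, the paper injects the structural bound $|a| < K\parallel\varepsilon\parallel_{\dot H^2}^2$ into \eqref{eq:modulation a} to obtain $|a_s| \lesssim K\parallel\varepsilon\parallel_{\dot H^2}^2$, then integrates in $s$ and invokes the global energy dissipation bound \eqref{eq:variation energie} (which gives $\int_0^{S_{\text{ins}}}\parallel\varepsilon\parallel_{\dot H^2}^2\,ds \lesssim \delta^4$ since the solution is trapped at distance $\delta^2$); this yields $|a| \lesssim K\delta^4$ with a genuine factor $K$. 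For $\varepsilon$, the paper shows that $\frac{1}{2}\int\varepsilon H\varepsilon$ is itself non-increasing, absorbing the $Ca^4$ term exactly as you do. Your functional $N = \frac{1}{2}\int\varepsilon H\varepsilon + \mu a^2$ handles both at once and avoids the appeal to Lemma \ref{lem:variation energie}; the price is the extra positive term $2e_0\mu a^2$, which as you note is harmless once $K\delta^2$ is small. A minor bonus of your route is that the bound on $|a|$ comes out $K$-independent. The characterization \eqref{eq:caracterisation Tins} is handled identically in both arguments by continuity.
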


\begin{proof}[Proof of Lemma \ref{lem:caracterisation Tins}]

To prove the lemma, from the definition \fref{eq:def Tins} of $T_{\text{ins}}$ , it suffices to show that \fref{eq:bound delta2 Tins} holds, which will automatically imply that the other identity \fref{eq:caracterisation Tins} holds for $\delta$ small enough. We will prove it by computing the time evolution of $a$ and performing an energy estimate adapted to the linear level for $\varepsilon$ in the regime $0\leq t \leq T_{\text{ins}}$. $u$ being trapped at distance $\delta^2$ on $[0,T_{\text{ins}})$ in the sense of Definition \ref{def:trapped}, we reason in renormalized time \fref{eq:def s} and define: $S_{\text{ins}}:=s(T_{\text{ins}})$. \\

\noindent{\bf step 1} Bound for $a$. From the definition \fref{eq:def Tins} of $T_{\text{ins}}$, the modulation estimate \fref{eq:modulation a} for $a$ on $0\leq s \leq S_{\text{ins}}$ and \fref{eq:estimation dotH1} one has:
$$
|a_s|\leq e_0 |a|+Ca^2+C\parallel \varepsilon \parallel_{\dot H^2}^2\leq \parallel \varepsilon \parallel_{\dot H^2}^2(K |e_0| +CK |\delta^2 |+C)\lesssim K\parallel \varepsilon \parallel_{\dot H^2}^2 
$$
for $K$ large enough. We reintegrate in time this identity using the variation of energy formula \fref{eq:variation energie}:
$$
|a(s)| \leq |a(0)|+CK \int_0^{S_{\text{ins}}} \para \varepsilon (s)\para_{\dot H^2}^2ds \leq C|\delta |^4+CK\delta^4\lesssim K\delta^4
$$
for $K$ large enough as initially $|a(0)|\lesssim \delta^4$ from \fref{eq:distance u0 delta2} and \fref{eq:estimation dotH1}.\\

\noindent{\bf step 2} Bound for $\varepsilon$. From the definition of $T_{\text{ins}}$ \fref{eq:def Tins}, the $\dot H^1$ bound \fref{eq:dotH1},  \fref{eq:estimation dotH1} and the coercivity \fref{eq:coercivite}, one has for $0\leq s \leq S_{\text{ins}}$:
\be \label{eq:energie dotH1 Tins} 
\ba{r c l}
\frac{d}{ds}\left[\frac 1 2 \int \varepsilon H\varepsilon \right] & \leq &  -\frac{1}{C}\int (H\varepsilon)^2+Ca^4 \leq -\frac{1}{C}\int (H\varepsilon)^2+CK |a|^3\para \varepsilon \para_{\dot H^2}^2 \\
& \leq &  -\frac{1}{C}\int (H\varepsilon)^2+CK\delta^6 \int (H \varepsilon)^2 \leq 0 \\
\ea
\ee
for $\delta$ small enough (depending on $K$) meaning that on $[0,S_{\text{ins}}]$, the quantity $ \int \varepsilon H\varepsilon$ is a Lyapunov functional. We integrate in time using the coercivity \fref{eq:coerciviteA} to find for $0\leq s \leq S_{\text{ins}}$:
$$
\parallel \varepsilon (s) \parallel_{\dot H^1}\lesssim \left(\int \varepsilon (s) H\varepsilon (s) \right)^{\frac 1 2} \leq \left(\int \varepsilon (0) H\varepsilon (0) \right)^{\frac 1 2} \lesssim \parallel \varepsilon (0) \parallel_{\dot H^1} \lesssim \delta^4
$$
from \fref{eq:distance u0 delta2} and \fref{eq:estimation dotH1}, ending the proof of the lemma. 
\end{proof}


\subsection{Soliton regime} We now claim that $T_{\text{ins}}=T_{u_0}$ is the (Soliton) regime. 

\begin{lemma}[Soliton regime for $T_{\text{ins}}=T_{u_0}$] \label{lem:soliton}

There exists $K^*\gg 1$, such that for any $K\geq K^*$, there exists $0<\delta^*(K) \ll 1$, such that for any $0<\delta <\delta^*(K)$, if $T_{\text{ins}}=T_{u_0}$ then  $T_{u_0}=+\infty$ and there exists $z_{\infty}\in \mathbb R^d$ and $\lambda_{\infty}>0$ such that:
$$
u\rightarrow Q_{z_{\infty},\lambda_{\infty}} \ \ \text{as} \ t\rightarrow +\infty \ \text{in} \ \dot{H}^1.
$$

\end{lemma}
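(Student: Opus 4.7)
The plan is to exploit the hypothesis $T_{\text{ins}}=T_{u_0}$ to keep $u$ trapped near $\mathcal M$ throughout its lifespan, then use the $\dot H^1$/$\dot H^2$ energy-Lyapunov structure to dissipate the stable perturbation $(a,\varepsilon)$ to zero, and finally invoke the refined modulation equations to identify the limiting soliton parameters.

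First I would propagate the bounds of Lemma \ref{lem:caracterisation Tins} to conclude that $u$ stays trapped at distance $O(\delta^4)$ of $\mathcal M$ on all of $[0,T_{u_0})$, so Lemma \ref{lem:lambda} yields a uniform $\lambda(t)=\lambda(0)(1+O(\delta^4))$, and by \fref{eq:equivalence temps} the renormalized time $s$ and original time $t$ are equivalent. Global existence $T_{u_0}=+\infty$ then follows from the blow-up criterion \fref{beibeibvei}: the $\dot H^2$ monotonicity \fref{eq:dotH2} combined with the coercivity \fref{eq:coercivite H3} (ensuring $\int H\varepsilon\,H^2\varepsilon\ge \tfrac{1}{C}\|\varepsilon\|_{\dot H^3}^2$) and the smallness of $a$, $\tfrac{\lambda_s}{\lambda}$, $\tfrac{z_s}{\lambda}$ yields via Gr\"onwall a uniform bound on $\|H\varepsilon\|_{L^2}^2$; through the decomposition \fref{eq:decomposition} and the lower bound on $\lambda$, this translates into a uniform $\dot H^2$ bound on $u$.

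Next I would dissipate $(a,\varepsilon)$. The $\dot H^1$ monotonicity \fref{eq:dotH1}, together with the trapping bound $|a|\le K\|\varepsilon\|_{\dot H^2}^2$ inherited from the definition of $T_{\text{ins}}$, makes $F(s):=\tfrac12\int\varepsilon H\varepsilon$ a nonnegative decreasing quantity with $F'(s)\le -\tfrac1C\|H\varepsilon\|_{L^2}^2$. Integrating gives $\int_0^{+\infty}\|\varepsilon\|_{\dot H^2}^2\,ds<+\infty$, which produces a sequence $s_n\to+\infty$ with $\|\varepsilon(s_n)\|_{\dot H^2}\to 0$; the coercivity \fref{eq:coerciviteA} forces $F(s_n)\to 0$, and monotonicity upgrades this to $F(s)\to 0$, so $\|\varepsilon(s)\|_{\dot H^1}\to 0$. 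For the scalar $a$, the same trapping bound yields $a\in L^1(ds)$, and rewriting the modulation equation \fref{eq:modulation a} as $a_s-e_0 a=O(\|\varepsilon\|_{\dot H^2}^2)$ with $a$ bounded allows backward integration from $+\infty$,
\begin{equation*}
a(s)=-\int_s^{+\infty}e^{-e_0(\sigma-s)}O\bigl(\|\varepsilon(\sigma)\|_{\dot H^2}^2\bigr)\,d\sigma,
\end{equation*}
whose right-hand side tends to $0$ as $s\to+\infty$ by dominated convergence.

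Finally, the refined modulation equations \fref{eq:modulation lambda 2} and \fref{eq:modulation z 2} present $\tfrac{\lambda_s}{\lambda}$ and $\tfrac{z_s}{\lambda}$ as the sum of an integrable term $O(a^2+\|\varepsilon\|_{\dot H^2}^2)$ and the $s$-derivative of a boundary quantity $O(\|\varepsilon\|_{\dot H^s})$ that vanishes as $s\to+\infty$. Integrating, using the boundedness of $\lambda$ and the equivalence of $s$ and $t$, gives $\lambda(s)\to\lambda_\infty>0$ and $z(s)\to z_\infty\in\mathbb R^d$. A triangle inequality applied to the decomposition \fref{eq:decomposition}, together with the scale invariance of $\|\cdot\|_{\dot H^1}$ and the continuity of $(z,\lambda)\mapsto Q_{z,\lambda}$ in $\dot H^1$, then yields $\|u(t)-Q_{z_\infty,\lambda_\infty}\|_{\dot H^1}\to 0$. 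The main obstacle I anticipate is the global-existence step: pure $\dot H^1$ trapping does not by itself preclude a finite-time singularity through \fref{beibeibvei}, and the $\dot H^2$-level Gr\"onwall argument driven by \fref{eq:dotH2} and \fref{eq:coercivite H3} seems essential.
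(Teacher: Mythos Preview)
Your proposal has a genuine gap in the step where you claim $\|\varepsilon(s)\|_{\dot H^1}\to 0$. You argue that $\|\varepsilon(s_n)\|_{\dot H^2}\to 0$ on a subsequence and then invoke the coercivity \fref{eq:coerciviteA} to conclude $F(s_n)\to 0$. But \fref{eq:coerciviteA} says $F\sim\|\varepsilon\|_{\dot H^1}^2$, not $\|\varepsilon\|_{\dot H^2}^2$, and on $\mathbb R^d$ there is no embedding $\dot H^2\hookrightarrow\dot H^1$: by scaling (take $\varepsilon_\mu(x)=\mu^{(d-2)/2}\varepsilon(\mu x)$ with $\mu\to 0$) one can send $\|\varepsilon_\mu\|_{\dot H^2}\to 0$ while keeping $\|\varepsilon_\mu\|_{\dot H^1}$ fixed. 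So the monotonicity of $F$ together with $\dot H^2$ integrability only tells you that $F$ decreases to some limit $L\ge 0$, and nothing in your argument rules out $L>0$. The triangle-inequality conclusion at the end collapses without $\|\varepsilon\|_{\dot H^1}\to 0$.

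The paper proceeds differently and this is precisely the obstacle it works around. It first establishes $\|\varepsilon(s)\|_{\dot H^2}\to 0$ for \emph{all} $s$ (not just on a subsequence), using the $\dot H^2$ monotonicity \fref{eq:dotH2} rewritten as $\tfrac{d}{ds}\bigl(\lambda^{-2}\|H\varepsilon\|_{L^2}^2\bigr)\lesssim a^2+\|\varepsilon\|_{\dot H^2}^2$ together with the integrability \fref{eq:variation energie}. This, with the trapping bound $|a|\lesssim\|\varepsilon\|_{\dot H^2}^2$, gives $a\to 0$ directly (your backward-integration argument is correct but unnecessary here). The $\dot H^1$ convergence is then obtained by an entirely separate and substantially longer argument: one writes the Duhamel formula for $\nabla\tilde\varepsilon$ where $\tilde\varepsilon=(a\mathcal Y+\varepsilon)_{z,\lambda}$, and shows each of the four contributions (free evolution, potential term $pQ^{p-1}_{z,\lambda}\tilde\varepsilon$, nonlinear term, and the modulation remainders $\tfrac{\lambda_t}{\lambda}\Lambda Q_{z,\lambda}+\tfrac{z_t}{\lambda}\cdot\nabla Q_{z,\lambda}$) tends to $0$ in $L^2$. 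The potential and nonlinear terms require splitting the time integral and using the $\dot H^2$ space-time integrability, Sobolev embedding, and the Strichartz-type estimate \fref{strichartz inhomogene}. This Duhamel machinery is what replaces the missing $\dot H^2\to\dot H^1$ inference in your plan.
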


\begin{proof}[Proof of Lemma \ref{lem:soliton}] Let $u$ satisfy \fref{eq:distance u0 delta2} such that $T_{\text{ins}}=T_{u_0}$. From \fref{eq:bound delta2 Tins}, there exists $\tilde C>0$ (depending on $K$) such that $u$ is trapped at distance $\tilde C\delta^4$ in the sense of Definition \ref{def:trapped} on $[0,T_{u_0})$. We reason in renormalized time \fref{eq:def s} and define $S(u_0)=\underset{t\rightarrow T_{u_0}}{\text{lim}} s(t)$. \\

\noindent{\bf step 1} Global existence.
We claim that $u$ is a global solution, i.e. that $T_{u_0}=+\infty$. Indeed, recall from \fref{eq:equivalence temps} that the times $t$ or $s$ are equivalents, i.e. $\frac{s}{C}\leq t \leq Cs$ for $C>0$. Injecting the bound \eqref{eq:bound delta2 Tins} on $a$ into \eqref{eq:dotH2} and using Gronwall's lemma ensures that $\|\e(t)\|_{\dot{H}^2}<c(t)<+\infty$ for all bounded time $t$, and hence $\| u(t)\|_{\dot{H}^2}<c(t)<+\infty$ an $T(u_0)=+\infty$ from the blow up criterion \eqref{beibeibvei}.\\

\noindent{\bf step 2} Convergence of the perturbation to $0$ in $\dot H^2$. We claim that:
\be \label{eq:convergence dotH2 soliton}
\parallel \varepsilon \parallel_{\dot H^2}\rightarrow 0 \ \ \text{as} \ \ s \rightarrow +\infty .
\ee
Indeed, the $\dot H^2$ bound \fref{eq:dotH2} and the smallness \eqref{eq:bound delta2 Tins} ensure:
\be
\label{eobnveneo}
\frac{d}{ds}\left[\frac{1}{\lambda^2}\parallel H\varepsilon \parallel_{L^2}^2\right]\leq \frac{C}{\lambda^2}(a^4+a^2\parallel \varepsilon \parallel_{\dot H^2}^2) -\frac{1}{C\lambda^2}\int \varepsilon H^3\varepsilon\lesssim a^2+\|\e\|_{\dot{H}^2}^2.
\ee
Now since $0<\l_0<\l(s)<\l_2$, the right hand side is in $L^1([0,+\infty))$ from \eqref{eq:variation energie}, and hence there exists a sequence $s_n\to +\infty$ with $\|\e(s_n)\|_{\dot{H}^2}\to0$, and integrating \eqref{eobnveneo} on $[s_n,s]$ yields $$\forall s\geq s_n, \ \ \\|\e(s)\|^2_{\dot{H}^2}\lesssim \|\e(s_n)\|^2_{\dot{H}^2}+\int_{s_n}^{+\infty} (a^2(\tau)+\|\e(\tau)\|_{\dot{H}^2}^2)d\tau$$ and \eqref{eq:convergence dotH2 soliton} follows.\\

\noindent{\bf step 3} Convergence of the central point and the scale. We claim that there exist $\lambda_{\infty}>0$ and $z_{\infty}\in \mathbb R^d$ such that:
\be \label{eq:convergence lambda z soliton}
\lambda \rightarrow \lambda_{\infty}, \ \ z\rightarrow z_{\infty} \ \ \text{as} \ s \rightarrow +\infty .
\ee
The evolution for these parameters is given by the modulation equations \eqref{eq:modulation lambda 2}, \eqref{eq:modulation z 2}. After integration in time this gives:
$$
|\lambda (s)-\lambda(0)|= O(\parallel \varepsilon (s) \parallel_{\dot H^{1+\frac{1}{3}}})+\int_0^s O(a^2(\tau)+\parallel \varepsilon (\tau)\parallel_{\dot H^2}^2)d\tau ,
$$
$$
|z(s)-z(0)| = O(\parallel \varepsilon \parallel_{\dot H^{1+\frac 1 3}})+\int_0^s O(a^2(\tau)+\parallel \varepsilon (\tau) \parallel_{\dot H^2}^2)d\tau .
$$
From the uniform bound \fref{eq:bound delta2 Tins} at the $\dot H^1$ level and the convergence to $0$ at the $\dot H^2$ level \fref{eq:convergence dotH2 soliton}, using interpolation one has that the first term in the above identities converges to $0$:
$$
\parallel \varepsilon \parallel_{\dot H^{1+\frac{1}{3}}} \lesssim \para \varepsilon \para_{\dot H^1}^{\frac 2 3} \para \varepsilon \para_{\dot H^2}^{\frac 1 3}\leq (\tilde C\delta^4)^{\frac 2 3} \para \varepsilon \para_{\dot H^2}^{\frac 1 3}  \rightarrow 0 \ \ \text{as} \ s\rightarrow +\infty .
$$
From the variation of energy identity \fref{eq:variation energie} one has that $(a^2+\parallel \varepsilon \parallel_{\dot H^2}^2)\in L^1([0,+\infty))$, implying that the second term is convergent. These two facts imply \fref{eq:convergence lambda z soliton}.\\

\noindent{\bf step 4} Convergence of the perturbation to $0$ in $\dot H^1$. The convergence \eqref{eq:convergence dotH2 soliton} and the bound $a\lesssim \parallel \varepsilon \parallel_{\dot H^2}^2$ from \fref{eq:def Tins} ensure $$a(s)\to 0\ \ \mbox{as}\ \ s\to+\infty.$$
It remains to show the convergence to $0$ for $\varepsilon$ in the $\dot H^1$ energy norm. We come back to the original time variable $t$. As $a$ converges to $0$ as $t\rightarrow +\infty$ with $\int_0^{+\infty}a^2(t)dt<+\infty$ from \eqref{eq:variation energie}, and as $\varepsilon$ converges to $0$ as $t\rightarrow +\infty$ in $\dot H^2$ with $\int_0^{+\infty}\parallel \varepsilon (t)\parallel_{\dot H^2}^2dt<+\infty$, and is bounded in $\dot H^1$ from \fref{eq:estimation dotH1} and the fact that the solution is trapped at distance $\tilde C \delta^4$ on $[0,+\infty)$ from \fref{eq:bound delta2 Tins}, we can gather the instable and stable parts and write our solution as:
\be \label{eq:decomposition soliton}
u=Q_{z,\lambda}+\tilde{\varepsilon}, \ \ \tilde{\varepsilon}=(a\mathcal Y +\varepsilon)_{z,\lambda}
\ee
with a perturbation $\tilde \varepsilon $ satisfying, as the scale does not diverge from \fref{eq:controle lambda}:
\be \label{eq:convergence 0 soliton}
\parallel \tilde \varepsilon \parallel_{\dot H^1}\ll 1, \ \ \lim_{t\to +\infty} \|\tilde \varepsilon(t)\|_{\dot H^2}= 0, \ \ \int_0^{+\infty} \parallel \tilde \varepsilon (t)\parallel_{\dot H^2}^2dt<+\infty .
\ee
The last space time integrability property, via Sobolev embedding yields:
$$
\int_0^{+\infty} \parallel \nabla \tilde \varepsilon \parallel_{L^{\frac{2d}{d-2}}}^2<+\infty .
$$
Hence one has the boundedness of Strichartz type norms for $\nabla \tilde \varepsilon$: 
\be \label{eq:strichartz varepsilon}
\parallel \nabla \tilde \varepsilon \parallel_{L^{\infty}([0,+\infty),L^2(\mathbb R^d))}+\parallel \nabla \tilde \varepsilon \parallel_{L^2([0,+\infty),L^{\frac{2d}{d-2}}(\mathbb R^d))}<+\infty .
\ee
The evolution of $\tilde \varepsilon$ is given by:
$$
\tilde \varepsilon_t-\Delta \tilde \varepsilon=pQ_{z,\lambda}^{p-1}\tilde \varepsilon+\frac{\lambda_t}{\lambda}(\Lambda Q)_{z,\lambda}+\frac{z_t}{\lambda}.(\nabla Q)_{z,\lambda}+NL, \ \ NL:=f(u)-f(Q)-f'(Q)\tilde \varepsilon .
$$
The Duhamel formula then gives, with $K_t$ being defined by \fref{eq:def Kt}:
\be \label{eq:duhamel nabla tilde varepsilon}
\begin{array}{r c l}
\nabla \tilde \varepsilon &=& K_t*(\nabla \tilde \varepsilon (0))+\int_0^t (\nabla K_{t-t'})*(pQ_{z,\lambda}^{p-1}\tilde \varepsilon)dt'+\int  K_{t-t'}*(\nabla NL)dt' \\
&&+\int_0^t (\nabla K_{t-t'})*(\frac{\lambda_t}{\lambda}(\Lambda Q)_{z,\lambda}+\frac{z_t}{\lambda}.(\nabla Q)_{z,\lambda}  )dt'. \\
\end{array}
\ee
We now estimate each term in the right hand side of the previous identity and prove that it goes to $0$ in $\dot H^1$ as $t\rightarrow +\infty$.\\

\noindent \emph{Free evolution term}. The first one undergoes dissipation:
\be \label{eq:lineaire nabla tilde varepsilon}
K_t*(\nabla \tilde \varepsilon (0)) \rightarrow 0 \ \text{in} \ L^2(\mathbb R^d) \ \text{as} \ t\rightarrow +\infty .
\ee

\noindent \emph{Potential term}. Let $0<\epsilon \ll 1$ be small enough and $T>0$. Using Young and H\"older inequalities, \fref{eq:convergence 0 soliton} and the fact that $\lambda$ and $z$ converge as $t\rightarrow +\infty$, for $t>T$ one computes:
\bee
&& \parallel \int_0^t (\nabla K_{t-t'})*(pQ_{z,\lambda}^{p-1}\tilde \varepsilon)dt' \parallel_{L^2} \\
& \leq &  \int_0^{t-T} \parallel (\nabla K_{t-t'})*(pQ_{z,\lambda}^{p-1}\tilde \varepsilon) \parallel_{L^2}dt'+\int_{t-T}^{t} \parallel (\nabla K_{t-t'})*(pQ_{z,\lambda}^{p-1}\tilde \varepsilon) \parallel_{L^2}dt'  \\
& \lesssim &  \int_0^{t-T} \parallel \nabla K_{t-t'} \parallel_{L^{1+\epsilon}} \parallel pQ_{z,\lambda}^{p-1}\tilde \varepsilon \parallel_{L^{\frac{2+2\epsilon}{1+3\epsilon}}}dt'+\int_{t-T}^{t} \parallel \nabla K_{t-t'}\parallel_{L^1}\parallel pQ_{z,\lambda}^{p-1}\tilde \varepsilon \parallel_{L^2} \\
& \lesssim &  \int_0^{t-T} \frac{1}{(t-t')^{\frac 1 2 + \epsilon\frac{ d}{2+2\epsilon}}} \parallel pQ_{z,\lambda}^{p-1} \parallel_{L^{\frac{d+\epsilon d}{2+\epsilon (2+d)}}} \parallel \tilde \varepsilon \parallel_{L^{\frac{2d}{d-4}}}dt' \\
 &&+\int_{t-T}^{t} \frac{1}{\sqrt{t-t'}} \parallel pQ_{z,\lambda}^{p-1} \parallel_{L^{\frac d 2}} \parallel \tilde \varepsilon \parallel_{L^{\frac{2d}{d-4}}} dt' \\
 &\lesssim &  \int_0^{t-T} \frac{1}{(t-t')^{\frac 1 2 +\frac{ \epsilon d}{2+2\epsilon}}} \parallel \tilde \varepsilon  \parallel_{\dot H^2}dt'+\int_{t-T}^{t} \frac{1}{\sqrt{t-t'}}  \parallel \tilde \varepsilon  \parallel_{\dot H^2}dt'  \\
& \lesssim &  \left( \int_0^{t-T} \frac{dt'}{(t-t')^{1 +\frac{\epsilon d}{1+\epsilon}}}\right)^{\frac 1 2} \left( \int_0^{t-T} \parallel \tilde  \varepsilon \parallel_{\dot H^2}^2dt'\right)^{\frac 1 2}+ \int_{t-T}^{t} \frac{\underset{t-T\leq t'\leq t}{\text{sup}} \parallel \tilde \varepsilon (t') \parallel_{\dot H^2}dt'}{\sqrt{t-t'}}   \\
& \lesssim &  \frac{1}{T^{\frac{\epsilon d}{2+2\epsilon}}}\left( \int_0^{+\infty} \parallel \tilde \varepsilon (t') \parallel_{\dot H^2}^2dt'\right)^{\frac 1 2}+ C(T)\underset{t-T\leq t'\leq t}{\text{sup}} \parallel \tilde \varepsilon (t') \parallel_{\dot H^2}   \\
 & \lesssim &  \left( \frac{1}{T^{\frac{\epsilon d}{2+2\epsilon}}}+ C(T)\underset{t-T\leq t'\leq t}{\text{sup}} \parallel \tilde \varepsilon (t') \parallel_{\dot H^2}\right)\rightarrow \frac{1}{T^{\varepsilon(d+1)}} \ \text{as} \ t\rightarrow +\infty.
\eee
Now, this computation being valid for any fixed $T>0$, one obtains the convergence to $0$ for this term: 
\be \label{eq:potentiel nabla tilde varepsilon}
\underset{t\rightarrow +\infty}{\text{lim}} \left\Vert \int_0^t (\nabla K_{t-t'})*(pQ_{z,\lambda}^{p-1}\tilde \varepsilon)dt' \right\Vert_{L^2} =0 .
\ee

\noindent \emph{Nonlinear term}. We now turn to the third term in \fref{eq:duhamel nabla tilde varepsilon}. Using the estimates \fref{eq:NL8} and \fref{eq:NL9} for the nonlinearity one obtains first the pointwise bound:
$$
\begin{array}{r c l}
|\nabla NL| &=& \Bigl| p(|Q_{z,\lambda}+\tilde \varepsilon|^{p-1}-Q_{z,\lambda}^{p-1})\nabla \tilde \varepsilon \\
&& +p(|Q_{z,\lambda}+\tilde \varepsilon|^{p-1}-Q_{z,\lambda}^{p-1}-(p-1)Q^{p-2}_{z,\lambda}\tilde \varepsilon)\nabla (Q_{z,\lambda}) \Bigr| \\
&\lesssim & |\tilde \varepsilon |^{p-1}|\nabla \tilde \varepsilon| +\frac{|\tilde \varepsilon|^p}{1+|x|} .
\end{array}
$$
Now, using H\"older inequality, the generalized Hardy inequality in Lebesgue spaces and Sobolev embedding we estimate this term via:
\bee
&& \parallel \nabla NL \parallel_{L^{\frac{2d}{(d-2)+(d-4)(p-1)}}} \\
& \lesssim & \parallel |\tilde \varepsilon |^{p-1}|\nabla \tilde \varepsilon| \parallel_{L^{\frac{2d}{(d-2)+(d-4)(p-1)}}} +  \left\Vert \frac{|\tilde \varepsilon|^p}{1+|x|} \right\Vert_{L^{\frac{2d}{(d-2)+(d-4)(p-1)}}} \\
& \lesssim & \parallel \tilde \varepsilon \parallel_{L^{\frac{2d}{d-4}}}^{p-1} \parallel \nabla \tilde \varepsilon \parallel_{L^{\frac{2d}{d-2}}} +  \left\Vert \frac{\tilde \varepsilon }{1+|x|} \right\Vert_{L^{\frac{2d}{d-2}}} \parallel \tilde \varepsilon \parallel_{L^{\frac{2d}{d-4}}}^{p-1} \\
&\lesssim & \parallel \tilde \varepsilon \parallel_{\dot H^2}^{p-1}  \parallel \tilde \varepsilon \parallel_{\dot H^2} + \parallel \nabla \tilde \varepsilon \parallel_{L^{\frac{2d}{d-2}}} \parallel \tilde \varepsilon \parallel_{\dot H^2}^{p-1} \lesssim  \parallel \tilde \varepsilon \parallel_{\dot H^2}^p .
\eee
As $\parallel \tilde \varepsilon \parallel_{\dot H^2}\in L^2([0,+\infty))$ this means that:
$$
\parallel \nabla NL \parallel_{L^{\frac{2}{p}}\left( [0,+\infty),L^{\frac{2d}{(d-2)+(d-4)(p-1)}}(\mathbb R^d)\right)}<+\infty . 
$$
We let $(q,r)$ be the conjugated exponents of $\frac{2}{p}$ and $\frac{2d}{(d-2)+(d-4)(p-1)}$ respectively:
$$
q=\frac{2}{2-p}>2, \ \ r=\frac{2d}{d+2-(d-4)(p-1)}>2 .
$$
They satisfy the Strichartz relation $ \frac{2}{q}+\frac{d}{r}=\frac d 2 $. Therefore, using \fref{strichartz inhomogene} one gets for any $0\leq T\leq t$:
$$
\left\Vert \int_0^T  K_{T-t'}*(\nabla NL)dt'  \right\Vert_{L^2} \leq \parallel \nabla NL \parallel_{L^{\frac{2}{p}}\left([0,T],L^{\frac{2d}{(d-2)+(d-4)(p-1)}}\right)}<+\infty ,
$$
$$
\ba{r c l}
\parallel \int_T^t  K_{t-t'}*(\nabla NL)dt'  \parallel_{L^2} &\leq & \parallel \nabla NL \parallel_{L^{\frac{2}{p}}\left([T,t],L^{\frac{2d}{(d-2)+(d-4)(p-1)}}\right)} \\
&\leq & \parallel \nabla NL \parallel_{L^{\frac{2}{p}}\left([T,+\infty],L^{\frac{2d}{(d-2)+(d-4)(p-1)}}\right)} \\
&\rightarrow  &0  \ \text{as} \ T\rightarrow +\infty . 
\ea
$$
We now write:
$$
\int_0^t  K_{t-t'}*(\nabla NL)dt'=K_{t-T}*\left(  \int_0^{T}  K_{T-t'}*(\nabla NL)dt' \right)+\int_T^t K_{t-t'}*(\nabla NL)dt' 
$$
and the two previous inequalities imply that, for $T$ fixed the first term goes to $0$ in $\dot H^1$ as $t\rightarrow +\infty$, and the second goes to $0$ in $\dot H^1$ as $T\rightarrow +\infty$ uniformly in $t\geq T$. Therefore one gets the convergence to $0$ of the nonlinear term in $\dot H^1$ as $t\rightarrow +\infty$:
\be \label{eq:NL nabla tilde varepsilon}
\left\Vert \int  K_{t-t'}*(\nabla NL)dt'  \right\Vert_{L^2} \rightarrow 0 \ \text{as} \ t \rightarrow +\infty .
\ee

\noindent\emph{Remainders from scale and space translations}. We now turn to the last two terms in \fref{eq:duhamel nabla tilde varepsilon}. From the modulation equations \fref{eq:modulation lambda}, \fref{eq:modulation z}, the variation of energy formula \fref{eq:variation energie}, the fact that $\lambda$ converges and the convergence to $0$ of $a$ and $\varepsilon$ in $\dot H^2$ \fref{eq:convergence lambda z soliton} and \fref{eq:convergence dotH2 soliton} one has:
$$
\lambda_t , z_t \in L^2([0,+\infty))\cap L^{\infty}([0,+\infty)), \ \ \text{with} \ |\lambda_t|+|z_t|\rightarrow 0 \ \text{as} \ t\rightarrow +\infty. 
$$
Moreover one has $\Lambda Q,\nabla Q\in L^2\cap L^{\frac 3 2}$ as $d\geq 7$. One then deduces that:
$$
\left\Vert \frac{\lambda_t}{\lambda}(\Lambda Q)_{z,\lambda}+\frac{z_t}{\lambda}.(\nabla Q)_{z,\lambda}\right\Vert_{L^2}\rightarrow 0 \ \text{as} \ t\rightarrow +\infty ,
$$
$$
\left\Vert \frac{\lambda_t}{\lambda}(\Lambda Q)_{z,\lambda}+\frac{z_t}{\lambda}.(\nabla Q)_{z,\lambda}\right\Vert_{L^2} \in L^{2}([0,+\infty),L^{\frac 3 2}(\mathbb R^d)).
$$
Therefore one has for any $t\geq T>0$, using Young inequality for convolution and H\"older inequality:
\bee
&& \left\Vert \int_0^t (\nabla K_{t-t'})*\left[\frac{\lambda_t}{\lambda}(\Lambda Q)_{z,\lambda}+\frac{z_t}{\lambda}.(\nabla Q)_{z,\lambda}  \right]dt' \right\Vert_{L^2} \\
&\leq & \int_0^{t-T} \parallel \nabla K_{t-t'} \parallel_{L^{\frac 6 5}} \left\Vert \frac{\lambda_t}{\lambda}(\Lambda Q)_{z,\lambda}+\frac{z_t}{\lambda}.(\nabla Q)_{z,\lambda} \right\Vert_{L^{\frac 3 2}} dt'\\
&&+ \int_{t-T}^{t} \parallel \nabla K_{t-t'} \parallel_{L^1} \left\Vert \frac{\lambda_t}{\lambda}(\Lambda Q)_{z,\lambda}+\frac{z_t}{\lambda}.(\nabla Q)_{z,\lambda} \right\Vert_{L^{2}}dt' \\
&\lesssim & \int_0^{t-T}\frac{dt'}{(t-t')^{\frac{1}{2}+\frac{d}{12}}} \left\Vert \frac{\lambda_t}{\lambda}(\Lambda Q)_{z,\lambda}+\frac{z_t}{\lambda}.(\nabla Q)_{z,\lambda} \right\Vert_{L^{\frac 3 2}} \\
&&+ \int_{t-T}^{t} \frac{dt'}{\sqrt{t-t'}} \para\frac{\lambda_t}{\lambda}(\Lambda Q)_{z,\lambda}+\frac{z_t}{\lambda}.(\nabla Q)_{z,\lambda} \parallel_{L^{2}} \\
&\lesssim & T^{-\frac{d}{12}} \left(\int_0^{t-T} \parallel \frac{\lambda_t}{\lambda}(\Lambda Q)_{z,\lambda}+\frac{z_t}{\lambda}.(\nabla Q)_{z,\lambda} \parallel_{L^{\frac 3 2}}^2dt'\right)^{\frac 1 2} \\
&&+ \sqrt T \underset{t\in [t-T,t]}{\text{sup}} \parallel \frac{\lambda_t}{\lambda}(\Lambda Q)_{z,\lambda}+\frac{z_t}{\lambda}.(\nabla Q)_{z,\lambda} \parallel_{L^{2}} \rightarrow  T^{-\frac{d}{12}} \ \text{as} \ t\rightarrow +\infty .
\eee
from what we deduce as this is valid for any $T>0$ that
\be \label{eq:zt lambdat nabla tilde varepsilon}
\begin{array}{r c l}
\parallel \int_0^t (\nabla K_{t-t'})*(\frac{\lambda_t}{\lambda}(\Lambda Q)_{z,\lambda}+\frac{z_t}{\lambda}.(\nabla Q)_{z,\lambda}  )dt' \parallel_{L^2}\rightarrow 0 \ \text{as} \ t\rightarrow +\infty .
\end{array}
\ee

\noindent \emph{Conclusion} We now come back to the Duhamel formula \fref{eq:duhamel nabla tilde varepsilon}. We showed in \fref{eq:lineaire nabla tilde varepsilon}, \fref{eq:potentiel nabla tilde varepsilon}, \fref{eq:NL nabla tilde varepsilon} and \fref{eq:zt lambdat nabla tilde varepsilon} that each terms in the right hand side converges to zero strongly in $L^2$ as $t\rightarrow +\infty$. Hence $ \tilde \varepsilon$ converges strongly to $0$ in $\dot H^1$ as $t\rightarrow +\infty$. Going back to \fref{eq:decomposition soliton}, this, with the convergence of $\lambda$ and $z$ as $t\rightarrow +\infty$ showed in Step 3, implies that $u\rightarrow Q_{z_{\infty},\lambda_{\infty}}$ strongly in $\dot H^1$ as $t\rightarrow +\infty$, ending the proof of the Lemma.

\end{proof}


\subsection{Transition regime and no return}


We now study the (exit) regime $T_{\text ins}<T_{u_0}$ and start with the fundamental no return lemma:

\begin{lemma}[No return lemma] 
There holds $$T_{\text{trans}}<+\infty, \ \ T_{\text{trans}}< T_{u_0}$$ and the bounds for all $T_{\text{ins}}\leq t \leq T_{\text{trans}}$:
\be \label{eq:varepsilon Ttrans}
\parallel \varepsilon (t) \parallel_{\dot H^1}+\parallel \varepsilon (t) \parallel_{\dot H^2}\lesssim \delta^2 
\ee
 \be \label{eq:domination Ttrans}
\parallel \varepsilon \parallel_{\dot H^2}^2(t)\lesssim \frac{|a(t)|}{K}.
\ee
Moreover, 
\be
\label{cenovbeinone}
|a(T_{\text{trans}})|=\delta.
\ee
\end{lemma}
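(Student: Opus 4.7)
The plan is to run a bootstrap argument on $[T_{\text{ins}},T_{\text{trans}}]$ starting from the data at $T_{\text{ins}}$ provided by Lemma~\ref{lem:caracterisation Tins}: $\|\varepsilon(T_{\text{ins}})\|_{\dot H^2}^2=|a(T_{\text{ins}})|/K$, $|a(T_{\text{ins}})|\lesssim K\delta^4$ and $\|\varepsilon(T_{\text{ins}})\|_{\dot H^1}\lesssim\delta^4$. In particular $\|\varepsilon(T_{\text{ins}})\|_{\dot H^2}\lesssim \delta^2$, and since we are in the exit regime $T_{\text{ins}}<T_{u_0}$ one has $a(T_{\text{ins}})\ne 0$ (else $u\equiv Q$, contradicting $T_{\text{ins}}<T_{u_0}$). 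All estimates are carried out in renormalized time $s$; write $S_{\text{ins}}:=s(T_{\text{ins}})$.

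The cornerstone is \eqref{eq:domination Ttrans}, obtained by running the ``no return'' argument from Step~1 of the proof of Theorem~\ref{th:liouville} forward in time. Set
\[
\mathcal S := \bigl\{\, s\in [S_{\text{ins}},s(T_{\text{trans}})) \, : \, K\|\varepsilon(\sigma)\|_{\dot H^2}^2 \le 2|a(\sigma)|\ \text{for all}\ \sigma\in [S_{\text{ins}},s]\,\bigr\}.
\]
This set is closed, nonempty, and contains a right-neighborhood of $S_{\text{ins}}$ by continuity (since $K\|\varepsilon\|_{\dot H^2}^2 = |a| < 2|a|$ strictly at $S_{\text{ins}}$). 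On $\mathcal S$, the modulation equation \eqref{eq:modulation a} together with the bootstrap and $|a|\le\delta$ yields $|a_s|\ge (e_0/2)|a|$ for $K$ large and $\delta=\delta(K)$ small, while the $\dot H^2$ monotonicity \eqref{eq:dotH2} combined with \eqref{eq:modulation lambda} and the coercivity \eqref{eq:coercivite} leads to $\frac{d}{ds}(K\|\varepsilon\|_{\dot H^2}^2) \le (e_0/4)|a|$. Hence
\[
\frac{d}{ds}\bigl(|a|-K\|\varepsilon\|_{\dot H^2}^2\bigr) \ge \tfrac{e_0}{4}|a| > 0,
\]
and since this quantity vanishes at $S_{\text{ins}}$, it is strictly positive on $\mathcal S\setminus\{S_{\text{ins}}\}$; thus $\mathcal S$ is relatively open, and by connectedness $\mathcal S=[S_{\text{ins}},s(T_{\text{trans}}))$, proving \eqref{eq:domination Ttrans}. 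The lower bound $|a_s|\ge (e_0/2)|a|$ then propagates to the whole interval and forces $|a|$ to grow exponentially from $|a(T_{\text{ins}})|>0$, reaching $\delta$ in finite renormalized time, equivalently in finite $t$ by \eqref{eq:controle lambda}. This gives $T_{\text{trans}}<+\infty$.

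The $\dot H^1$ part of \eqref{eq:varepsilon Ttrans} follows by integrating \eqref{eq:dotH1} and dropping the negative dissipation: exponential growth of $|a|$ yields $\int_{S_{\text{ins}}}^{s} a^4 d\sigma \lesssim a^4(s)/(2e_0) \lesssim \delta^4$, which combined with \eqref{eq:coerciviteA} and $\|\varepsilon(T_{\text{ins}})\|_{\dot H^1}^2\lesssim \delta^8$ gives $\|\varepsilon\|_{\dot H^1}\lesssim\delta^2$. The $\dot H^2$ bound is the delicate step: the naive use of \eqref{eq:domination Ttrans} only yields $\|\varepsilon\|_{\dot H^2}\lesssim\sqrt{\delta/K}$, which is far weaker than $\delta^2$. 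To sharpen it, I would apply Gronwall to
\[
\frac{d}{ds}\|\varepsilon\|_{\dot H^2}^2 \le C\bigl(a^2+\|\varepsilon\|_{\dot H^2}\bigr)\|\varepsilon\|_{\dot H^2}^2 + Ca^4,
\]
deduced from \eqref{eq:dotH2}, \eqref{eq:modulation lambda} and \eqref{eq:coercivite}. The exponential Gronwall factor stays $O(1)$: exponential growth of $|a|$ yields $\int a^2 d\sigma \lesssim \delta^2$, and the pointwise bound $\|\varepsilon\|_{\dot H^2}\lesssim\sqrt{|a|/K}$ coupled with the same exponential growth gives $\int\|\varepsilon\|_{\dot H^2}d\sigma\lesssim\sqrt{\delta/K}$, both independently of the (possibly very small) value of $|a(T_{\text{ins}})|$. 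The source integrates to $\int a^4 d\sigma \lesssim \delta^4$, and $\|\varepsilon(T_{\text{ins}})\|_{\dot H^2}^2\lesssim \delta^4$; Gronwall then gives $\|\varepsilon\|_{\dot H^2}^2\lesssim\delta^4$, i.e.\ $\|\varepsilon\|_{\dot H^2}\lesssim\delta^2$.

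With $\|\varepsilon\|_{\dot H^1}+\|\varepsilon\|_{\dot H^2}\lesssim\delta^2$ and $|a|\le\delta$ on $[T_{\text{ins}},T_{\text{trans}})$, Lemma~\ref{lem:decomposition} gives $d(u,\mathcal M)\le C(|a|+\|\varepsilon\|_{\dot H^1})\lesssim\delta$, which for $\tilde K$ chosen sufficiently large in \eqref{eq:def tildeK} is strictly below $\tilde K\delta$. The distance threshold in the definition of $T_{\text{trans}}$ is thus not saturated, so $T_{\text{trans}}$ must be reached by $|a|$ hitting $\delta$, giving \eqref{cenovbeinone}. The uniform $\dot H^2$ control combined with the local well-posedness Proposition~\ref{pr:cauchy} and the blow-up criterion \eqref{beibeibvei} ensures $T_{\text{trans}}<T_{u_0}$. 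The main technical hurdle is the $\dot H^2$ Gronwall step just described; the key point is that although the interval $[T_{\text{ins}},T_{\text{trans}}]$ can be logarithmically long in $\delta$, the quantities $\int a^2 d\sigma$, $\int\|\varepsilon\|_{\dot H^2}d\sigma$ and $\int a^4 d\sigma$ all geometric-sum to their endpoint values and remain controlled by powers of $\delta$, independently of how small $|a(T_{\text{ins}})|$ is.
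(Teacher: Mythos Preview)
Your argument is correct and follows the same overall architecture as the paper's proof: a bootstrap for the domination \eqref{eq:domination Ttrans}, energy estimates for the $\dot H^1$ and $\dot H^2$ bounds, and exponential growth of $|a|$ to force $T_{\text{trans}}<+\infty$. The one genuine tactical difference is the source of time-integrability. You prove the domination bound \emph{first}, deduce exponential growth of $|a|$, and then use this growth to control $\int a^2\,d\sigma$, $\int a^4\,d\sigma$ and $\int\|\varepsilon\|_{\dot H^2}\,d\sigma$ by geometric summation; this feeds into a Gronwall for the $\dot H^2$ bound. The paper instead observes that on $[0,T_{\text{trans}})$ the solution is trapped at distance $\tilde K\delta$, so the global energy dissipation \eqref{eq:variation energie} directly gives $\int_0^{S_{\text{trans}}}(a^2+\|\varepsilon\|_{\dot H^2}^2)\,ds\lesssim\delta^2$; combined with $|a|\le\delta$ this yields $\int a^4\,d\sigma\le\delta^2\int a^2\,d\sigma\lesssim\delta^4$ and the $\dot H^2$ bound follows by straight integration of \eqref{eq:dotH2} (written with the $1/\lambda^2$ weight to absorb the $\frac{\lambda_s}{\lambda}$ term), \emph{before} the domination bootstrap. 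This is shorter and avoids the Gronwall step you flag as delicate. Your route is self-contained in that it does not invoke \eqref{eq:variation energie}, but the paper's use of the Lyapunov structure is the more economical path. Two small remarks: the bootstrap should really be run on $\int(H\varepsilon)^2$ rather than $\|\varepsilon\|_{\dot H^2}^2$ to avoid coercivity constants interfering with the openness argument (the paper introduces $\tilde C$ for this); and $\tilde K$ is fixed by \eqref{eq:def tildeK}, not chosen large---what makes the distance threshold inactive is that $\|\varepsilon\|_{\dot H^1}\lesssim\delta^2<\delta$ for $\delta$ small.
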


\begin{proof}[Proof of Lemma \ref{lem:exit}] First notice is from their definition \fref{eq:def Ttrans} and \fref{eq:def Texit} one has $T_{\text{trans}}\leq T_{\text{exit}}\leq T_{u_0}$ and that the solution is trapped at distance $\tilde K \alpha$ on $[0,T_{\text{exit}})$. From \fref{eq:controle lambda} the scale does not degenerate:
\be \label{eq:controle lambda Ttrans}
\lambda = 1+O(\alpha).
\ee
To reason in renormalized time we define $$S_{\text{trans}}:=\underset{t\rightarrow T_{\text{trans}}}{\text{lim}}s(t), \ \ S_{\text{exit}}:=\underset{t\rightarrow T_{\text{exit}}}{\text{lim}}s(t).$$ We recall that from its definition, on $[0,S_{\text{trans}})$ the solution is trapped at distance $\tilde K \delta$.\\

\noindent {\bf step 1} Proof of the $\dot H^1$ bound. On $[0,S_{\text{trans}})$ the $\dot H^1$ energy bound \fref{eq:dotH1} gives:
$$
\frac{d}{ds} \left[\frac 1 2 \int \varepsilon H\varepsilon \right]\lesssim a^4.
$$
We integrate it in time on using the fact that $\int_0^{S_{\text{trans}}} a^2\lesssim \delta^2$ from \fref{eq:variation energie}, the fact that $|a(s)|\leq \delta$ for all $0\leq s \leq S_{\text{trans}}$ from \fref{eq:def Tins} and \fref{eq:def Ttrans} and \fref{eq:estimation dotH1}, the coercivity \fref{eq:coerciviteA} and \fref{eq:distance u0 delta2}:
$$
\parallel \varepsilon (s) \parallel_{\dot H^1}^2 \lesssim \int \varepsilon (s)H\varepsilon (s)\lesssim  \int \varepsilon (0)H\varepsilon (0)+\int_0^s a^4(s)ds \lesssim \parallel \varepsilon (0) \parallel_{\dot H^1}^2+\delta^4 \lesssim \delta^4.
$$
This proves the first bound in \fref{eq:varepsilon Ttrans}.\\

\noindent {\bf step 2} Proof of the $\dot H^2$ bound. On $[0,S_{\text{trans}})$ the $\dot H^2$ energy bound \fref{eq:dotH2}, gives:
$$
\frac{d}{ds} \left[\frac{1}{\lambda^2} \int (H\varepsilon)^2 \right]\lesssim \frac{1}{\lambda^2}\left(a^4+a^2\int (H \varepsilon)^2 \right).
$$
We integrate it in time on $[S_{\text{ins}},S_{\text{trans}})$ using the facts that 
$$
\int_{S_{\text{ins}}}^{S_{\text{trans}}} \left(\parallel H\varepsilon (s) \parallel^2+a^2(s)\right)ds \lesssim \delta^2
$$
from \fref{eq:variation energie} and because on $[0,S_{\text{trans}})$ the solution is trapped at distance $\tilde K \delta$ from \fref{eq:def Ttrans}, that $|a(s)|\leq \delta$ for all $S_{\text{ins}}\leq s \leq S_{\text{trans}}$ from \fref{eq:def Ttrans} and \fref{eq:estimation dotH1}, the non degeneracy of the scale \fref{eq:controle lambda Ttrans} and \fref{eq:coercivite}:
$$
\begin{array}{r c l}
\parallel \varepsilon (s)\para_{\dot H^2}^2&\lesssim & \int (H\varepsilon (s))^2 \lesssim  \int (H\varepsilon (S_{\text{ins}}))^2+\int_{S_{\text{ins}}}^s \left(a^4+a^2\int (H \varepsilon)^2 \right)ds\\
&\lesssim & \para \varepsilon (S_{\text{ins}})\para_{\dot H^2}^2+\delta^2\int_{S_{\text{ins}}}^s (a^2+\para \varepsilon \para_{\dot H^2}^2)ds\lesssim \delta^4 
\end{array}
$$
as $\para \varepsilon (T_{\text{ins}})\para_{\dot H^2}^2\lesssim \delta^4 $ from \fref{eq:caracterisation Tins} and \fref{eq:bound delta2 Tins}. This proves the second bound in \fref{eq:varepsilon Ttrans}.\\

\noindent {\bf step 3} No return. We now turn to the proof of \eqref{eq:domination Ttrans} using a bootstrap argument. Let $\tilde C>0$ be a constant such that:
$$
\frac{1}{\tilde C} \parallel \varepsilon \parallel_{\dot H^2}^2 \leq \int (H\varepsilon)^2\leq \tilde C \para \varepsilon \para_{\dot H^2}^2
$$
which exists from \fref{eq:coercivite} and is independent of the other constants. Let $\mathcal S$ be the set of times:
$$
\mathcal S:=\left\{s\in [S_{\text{ins}},S_{\text{trans}}], \ \forall S_{\text{ins}}\leq s'\leq s, \  \int (H\varepsilon(s'))^2\leq \frac{\tilde C |a(s')| \lambda^2(s')}{K\lambda^2(S_{\text{ins}})} \right\} .
$$
$\mathcal S$ is non empty as it contains $S_{\text{ins}}$ from the two previous inequalities. It is closed by a continuity argument. We now show that it is open in $[S_{\text{ins}},S_{\text{trans}})$. For each $s\in \mathcal S$ using \fref{eq:dotH2}, \fref{eq:modulation a}, \fref{eq:controle lambda Ttrans}, the fact that $|a(s)|\leq \delta$ for all $S_{\text{ins}}\leq s \leq S_{\text{trans}}$ from \fref{eq:def Ttrans} and \fref{eq:estimation dotH1}, and the coercivity \fref{eq:coercivite}:
\bee
&& \frac{d}{ds}\left( |a(s)|-\frac{K\lambda^2(S_{\text{ins}}) }{\tilde C \lambda^2}\int (H\varepsilon)^2 \right) \\
&\geq  & e_0|a(s)|-C|a(s)|^2-C\parallel \varepsilon (s) \parallel_{\dot H^2}^2-\frac{C K}{\tilde C}\frac{\lambda^2(S_{\text{ins}})}{\lambda^2(s)}(|a(s)|^4+|a(s)|^2\parallel \varepsilon (s) \parallel_{\dot H^2}^2 ) \\
&\geq  & |a(s)| \left(e_0 -C\delta-\frac C K -CK\delta^3-C\delta^2\right) >0
\eee
for $K$ large enough and $\delta $ small enough, where the constant $C$ is independent of the other constants. Consequently $\mathcal S$ is open, which implies that $\mathcal S=[S_{\text{ins}},S_{\text{trans}}]$. From the definition of $\mathcal S$, \fref{eq:controle lambda Ttrans} and \fref{eq:coercivite} one has proven \fref{eq:domination Ttrans}.\\

\noindent {\bf step 4} Proof of $T_{\text{trans}}<T_{u_0}$. We claim that $T_{\text{trans}}<+\infty$. Indeed, from \fref{eq:domination Ttrans} and the modulation equation \fref{eq:modulation a} for $a$ one gets:
$$
|a|_s\geq |a(s)|\left(e_0-C\delta-\frac{C}{K}\right)
$$
for a constant $C$ independent of the other constants. Hence for $K$ large enough, the function $|a|$ satisfies $|a|_s>c>0$ on $[S_{\text{ins}},S_{\text{trans}}]$. Therefore $T_{\text{trans}}<+\infty$ because if not $a$ would be unbounded, which is a contradiction to the very definition of $T_{\text{trans}}$ \fref{eq:def Ttrans}. This implies $T_{\text{trans}}<T_{u_0}$. This is obvious if $T_{u_0}=+\infty$ and is otherwise a consequence of \eqref{eq:varepsilon Ttrans} and the control of the scale which implies a uniform $\dot{H}^2$ bound on u in $[0,T_{\text trans}]$ and hence $T_{\text trans}<T_{u_0}$ from \eqref{beibeibvei}. The estimate \eqref{cenovbeinone} now follows by continuity and \eqref{eq:domination Ttrans}.
\end{proof}


\subsection{(Exit) dynamics}


We now classify the (Exit) dynamics and show that $Q^\pm$ are the attractors.

\begin{lemma}[Classification of the (Exit) dynamics] \label{lem:exit}

There exists $K^*\gg 1$, such that for any $K\geq K^*$, there exists $0<\delta^*(K) \ll 1$, such that for any $0<\delta <\delta^*(K)$, if $T_{\text{ins}}<T_{u_0}$ then either $u$ will blow up with type I blow up forward in time, or $u$ will converge to $0$ strongly in $\dot H^1$.
\end{lemma}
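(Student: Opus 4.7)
The plan is to work on the exponentially unstable window $[T_{\text{trans}},T_{\text{exit}}]$, compare $u$ with the minimal solutions $Q^{\pm}$ constructed in Proposition \ref{pr:minimal}, and then invoke the stability of the dissipation and type I blow up regimes to handle the forward behaviour after $T_{\text{exit}}$.

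\textbf{Step 1: exponential regime on $[T_{\text{trans}},T_{\text{exit}}]$.} I will first show that the inequalities \eqref{eq:varepsilon Ttrans} and \eqref{eq:domination Ttrans} can be propagated past $T_{\text{trans}}$ up to $T_{\text{exit}}$, in the form
\begin{equation*}
\|\varepsilon(t)\|_{\dot H^2}^2 \lesssim \frac{|a(t)|}{K}, \qquad |a(t)|=\delta e^{e_0(s(t)-s(T_{\text{trans}}))}(1+O(\sqrt\delta))
\end{equation*}
on $[T_{\text{trans}},T_{\text{exit}}]$. This is a bootstrap argument of exactly the same type as the ``no return'' step of the previous lemma: combine the $\dot H^2$ energy bound \eqref{eq:dotH2}, the modulation equation \eqref{eq:modulation a} for $a$, and the non-degeneracy \eqref{eq:controle lambda} of the scale. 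The linear term $e_0 a$ dominates thanks to $\delta\ll 1$ and $K\gg 1$, while the quadratic terms $a^2,\|\varepsilon\|_{\dot H^2}^2$ are absorbed by the instability. Reintegrating the modulation equation gives the exponential growth of $|a|$ and in particular the finiteness of $T_{\text{exit}}$, with $T_{\text{exit}}-T_{\text{trans}}\sim \frac{1}{e_0}\log(\alpha/\delta)$. The sign of $a$ is constant on this interval, say $\mathrm{sign}\,a=\sigma\in\{+,-\}$.

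\textbf{Step 2: comparison with $Q^\sigma$ on $[T_{\text{trans}},T_{\text{exit}}]$.} By Proposition \ref{pr:minimal} and in particular \eqref{eq:expansion Qpm}, for any small $\epsilon>0$ there is a time $t_\epsilon$ at which $Q^\sigma(t_\epsilon)=Q+\sigma\epsilon\mathcal Y+v_\epsilon$ with $\|v_\epsilon\|_{\dot H^1}\lesssim \epsilon^2$. Choose the time shift $\tau$ and position $\lambda,z$ so that $Q^\sigma$ and $u$, written in the geometric decomposition \eqref{eq:decomposition}, share the same value of the unstable mode at $t=T_{\text{trans}}$, i.e.\ $a_u(T_{\text{trans}})=a_{Q^\sigma}(T_{\text{trans}}+\tau)=\sigma\delta$. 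At this time both $u$ and the translated $Q^\sigma$ are of order $\delta$ on the unstable mode and of order $\delta^2$ on the stable remainder. I then run a contraction argument on the difference $w=u-Q^\sigma(\cdot+\tau,\cdot)$ in the exponentially weighted norm
\begin{equation*}
\|w\|_\star:=\sup_{T_{\text{trans}}\le t\le T_{\text{exit}}}e^{-e_0(s(t)-s(T_{\text{trans}}))}\bigl(|a_u-a_{Q^\sigma}|+\|\varepsilon_u-\varepsilon_{Q^\sigma}\|_{\dot H^1}+|\lambda_u-\lambda_{Q^\sigma}|+|z_u-z_{Q^\sigma}|\bigr),
\end{equation*}
following precisely the contraction scheme in Step 5 of the proof of Theorem \ref{th:liouville}. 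The nonlinear terms are quadratic in the smallness of the two profiles, and the linearized flow is controlled by the coercivity \eqref{eq:coercivite} after projection onto $\mathcal Y^\perp$. This yields
\begin{equation*}
\|u(T_{\text{exit}})-Q^\sigma(T_{\text{exit}}+\tau)\|_{\dot H^1}\lesssim \delta,
\end{equation*}
while $d(Q^\sigma(T_{\text{exit}}+\tau),\mathcal M)\sim \tilde K\alpha$ by definition of $T_{\text{exit}}$, so $u(T_{\text{exit}})$ lies in an arbitrarily small $\dot H^1$ neighbourhood (of size $\delta\ll\alpha$) of $Q^\sigma$ at a point which is $\sim\alpha$ away from $\mathcal M$.

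\textbf{Step 3: conclusion by forward stability.} If $\sigma=-$, the solution $Q^-$ dissipates to $0$ in $\dot H^1$ by Proposition \ref{pr:minimal}, and a standard small-data parabolic argument for \eqref{eq:NLH} in $\dot H^1$ (using the smallness of $u(T_{\text{exit}})$ compared with the soliton threshold and the energy dissipation \eqref{eq:E}) shows that $u$ is global and also dissipates. If $\sigma=+$, the solution $Q^+$ blows up forward in finite time in the ODE type I regime, and the stability of the type I blow up in the energy topology, proved in Section \ref{sec:type I} (and stated in the paper at the end of the statement of Theorem \ref{th:main}), transfers that behaviour to $u$ provided the $\dot H^1$ distance at $T_{\text{exit}}$ is small enough, which is guaranteed by taking $\delta$ small relative to the constants coming from that stability statement.

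\textbf{Main obstacle.} The delicate point is Step 2: running the contraction argument with $Q^\sigma$ rather than with another generic trapped solution. Compared with the $t\to-\infty$ uniqueness of Theorem \ref{th:liouville}, one now has to start the comparison at the finite time $T_{\text{trans}}$ with two profiles whose unstable components only agree to leading order, and control it across a window of length $\sim\frac{1}{e_0}\log(\alpha/\delta)$ over which the error has to stay a factor $\delta$ below the growing instability. This forces a careful choice of modulation parameters at $T_{\text{trans}}$ (so that the difference has the right orthogonality structure to apply \eqref{eq:coercivite}) and a careful tuning of $\delta\ll\alpha\ll 1$ so that the nonlinear contributions remain subcritical in the exponentially weighted norm.
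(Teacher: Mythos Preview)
Your outline matches the paper's approach: exponential bootstrap on $[T_{\text{trans}},T_{\text{exit}}]$, comparison with $Q^\pm$ via a weighted-norm argument, and conclusion by forward stability. Two points need correction.

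First, in Step~2 you describe the contraction in the nonlinear modulation variables $(\lambda,z,a,\varepsilon)$, but the scheme you cite (Step~5 of the proof of Theorem~\ref{th:liouville}) and the paper's own proof both use the \emph{affine} decomposition $\hat u=Q+(\delta+\hat a)e^{e_0 t}\mathcal Y+\hat b\,\Lambda Q+\hat z\cdot\nabla Q+\hat v$ with $\hat v\in\text{Span}(\mathcal Y,\Psi_0,\dots,\Psi_d)^\perp$, after first renormalising so that $\lambda(T_{\text{trans}})=1$ and $z(T_{\text{trans}})=0$. This avoids comparing remainders based at different $(\lambda,z)$ and makes the energy estimate for $\hat v-\hat v'$ clean. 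The contraction also needs the pointwise bound $\|\hat v\|_{L^\infty}\lesssim\delta e^{e_0 t}$ obtained by parabolic regularisation, since the nonlinear difference estimate requires control of $\|\hat u-Q\|_{L^{d(p-1)}}$, which $\dot H^1$ alone does not provide.

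Second, your Step~3 argument for $\sigma=-$ does not work as stated: at $T_{\text{exit}}$ the solution is $\delta$-close to $Q^-$ at a time where $Q^-$ is still only $O(\alpha)$ away from $Q$, hence of size $\sim\|Q\|_{\dot H^1}$, so no small-data argument applies there. The paper instead observes that $\hat Q^\pm(\hat T_{\text{exit}})$ ranges over a fixed compact set $\mathcal K\subset\dot H^1$ independent of $\delta$ (because $\hat T_{\text{exit}}=e_0^{-1}\log(\alpha/\delta)+O(\alpha)$, so only the $O(\alpha)$ uncertainty matters), and that both the set of dissipating data and the set of type~I blowing-up data are open in $\dot H^1$ (the latter via Proposition~\ref{pr:type I}). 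Taking $\delta$ small relative to the distance from $\mathcal K$ to the complement of the relevant open set then forces $u$ into that regime.
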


\begin{proof}[Proof of lemma \ref{lem:exit}] At time $T_{\text{trans}}$, there holds from \eqref{cenovbeinone}, \fref{eq:varepsilon Ttrans}: $$|a(T_{\text{trans}})|=\delta, \ \ \parallel \varepsilon (t) \parallel_{\dot H^1}+\parallel \varepsilon (t) \parallel_{\dot H^2}\lesssim \delta^2.$$  This is the cruising instable regime. We recall that the exit time $T_{\text{trans}}< T_{\text{exit}}$ is defined by \fref{eq:def Texit}.\\

\noindent {\bf step 1} First exponential bounds in renormalized time. We claim that $T_{\text{exit}}<+\infty$, $T_{\text{exit}}\neq T_{u_0}$ and that the following holds on $[S_{\text{trans}},S_{\text{exit}}]$:
\be \label{eq:regime TtransTexit}
\begin{array}{l l}
a(s)=\pm (\delta+\tilde a) e^{e_0(s-S_{\text{trans}})}, \ \text{with} \ |\tilde a| \leq \tilde C' \delta^2 e^{e_0(s-S_{\text{trans}})}, \\
\parallel \varepsilon \parallel_{\dot H^1}+\parallel \varepsilon \parallel_{\dot H^2}\leq \tilde C' \delta^2e^{2e_0(s-S_{\text{trans}})}
\end{array}
\ee
for some constant $\tilde C'>0$ independent of the other constants. We use a bootstrap method to prove the above bound. Fix $\tilde C>0$ and define $\mathcal S \subset [S_{\text{trans}},S_{\text{exit}}]$ as the set of times $s$ such that \fref{eq:regime TtransTexit} holds on $[S_{\text{trans}},s]$. For $\tilde C'$ large enough independently on the other constants, $\mathcal S$ is non empty as it contains $S_{\text{trans}}$ from \fref{eq:varepsilon Ttrans}. It is closed by a continuity argument. We claim that for $\tilde C'$ big enough independently of the other constants, it is open. The first thing to notice is that from \fref{eq:regime TtransTexit} and \fref{eq:def Texit}:
\be \label{eq:controle s exit}
\delta e^{e_0(s-S_{\text{trans}})}\leq 2\alpha .
\ee

\noindent  \emph{Estimate for $\tilde a$}. For $s\in \mathcal S$ we compute from \fref{eq:modulation a}, using \fref{eq:regime TtransTexit} and \fref{eq:controle s exit}:
$$
\begin{array}{r c l}
|\tilde a_s| & = & e^{-e_0(s-S_{\text{trans})}}[O(a^2)+O(\parallel \varepsilon \parallel_{\dot H^2}^2)] \\
&\leq & e^{-e_0 (s-S_{\text{trans}})}[C\delta^2 e^{2(e_0s-S_{\text{trans}})}+C(\tilde C')^2\delta^2e^{2e_0(s-S_{\text{trans}})}\alpha^2] \\
&\leq & \delta^2  e^{e_0(s-S_{\text{trans}})}(C+C\alpha^2(\tilde C')^2).
\end{array}
$$
Reintegrating it in time between $S_{\text{trans}}$ and $s\in \mathcal S$ we find, as $\tilde a (S_{\text{trans}})=0$:
\be \label{eq:expo exit a}
|\tilde a(s)| \lesssim  \delta^2  e^{e_0(s-S_{\text{trans}})}(C+C\alpha^2(\tilde C')^2)<\delta^2 \tilde C'  e^{e_0(s-S_{\text{trans}})}
\ee
for $\tilde C'$ large enough and $\alpha$ small enough.

\noindent \emph{Estimate for $\parallel \varepsilon \parallel_{\dot H^1}$}. For $s\in \mathcal S$ we compute from \fref{eq:dotH1}, using \fref{eq:regime TtransTexit} and \fref{eq:controle s exit}:
$$
\frac{d}{ds}\left[\int \varepsilon H\varepsilon \right]\leq C|a|^4\leq C\delta^4 e^{4e_0(s-S_{\text{trans}})}[1+(\tilde C')^4\alpha^4].
$$
Reintegrating it in time between $S_{\text{trans}}$ and $s\in \mathcal S$ we find, using \fref{eq:varepsilon Ttrans} and the coercivity \fref{eq:coerciviteA}:
\be \label{eq:expo exit dotH1}
\begin{array}{r c l}
\parallel \varepsilon (s) \parallel_{\dot H^1}^2 & \leq &  C \int \varepsilon (s) H\varepsilon (s)\\
&\leq & C \int \varepsilon (S_{\text{trans}}) H\varepsilon (S_{\text{trans}}) +\int_{S_{\text{Trans}}}^s C\delta^4 e^{4e_0(s-S_{\text{trans}})}[1+(\tilde C')^4\alpha^4] ds \\
&\leq & C\parallel \varepsilon (S_{\text{trans}})\parallel_{\dot H^1}^2 + \delta^4  e^{4e_0(s-S_{\text{trans}})}(C+C\alpha^4(\tilde C')^2)\\
&\leq & C\delta^4+ \delta^4  e^{4e_0(s-S_{\text{trans}})}(C+C\alpha^4(\tilde C')^2)<\delta^4 \tilde C'  e^{4e_0(s-S_{\text{trans}})}
\end{array}
\ee
for $\tilde C'$ large enough and $\alpha$ small enough.\\

\noindent \emph{Estimate for $\parallel \varepsilon \parallel_{\dot H^2}$}. For $s\in \mathcal S$ we compute from \fref{eq:dotH2}, using \fref{eq:regime TtransTexit}, \fref{eq:controle s exit} and \fref{eq:controle lambda Ttrans}:
$$
\frac{d}{ds}[O(1)\int (H\varepsilon )^2 ]\leq Ca^4+Ca^2\parallel \varepsilon \parallel_{\dot H^2}^2\leq C\delta^4 e^{4e_0(s-S_{\text{trans}})}[1+(\tilde C')^4\alpha^4].
$$
where $O(1)=\frac{1}{\lambda^2}=1+O(\alpha)$ from \fref{eq:controle lambda Ttrans}. Reintegrating it in time between $S_{\text{trans}}$ and $s\in \mathcal S$ we find, using \fref{eq:varepsilon Ttrans} and the coercivity \fref{eq:coercivite}:
\be \label{eq:expo exit dotH2}
\begin{array}{r c l}
\parallel \varepsilon (s) \parallel_{\dot H^2}^2 & \leq & C\parallel \varepsilon (S_{\text{trans}})\parallel_{\dot H^2}^2 + \delta^4  e^{4e_0(s-S_{\text{trans}})}(C+C\alpha^2(\tilde C')^2)\\
&\leq & C\delta^4+ \delta^4  e^{4e_0(s-S_{\text{trans}})}(C+C\alpha^4(\tilde C')^2)<\delta^4 \tilde C'  e^{4e_0(s-S_{\text{trans}})}
\end{array}
\ee
for $\tilde C'$ large enough and $\alpha$ small enough.\\

\noindent \emph{Conclusion}. From \fref{eq:expo exit a}, \fref{eq:expo exit dotH1} and \fref{eq:expo exit dotH2} one obtains that $\mathcal S$ is open, hence $\mathcal S=[S_{\text{trans}},S_{\text{exit}})$ which ends the proof of \fref{eq:regime TtransTexit}. The law for $a$ \eqref{eq:regime TtransTexit} implies $T_{\text{exit}}<+\infty$, and the $\dot{H}^2$ bound \eqref{eq:regime TtransTexit} and the control of the scale now imply $T_{\text{exit}}<T_{u_0}$.\\

\noindent {\bf step 2} Exponential bounds in original time variable. Let now the constant $\tilde C'$ used in the first substep in \fref{eq:regime TtransTexit} be fixed. We claim that one has the following estimates\footnote{The constants involved in the $\lesssim $ may depend on $\tilde C'$ defined earlier on the first substep of Step 2 but this is not a problem as it is fixed from now on independently of the other constants.} in original time variables on $[T_{\text{trans}},T_{\text{exit}}]$:
\be \label{eq:original lambda z}
|z(t)-z(T_{\text{trans}})|+\left|\frac{\lambda (t)}{\lambda (T_{\text{trans}})}-1 \right|\lesssim \delta^2 e^{\frac{2 e_0}{\lambda^2(T_{\text{trans}})}(t-T_{\text{trans}})},
\ee
\be \label{eq:original a varepsilon}
\begin{array}{l l}
a(t)=\pm (\delta+\tilde a) e^{e_0\frac{(t-T_{\text{trans}})}{\lambda^2(T_{\text{trans}})}}, \ \ |\tilde a| \lesssim \delta^2 e^{\frac{e_0}{\lambda^2(T_{\text{trans}})}(t-T_{\text{trans}})}, \\
 \parallel \varepsilon \parallel_{\dot H^1}+\parallel \varepsilon \parallel_{\dot H^2}\lesssim \delta^2e^{\frac{2e_0}{\lambda^2(T_{\text{trans}})}(t-T_{\text{trans}})}.
\ea
\ee

\noindent  \emph{Bound for $\lambda$ and estimate on $s$}. The modulation equation \fref{eq:modulation lambda 2}, using \fref{eq:regime TtransTexit}, can be rewritten as:
$$
\left|\frac{d}{ds}\left[\text{log}(\lambda)+O(\delta^2e^{2e_0(s-S_{\text{trans}})}) \right]\right| \lesssim \delta^2 e^{2e_0(s-S_{\text{trans}})}
$$ 
After reintegration in time this becomes, using \fref{eq:controle lambda Ttrans} and \fref{eq:controle s exit}:
\be \label{eq:regime TtransTexit lambda}
\lambda (s)= \lambda (S_{\text{trans}})+O(\delta^2 e^{2e_0(s-S_{\text{trans}})}).
\ee
The definition of the renormalized time $s$ \fref{eq:def s} then implies:
$$
\frac{dt}{ds}=\lambda^2(T_{\text{trans}})+O(\delta^2e^{2e_0(s-S_{\text{trans}})}).
$$
Reintegrated in time this gives:
$$
t-T_{\text{trans}}=\lambda^2(T_{\text{trans}})(s-S_{\text{trans}})+O(\delta^2e^{2e_0(s-S_{\text{trans}})}).
$$
From \fref{eq:controle s exit} this implies:
$$
e^{s-S_{\text{trans}}}=e^{\frac{t-T_{\text{trans}}}{\lambda^2(T_{\text{trans}})}+O(\delta^2e^{2e_0(s-S_{\text{trans}})})}=e^{\frac{t-T_{\text{trans}}}{\lambda^2(T_{\text{trans}})}} (1+O(\alpha))
$$
and therefore
\be \label{eq:regime TtransTexit t}
t-T_{\text{trans}}=\lambda^2(T_{\text{trans}})(s-S_{\text{trans}})+O\left(\delta^2e^{\frac{2e_0}{\lambda^2(T_{\text{trans}})}(t-T_{\text{trans}})}\right) .
\ee

We inject the above identity in \fref{eq:regime TtransTexit lambda}, yielding the estimate for $\lambda $ in \fref{eq:original lambda z}.\\

\noindent \emph{Bound for $z$.} The modulation equation \fref{eq:modulation z 2} for $z$, using \fref{eq:regime TtransTexit}, can be rewritten as:
$$
\left|\frac{d}{ds}\left[z+O(\delta^2e^{2e_0(s-S_{\text{trans}})}) \right]\right| \lesssim \delta^2 e^{2e_0(s-S_{\text{trans}})}
$$ 
After reintegration in time this becomes:
\be \label{eq:regime TtransTexit z}
|z (s)-z(S_{\text{trans}})|\lesssim \delta^2 e^{2e_0(s-S_{\text{trans}})}.
\ee
We inject \fref{eq:regime TtransTexit t} in the above equation, giving the estimate for $z $ in \fref{eq:original lambda z}.

\noindent \emph{Bounds for $a$ and $\varepsilon$}. We inject \fref{eq:regime TtransTexit t} in \fref{eq:regime TtransTexit}, giving \fref{eq:original a varepsilon}.\\

\noindent{\bf step 3} Setting up the comparison with $Q^{\pm}$. From now on, without loss of generality, we treat the case of a "plus" sign in \fref{eq:original a varepsilon}, i.e. $a=\delta e^{e_0\frac{t-T_{\text{trans}}}{\lambda^2(T_{\text{trans}})}}$ at the leading order on $[T_{\text{trans}},T_{\text{exit}}]$, which will correspond to an exit close to a renormalized\footnote{i.e. an element of the orbit of $Q^+$ under the symmetries of the flow: scaling and space and scale translations.} version of $Q^+$. The case of a "minus" sign corresponds to an exit close to a renormalized\footnote{Idem.} version of $Q^-$ and can be treated with exactly the same techniques. From the definition \fref{eq:def Texit} of the exit time $T_{\text{exit}}$ and the law for $a$ \fref{eq:original a varepsilon} on $[T_{\text{trans}},T_{\text{exit}}]$ at this time there holds:
$$
\ba{l l}
\delta e^{\frac{e_0}{\lambda^2(T_{\text{trans}})}(T_{\text{exit}}-T_{\text{trans}})}+O\left(\delta^2 e^{\frac{2e_0}{\lambda^2(T_{\text{trans}})}(T_{\text{exit}}-T_{\text{trans}})}\right)=a(T_{\text{exit}})=\alpha, \\
\delta e^{\frac{e_0}{\lambda^2(T_{\text{trans}})}(T_{\text{exit}}-T_{\text{trans}})}=O(\alpha) ,
\ea
$$
from what one obtains the following formula for $T_{\text{exit}}$:
\be \la{eq:estimation Texit}
T_{\text{exit}}=T_{\text{trans}}+\frac{\lambda^2(T_{\text{trans}})}{e_0}\text{log} \left(\frac{\alpha (1+O(\alpha))}{\delta} \right) .
\ee
To ease the writing of the estimates, we first renormalize the function $u$ at time $T_{\text{trans}}$. For $t\in [0,\frac{T_{\text{exit}}-T_{\text{trans}}}{\lambda^2(T_{\text{trans}})}]$ we define the renormalized time:
\be \la{cla:def tau}
t'(t):=\lambda^2(T_{\text{trans}})t+T_{\text{trans}}\in [T_{\text{trans}},T_{\text{exit}}].
\ee
We define the renormalized versions of $u$ and of the adapted variables as:
\be \la{eq:def hat u}
\hat u(t,\cdot):=\left( \tau_{-z(T_{\text{trans}})}u\left( t',\cdot \right) \right)_{\frac{1}{\lambda (T_{\text{trans}})}} ,
\ee
\be \la{eq:def bar variables}
\ba{l l}
\bar \varepsilon (t):=\varepsilon (t'), \ \ \bar a (t):=a (t'), \ \ \bar z (t):=\frac{z(t')-z(T_{\text{trans}})}{\lambda (T_{\text{trans}}} ,\ \ \bar \lambda (t):=\frac{\lambda (t')}{\lambda (T_{\text{trans}})} .
\ea
\ee
We define the renormalized exit time $\hat T_{\text{exit}}$ by
\be \label{eq:estimation hatTexit}
\hat T_{\text{exit}}:=\frac{T_{\text{exit}}-T_{\text{trans}}}{\lambda^2(T_{\text{trans}})}= \frac{1}{e_0}\text{log}\left(  \frac{\alpha (1+O(\alpha))}{\delta} \right) \gg 1
\ee
from \fref{eq:estimation Texit}. From the invariances of the equation, $\hat u$ is also a solution of \fref{eq:NLH}, at least defined on $[0,\hat T_{\text{exit}}]$, and $u$ blows up with type I if and only if $\hat u$ blows up with type I. We will then show the result for $\hat u$. As $a,z,\lambda$ and $\varepsilon$ are the adapted variables for $u$ given by Definition \ref{def:trapped} and as $(\tau_z f)_{\lambda}=\tau_{\lambda z} (f_{\lambda})$:
\be \la{eq:first decomposition exit}
\ba{r c l}
\hat u (t) & = & \left( \tau_{-z(T_{\text{trans}})}u\left( t' \right) \right)_{\frac{1}{\lambda (T_{\text{trans}})}} = \left( \tau_{-z(T_{\text{trans}})}\tau_{z(t')}(Q+a(t')\mathcal Y+\varepsilon (t')) \right)_{\frac{1}{\lambda (T_{\text{trans}})}} \\
&=& (Q+a (t')\mathcal Y+ \varepsilon (t'))_{\frac{z(t')-z(T_{\text{trans}})}{\lambda (T_{\text{trans}})},\frac{\lambda (t')}{\lambda (T_{\text{trans}})}} = (Q+\bar a (t)\mathcal Y+\bar \varepsilon (t))_{\bar z (t),\bar \lambda (t)}
\ea
\ee
with $\bar \varepsilon$ satisfying the orthogonality conditions \fref{eq:orthogonalite}, from \fref{eq:def hat u}, \fref{eq:def bar variables}, \fref{cla:def tau}, \fref{eq:decomposition} and \fref{eq:orthogonalite}. Therefore, $(\bar \lambda,\bar z,\bar a,\bar \varepsilon)$ are the variables associated to the decomposition of $\hat u$ given by Definition \ref{def:trapped}. \fref{cla:def tau}, \fref{eq:def bar variables} and the bounds \fref{eq:original lambda z} and \fref{eq:original a varepsilon} imply:
\be \label{eq:bound decomposition exit}
|1-\bar \lambda |+|\bar z|+|\bar a-\delta e^{e_0 t}|+\parallel \bar \varepsilon \parallel_{\dot H^1}+\parallel \bar \varepsilon \parallel_{\dot H^2} \lesssim \delta^2 e^{2e_0t}
\ee
The change of variable we did thus simplified the estimates. As we aim at comparing $\hat u$ to $Q^{\pm}$, the scale and central points $\bar \lambda$ and $\bar z$ might be adapted for $\hat u$ but they are not for $Q^{\pm}$. We perform a second change of variables to treat these two profiles as perturbations of $Q$ under an affine adapted decomposition. To do so we define:
\be \la{eq:def hat variables}
\ba{l l}
\hat a=\frac{e^{-e_0t}}{\para \mathcal Y \para_{L^2}^2}\langle \hat u-Q,\mathcal Y\rangle-\delta , \ \  \hat z_i=\frac{d}{\int \chi_M|\nabla Q|^2}\langle \hat u-Q,\Psi_i \rangle \ \ \text{for} \ \ 1\leq i \leq d ,  \\
\hat b=\frac{1}{\int \chi_M\Lambda Q^2}\langle \hat u-Q,\Psi_0 \rangle , \ \ \hat v =\hat u -Q-(\hat a+\delta e^{e_0t})\mathcal Y-\hat b\Lambda Q-\hat z.\nabla Q .
\ea
\ee
From \fref{eq:orthogonalite Psii} these new variables produce the following decomposition for $\hat u$:
\be \la{exit:eq:decomposition hat u}
\hat u = Q + (\delta +\hat a) e^{e_0t}\mathcal Y+\hat v+\hat b \Lambda Q+\hat z.\nabla Q, \ \ \hat v \in \text{Span} (\mathcal Y,\Psi_0,\Psi_1,...,\Psi_d)^{\perp}
\ee
and from \fref{eq:first decomposition exit}, \fref{eq:bound decomposition exit}, \fref{eq:def hat variables} and \fref{eq:orthogonalite Psii} they enjoy the following bounds:
\be \label{eq:bound hatu}
|\hat b |+|\hat z|+|\hat a|e^{e_0 t}+\parallel \hat v \parallel_{\dot H^1}+\parallel \hat v \parallel_{\dot H^2} \lesssim \delta^2 e^{2e_0t}.
\ee
This, from Sobolev embedding, implies that:
$$
\para \hat u-Q\para_{L^{\frac{2d}{d-4}}}\lesssim \delta e^{e_0t}.
$$
By the parabolic regularization estimate \fref{para:eq:bd v W2infty} this implies that on $[1,\hat T_{\text{exit}}]$:
$$
\para \hat u-Q\para_{L^{\infty}}\lesssim \delta e^{e_0t}.
$$
In turn, using again \fref{exit:eq:decomposition hat u} and \fref{eq:bound hatu} this implies that in addition to \fref{eq:bound hatu} for all $t\in [1,\hat T_{\text{exit}}]$ one has an exponential $L^{\infty}$ bound:
\be \label{eq:bound hatu2}
\parallel \hat v \parallel_{L^{\infty}} \lesssim \delta e^{e_0t} .
\ee
We now aim at comparing $\hat u$, under the decomposition \fref{exit:eq:decomposition hat u}, the a priori bounds \fref{eq:bound hatu} and \fref{eq:bound hatu2}, to $Q^+$ in the time interval $[0,\hat T_{\text{exit}}]$. We need to provide a similar decomposition for $Q^+$. We recall that from \fref{eq:bound Qpm}, $Q^+$ satisfies on $(-\infty,0]$:
\be \label{eq:Q+ exit}
Q^+= Q+ (\epsilon+O(\epsilon^2e^{e_0t})) e^{te_0}\mathcal Y+w, \ \ \parallel w\parallel_{L^{\infty}}+\parallel w\parallel_{\dot H^1} \lesssim \epsilon^2e^{2e_0t}
\ee
for some $0<\epsilon \ll 1$ fixed independent of $\alpha$ and $\delta$. One can therefore assume: 
\be \la{cla:def epsilon}
\alpha \ll \epsilon
\ee
First, we perform a time translation so that at time $0$, the projection of $Q^+$ onto the unstable mode $\mathcal Y$ matches the one of $\hat u$, i.e. is $\delta$. To do so we define the time:
\be \la{eq:def t0 Q+}
t_0:=\frac{1}{e_0} \text{log}\left( \frac{\epsilon}{\delta}\right) \gg \hat T_{\text{exit}}
\ee
from \fref{eq:estimation hatTexit} and \fref{cla:def epsilon}. We let $\hat Q^+$ be the time translated version of $Q^+$ defined by:
\be \label{eq:def hatQ+}
\hat Q^+(t,x)=Q^+\left(t-t_0,x\right), \ \ (t,x)\in (-\infty,t_0]\times \mathbb R^d.
\ee
We then decompose $\hat Q^+$ in a similar way we decomposed $\hat u$, introducing the three associated parameters $\hat a'$, $\hat b'$ and $\hat z'$, and the profile $\hat v'$:
$$
\hat Q^+=  Q+ (\delta +\hat a') e^{e_0t}\mathcal Y+\hat v'+\hat b' \Lambda Q+\hat z'.\nabla Q, \ \ \hat v' \in \text{Span} (\mathcal Y,\Psi_0,\Psi_1,...,\Psi_d)^{\perp}
$$
with the following bounds on $[0,t_0]$ from \fref{eq:Q+ exit} and \fref{eq:def t0 Q+}:
\be \label{eq:bound hatQ+}
|\hat b' |+|\hat z'|+|\hat a'|e^{e_0 t}+\parallel \hat v' \parallel_{\dot H^1}+\parallel \hat v' \parallel_{L^{\infty}} \lesssim \delta^2 e^{2e_0t}.
\ee
Our aim is to compare $\hat u $ and $\hat Q^+$ and we claim that at the exit time there holds:
\be \label{eq:caracterisation Texit}
\parallel \hat u(\hat T_{\text{exit}})-\hat Q^+(\hat T_{\text{exit}})\parallel_{\dot H^1} \lesssim \delta .
\ee
Similarly, in the case $a(T_{\text{trans}})=-\delta$, the above bound holds replacing $Q^+$ with $Q^-$.\\

\noindent{\bf step 5} Proof of \eqref{eq:caracterisation Texit}. The evolution of the difference $\hat u-\hat Q^+$ on $[1,\hat T_{\text{exit}}]$ is:
\be \label{eq:evolution hatu-hatQ+}
(\hat v-\hat v')_t+H(\hat v-\hat v')=-(\hat a-\hat a')_te^{e_0t}\mathcal Y-(\hat b-\hat b')_t\Lambda Q-(\hat z-\hat z')_t.\nabla Q+NL-NL',
\ee
where:
$$
NL:=f(\hat u)-f(Q)-f'(Q)\hat u \ \ \text{and} \ \ NL':=f(\hat Q^+)-f(Q)-f'(Q)\hat Q^+
$$
We define the weighted distance between $\hat u$ and $\hat Q^+$ on $[1,\hat T_{\text{exit}}]$ as:
\be \label{eq:def norme exit}
D := \underset{1\leq t\leq \hat T_{\text{exit}}}{\text{sup}} \frac{e^{-e_0t}}{\delta }\left(\parallel \hat v-\hat v'\parallel_{\dot H^1} +\frac{|\hat a-\hat a'|}{\delta}e^{e_0t}+|\hat b-\hat b'| +|\hat z-\hat z'|\right)
\ee
From \fref{eq:bound hatQ+} and \fref{eq:bound hatu}, \fref{eq:bound hatu2}, \fref{eq:NL7}, H\"older inequality and interpolation one gets the following bounds for the difference of nonlinear terms on $[1,\hat T_{\text{exit}}]$:
\bea \label{eq:bound NL exit}
\non &&\parallel NL-NL' \parallel_{L^2} \lesssim  \para |\hat u-\hat Q^+|(|\hat u-Q|^{p-1}+|\hat Q^+-Q|^{p-1})  \para_{L^2}\\
\non & \leq & \para \hat u-\hat Q^+\para_{L^{\frac{2d}{d-2}}}(\para |\hat u-Q|^{p-1} \para_{L^{d}}+\para |\hat Q^+-Q|^{p-1}\para_{L^d}) \\
\non & \leq & \para \hat u-\hat Q^+\para_{\dot H^1}(\para \hat u-Q\para_{L^{d(p-1)}}^{p-1}+\para \hat Q^+-Q\para_{L^{d(p-1)}}^{p-1}) \\
\non & \leq & \delta e^{e_0t} D\Bigl(\para \hat u-Q\para_{L^{\frac{2d}{d-2}}}^{\frac{p-1}{2}}\para \hat u-Q\para_{L^{\infty}}^{\frac{p-1}{2}}+\para \hat Q^+-Q\para_{L^{\frac{2d}{d-2}}}^{\frac{p-1}{2}}\para \hat Q^+-Q\para_{L^{\infty}}^{\frac{p-1}{2}}\Bigr) \\
\non & \leq & \delta e^{e_0t} D\Bigl(\para \hat u-Q\para_{\dot H^1}^{\frac{p-1}{2}}\para \hat u-Q\para_{L^{\infty}}^{\frac{p-1}{2}}+\para \hat Q^+-Q\para_{\dot H^1}^{\frac{p-1}{2}}\para \hat Q^+-Q\para_{L^{\infty}}^{\frac{p-1}{2}}\Bigr) \\
&\lesssim &\delta^{p}e^{pe_0 t} D .
\eea

\noindent \emph{Energy estimate for the difference of errors}. From \fref{eq:evolution hatu-hatQ+}, the orthogonality conditions \fref{eq:orthogonalite v1-v2} and the bound \fref{eq:bound NL1-NL2} on the nonlinear term one gets the following energy estimate:
$$
\begin{array}{r c l}
\frac{d}{dt}\left[\int (\hat v-\hat v')H(\hat v-\hat v') \right] &=& 2 \int H(\hat v-\hat v')(NL-NL')-2\int H(\hat v-\hat v')^2 \\
&\lesssim&  \parallel NL-NL'\parallel_{L^2}^2 \lesssim  D^2\delta^{2p}e^{2e_0pt}.
\end{array}
$$
From the coercivity of the linearized operator \fref{eq:coercivite}, we reintegrate in time the above inequality to obtain that on $[1,\hat T_{\text{exit}}]$:
\be \la{eq:exit:bd v}
\ba{r c l}
\para \hat v-\hat v' \para_{\dot H^1}^2 & \lesssim & \int (\hat v-\hat v' )H(\hat v-\hat v' ) \\
&\leq & \int (\hat v(1)-\hat v'(1) )H(\hat v(1)-\hat v'(1) ) +\int_1^t D^2\delta^{2p}e^{2e_0pt}dt \\
&\lesssim & \para \hat v(1) \para_{\dot H^1}^2+ \para \hat v'(1) \para_{\dot H^1}^2+ D^2\delta^{2p}e^{2e_0pt} \lesssim  \delta^4+ D^2\delta^{2p}e^{2e_0pt}
\ea
\ee
where we used \fref{eq:bound hatu} and \fref{eq:bound hatQ+}. Using \fref{eq:estimation hatTexit} this means:
\be \label{eq:contraction v exit}
\begin{array}{r c l}
\underset{1\leq t\leq \hat T_{\text{exit}}}{\text{sup}}  \parallel \hat v-\hat v' \parallel_{\dot H^1}\frac{e^{-e_0t}}{\delta } & \lesssim  & \underset{1\leq t\leq \hat T_{\text{exit}}}{\text{sup}}  \delta^{p-1}e^{(p-1)e_0 t}D+\underset{1\leq t\leq \hat T_{\text{exit}}}{\text{sup}} \frac{e^{-e_0 t}\delta^2}{\delta} \\
& \lesssim & \alpha^{p-1}D+\delta\\
\end{array}
\ee

\noindent \emph{Modulation equations for the differences of parameters}. We take the scalar products of \fref{eq:evolution hatu-hatQ+} with $\Lambda Q$, $\mathcal Y$ and $\nabla Q$, using the bound \fref{eq:bound NL exit} for the nonlinear term and obtain for any $1\leq t\leq \hat T_{\text{exit}}$:
\bee
&&\left|\frac{d}{dt}\left[\hat b-\hat b'+\int (\hat v-\hat v')\Lambda Q \right] \right|+\left|\frac{d}{dt}\left[\hat z-\hat z'+\int (\hat v-\hat v') \nabla Q \right] \right|+|( \hat a-\hat a')_t|e^{e_0t} \\
&\lesssim & \delta^p e^{pe_0 t} D .
\eee
We integrate in time the two above equations on $[1,\hat T_{\text{exit}}]$, using \fref{eq:exit:bd v}, \fref{eq:bound hatu} and \fref{eq:bound hatQ+}, giving for $t\in [1,\hat T_{\text{exit}}]$:
$$
\ba{r c l}
&|\hat b(t)-\hat b'(t)|+|\hat z(t)-\hat z'(t)|+|\hat a(t)-\hat a'(t)|e^{e_0t} \\
\lesssim &|\hat b(1)-\hat b'(1)|+|\hat z(1)-\hat z'(1)|+|\hat a(1)-\hat a'(1)|e^{e_0t}+e^{e_0t}\int_1^t \delta^p e^{(p-1)e_0 t} Ddt\\
&+ |\int (\hat v(t)-\hat v'(t))\Lambda Q |+ |\int (\hat v(t)-\hat v'(t))\Lambda Q |+\int_1^t \delta^p e^{pe_0 t} Ddt \\
\lesssim & \delta^2+\delta^2e^{e_0t} +\para \hat v(t)-\hat v'(t) \para_{\dot H^1}+ \delta^p e^{pe_0 t} D\lesssim \delta^2e^{e_0t} + \delta^p e^{pe_0 t} D.
\ea
$$
From this, we deduce using \fref{eq:estimation hatTexit} that:
\be \label{eq:contraction exit1}
 \underset{1\leq t\leq \hat T_{\text{exit}}}{\text{sup}} (|\hat b-\hat b'|+|\hat z-\hat z'|+|\hat a-\hat a'|e^{e_0t})\frac{e^{-e_0t}}{\delta} \lesssim \delta+\alpha^{p-1}D.
\ee

\noindent \emph{End of the proof of \fref{eq:caracterisation Texit}}. From the definition \fref{eq:def norme exit} of the weighted norm of the difference and the estimates \fref{eq:contraction exit1} and \fref{eq:contraction 2} one obtains:
$$
D\lesssim \delta +\alpha^{p-1}D.
$$
We then conclude that $D\lesssim \delta$. From this fact and the definition \fref{eq:def norme exit} of $D$, the definition \fref{eq:estimation hatTexit} of $\hat T_{\text{exit}}$, at time $\hat T_{\text{exit}}$ there holds:
$$
\left(|\hat b-\hat b'|+|\hat z-\hat z'|+|\hat a-\hat a'|e^{e_0\hat T_{\text{exit}}}+\parallel \hat v-\hat v'\parallel_{\dot H^1}\right)\lesssim D\delta e^{e_0\hat T_{\text{exit}}}\lesssim \delta \alpha
$$
which implies the estimate \fref{eq:caracterisation Texit} we had to prove.

\noindent \emph{End of the proof}. From \fref{eq:def hatQ+} and \fref{eq:estimation hatTexit} one has:
$$
\hat Q^+(\hat T_{\text{exit}} )=Q^+\left(\text{log}\left( \left(\frac{\alpha+O(\alpha^2)}{\epsilon}\right)^{\frac{1}{e_0}}  \right) \right)
$$
which implies that there exists $C>0$ such that:
$$
\hat Q^+(\hat T_{\text{exit}} )\in \cup_{-C\leq \mu \leq C} Q^+\left(\text{log}\left( \left(\frac{\alpha+\mu \alpha}{\epsilon}\right)^{\frac{1}{e_0}}  \right) \right)=:\mathcal K.
$$
$\mathcal K$ is a compact set of $\dot H^1$ functions that will explode according to type I blow up forward in time, and $\mathcal K$ does not depend on $\delta$. We notice that as $\delta \rightarrow 0$, from \fref{eq:caracterisation Texit}:
$$
\underset{f\in \mathcal K}{\text{inf}} \parallel \hat u (\hat T_{\text{exit}}) -f\parallel_{\dot H^1} \lesssim \delta \rightarrow 0 \ \text{uniformly} \ \text{as} \ \delta \rightarrow 0.
$$ 
As the set of functions exploding with type I blow up forward in time is an open set of $\dot H^1$ from Proposition \ref{pr:type I} and Proposition \ref{pr:cauchy}, for $\delta$ small enough, $\hat u$ is blowing up with type I blow up forward in time. By the symmetries of the equation this means that $u$ is also blowing up with type I blow up forward in time.\\

The very same reasoning applies if $a(T_\text{trans})=-\delta$, and in that case at the exit time the solution is arbitrarily close in $\dot H^1$ to a compact set of renormalized versions of $Q^-$ going to $0$ in the energy topology because of dissipation. As the set of initial data such that the solution goes to $0$ in $\dot H^1$ as $t\rightarrow +\infty$ is also an open set of $\dot H^1$, one obtains that $u$ is global with $\parallel u \parallel_{\dot H^1}\rightarrow 0$ as $t\rightarrow +\infty$. This ends the proof of Theorem \ref{th:main}.

\end{proof}


\section{Stability of type I blow up} \la{sec:type I}

In this section we give some properties of solutions blowing up with type I blow up, and we prove the stability of this behavior. We follow the lines of \cite{Fe} where the authors prove it in the energy subcritical case $1<p<\frac{d+2}{d-2}$. Their proof adapts almost automatically but we write a proof here for the sake of completeness and for some estimates are more subtle in the energy critical case, mainly Proposition \ref{pr:energy to Linfty}. We recall that a solution $u$ of \fref{eq:NLH} blowing up at time $T$ is said to blow up with type I if it satisfies \fref{intro:def type I}.

\begin{proposition}[Stability of type I blow-up] \label{pr:type I}

Let $u_0\in W^{3,\infty}$ be an initial datum such that the solution $u$ of \fref{eq:NLH} starting from $u_0$ blows up with type I. Then there exists $\delta=\delta(u_0)>0$ such that for any $v_0\in W^{3,\infty}$ with 
\be \label{eq:proximite dotH1}
\parallel u_0-v_0\parallel_{W^{3,\infty}}\leq \delta 
\ee
the solution $v$ of \fref{eq:NLH} starting from $v_0$ blows up with type I.

\end{proposition}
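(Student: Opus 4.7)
The plan is to adapt the strategy of \cite{Fe} to the energy critical case by working in self-similar variables around the blow-up point of $u$ and showing that type I blow-up forms a trapping regime under small perturbations. First, localize the blow-up of $u$: let $T<+\infty$ be its blow-up time and $z_0\in\mathbb R^d$ a blow-up point, and introduce the self-similar renormalization
\[
w_u(s,y):=(T-t)^{\frac{1}{p-1}}u(t,z_0+y\sqrt{T-t}),\qquad s=-\log(T-t),
\]
which is uniformly bounded in $L^\infty$ by \fref{intro:def type I} and solves $\partial_s w=\Delta w-\tfrac12 y\cdot\nabla w-\tfrac{w}{p-1}+|w|^{p-1}w$. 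The Giga--Kohn Lyapunov functional in weighted $L^2_\rho$, combined with this uniform bound, gives convergence of $w_u(s)$ as $s\to+\infty$ to a constant stationary solution in $\{0,\pm\kappa_p\}$; since $z_0$ is a genuine blow-up point the limit cannot be $0$, and up to a sign change we may assume $w_u(s)\to\kappa_p$. Parabolic regularization on the bounded self-similar equation upgrades this to convergence in $W^{3,\infty}_{\rm loc}$ together with a weighted $L^2_\rho$ norm $\mathcal N$, so that for any $\mu>0$ one can fix $s_0=s_0(u)\gg 1$ with $\|w_u(s_0)-\kappa_p\|_{\mathcal N}\leq\mu$.

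The second step is to transfer this smallness to the perturbation $v$. Set $t_0:=T-e^{-s_0}<T$; by Proposition \ref{pr:cauchy} applied to $u$ and $v$ on $[0,t_0]$, continuous dependence in $W^{3,\infty}$ yields $\|v(t_0)-u(t_0)\|_{W^{3,\infty}}\lesssim\|v_0-u_0\|_{W^{3,\infty}}\lesssim\delta$. Renormalizing $v$ with the same anchoring $(T,z_0)$ as above, this gives $\|w_v(s_0)-\kappa_p\|_{\mathcal N}\lesssim\mu+\delta$, which can be made as small as needed by first choosing $\mu$ and then $\delta=\delta(u_0,\mu)$.

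The heart of the proof is then a trapping lemma: if $\|w(s_0)-\kappa_p\|_{\mathcal N}$ is sufficiently small, then there exist modulated parameters $(\tilde T,\tilde z)$ close to $(T,z_0)$ for which the self-similar renormalization of $v$ anchored at $(\tilde T,\tilde z)$ converges to $\kappa_p$ in $L^\infty$ as $s\to+\infty$, which forces $v$ to blow up at time $\tilde T$ with type I. I would prove this by linearizing: writing $w=\kappa_p+\eta$, $\eta$ obeys $\partial_s\eta=L_{\rm ss}\eta+N(\eta)$ with $L_{\rm ss}:=\Delta-\tfrac12 y\cdot\nabla+1$ and $N(\eta)=O(\eta^2)$. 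The operator $L_{\rm ss}$ is self-adjoint on $L^2_\rho$ with explicit Hermite spectrum $\{1-|\alpha|/2\}_{\alpha\in\mathbb N^d}$: the top eigenvalue $1$ (constant mode) corresponds to blow-up time translations, and the $d$ eigenvalues $1/2$ (linear modes $y_i$) to blow-up point translations, so these $d+1$ unstable modes are absorbed by choosing $(\tilde T,\tilde z)$ via an implicit function argument that kills the projections of $\eta_v(s_0)$ on $1$ and on $y_i$. A weighted energy estimate on the remaining component, orthogonal to these modes, together with parabolic regularization then yields exponential decay of $\eta_v(s)$ in $L^2_\rho\cap L^\infty$, giving $\|v(t)\|_{L^\infty}\sim(\tilde T-t)^{-\frac1{p-1}}$.

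The main obstacle will be handling the neutral eigenspace of $L_{\rm ss}$ spanned by the second Hermite polynomials (eigenvalue $0$): the quadratic nonlinearity $N(\eta)$ has a nontrivial projection on these modes, so the weighted energy estimate cannot close purely linearly on the full stable subspace. I would resolve this exactly as in \cite{Fe} by extracting the sign of the projection of $N(\eta)$ on these neutral modes through the convexity of the Giga--Kohn Lyapunov functional along them, which forces the null component to decay like $1/s$ and preserves the trapping. A secondary, essentially technical, issue specific to the energy critical case is that $f(u)=|u|^{p-1}u$ with $1<p<2$ is only $\mathcal C^1$ through zero, so promoting the $L^\infty$ bound on $w_u$ to the $W^{3,\infty}$ convergence needed for the $\mathcal N$-norm requires iterating the parabolic regularization of Lemma \ref{para:lem:para} on the bounded self-similar equation; once past this upgrade, smoothness of $f$ near $\kappa_p>0$ makes the rest of the argument proceed as in the subcritical setting of \cite{Fe}.
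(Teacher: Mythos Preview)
Your approach differs substantially from the paper's and has a genuine gap. The paper does not linearize around $\kappa_p$ nor perform a spectral/modulation analysis; instead it argues by contradiction using the pointwise ODE characterization of type I (Lemma~\ref{lem:description type I}: type I is equivalent to $|\Delta u|\le\tfrac12|u|^p+K$ on all of $[0,T)\times\mathbb R^d$) and the Liouville theorem for ancient solutions with the type I bound (Proposition~\ref{lem:liouville autosimilaire}). Assuming a sequence $u_n\to u$ with $u$ type I but $u_n$ not, the ODE criterion produces points $(t_n,x_n)$ where equality $|\Delta u_n|=\tfrac12|u_n|^p+2K$ is first reached; after centering at $x_n$ and renormalizing by the size of the solution, the Liouville theorem forces the rescaled sequence to converge in $C^{1,2}_{\rm loc}$ to a constant-in-space profile, and passing the equality to the limit yields the contradiction $0=\tfrac12|\kappa'|^p>0$. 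The proof splits according to whether $|u_n(t_n,x_n)|/\|u_n(t_n)\|_{L^\infty}$ stays bounded away from $0$ or tends to $0$, the second case requiring an intermediate renormalization time and the energy-to-$L^\infty$ machinery of Proposition~\ref{pr:energy to Linfty}.

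The gap in your route is that it is local at one blow-up point, whereas type I is the \emph{global} statement $\|v(t)\|_{L^\infty}\lesssim(T_v-t)^{-1/(p-1)}$. Your $L^2_\rho$ spectral analysis and the parabolic regularization you invoke control $w_v-\kappa_p$ only on compact sets in $y$, i.e.\ only for $|x-\tilde z|\lesssim\sqrt{\tilde T-t}$ in original variables; nothing in your outline prevents $\|v(t)\|_{L^\infty}$ from being achieved far from $\tilde z$ and blowing up faster (near another blow-up point of $u$, or at a new singularity created by the perturbation), nor does it even ensure $T_v=\tilde T$. This is precisely why the paper's route through the pointwise ODE criterion is effective: it converts the global $L^\infty$ bound into a local failure at a specific $(t_n,x_n)$, where all the analysis can be concentrated. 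As a secondary remark, killing the unstable projections at the single time $s_0$ does not keep them zero for $s>s_0$, since the quadratic nonlinearity feeds them back; selecting $(\tilde T,\tilde z)$ so that $w_v$ stays near $\kappa_p$ for all $s\ge s_0$ is a shooting/stable-manifold problem rather than a one-shot implicit function step.
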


\begin{remark}

The topology $W^{3,\infty}$ is convenient for our purpose but is not essential because of the parabolic regularizing effects, see Proposition \ref{pr:cauchy}.

\end{remark}

The section is organized as follows. In Subsection \ref{sub:typeI} we recall without proof some already known facts on type I blow-up, then we introduce the self-similar renormalization at a blow up point in Subsection \ref{sub:autosim}, before giving the proof of Proposition \ref{pr:type I} in Subsection \ref{sub:preuve}.


\subsection{Properties of type I blowing-up solution} \la{sub:typeI}

A point $x\in \mathbb R^d$ is said to be a blow up point for $u$ blowing up at time $T$ if there exists $(t_n,x)\rightarrow (T,x)$ such that:
$$
|u(t_n,x_n)|\rightarrow + \infty \ \ \text{as} \ n\rightarrow +\infty.
$$
A fundamental fact is the rigidity for solutions satisfying the type I blow up estimate \fref{intro:def type I} that are global backward in time, this is the result of Proposition \ref{lem:liouville autosimilaire}. Then in Lemma \ref{lem:description type I} we give a precise description of type I blow up, with an asymptotic at a blow up point and an ODE type caracterization.

\begin{proposition}[Liouville type theorem fot type I blow up \cite{MeZa2,Me9} ] \label{lem:liouville autosimilaire}

If $u$ be a solution of \fref{eq:NLH} on $(-\infty,0]\times \mathbb R^d$ such that $\para u\para_{L^{\infty}}\leq C(-t)^{\frac{1}{p-1}}$ for some constant $C>0$, then there exists $T\geq 0$ such that $u=\frac{\kappa}{(T-t)^{\frac{1}{p-1}}}$ where $\kappa$ is defined in \fref{intro:def kappa}.

\end{proposition}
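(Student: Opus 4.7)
\medskip
\noindent\textbf{Proof sketch.}

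The plan is to apply the classical self-similar renormalization strategy of Giga-Kohn, in the form refined by Merle \cite{Me9} and Merle-Zaag \cite{MeZa2} to reach the energy-critical exponent. First I would fix an arbitrary time $T>0$ and introduce the rescaled unknown
$$w_T(y,\tau):=(T-t)^{\frac{1}{p-1}}u(t,y\sqrt{T-t}), \qquad \tau=-\log(T-t),$$
defined for $\tau \in (-\infty,-\log T)$. A direct computation shows that $w_T$ solves the renormalized parabolic equation
$$\partial_\tau w=\Delta w-\tfrac{y}{2}\cdot \nabla w-\tfrac{w}{p-1}+|w|^{p-1}w.$$
The type I assumption $\|u(t)\|_{L^\infty}\leq C(-t)^{\frac{1}{p-1}}$ converts into the uniform bound $\|w_T\|_{L^\infty((-\infty,-\log T)\times \mathbb R^d)}\leq \tilde C$, and classical interior parabolic estimates for the heat-plus-linear-drift operator upgrade this to uniform $C^k_{\mathrm{loc}}$ control on any strip $\{\tau\leq \tau_0\}$, providing compactness of the backward-in-$\tau$ family $\{w_T(\cdot+\tau_n,\cdot)\}$ along any sequence $\tau_n\to -\infty$.

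Next I would exploit the gradient-flow structure of the renormalized equation. Up to the Gaussian weight $\rho(y)=e^{-|y|^2/4}$, it is the $L^2_\rho$ gradient flow of
$$E[w]:=\int_{\mathbb R^d}\Bigl(\tfrac{1}{2}|\nabla w|^2+\tfrac{w^2}{2(p-1)}-\tfrac{|w|^{p+1}}{p+1}\Bigr)\rho(y)\,dy,$$
in the sense that $\frac{d}{d\tau}E[w_T(\tau)]=-\int (\partial_\tau w_T)^2\rho\,dy$. The uniform $L^\infty$ bound makes $E[w_T(\tau)]$ uniformly bounded, hence monotone and convergent as $\tau\to -\infty$, and yields the dissipation estimate $\int_{-\infty}^{\tau_0}\int(\partial_\tau w_T)^2\rho\,dy\,d\tau<+\infty$. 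Combining this integrable dissipation with the compactness from the first step, a standard $\omega$-limit argument forces any subsequential limit $\phi$ of $w_T(\tau_n,\cdot)$ as $\tau_n\to -\infty$ to be a bounded classical solution of the stationary elliptic equation
$$\Delta \phi-\tfrac{y}{2}\cdot\nabla \phi-\tfrac{\phi}{p-1}+|\phi|^{p-1}\phi=0 \qquad \text{on}\quad \mathbb R^d.$$

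The crucial rigidity ingredient is then the classification of bounded solutions to this elliptic equation, which asserts that $\phi\in\{0,\pm\kappa\}$. I would invoke this as a black box from \cite{MeZa2,Me9}, since it is the deepest and most substantive input of the argument: it amounts to extending the Giga-Kohn Pohozaev-type computation up to and including the energy-critical exponent $p=\frac{d+2}{d-2}$, and this is the main obstacle of the whole scheme. The subcritical version, by contrast, reduces to a classical integration by parts against $\Lambda \phi$ in the Gaussian-weighted space, but at the critical exponent a more refined functional framework, as developed in the cited references, is required.

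It remains to transfer this information back to $u$. The $\alpha$-limit at $\tau=-\infty$ lies in $\{0,\pm\kappa\}$ and $E$ is strictly decreasing along non-stationary trajectories. Using the freedom in the renormalization parameter $T$, together with the algebraic identity relating $w_T$ and $w_{T'}$ for $T'\neq T$, one can match the $\alpha$-limits across all such renormalizations; this forces $w_T$ itself to be independent of $\tau$, hence equal to one of $0,\pm\kappa$, for a distinguished choice $T\geq 0$. Unraveling the self-similar change of variables for this correct $T$ produces $u(t,x)=\kappa/(T-t)^{1/(p-1)}$ in the positive case, with the obvious symmetric counterparts for the two remaining cases, as claimed.
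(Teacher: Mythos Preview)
The paper does not supply its own proof of this proposition; it is quoted from \cite{MeZa2,Me9}, and the paper only adds a short paragraph explaining why the result carries over from the subcritical range to $p=\frac{d+2}{d-2}$. Your overall scheme --- self-similar variables, the weighted energy as Lyapunov functional, compactness of the ancient orbit, classification of bounded stationary profiles, then rigidity --- is exactly the standard one from those references, so in that sense your sketch is correct and aligned with what the paper invokes.

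There is, however, a substantive misconception in your sketch. You single out the classification of bounded stationary solutions of
\[
\Delta \phi - \tfrac{y}{2}\cdot\nabla\phi - \tfrac{\phi}{p-1} + |\phi|^{p-1}\phi = 0
\]
as ``the main obstacle'' and claim that at the critical exponent ``a more refined functional framework\ldots is required'' beyond the Pohozaev computation. The paper makes precisely the opposite point: the Pohozaev identity
\[
(d+2-p(d-2))\int_{\mathbb R^d} |\nabla w|^2 e^{-|y|^2/4}\,dy \;+\; \frac{p-1}{2}\int_{\mathbb R^d} |y|^2 |\nabla w|^2 e^{-|y|^2/4}\,dy \;=\; 0
\]
holds verbatim at $p=\frac{d+2}{d-2}$; the first coefficient simply vanishes, and the second term alone forces $\nabla w=0$, hence $w\in\{0,\pm\kappa\}$. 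So the stationary classification is no harder at the critical exponent than subcritically --- it is the \emph{same} one-line computation. The remaining steps of the Liouville argument (energy monotonicity, parabolic compactness, matching of limits) are insensitive to whether $p$ is subcritical or critical. Your sketch is therefore sound in outline but mislocates the difficulty; there is no black box needed for the stationary step beyond the identity above.
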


\begin{lemma}[Description of type I blow up \cite{Gi3}, \cite{MeZa2}, \cite{Me9}] \label{lem:description type I}

Let $u$ solve \fref{eq:NLH} with $u_0\in W^{2,\infty}$ blowing up at $T>0$. The three following properties are equivalent:
\bea
\non &(i) & \text{The} \ \text{blow-up} \ \text{is} \ \text{of} \ \text{type} \ I. \\
\la{eq:car typeI ODE} & (ii) & \exists K>0, \ \ |\Delta u|\leq \frac{1}{2}|u|^p +K \ \text{on} \ \mathbb R^d \times [0,T) . \\
\la{eq:prop typeI} & (iii) & \parallel u \parallel_{L^{\infty}} (T-t)^{\frac{1}{p-1}} \rightarrow \kappa \ \text{as} \ t\rightarrow T .
\eea
Moreover, if $u$ blows up with type I at $x$ then:
\be \la{eq:prop typeI x}
 |u(t,x)|\sim \frac{\kappa}{(T-t)^{\frac{1}{p-1}}} \ \ \text{as} \ t\rightarrow T ,
\ee
and if $u_n(0)\rightarrow u(0)$ in $W^{2,\infty}$, for large $n$, $u_n$ blows up at time $T_n$ with $T_n\rightarrow T$.

\end{lemma}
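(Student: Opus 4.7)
The plan is to establish the equivalence (i)$\Leftrightarrow$(ii)$\Leftrightarrow$(iii) through the implications (iii)$\Rightarrow$(i), (ii)$\Rightarrow$(i), (i)$\Rightarrow$(iii), and (i)$\Rightarrow$(ii), all organized around the backward self-similar rescaling at an arbitrary point $a \in \mathbb R^d$:
\[
w_a(\tau,y) := (T-t)^{\frac{1}{p-1}} u(t, a + y\sqrt{T-t}), \qquad \tau := -\log(T-t),
\]
which transforms \eqref{eq:NLH} into the weighted parabolic equation
\[
\partial_\tau w = \Delta w - \tfrac{1}{2} y \cdot \nabla w - \tfrac{1}{p-1} w + |w|^{p-1}w.
\]
Under any of the three type I hypotheses, $w_a$ is uniformly bounded in $L^\infty_{\tau,y,a}$, and standard interior parabolic estimates then provide uniform $C^{2,\alpha}_{\mathrm{loc}}$ control on $w_a$ and in particular on $\Delta w_a$, a fact used repeatedly below.

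For the easy implications: (iii)$\Rightarrow$(i) is tautological. For (ii)$\Rightarrow$(i), the inequality (ii) combined with \eqref{eq:NLH} gives $|\partial_t u| \leq \tfrac{3}{2}|u|^p + K$, so that the function $M(t) := \|u(t)\|_{L^\infty}$ satisfies the differential inequality $M'(t) \leq \tfrac{3}{2} M(t)^p + K$ (argued via evaluation at maximum points of $\pm u(t,\cdot)$ together with a standard regularization). The ODE $U' = \tfrac{3}{2}U^p + K$ starting from $M(t_0)$ blows up within time $\sim M(t_0)^{-(p-1)}$; since $u$ itself blows up exactly at $T$, this forces $M(t_0) \lesssim (T-t_0)^{-1/(p-1)}$, which is (i).

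The core of the proof is (i)$\Rightarrow$(iii), following the Giga-Kohn self-similar analysis. The Lyapunov functional
\[
\mathcal E(w) := \int_{\mathbb R^d} \Bigl( \tfrac{1}{2}|\nabla w|^2 + \tfrac{1}{2(p-1)} w^2 - \tfrac{1}{p+1}|w|^{p+1} \Bigr) e^{-|y|^2/4} \, dy
\]
is non-increasing along the rescaled flow, with dissipation $\partial_\tau \mathcal E(w_a) = -\|\partial_\tau w_a\|_{L^2(e^{-|y|^2/4})}^2$. Its boundedness below, coming from the uniform $L^\infty$ control, yields $\int_0^\infty \|\partial_\tau w_a\|_{L^2(e^{-|y|^2/4})}^2 d\tau < \infty$. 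Any subsequential limit $w_\infty$ of $w_a(\tau_n,\cdot)$ as $\tau_n \to +\infty$ is therefore a stationary solution of the rescaled equation, so that the function $u_\infty(t,x) := (T-t)^{-1/(p-1)} w_\infty(x/\sqrt{T-t})$ is a solution of \eqref{eq:NLH} on $(-\infty,T)$ satisfying the type I bound. Proposition \ref{lem:liouville autosimilaire} then forces $u_\infty$ to be a constant blow-up solution, i.e., $w_\infty \in \{+\kappa,-\kappa,0\}$. Discreteness of this limit set together with a connectedness argument in $\tau$ promotes the subsequential statement to the full convergence $w_a(\tau, 0) \to \pm\kappa$ when $a$ is a blow-up point (which gives \eqref{eq:prop typeI x}) and $w_a(\tau, 0) \to 0$ otherwise. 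A compactness argument in $a$ combining the uniform $C^{2,\alpha}_{\mathrm{loc}}$ bounds with continuity of the limit map upgrades pointwise to uniform convergence $\sup_{a} |w_a(\tau, 0)| \to \kappa$, which is precisely (iii).

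Finally, (i)$\Rightarrow$(ii) uses that the convergence $w_a \to \pm\kappa$ or $0$, combined with $C^2_{\mathrm{loc}}$-compactness, gives $|\Delta w_a(\tau, 0)| \to 0$ uniformly in $a$, while $|w_a(\tau,0)|^p$ approaches $\kappa^p$ at blow-up points; for $\tau$ large the pointwise inequality $|\Delta w_a(\tau, 0)| \leq \tfrac{1}{2}|w_a(\tau, 0)|^p$ holds everywhere, and absorbing the compact initial-time regime into $K$ yields (ii). For the stability $T_n \to T$ when $u_n(0) \to u(0)$ in $W^{2,\infty}$: continuous dependence (Proposition \ref{pr:cauchy}) gives $u_n \to u$ uniformly on $[0, T-\epsilon]$ for each $\epsilon > 0$, whence $M_n(T-\epsilon) \to M(T-\epsilon)$ with the common value tending to $+\infty$ as $\epsilon \downarrow 0$; the ODE comparison $T_n \leq (T-\epsilon) + C\, M_n(T-\epsilon)^{-(p-1)}$ (inherited by $u_n$ from (ii), where the constant $K$ stays uniform for $n$ large by continuous dependence) yields $\limsup T_n \leq T$, and $\liminf T_n \geq T$ is direct. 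The main obstacle is (i)$\Rightarrow$(iii), specifically the identification of the self-similar limits through the Lyapunov/Liouville pairing and the non-trivial promotion of pointwise to uniform convergence in $a$.
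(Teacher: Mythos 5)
The paper does not actually reprove this lemma: it is quoted from \cite{Gi3,MeZa2,Me9}, and the only point the authors verify for the critical exponent is that the bounded stationary solutions of the self-similar equation \eqref{eq:w} are still exactly $\pm\kappa$ and $0$, via the Pohozaev identity stated right after the lemma. Your proposal instead attempts a full proof along the standard Giga--Kohn/Merle--Zaag lines; the skeleton is right, but two steps are genuinely wrong as written.

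First, in (ii)$\Rightarrow$(i) you keep the wrong half of the differential inequality. From $|\Delta u|\leq \tfrac12|u|^p+K$ and the equation one gets, at extremal points, the two-sided bound $\tfrac12|u|^p-K\leq \partial_t|u|\leq \tfrac32|u|^p+K$. The upper half, $M'\leq \tfrac32 M^p+K$, only says $M$ cannot blow up faster than the comparison ODE, hence yields the trivial lower bound $M(t_0)\gtrsim (T-t_0)^{-1/(p-1)}$ on the blow-up rate --- not the type I upper bound you claim. What is needed is the lower half: wherever $|u(t_0,x)|\geq (4K)^{1/p}$, the inequality $\partial_t|u(\cdot,x)|\geq\tfrac14|u(\cdot,x)|^p$ forces pointwise blow-up within time $\lesssim |u(t_0,x)|^{-(p-1)}$, and since no blow-up occurs before $T$ this gives $|u(t_0,x)|\lesssim (T-t_0)^{-1/(p-1)}$. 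Second, your argument for $\limsup T_n\leq T$ is circular: you invoke (ii) for $u_n$ with a uniform constant ``by continuous dependence'', but continuous dependence only controls $u_n$ on compact subintervals of $[0,T)$ and cannot propagate a differential inequality for $u_n$ up to its own blow-up time; whether $u_n$ satisfies (ii) near $T_n$ is essentially what Proposition \ref{pr:type I} is about. The robust route --- the one the paper itself uses in Step 3 of Lemma \ref{lem:typeI renorm} --- is the blow-up criterion $I(w)>0$ of Lemma \ref{lem:gk} applied to the self-similar rescaling of $u_n$ about a type I blow-up point of $u$ (alternatively the Jensen/convexity argument against the positive eigenfunction, as in Step 5 of Proposition \ref{pr:minimal}). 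Finally, the upgrade from pointwise convergence $w_a(\tau,0)\to\pm\kappa$ or $0$ to the uniform statement (iii) cannot be a ``compactness argument in $a$'': the limit map is discontinuous in $a$ and $a$ ranges over an unbounded set. This uniformity is precisely the hard content of \cite{Gi2,MeZa2} (it requires the Liouville theorem applied to limits along arbitrary space-time sequences $(a_n,\tau_n)$); you correctly flag it as the main obstacle but do not supply it.
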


The above results are stated in \cite{Fe}, \cite{Gi3}, \cite{MeZa2} and \cite{Me9} in the case $1<p<\frac{d+2}{d-2}$. They are however still valid in the energy critical case because the main argument is that the only bounded stationary solutions of \fref{eq:w} are $\kappa$, $-\kappa$ and $0$, which is still true for $p=\frac{d+2}{d-2}$. Indeed for a bounded stationary solution of \fref{eq:w}, the Pohozaev identity
$$
(d+2-p(d-2))\int_{\mathbb R^d} |\nabla w|^2e^{-\frac{|y|^2}{4}}dy+\frac{p-1}{2} \int_{\mathbb R^d}|y|^2|\nabla w |^2e^{-\frac{|y|^2}{4}}dy=0
$$
gives that $\nabla w=0$ if $1<p\leq \frac{d+2}{d-2}$. Hence $w$ is constant in space, meaning that it is one of the aforementioned solutions.


\subsection{Self-similar variables} \la{sub:autosim}

We follow the method introduced in \cite{Gi1}, \cite{Gi2}, \cite{Gi3} to study type I blow-up locally. The results the ideas of their proof are either contained in \cite{Gi2} or similar to the results there. A sharp blow-up criterion and other preliminary bounds are given by Lemma \ref{lem:gk} and a condition for local boundedness is given in Proposition \ref{pr:energy to Linfty}. For $u$ defined on $[0,T_{u_0})\times \mathbb R^d$, $a \in \mathbb R^d$ and $T>0$ we define the self-similar renormalization of $u$ at $(T,a)$:
\be \label{eq:def w}
w_{a,T}(y,t):=(T-t)^{\frac{1}{p-1}}u(t, a+\sqrt{T-t}y)
\ee
for $(t,y)\in [0,\text{min}(T_{u_0},T))\times \mathbb R^d$. Introducing the self-similar renormalized time:
\be \label{eq:def s autosim}
s:=-\text{log}(T-t)
\ee
one sees that if $u$ solves \fref{eq:NLH} then $w_{a,T}$ solves:
\be \label{eq:w}
\partial_s w_{a,T}-\Delta w_{a,T} -|w_{a,T}|^{p-1}w_{a,T} +\frac{1}{2}\Lambda w_{a,T}=0 .
\ee
Equation \fref{eq:w} admits a natural Lyapunov functional,
\be \label{eq:def E}
E(w)=\int_{\mathbb R^d} \left(\frac 1 2 |\nabla w(y)|^2+\frac{1}{2(p-1)}|w(y)|^2-\frac{1}{p+1}|w(y)|^{p+1} \right)\rho (y)dy ,
\ee
where $\rho (y):=\frac{1}{(4\pi)^{\frac d 2}}e^{-\frac{|y|^2}{4}}$ from the fact that for its solutions there holds:
\be \la{eq:decroissance energy}
\frac{d}{ds} E(w)=-\int_{\mathbb R^d}w_s^2\rho dy \leq 0 .
\ee
Another quantity that will prove to be helpful is the following:
\be \label{eq:def I}
I(w):=-2E(w)+\frac{p-1}{p+1} \left( \int_{\mathbb R^d} w^2\rho dy\right)^{\frac{p+1}{2}}.
\ee

\begin{lemma}[\cite{Gi1}, \cite{MeZa2}] \la{lem:gk}

Let $w$ be a global solution of \fref{eq:w} with $E(w(0))=E_0$, then\footnote{From the definition \fref{eq:def I} of $I$ and \fref{eq:blow up criterion} one has that for all $s\geq 0$, $E(w(s))\geq 0$. Hence the right hand side in \fref{eq:bound energy ws} is nonnegative.} for $s\geq 0$:
\be \la{eq:blow up criterion}
I(w(s))\leq 0 ,
\ee
\be \la{eq:bound energy ws}
\int_0^{+\infty} \int_{\mathbb R^d} w_s^2\rho dyds\leq E_0 .
\ee
If moreover $E_0:=E(w(0))\leq 1$, then\footnote{Idem for the right hand side of \fref{eq:bound energy w} and \fref{eq:bound energy nablaw}.} for any $s\geq 0$:
\be \la{eq:bound energy w}
\int_{\mathbb R^d} w^2\rho  dy \leq C E_0^{\frac{2}{p+1}},
\ee
\be \la{eq:bound energy nablaw}
\int_s^{s+1} \left( \int_{\mathbb R^d} ( |\nabla w|^2+w^2+|w|^{p+1})\rho dy \right)^2ds \leq C E_0^{\frac{p+3}{p+1}}.
\ee

\end{lemma}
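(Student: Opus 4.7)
The plan is to extract all the dynamical information encoded in the gradient-flow structure of \eqref{eq:w} via the Lyapunov functional $E$ and the second moment $f(s):=\int_{\mathbb R^d} w(s)^2 \rho\, dy$. The fundamental ingredient, complementing \eqref{eq:decroissance energy}, is obtained by testing \eqref{eq:w} against $w\rho$ and integrating by parts using $\nabla\rho=-\tfrac{y}{2}\rho$ and the critical identity $\tfrac{1}{p-1}=\tfrac{d-2}{4}$; this produces
$$
\tfrac12 f'(s)=-\int|\nabla w|^2\rho-\tfrac{1}{p-1}\int w^2\rho+\int|w|^{p+1}\rho,
$$
which combined with the definition of $E$ yields the key relation
$$
\int|w|^{p+1}\rho=\tfrac{p+1}{p-1}\bigl(2E(w)+\tfrac12 f'(s)\bigr),
$$
and since $\rho\, dy$ is a probability measure, Jensen's inequality applied to the convex map $x\mapsto x^{(p+1)/2}$ gives $\int|w|^{p+1}\rho\geq f^{(p+1)/2}$ and therefore $\tfrac12 f'(s)\geq I(w(s))$.

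For \eqref{eq:blow up criterion} I would argue by contradiction: if $I(w(s_0))>0$ then $f'(s_0)>0$, and differentiating $I$ while using $E'\leq 0$ and $f'\geq 0$ shows that $I$ is non-decreasing along the evolution for $s\geq s_0$; once $f$ passes a threshold determined by $E_0$, the inequality $f'\gtrsim f^{(p+1)/2}$ forces $f$ to blow up in finite $s$-time, contradicting global existence. Then \eqref{eq:bound energy ws} follows by integrating $E'(w)=-\int w_s^2\rho$ on $[0,T]$ and noting that \eqref{eq:blow up criterion} gives $E(w)\geq \tfrac{p-1}{2(p+1)}f^{(p+1)/2}\geq 0$, so that $\lim_T E(w(T))\geq 0$. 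The bound \eqref{eq:bound energy w} is then a direct consequence of \eqref{eq:blow up criterion} combined with $E\leq E_0\leq 1$.

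For \eqref{eq:bound energy nablaw} the cleanest route uses the algebraic identity
$$
A(s):=\int(|\nabla w|^2+w^2+|w|^{p+1})\rho=2E(w)+\tfrac{p-2}{p-1}f(s)+\tfrac{p+3}{p+1}\int|w|^{p+1}\rho,
$$
obtained by combining the definitions of $E$ and $A$. In the range $d\geq 7$ one has $p=(d+2)/(d-2)<2$, hence $\tfrac{p-2}{p-1}\leq 0$ and the sign of the $f$-term is favourable, so $A(s)\leq 2E(w)+\tfrac{p+3}{p+1}\int|w|^{p+1}\rho$ and $A^2\lesssim E^2+(\int|w|^{p+1}\rho)^2$. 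The $E^2$ contribution integrates to at most $E_0\int E\leq E_0^2\leq E_0^{(p+3)/(p+1)}$ since $E_0\leq 1$ and $p\geq 1$. For the nonlinear square, the key identity gives $(\int|w|^{p+1}\rho)^2\lesssim E^2+(f')^2$, and Cauchy--Schwarz yields $(f'(s))^2=4(\int ww_s\rho)^2\leq 4f(s)\int w_s^2\rho\leq CE_0^{2/(p+1)}\int w_s^2\rho$ using \eqref{eq:bound energy w}; integrating over $[s,s+1]$ using \eqref{eq:bound energy ws} produces $\int_s^{s+1}(f')^2\,ds\leq CE_0^{2/(p+1)}\cdot E_0=CE_0^{(p+3)/(p+1)}$, closing the estimate.

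The main obstacle is precisely the matching of the exponent in \eqref{eq:bound energy nablaw}: naively bounding the $f^2$ contribution to $A^2$ using only \eqref{eq:bound energy w} gives $CE_0^{4/(p+1)}$, which is strictly weaker than $E_0^{(p+3)/(p+1)}$ for $p>1$. The resolution exploits the algebraic identity above, in which the problematic $w^2$-term carries the favourable sign $(p-2)/(p-1)\leq 0$ in the large-dimension regime $d\geq 7$ and simply drops out of the pointwise upper bound for $A$; the remaining two pieces then admit the correct exponent via Cauchy--Schwarz combined with the time-integrated dissipation \eqref{eq:bound energy ws}.
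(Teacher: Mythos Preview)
Your argument is correct and follows essentially the same route as the paper: the same contradiction-via-Jensen for \eqref{eq:blow up criterion}, the same Lyapunov integration for \eqref{eq:bound energy ws}, and for \eqref{eq:bound energy nablaw} the same key identity $\int|w|^{p+1}\rho=\tfrac{p+1}{p-1}(2E+\tfrac12 f')$ combined with Cauchy--Schwarz on $f'=2\int ww_s\rho$. The only cosmetic difference is that where the paper simply writes ``using \eqref{eq:def E}'' to absorb $\int(|\nabla w|^2+w^2)\rho$ into $E$ and $\int|w|^{p+1}\rho$, you make the mechanism explicit via the identity $A=2E+\tfrac{p-2}{p-1}f+\tfrac{p+3}{p+1}\int|w|^{p+1}\rho$ and the sign $p<2$; this is the same computation unpacked.
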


\begin{remark}

If $I(w(s))>0$ holds for $w_{a,T}$ associated by \fref{eq:def w} to a solution $u$ of \fref{eq:NLH}, then $u$ blows up before $T$ from \fref{eq:def s autosim} since $w$ is not global from \fref{eq:blow up criterion}.

\end{remark}

\begin{proof} [Proof of Lemma \ref{lem:gk}]

\noindent \textbf{step 1} Proof of \fref{eq:blow up criterion}. We argue by contradiction and assume that $I(w(s_0))>0$ for some $s_0\geq 0$. The set $\mathcal S:=\{ s\geq s_0, \ I(s)\geq I(s_0) \}$ is closed by continuity. For any solution of \fref{eq:w} one has:
\be \la{eq:variation L2}
\frac{d}{ds} \left( \int_{\mathbb R^d} w^2\rho dy\right) = 2 \int_{\mathbb R^d} w w_s \rho dy = -4E(w)+\frac{2(p-1)}{p+1} \int_{\mathbb R^d} |w|^{p+1} \rho dy .
\ee
Therefore, for any $s\in \mathcal S$, from \fref{eq:def I} and Jensen inequality this gives:
\be \la{eq:croissance w2 bu}
\frac{d}{ds} \left( \int_{\mathbb R^d} w^2\rho dy\right) \geq -4E(w(s))+\frac{2(p-1)}{p+1} \left(\int_{\mathbb R^d} w^2 \rho dy\right)^{\frac{p+1}{2}} = I(w(s))>0 
\ee
as $I(w(s))\geq I(w(s_0))$ which with \fref{eq:decroissance energy} and \fref{eq:def I} imply $\frac{d}{ds} I(w(s))>0$. Hence $\mathcal S$ is open and therefore $\mathcal S=[s_0,+\infty)$. From \fref{eq:croissance w2 bu} and \fref{eq:decroissance energy} there exists $s_1$ such that $E(w(s))\leq \frac{p-1}{2(p+1)} \left(\int_{\mathbb R^d} w^2 \rho dy\right)^{\frac{p+1}{2}}$ for all $s\geq s_1$, implying from \fref{eq:croissance w2 bu}:
$$
\frac{d}{ds} \left( \int_{\mathbb R^d} w^2\rho dy\right) \geq 2\frac{p-1}{p+1} \left(\int_{\mathbb R^d} w^2 \rho dy\right)^{\frac{p+1}{2}} .
$$
This quantity must then tend to $+\infty$ in finite time, which is a contradiction.\\

\noindent \textbf{step 2} End of the proof. \fref{eq:bound energy ws} and \fref{eq:bound energy w} are consequences of \fref{eq:decroissance energy}, \fref{eq:def I} and \fref{eq:blow up criterion}. To prove \fref{eq:bound energy nablaw}, from \fref{eq:variation L2}, \fref{eq:decroissance energy}, \fref{eq:bound energy w} and H\"older one obtains:
$$
\int_s^{s+1}  \left( \int_{\mathbb R^d} |w|^{p+1} \rho dy \right)^2 ds \leq  \int_s^{s+1} \left(C E_0^2+C\int_{\mathbb R^d} w_s^2\rho dy\int_{\mathbb R^d} w^2\rho dy\right)ds \leq CE_0^{\frac{p+3}{p+1}}
$$
as $E_0\leq 1$. This identity, using \fref{eq:def E}, \fref{eq:decroissance energy} and as $E_0\leq 1$ implies \fref{eq:bound energy nablaw}.

\end{proof}

\begin{proposition}[Condition for local boundedness] \la{pr:energy to Linfty}

Let $R>0$, $0<T_-<T_+$ and $\delta>0$. There exists $\eta>0$ and $0<r\leq R$ such that for any $T\in [T_-,T_+]$ and $u$ solution of \fref{eq:NLH} on $[0,T)\times \mathbb R^d$ with $u_0\in W^{2,\infty}$ satisfying:
\be \la{eq:bd E waT}
\forall a\in B(0,R), \ \ E(w_{a,T}(0, \cdot))\leq \eta,
\ee
\be \la{eq:bd type I u}
\forall (t,x) \in [0,T)\times \mathbb R^d, \ \ |\Delta u(t,x)|\leq \frac 1 2 |u(t,x)|^p+\eta,
\ee
there holds
\be \la{eq:bd W2inftyloc u}
\forall t \in \left[\frac{T_-}{2},T\right), \ \ \para u(t) \para_{W^{2,\infty}(B(0,r))} \leq \delta.
\ee

\end{proposition}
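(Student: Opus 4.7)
The strategy is to argue by contradiction, combining the Lyapunov structure of (\ref{eq:w}) (via Lemma~\ref{lem:gk}) with parabolic regularity and a Giga--Kohn type classification of limits of self-similar rescalings. The plan is to show that the small-energy hypothesis forces every self-similar profile $w_{a,T}$ with $a \in \overline{B(0,R)}$ to vanish uniformly (in $s$ and locally in $y$), so that no point of $B(0,R)$ is a blow-up point of $u$, and then to extract a quantitative $L^\infty$ bound (and hence $W^{2,\infty}$ via Schauder) on a smaller ball up to the final time $T$.

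First, the pointwise ODE control (\ref{eq:bd type I u}) gives, via comparison with the scalar ODE $y'=\tfrac12 y^p+\eta$, a uniform type~I bound $|u(t,x)|\le \kappa_0(T-t)^{-1/(p-1)}$, with $\kappa_0$ independent of $\eta\le 1$; equivalently, the self-similar profiles satisfy $\|w_{a,T}\|_{L^\infty}\le \kappa_0$ uniformly in $a$ and $s\ge -\log T$. Combined with parabolic Schauder estimates for (\ref{eq:w}) (whose coefficients are smooth of bounded local growth and whose nonlinearity is $C^1$), the family $\{w_{a,T}\}_{a\in\overline{B(0,R)}}$ is then bounded in $C^{2,\alpha}_{\mathrm{loc}}$ uniformly. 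The main compactness claim I would prove is: for every $\epsilon,M>0$, there exists $\eta_0>0$ such that (\ref{eq:bd E waT}) with $\eta\le\eta_0$ implies
\[
\sup_{a\in \overline{B(0,R)},\ s\ge -\log T,\ |y|\le M}\;|w_{a,T}(s,y)|\le \epsilon.
\]
This I would establish by contradiction: if the conclusion fails along sequences $\eta_n\to 0$, $a_n\to a_\infty$, $y_n\to y_\infty$ and $s_n$ with $|w_{a_n,T_n}(s_n,y_n)|\ge \epsilon_0$, the time-shifted profiles $\tilde w_n(s,y):=w_{a_n,T_n}(s_n+s,y)$ subconverge in $C^{2,\beta}_{\mathrm{loc}}$ to a solution $\tilde w_\infty$ of (\ref{eq:w}) on a neighborhood of $(0,y_\infty)$ (whether $s_n\to\infty$ or stays bounded only affects the left endpoint of the limiting time interval). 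Lemma~\ref{lem:gk}, formula (\ref{eq:bound energy w}), gives $\int \tilde w_n(s)^2\rho\le C\eta_n^{2/(p+1)}\to 0$ uniformly in $s\ge 0$, whence $\tilde w_\infty\equiv 0$, contradicting $|\tilde w_n(0,y_n)|\ge\epsilon_0$ by locally uniform convergence.

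Choosing $\epsilon<\kappa/2$ in the uniform vanishing claim, one obtains in original variables the bound $|u(t,x)|\le \epsilon(T-t)^{-1/(p-1)}$ on $B(0,R)\times[0,T)$. For $t$ bounded away from $T$ (i.e. on $[T_-/2,T-\tau]$ for any fixed $\tau>0$), this immediately yields a small $L^\infty$ bound on $u$ by taking $\eta_0$ small. For $t$ close to $T$, (\ref{eq:prop typeI x}) ensures that no point of $\overline{B(0,R)}$ is a blow-up point of $u$; together with a Duhamel bootstrap writing $u$ on $[T-\tau,T)$ against the self-similar-small source $|u|^p\le \epsilon^p(T-s)^{-p/(p-1)}$ (inside $B(0,R)$) and against the globally type~I-bounded tail (which gives Gaussian-tail contributions thanks to the heat kernel), one improves the divergent bound $|u(t,x)|\le\epsilon(T-t)^{-1/(p-1)}$ into a uniform bound $|u|\le \delta'$ on $B(0,r)\times[T_-/2,T)$ for a suitable $r<R$; the bootstrap closes because $p>1$ makes the nonlinear contribution of order $\epsilon^p\ll\epsilon$. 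Finally, \eqref{eq:bd type I u} gives $|\Delta u|\le \tfrac12{\delta'}^p+\eta$, so interior parabolic Schauder estimates on cylinders of size $r$ promote $L^\infty$-smallness to $W^{2,\infty}$-smallness, yielding $\|u(t)\|_{W^{2,\infty}(B(0,r/2))}\le \delta$ for $\eta_0$ small and $r$ small.

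The main obstacle is the Duhamel bootstrap near $t=T$: the self-similar vanishing alone only yields the divergent bound $|u(t,x)|\le\epsilon(T-t)^{-1/(p-1)}$, and converting it into a genuinely uniform $L^\infty$ bound up to the final time requires iterating the nonlinear integral equation, using in an essential way both the superlinearity $p>1$ of the nonlinearity and the Gaussian spatial localization of the heat kernel relative to the algebraic type~I rate. Everything else (type I upper bound, parabolic Schauder, compactness of rescaled profiles) is standard once the Lyapunov identities of Lemma~\ref{lem:gk} are in hand.
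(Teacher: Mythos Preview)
Your plan is correct and follows the same overall architecture as the paper (self-similar smallness $\Rightarrow$ Duhamel bootstrap to uniform $L^\infty$ $\Rightarrow$ parabolic regularity to $W^{2,\infty}$), but the first step is handled by a genuinely different mechanism.

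For the self-similar smallness of $w_{a,T}$, the paper proceeds constructively: Proposition~\ref{pr:energy to Linfty autosimilaire} is obtained by first showing $H^1_{\mathrm{loc}}$ smallness (Lemma~\ref{lem:energy to LinftydotH1}, via a direct Duhamel/energy computation using the space--time bounds \eqref{eq:bound energy ws}--\eqref{eq:bound energy nablaw} and the pointwise Laplacian control), and then upgrading $H^1_{\mathrm{loc}}$ smallness to $L^\infty_{\mathrm{loc}}$ smallness (Lemma~\ref{lem:H1 to Linfty}, via an $H^2$ energy estimate on the time derivative). Your route instead uses the ODE-comparison type~I bound to get uniform $L^\infty$ control of $w_{a,T}$, parabolic Schauder to get compactness in $C^{2,\beta}_{\mathrm{loc}}$, and then a contradiction/compactness argument where the weighted $L^2$ bound \eqref{eq:bound energy w} forces any limit to vanish. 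This is shorter and more robust (it bypasses the somewhat delicate localized $H^2$ estimate in Lemma~\ref{lem:H1 to Linfty}), at the cost of being non-constructive: you lose any explicit dependence of $\eta$ on $\epsilon,M$, which the paper's argument in principle retains.

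For the second step, your ``Duhamel bootstrap'' is exactly what the paper packages as Lemma~\ref{lem:subtypeI Linfty} and Corollary~\ref{cor:subtypeI Linfty}: localize with a cutoff, write the Duhamel formula, and iterate the gain $\epsilon(T-t)^{-1/(p-1)+a}\mapsto C\epsilon(T-t)^{-1/(p-1)+a+1/2}$ until the exponent becomes nonnegative. You correctly identify this as the step requiring real work, and your description of the ingredients (superlinearity of $f$, Gaussian tails of $K_t$) matches the mechanism of the paper's proof.
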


The proof Proposition \ref{pr:energy to Linfty} is done at the end of this subsection. We need intermediate results: Proposition \ref{pr:energy to Linfty autosimilaire} gives local smallness in self-similar variables, Lemma \ref{lem:subtypeI Linfty} and its Corollary \ref{cor:subtypeI Linfty} give local boundedness in $L^{\infty}$ in original variables.

\begin{proposition} \la{pr:energy to Linfty autosimilaire}

For any $R,s_0,\delta>0$ there exists $\eta>0$ such that for any $w$ global solution of \fref{eq:w}, with $w(0)\in W^{2,\infty}(\mathbb R^d)$ satisfying
\be \la{eq:bound energy}
E(w(0))\leq \eta \ \ \text{and} \ \ \forall (s,y)\in [0,+\infty)\times \mathbb R^d, \  |\Delta w(s,y)|\leq \frac 1 2 |w(s,y)|^{p}+\eta,
\ee
then there holds:
\be \la{eq:bound energy to Linftyloc}
\forall (s,y) \in [s_0,+\infty)\times B(0,R), \ \ |w(s,y)|\leq \delta .
\ee

\end{proposition}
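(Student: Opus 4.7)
The plan is a contradiction plus Liouville-type argument. Assume the conclusion fails: there exist $\eta_n\to 0$, global solutions $w_n$ of \eqref{eq:w} satisfying \eqref{eq:bound energy} with $\eta=\eta_n$, and points $(s_n,y_n)\in [s_0,+\infty)\times\overline{B(0,R)}$ with $|w_n(s_n,y_n)|>\delta$. The strategy is to extract, after a time translation when $s_n\to+\infty$, a subsequential limit $\tilde w_\infty$ which is a bounded stationary solution of \eqref{eq:w} with zero energy, and to conclude $\tilde w_\infty\equiv 0$, contradicting $|w_n(s_n,y_n)|>\delta$.

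The delicate first step is to upgrade the hypothesis $|\Delta w_n|\le\frac{1}{2}|w_n|^{p}+\eta_n$ into a uniform $L^\infty$ bound on $w_n$. Since $p=\frac{d+2}{d-2}$ forces $\frac{1}{p-1}=\frac{d-2}{4}$, one has the identity $\frac{1}{2}\Lambda w=\frac{1}{p-1}w+\frac{1}{2}y\cdot\nabla w$. Along the characteristics $y(s)=y_0 e^{(s-s_1)/2}$ of the transport part of \eqref{eq:w}, the quantity $v(s):=w_n(s,y(s))$ satisfies
\be
\left|v'(s)-|v|^{p-1}v+\frac{1}{p-1}v\right|\le \frac{1}{2}|v|^p+\eta_n,
\ee
so $\frac{d}{ds}|v|\ge \frac{1}{2}|v|^p-\frac{1}{p-1}|v|-\eta_n$. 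Since $w_n$ is defined for all $s\ge 0$, an elementary ODE comparison forces $|v|$ to remain bounded by a constant $M$ independent of $n$ and $y_0$: if $|v|$ exceeded some threshold depending only on $\eta_n$, the right-hand side above would become comparable to $|v|^p$ and produce finite-time blow-up of $v$. This gives $\|w_n\|_{L^\infty([s_0,\infty)\times\mathbb R^d)}\le M$, which combined with $|\Delta w_n|\le \frac{1}{2}M^p+1$ and standard parabolic regularity applied to \eqref{eq:w} upgrades to uniform $W^{2,\infty}_{\mathrm{loc}}$ and H\"older bounds for $(w_n,\nabla w_n,\nabla^2 w_n,\partial_s w_n)$.

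Next, up to subsequences, either $s_n\to s_\infty\in[s_0,\infty)$, or $s_n\to\infty$; in the latter case we shift times by setting $\tilde w_n(\sigma,y):=w_n(\sigma+s_n-1,y)$. By Arzel\`a--Ascoli and a diagonal extraction, $\tilde w_n$ (or $w_n$, in the bounded case) converges in $C^{1,2}_{\mathrm{loc}}$ to a bounded classical solution $\tilde w_\infty$ of \eqref{eq:w}, defined on $[s_0,\infty)\times\mathbb R^d$ or on $\mathbb R\times\mathbb R^d$. The bound \eqref{eq:bound energy ws} gives $\int_0^\infty\int|(w_n)_s|^2\rho\,dy\,ds\le E(w_n(0))\le \eta_n\to 0$, so at the limit $\partial_\sigma \tilde w_\infty\equiv 0$ and $\tilde w_\infty$ is a bounded stationary solution of \eqref{eq:w}. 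By the Pohozaev-based classification recalled just before the statement, $\tilde w_\infty\in\{0,\pm \kappa\}$. A direct computation gives $E(\pm\kappa)=\frac{\kappa^2}{2(p+1)}>0$, whereas lower semi-continuity of $E$ combined with $E(w_n(0))\le \eta_n\to 0$ and the dissipation \eqref{eq:decroissance energy} yields $E(\tilde w_\infty)\le 0$. Therefore $\tilde w_\infty\equiv 0$, and extracting further $y_n\to y_\infty\in\overline{B(0,R)}$, the locally uniform convergence contradicts $|w_n(s_n,y_n)|>\delta$.

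The main obstacle is the first step: turning the pointwise bound on $\Delta w_n$ into a uniform $L^\infty$ estimate via an ODE inequality along the rescaling characteristics of \eqref{eq:w}. This is where both the global existence in self-similar time and the criticality $p=\frac{d+2}{d-2}$ (through $\frac{1}{p-1}=\frac{d-2}{4}$) are essentially used. Once the $L^\infty$ bound is secured, the compactness and the classification of bounded stationary solutions of \eqref{eq:w} make the Liouville step routine.
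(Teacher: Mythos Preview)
Your proof is correct and takes a genuinely different route from the paper's. The paper proves this proposition by combining Lemma~\ref{lem:energy to LinftydotH1} and Lemma~\ref{lem:H1 to Linfty}: the energy bounds \eqref{eq:bound energy w}--\eqref{eq:bound energy nablaw} together with the hypothesis $|\Delta w|\le\frac12|w|^p+\eta$ first yield $\int_s^{s+1}\|w\|_{H^2(B(0,2R))}^2\lesssim\eta^{2/(p+1)}$, which is fed into a localized Duhamel--Strichartz argument to obtain smallness of $\|w(s)\|_{H^1(B(0,R))}$, and then a further energy estimate on $w_s$ plus parabolic regularization upgrades this to smallness in $L^\infty$ on a smaller ball. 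Your approach is instead a soft compactness--Liouville argument: the key new idea is the ODE inequality along the characteristics $y(s)=y_0e^{(s-s_1)/2}$, which converts the pointwise hypothesis on $\Delta w$ plus global existence in $s$ into a uniform $L^\infty$ bound, \emph{independently of any energy smallness}. This is essentially the mechanism behind the equivalence (i)$\Leftrightarrow$(ii) in Lemma~\ref{lem:description type I}, transported to self-similar variables. Once the uniform $L^\infty$ bound is secured, local parabolic regularity gives compactness, the dissipation \eqref{eq:bound energy ws} forces the limit to be stationary, the Pohozaev classification leaves only $\{0,\pm\kappa\}$, and the energy (lower semicontinuous thanks to the uniform $L^\infty$ bound and the Gaussian weight) rules out $\pm\kappa$. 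Your argument is more conceptual; the paper's is fully quantitative and produces an explicit polynomial dependence of $\eta$ on the data, which your contradiction scheme does not yield---though this quantitative rate is not needed for the application in Proposition~\ref{pr:energy to Linfty}.
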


\begin{proof}[Proof of Proposition \ref{pr:energy to Linfty}]

It is a direct consequence of Lemma \ref{lem:energy to LinftydotH1} and Lemma \ref{lem:H1 to Linfty}.

\end{proof}

\begin{lemma} \la{lem:energy to LinftydotH1}

For any $R,s_0,\eta'>0$ there exists $\eta>0$ such that for $w$ a global solution of \fref{eq:w}, with $w(0)\in W^{2,\infty}(\mathbb R^d)$, satisfying \fref{eq:bound energy}, there holds
\be \la{eq:bound energy to LinftydotH1}
\forall s \in [s_0,+\infty), \ \ \int_{B(0,R)} (|w|^2+|\nabla w|^2)dy \leq \eta'.
\ee

\end{lemma}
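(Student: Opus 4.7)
The plan is to combine the pointwise-in-time weighted $L^2$ bound provided by Lemma \ref{lem:gk} with a uniform local $L^\infty$ bound coming from Lemma \ref{lem:subtypeI Linfty}, followed by an elliptic regularity / Gagliardo--Nirenberg interpolation applied separately at each self-similar time.

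First, by \eqref{eq:bound energy w} (valid for $\eta\le 1$),
\begin{equation*}
\int_{\mathbb R^d} w(s,y)^2 \rho(y)\,dy \le C\eta^{2/(p+1)} \qquad \text{for all } s\ge 0.
\end{equation*}
Since $\rho$ is bounded below by a positive constant on $B(0,R+1)$, this at once gives $\|w(s)\|_{L^2(B(0,R+1))}\le C_R\,\eta^{1/(p+1)}$ for every $s\ge 0$, which already handles the $L^2$ part of \eqref{eq:bound energy to LinftydotH1} as soon as $\eta$ is chosen small.

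Second, the sub-type~I pointwise bound $|\Delta w|\le \frac 1 2 |w|^p + \eta$ contained in \eqref{eq:bound energy} becomes, through the self-similar change of variables \eqref{eq:def w}--\eqref{eq:def s autosim}, the analogous ODE bound \eqref{eq:car typeI ODE} for the original unknown $u$, uniformly in the scaling parameters. I would therefore apply Lemma \ref{lem:subtypeI Linfty}, read in self-similar variables, to produce a constant $M=M(R,s_0)>0$ independent of $\eta\in (0,1]$ such that
\begin{equation*}
\|w(s)\|_{L^\infty(B(0,R+1))}\le M \qquad \text{for all } s\ge s_0/2.
\end{equation*}
Combined with \eqref{eq:bound energy} this yields the pointwise bound $\|\Delta w(s)\|_{L^\infty(B(0,R+1))}\le \frac 1 2 M^p + 1$ on the same time interval.

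Third, fix a cut-off $\chi\in C_c^\infty(\mathbb R^d)$ with $\chi\equiv 1$ on $B(0,R)$ and $\mathrm{supp}\,\chi\subset B(0,R+1)$. The Gagliardo--Nirenberg inequality on $\mathbb R^d$ applied at each fixed $s$ to $\chi w(s,\cdot)$ gives
\begin{equation*}
\|\nabla w(s)\|_{L^2(B(0,R))}\le \|\nabla(\chi w)(s)\|_{L^2(\mathbb R^d)} \le C\|\Delta(\chi w)(s)\|_{L^2(\mathbb R^d)}^{1/2}\|\chi w(s)\|_{L^2(\mathbb R^d)}^{1/2}.
\end{equation*}
The first factor is controlled by a constant depending only on $M$, $R$ and $\chi$ thanks to the $L^\infty$ bounds on $w$ and $\Delta w$ from the previous step, while the second factor is bounded by $C\eta^{1/(p+1)}$ by the first step. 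Choosing $\eta$ small enough to absorb the resulting $\eta^{1/(p+1)}$ into $\eta'$ concludes the proof of \eqref{eq:bound energy to LinftydotH1}.

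The crux of the argument is the second step: the $L^\infty_{\mathrm{loc}}$ bound on $w$ with a constant independent of $\eta$ is the only genuinely nonlinear ingredient and is precisely the content of Lemma \ref{lem:subtypeI Linfty}, itself built from the sub-type~I assumption via parabolic regularization. Everything else reduces to the already established weighted energy estimates combined with standard linear interpolation and elliptic regularity hidden in the $L^2$ control of $\Delta(\chi w)$.
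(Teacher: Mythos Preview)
Your overall strategy---control $\|w\|_{L^2_{\mathrm{loc}}}$ via the weighted bound \eqref{eq:bound energy w}, obtain a uniform $L^\infty_{\mathrm{loc}}$ bound on $w$, then interpolate via Gagliardo--Nirenberg---is sound and more direct than the paper's Duhamel/Strichartz argument. However, step~2 has a genuine gap: Lemma~\ref{lem:subtypeI Linfty} does \emph{not} deliver the $L^\infty$ bound you claim. Its hypothesis \eqref{eq:bound hp subtypeI} is already a pointwise upper bound on $|u|$, not the ODE condition $|\Delta u|\le\frac12|u|^p+K$; the lemma \emph{improves} an existing pointwise bound, it does not manufacture one from the sub-type-I inequality. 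Your assertion that the self-similar ODE bound transfers ``uniformly'' to $u$ is also false: $|\Delta_y w|\le\frac12|w|^p+\eta$ becomes $|\Delta_x u|\le\frac12|u|^p+\eta(T-t)^{-p/(p-1)}$, with constant blowing up as $t\to T$.

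The claim you want in step~2 is nonetheless true, via a short characteristics argument directly in self-similar variables: along $y(s)=y_0e^{(s-s_0)/2}$ one has $\frac{d}{ds}w(s,y(s))=\Delta w+|w|^{p-1}w-\frac{1}{p-1}w$, so the ODE bound gives $\frac{d}{ds}|w|\ge\frac12|w|^p-\frac{1}{p-1}|w|-\eta$. If $|w(s_0,y_0)|$ exceeds a universal constant $M_0$ (uniform for $\eta\le 1$), this forces blow-up along the characteristic, contradicting globality of $w$. This is the mechanism behind the implication $(ii)\Rightarrow(i)$ in Lemma~\ref{lem:description type I}, not Lemma~\ref{lem:subtypeI Linfty}. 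With this correction your proof goes through (note that the cross term $2\nabla\chi\cdot\nabla w$ in $\Delta(\chi w)$ requires a preliminary integration by parts to be bounded from $\|w\|_{L^\infty}$ and $\|\Delta w\|_{L^\infty}$ alone, but this is routine). The paper instead avoids any pointwise bound at this stage: it exploits the ODE condition together with \eqref{eq:bound energy ws}--\eqref{eq:bound energy nablaw} to obtain a space-time $L^{2p}$ and then local $H^2$ estimate, and closes with a Duhamel/Strichartz argument for $\nabla v$.
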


\begin{lemma} \la{lem:H1 to Linfty}

For any $R,\delta>0$, $0<s_0<s_1$ there exists $\eta,\eta'>0$ and $0< r\leq R$ such that for $w$ a global solution of \fref{eq:w} with $w(0)\in W^{2,\infty}$, satisfying \fref{eq:bound energy} and \fref{eq:bound energy to LinftydotH1} there holds:
\be \la{eq:bound LinftyLinfty}
\forall (s,y)\in [ s_1,+\infty)\times B(0,r), \ \  |w(s,y)|\leq \delta .
\ee

\end{lemma}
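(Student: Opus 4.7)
The goal is to promote the local $H^1$ smallness \eqref{eq:bound energy to LinftydotH1} into a uniform local $L^\infty$ smallness on the self similar equation \eqref{eq:w}, using the pointwise ODE bound contained in \eqref{eq:bound energy} to tame the critical nonlinearity.

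First I would convert the local $H^1$ bound into a local bound at the critical Sobolev exponent: by Sobolev embedding inside a ball slightly smaller than $B(0,R)$, combined with the global weighted $L^2$ bound \eqref{eq:bound energy w} of Lemma~\ref{lem:gk}, the assumption \eqref{eq:bound energy to LinftydotH1} yields
$$
\| w(s)\|_{L^{\frac{2d}{d-2}}(B(0,R/2))} \le C(R)\sqrt{\eta'} \quad \text{for all } s\ge s_0.
$$
In particular, for any prescribed threshold this local critical Lebesgue norm can be made arbitrarily small by choosing $\eta'$ small, which is the small smallness that will drive the bootstrap.

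The bootstrap itself is a standard Duhamel/Moser type iteration localised in $B(0,R)$. Fix a decreasing sequence of radii $R/2=r_0>r_1>\cdots\to r>0$ and a nested family of smooth cutoffs $\chi_n$ with $\chi_n\equiv 1$ on $B(0,r_{n+1})$ and $\mathrm{supp}\,\chi_n\subset B(0,r_n)$. For $v_n:=\chi_n w$ one has
$$
\partial_s v_n - \Delta v_n = \chi_n |w|^{p-1}w - \tfrac12 \chi_n \Lambda w + [\Delta,\chi_n]w,
$$
where the commutator is supported in the annulus $B(0,r_n)\setminus B(0,r_{n+1})$ and is controlled by the previous iteration step. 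Writing Duhamel's formula on a short time window $[\sigma_n,\sigma_{n+1}]\subset[s_0,s_1]$ and using the smoothing $e^{\tau\Delta}\colon L^{q_n}\to L^{q_{n+1}}$ with a geometric sequence $q_n\nearrow\infty$, one upgrades the local $L^{q_n}$ norm of $w$ to a smaller local $L^{q_{n+1}}$ norm of $w$, provided the critical nonlinear contribution $\|\chi_n|w|^{p-1}w\|_{L^{q_n}} \lesssim \|w\|_{L^{pq_n}(B(0,r_n))}^{p}$ can be absorbed by a small prefactor coming from the previous step.

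The main obstacle is precisely this absorption at the energy critical exponent $p=\frac{d+2}{d-2}$, where the nonlinearity scales exactly as the Sobolev embedding and the time step gives no free smallness. The resolution is twofold: the initial local critical norm $\|w\|_{L^{2d/(d-2)}}$ can be chosen as small as one wishes through $\eta'$, yielding a small prefactor that survives the scaling; and the pointwise ODE bound $|\Delta w|\le\tfrac12|w|^p+\eta$ from \eqref{eq:bound energy}, combined with standard elliptic $L^q$ theory applied slice by slice in $s$, allows one to trade one factor of $|w|^p$ against $\Delta w$ plus a small error, breaking the criticality of the iteration. After finitely many steps the sequence $q_n$ overshoots any threshold and a final passage $L^{q}\to L^\infty$ with $q>d/2$, via Duhamel and the pointwise bound on $\Delta w$, delivers the claimed estimate \eqref{eq:bound LinftyLinfty} on $[s_1,+\infty)\times B(0,r)$ for an appropriate choice of $\eta,\eta',r$, the waiting time $s_1-s_0$ accommodating the finite number of iteration steps.
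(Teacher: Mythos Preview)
Your Moser/Duhamel iteration is a reasonable line of attack, but it differs from the paper's route and one of your two ``resolutions'' of the critical obstruction is mis-stated.

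The paper's argument is shorter and hinges on the time derivative. Localise $v=\chi_{R/2}w$. The energy dissipation \eqref{eq:bound energy ws} gives $\int_0^\infty\int v_s^2\,dy\,ds\lesssim\eta$, hence there is a first time where $\|v_s\|_{L^2}$ is small. Differentiating the localised equation in $s$ and running a plain $L^2$ energy estimate on $u:=v_s$, the nonlinear potential $p|w|^{p-1}$ is absorbed thanks to the local Sobolev smallness coming from \eqref{eq:bound energy to LinftydotH1}:
\[
\int u^2|w|^{p-1}\,dy\le\|u\|_{L^{2d/(d-2)}}^2\,\|w\|_{L^{2d/(d-2)}(B(0,R))}^{p-1}\le C(\eta')^{(p-1)/2}\|\nabla u\|_{L^2}^2,
\]
which yields $\|v_s(s)\|_{L^2}\le C\eta$ uniformly for $s\ge s_1/2$. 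Feeding this back into the equation and using the same absorption for the nonlinearity gives $\|\Delta v\|_{L^2}\le C\eta$, i.e.\ uniform local $H^2$ smallness. A standard parabolic regularisation (Lemma~\ref{lem:parabolic}) then upgrades this to the local $L^\infty$ bound \eqref{eq:bound LinftyLinfty}. Note that the pointwise ODE bound $|\Delta w|\le\tfrac12|w|^p+\eta$ from \eqref{eq:bound energy} is \emph{not} used in this lemma; only the energy smallness $E(w(0))\le\eta$ enters, via the dissipation.

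On your side, the first mechanism you invoke---smallness of the local critical Lebesgue norm making the nonlinearity perturbative---is exactly the absorption device the paper uses, and it should in principle let a careful Duhamel bootstrap close (with exponents chosen so that the time singularity stays integrable). Your second mechanism, however, is wrong as stated: the inequality $|\Delta w|\le\tfrac12|w|^p+\eta$ bounds $\Delta w$ by $|w|^p$, not conversely, so it cannot be used to ``trade one factor of $|w|^p$ against $\Delta w$''; elliptic $L^q$ theory applied to it yields $\|w\|_{W^{2,q}}\lesssim\|w\|_{L^{pq}}^p+\cdots$, which goes the wrong way for your iteration. Your bootstrap must close on the smallness mechanism alone, and the paper's $v_s$ route is both shorter and avoids tracking a sequence of Lebesgue exponents.
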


We now prove the two above lemmas. In what follows we will often have to localize the function $w$. Let $\chi $ be a smooth cut-off function, $\chi =1$ on $B( 0,1)$ and $\chi=0$ outside $B(0,2)$. For $R>0$ we define $\chi_R(x)=\chi\left(\frac{x}{R}\right)$ and:
\be \la{eq:def v}
v:=\chi_R w 
\ee
(we will forget the dependence in $R$ in the notations to ease writing, and will write $\chi$ instead of $\chi_R$). From \fref{eq:w} the evolution of $v$ is then given by:
\be \la{eq:v}
v_s-\Delta v = \chi |w|^{p-1}w +\left( \left[\frac{1}{p-1}-\frac d 2 \right]\chi-\frac 1 2 \nabla \chi.y +\Delta \chi \right)w+\nabla . \left(\left[ \frac 1 2 \chi y -2\nabla \chi \right]w\right) 
\ee

\begin{proof}[Proof of Lemma \ref{lem:energy to LinftydotH1}]

We will prove that \fref{eq:bound energy to LinftydotH1} holds at time $s_0$, which will imply \fref{eq:bound energy to LinftydotH1} at any time $s\in [s_0,+\infty)$ because of time invariance.\\

\noindent \textbf{step 1} An estimate for $\Delta w$. First one notices that the results of Lemma \ref{lem:gk} apply. From \fref{eq:bound energy} and \fref{eq:w} there exists a constant $C>0$ such that:
$$
 |w|^{2p} \leq C(|w|^{p-1}w+\Delta w)^2+C \eta^2\leq C|w_s|^2+C|y|^2|\nabla w|^2+Cw^2+C\eta^2 .
$$
We integrate this in time, using \fref{eq:bound energy ws}, \fref{eq:bound energy w}, \fref{eq:bound energy nablaw} and \fref{eq:bound energy}, yielding for $s\geq 0$:
\be \la{eq:bound wL2pL2p}
\int_s^{s+1} \int_{B(0,2R)} |w|^{2p} dy ds \leq C \eta +C \eta^{\frac{p+3}{p+1}}+C\eta^{\frac{2}{p+1}}+C\eta^2\leq C\eta^{\frac{2}{p+1}}.
\ee
Injecting the above estimate in \fref{eq:bound energy}, using \fref{eq:bound energy w} and \fref{eq:bound energy nablaw} we obtain for $s\geq 0$:
\be \la{eq:bound DeltawL2L2}
\begin{array}{r c l}
& \int_s^{s+1} \para w \para_{H^2(B(0,2R))}^2ds \leq \int_s^{s+1} \int_{B(0,2R)}( |\Delta w|^2+|\nabla w|^2+w^2)dyds \\
\leq & \int_s^{s+1}\int_{B(0,2R)} C(|w|^{2p}+|\nabla w|^2+w^2)dyds +C\eta^2 \leq C\eta^{\frac{2}{p+1}}.
\end{array}
\ee

\noindent \textbf{step 2} Localization. We localize at scale $R$ and define $v$ by \fref{eq:def v}. From \fref{eq:def v}, \fref{eq:bound energy nablaw} and \fref{eq:bound energy w} one obtains that there exists $\tilde s_0 \in [\text{max}(0,s_0-1),s_0]$ such that:
\be \la{eq:bound tildes0 v H1}
\para v(\tilde s_0) \para_{H^1(\mathbb R^d)}^2\lesssim \int_{B(0,2R)} (w(\tilde s_0)^2+|\nabla w(\tilde s_0)|^2)dy\leq C \eta^{\frac{2}{p+1}}+C\eta^{\frac{p+3}{p+1}} \leq C \eta^{\frac{2}{p+1}} 
\ee
We apply Duhamel formula to \fref{eq:v} to find that $v(s_0)$ is given by:
\be \la{eq:Duhamel boot}
\ba{r c l}
v(s_0)& = &\int_{\tilde s_0}^{s_0} K_{s_0-s} *\left\{\chi |w|^{p-1}w +\left( \left[\frac{1}{p-1}-\frac d 2 \right]\chi-\frac 1 2 \nabla \chi.y +\Delta \chi \right)w\right\}ds \\
&&+\int_{\tilde s_0}^{s_0} \nabla .K_{s_0-s} * \left(\left[ \frac 1 2 \chi y -2\nabla \chi \right]w\right)ds +K_{{s_0-\tilde s_0}}*v(\tilde s_0)  .
\ea
\ee
We now estimate the $\dot H^1$ norm of each term in the previous identity, using \fref{eq:bound tildes0 v H1}, \fref{eq:bound energy nablaw}, \fref{eq:bd Kt}, Young and H\"older inequalities:
\be \la{eq:lin boot}
\para K_{{s_0-\tilde s_0}}*v(\tilde s_0) \para_{\dot H^1(\mathbb R^d)}\leq \para v(\tilde s_0) \para_{\dot H^1(\mathbb R^d)}\leq C\eta^{\frac{1}{p+1}},
\ee
\be \la{eq:extra lin boot}
\ba{r c l}
&\left\Vert \int_{\tilde s_0}^{s_0} K_{s_0-s} *\{( [\frac{1}{p-1}-\frac d 2 ]\chi-\frac{\nabla \chi.y}{2} +\Delta \chi )w\}+\nabla . K_{s_0-s} * ([ \frac{\chi y}{2} -2\nabla \chi ]w) \right\Vert_{\dot H^1}\\
\leq &  C \int_{\tilde s_0}^{s_0} \para w \para_{H^1(B(0,2R))}ds+C \int_{\tilde s_0}^{s_0} \frac{1}{|s_0-s|^{\frac 1 2}} \para w\para_{H^1(B(0,2R))}ds \\
\leq & C\eta^{\frac{p+3}{4(p+1)}}+C \left( \int_{\tilde s_0}^{s_0} \frac{ds}{|\tilde s_1-s|^{\frac 1 2 \times \frac 4 3}}\right)^{\frac{3}{4}} \left( \int_{\tilde s_0}^{s_0} \para w\para_{H^1(B(0,2R))}^4 ds\right)^{\frac 1 4} \leq C\eta^{\frac{p+3}{4(p+1)}} \\
\ea
\ee
For the non linear term in \fref{eq:Duhamel boot}, one first compute from \fref{eq:def v} that:
\be \la{eq:expression NL loc}
\nabla (\chi |w|^{p-1} w)=p\chi |w|^{p-1}\nabla w+\nabla \chi |w|^{p-1}w .
\ee
For the first term in the previous identity, using Sobolev embedding one obtains:
$$
\ba{r c l}
\para |w|^{p-1}\nabla w \para_{L^{\frac{2d}{d-2+(d-4)(p-1)}}(B(0,2R))} & \leq & C \para w \para^{p-1}_{L^{\frac{2d}{d-4}}(B(0,2R))} \para \nabla w \para_{L^{\frac{2d}{d-2}}(B(0,2R))}\\
& \leq & C \para w \para_{H^2(B(0,2R))}^p
\ea
$$
Therefore, from \fref{eq:bound DeltawL2L2} this force term satisfies:
$$
\int_{\tilde s_0}^{s_0} \para |w|^{p-1}\nabla w \para_{L^{\frac{2d}{d-2+(d-4)(p-1)}}(B(0,2R))}^{\frac{2}{p}}ds \leq \int_{\tilde s_0}^{s_0} \para w \para_{H^2(B(0,2R))}^2ds \leq C\eta^{\frac{2}{p+1}} . 
$$
We let $(q,r)$ be the Lebesgue conjugated exponents of $\frac{2}{p}$ and $\frac{2d}{(d-2)+(d-4)(p-1)}$:
$$
q=\frac{2}{2-p}>2, \ \ r=\frac{2d}{d+2-(d-4)(p-1)}>2 .
$$
They satisfy the Strichartz relation $ \frac{2}{q}+\frac{d}{r}=\frac d 2 $. Therefore, using \fref{strichartz inhomogene} one obtains:
$$
\ba{r c l}
&\left\Vert \int_{\tilde s_0}^{s_0} K_{s_0-s} *(p\chi |w(s)|^{p-1}\nabla w(s))ds \right\Vert_{L^2} \\
 \leq & C\left( \int_{\tilde s_0}^{s_0} \para |w|^{p-1}\nabla w \para_{L^{\frac{2d}{d-2+(d-4)(p-1)}}(B(0,2R))}^{\frac{2}{p}}ds\right)^{\frac p 2} \leq C \eta^{\frac{p}{(p+1)}} .
\ea
$$
For the second term in \fref{eq:expression NL loc} using \fref{eq:bound wL2pL2p}, \fref{eq:bd Kt} and H\"older one has:
$$
\left\Vert \int_{\tilde s_0}^{s_0} K_{s_0-s} *(\nabla \chi |w|^{p-1}w)ds \right\Vert_{L^2} \leq C \int_{\tilde s_0}^{s_0} \para w \para_{L^{2p}(B(0,2R))}^p \leq C \eta^{\frac{1}{p+1}}.
$$
The two above estimates and the identity \fref{eq:expression NL loc} imply the following bound:
$$
\left\Vert \int_{\tilde s_0}^{s_0} K_{s_0-s} *(\chi |w|^{p-1}w) ds \right\Vert_{\dot H^1} \leq C\eta^{\frac{1}{p+1}}
$$
We come back to \fref{eq:Duhamel boot} where we found estimates for each term in the right hand side in \fref{eq:lin boot}, \fref{eq:extra lin boot} and the above identity, yielding $\para v(s_0) \para_{\dot H^1}\leq C \eta^{\frac{1}{p+1}}$. From \fref{eq:def v}, as $v$ is compactly supported in $B(0,2R)$, the above estimate implies the desired estimate \fref{eq:bound energy to LinftydotH1} at time $s_0$.

\end{proof}

To prove Lemma \ref{lem:H1 to Linfty} we need the following parabolic regularization result.

\begin{lemma}[Parabolic regularization] \la{lem:parabolic}

Let $R,M>0$, $0< s_0\leq 1 $ and $w$ be a global solution of \fref{eq:w} satisfying:
\be \la{eq:petitesse w q}
\forall (s,y)\in [0,+\infty )\times \mathbb R^d, \ \ \para w(s,y) \para_{H^2(B(0,R))} \leq M .
\ee
Then there exists $0<r\leq R$, a constant $C=C(R,s_0)$ and $\alpha>1$ such that:
\be \la{eq:bootstrap w 1}
\forall (s,y)\in [s_0,+\infty)\times B(0,r), \ \ | w(s,y)| \leq C(M+M^{\alpha}) .
\ee

\end{lemma}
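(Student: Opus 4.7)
The strategy is a localized parabolic bootstrap on the integrability exponent. I would first pick a decreasing sequence of radii $R > R_0 > R_1 > \cdots > R_N = r$ together with smooth cutoffs $\chi_k$ satisfying $\chi_k \equiv 1$ on $B(0,R_{k+1})$ and $\operatorname{supp}\chi_k \subset B(0,R_k)$. Setting $v_k = \chi_k w$ and using \eqref{eq:w}, one finds that $v_k$ satisfies a globally-defined heat equation on $\mathbb R^d$ of the schematic form
\begin{equation*}
\partial_s v_k - \Delta v_k = \chi_k |w|^{p-1}w + F_k(w,\nabla w),
\end{equation*}
where $F_k$ collects the transport contribution $-\tfrac12 \Lambda w$, the zero-order term and the commutator terms $-w\Delta\chi_k - 2\nabla\chi_k\cdot\nabla w$, each supported in $B(0,R_k)$ and \emph{linear} in $(w,\nabla w)$. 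Duhamel's formula then gives, for $s > s'$,
\begin{equation*}
v_k(s) = K_{s-s'}*v_k(s') + \int_{s'}^s K_{s-\sigma}*\bigl[\chi_k|w|^{p-1}w + F_k\bigr]\,d\sigma.
\end{equation*}

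Next, I would run an iteration on the $L^q$-integrability of $w$. The starting point comes from the hypothesis \eqref{eq:petitesse w q}: Sobolev embedding on $B(0,R)$ yields $\|w(s)\|_{L^{q_0}(B(0,R))} \leq CM$ for some exponent $q_0 > p$ (taking $q_0 = 2d/(d-4)$ for $d\ge 7$), and similarly $\|\nabla w\|_{L^{r_0}(B(0,R))}\lesssim M$ for $r_0 = 2d/(d-2)$. At step $k$ of the induction, assuming $\|w(s)\|_{L^{q_k}(B(0,R_k))} \leq C_k(M + M^{\alpha_k})$ for all $s \ge s_0 - \tfrac{k}{N}s_0$, I would plug into Duhamel for $v_k$: the nonlinear source lies in $L^{q_k/p}$ with norm $\lesssim (M+M^{\alpha_k})^p$, the linear source $F_k$ is controlled by $M$ through the $H^2$ hypothesis, and the smoothing bound \eqref{eq:bd Kt}, $\|K_t * f\|_{L^{q_{k+1}}} \lesssim t^{-\frac{d}{2}(\frac{1}{q}-\frac{1}{q_{k+1}})}\|f\|_{L^q}$, allows one to raise the exponent by a definite amount on a slightly smaller ball after a short time interval of length $s_0/N$. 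Iterating $N$ times, with $N$ chosen so that the sequence $q_k$ reaches $+\infty$ (which is possible since $p<2$ in dimension $d\ge 7$, so each step gains a controlled amount of integrability), delivers an $L^\infty$ estimate on $B(0,r)$ for all $s\ge s_0$.

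The main technical obstacle is the bookkeeping of how the bound depends on $M$: each nonlinear step raises the $M$-power by a factor of $p$, so after $N$ steps the estimate is of the form $C(M + M^{p^N})$, which gives the announced bound with $\alpha = p^N > 1$. Two points deserve attention. First, the commutator and transport terms involve $\nabla w$ rather than $w$, and must be handled either by using the higher integrability of $\nabla w$ supplied by $H^2$ on $B(0,R)$ (via Sobolev) or by an integration-by-parts / $\nabla K_t$ smoothing in Duhamel, costing an extra factor $t^{-1/2}$ which remains integrable on $[s_k, s_{k+1}]$. Second, the weight $|y|$ in $\Lambda w$ is harmless because it is uniformly bounded on $B(0,R)$. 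Once the iteration terminates, the $L^\infty$ bound on $B(0,r)$ after time $s_0$ follows, and time-translation invariance of \eqref{eq:w} shows that the same estimate propagates for all later times, establishing \eqref{eq:bootstrap w 1}.
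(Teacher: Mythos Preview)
Your approach is essentially the same as the paper's: the paper declines to give a detailed proof and simply refers to Lemma~\ref{para:lem:para}, whose engine is precisely the localized Duhamel bootstrap on the $L^q$ exponent (Lemma~\ref{para:lem:expo}) that you describe, starting from the Sobolev embedding $H^2(B(0,R))\hookrightarrow L^{2d/(d-4)}$ and iterating the heat-kernel smoothing \eqref{eq:bd Kt} on a shrinking sequence of balls until $q=\infty$. Your handling of the commutator/transport terms via the divergence form and $\nabla K_t$ smoothing matches the paper's localization identity \eqref{eq:v}, and your bookkeeping of the $M$-power (yielding $\alpha=p^N$) is the correct mechanism behind the stated $M+M^\alpha$ bound.
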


\begin{proof}[Proof of Lemma \ref{lem:parabolic}]

The proof is very similar to the proof of Lemma \ref{para:lem:para}, therefore we do not give it here.

\end{proof}

\begin{proof}[Proof of Lemma \ref{lem:H1 to Linfty}]

Without loss of generality we take $\eta'=\eta$, $s_0=0$, localize at scale $\frac R 2$ by defining $v$ by \fref{eq:def v}. The assumption \fref{eq:bound energy to LinftydotH1} implies that for $s\geq 0$:
\be \la{eq:bound LinftyH1v}
\int_{\mathbb R^d} (|v(s)|^2+|\nabla v(s)|^2)dy \leq C \eta .
\ee
We claim that for all $s\geq \frac{s_1}{2}$,
$$
\para v \para_{H^2} \leq C \eta .
$$
This will give the desired result \fref{eq:bound LinftyLinfty} by applying Lemma \ref{lem:parabolic} from \fref{eq:def v}. We now prove the above bound. By time invariance, we just have to prove it at time $\frac{s_1}{2}$.\\

\noindent \textbf{step 1} First estimate on $v_s$. Since $w$ is a global solution starting in $W^{2,\infty}(\mathbb R^d)$ with $E(w(0))\leq \eta$, from \fref{eq:bound energy ws} one obtains:
\be \la{eq:bound vs 1}
\int_0^{+\infty} \int_{\mathbb R^d} |v_s|^2dyds \leq C\eta . 
\ee

\noindent \textbf{step 2} Second estimate on $v_s$. Let $u=v_s$. From \fref{eq:w} and \fref{eq:def v} the evolution of $u$ is given by:
\be \la{eq:u}
u_s-\Delta u = p|w|^{p-1}u +\left( \left[\frac{1}{p-1}-\frac d 2 \right]\chi-\frac 1 2 \nabla \chi.y +\Delta \chi \right)w_s+\nabla . \left(\left[ \frac 1 2 \chi y -2\nabla \chi \right]w_s\right) .
\ee
We first state a non linear estimate. Using Sobolev embedding, H\"older inequality and \fref{eq:bound energy to LinftydotH1}, one obtains:
$$
\int_{\mathbb R^d} |u|^2|w|^{p-1}dy  \leq \para u \para_{L^{\frac{2d}{d-2}}(\mathbb R^d)}^2 \para w \para_{L^{\frac{2d}{d-2}}(B(0,R))}^{p-1} \leq C\eta^{\frac{p-1}{2}} \int_{\mathbb R^d}|\nabla u|^2 dy.
$$
We now perform an energy estimate. We multiply \fref{eq:u} by $u$ and integrate in space using Young inequality for any $\kappa>0$ and the above inequality:
$$
\ba{r c l}
\frac 1 2 \frac{d}{ds} \left[\int_{\mathbb R^d} |u|^2dy \right] & = & -\int_{\mathbb R^d} |\nabla u|^2dy+\int_{\mathbb R^d} \left( \left[\frac{1}{p-1}-\frac d 2 \right]\chi-\frac 1 2 \nabla \chi.y +\Delta \chi \right)w_sudy \\
&&+\int  \left(\left[ \frac 1 2 \chi y -2\nabla \chi \right]w_s\right).\nabla u dy +\int_{\mathbb R^d}u^2|w|^{2(p-1)}dy \\
&\leq & -\int_{\mathbb R^d} |\nabla u|^2dy+C\int_{B(0,R)}(w_s^2+u^2)dy+ \frac{C}{\kappa} \int_{B(0,R)} w_s^2dy \\
&&+C\kappa \int_{\mathbb R^d} |\nabla u|^2dy+C\eta^{\frac{p-1}{2}}\int_{\mathbb R^d}|\nabla u|^2dy \\
&\leq & -\int_{\mathbb R^d} |\nabla u|^2dy+C(\kappa) \int_{B(0,R)} w_s^2 dy
\ea
$$
if $\kappa$ and $\eta$ have been chosen small enough. Now because of the integrability \fref{eq:bound vs 1} there exists at least one $\tilde s \in [\text{max}(0,\frac{s_1}{2}-1),\frac{s_1}{2}]$ such that:
$$
\int_{\mathbb R^d} |v_s(\tilde s)|^2  dy\leq C(s_1) \eta.
$$
One then obtains from the two previous inequalities and \fref{eq:bound energy ws}:
\be \la{eq:bound vs 2}
\int_{\mathbb R^d} |v_s(s )|^2 dy \leq \int_{\mathbb R^d} |v_s(\tilde s)|^2 dy +C \int_{\tilde s}^{\frac{s_1}{2}} \int_{B(0,R)} w_s^2 dy ds \leq C \eta .
\ee

\noindent \textbf{step 3} Estimate on $\Delta v$. Applying Sobolev embedding and H\"older inequality, using the fact that $\left( \frac{\frac{2d}{d-4}}{2} \right)'=\frac{d}{4}=\frac{\frac{2d}{d-2}}{2(p-1)}$ one gets that for any $s\geq 0$:
\bea 
\non &&\int_{\mathbb R^d} v^2 |w|^{2(p-1)} dy \leq \para v^2 \para_{L^{\frac{\frac{2d}{d-4}}{2}}(\mathbb R^d)}  \para |w|^{2(p-1)} \para_{L^{\frac{\frac{2d}{d-2}}{2(p-1)}}(B(0,R))}   \\
\non &=& \para v \para_{L^{\frac{2d}{d-4}}(\mathbb R^d)}^2 \para w \para_{L^{\frac{2d}{d-2}}(B(0,R ))}^{2(p-1)} \leq  C \para v \para_{\dot H^2(\mathbb R^d)}^2 \para w \para_{H^1(B(0,R))}^{2(p-1)} \\
\la{eq:bound NL w} &\leq & C \eta^{p-1}  \int_{\mathbb R^d} |\Delta v|^2 dy
\eea
where injected the estimate \fref{eq:bound energy to LinftydotH1}. We inject the above estimate in \fref{eq:v}, using \fref{eq:def v}, yielding for all $s\geq 0$:
$$
\ba{r c l}
\int_{\mathbb R^d} |\Delta v|^2dy & \leq & C\left(\int_{\mathbb R^d} (|v_s|^2+ |w|^2+ |\nabla w|^2+v^2 |w|^{2(p-1)})dy \right) \\
 & \leq & C \int_{\mathbb R^d} |v_s|^2dy+C\eta +C\eta^{p-1}\int_{\mathbb R^d}|\Delta v|^2dy
\ea
$$
where we used \fref{eq:petitesse w q}. Injecting \fref{eq:bound vs 2}, for $\eta$ small enough:
\be \la{eq:bound Delta v}
\int_{\mathbb R^d} \left|\Delta v\left(\frac{s_1}{2}\right)\right|^2dy \leq C\int_{\mathbb R^d} \left|v_s\left(\frac{s_1}{2}\right)\right|^2 dy+C\eta \leq C\eta .
\ee

\noindent \textbf{step 4} Conclusion. From \fref{eq:bound LinftyH1v} and \fref{eq:bound Delta v} we infer $ \para v(\frac{s_1}{2})\para_{\dot H^2}\leq C\eta$ which is exactly the bound we had to prove.

\end{proof}

We now go from boundedness in $L^{\infty}$ in self-similar provided by Proposition \ref{pr:energy to Linfty autosimilaire} to boundedness in $L^{\infty}$ in original variables.

\begin{lemma}[\cite{Gi3}] \la{lem:subtypeI Linfty}

Let $0\leq a \leq \frac{1}{p-1}$ and $R,\epsilon_0>0$. Let $0<\epsilon \leq \epsilon_0$ and $u$ be a solution of \fref{eq:NLH} on $[-1,0)\times \mathbb R^d$ satisfying
\be \la{eq:bound hp subtypeI}
\forall (t,x)\in [-1,0) \times B(0,R), \ \ |u(t,x)|\leq \frac{\epsilon}{{|t|^{\frac{1}{p-1}-a}}} .
\ee
For $\epsilon_0$ small enough the following holds for all $(t,x)\in [-1,0)\times B\left(0,\frac R 2\right)$.
\bea
\la{eq:bound subtypeI Linfty}& \text{If} \ \ \frac{1}{p-1}-a< \frac 1 2, \ & \ \ |u(t,x)|\leq C(a)\epsilon . \\
\la{eq:bound subtypeI ln} & \text{If} \ \ \frac{1}{p-1}-a= \frac 1 2, \ & \ \ |u(t,x)|\leq C\epsilon (1+|\text{ln}(t)|) \\
\la{eq:bound subtypeI power} & \text{If} \ \ \frac{1}{p-1}-a> \frac 1 2, \ &  \ \ |u(t,x)|\leq \frac{C(a)\epsilon}{|t|^{\frac{1}{p-1}-a-\frac 1 2}}
\eea

\end{lemma}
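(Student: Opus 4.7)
The plan is to derive a representation formula for $u$ via Duhamel's formula applied to a suitable spatial truncation, and then to extract the three regimes from the asymptotics of one universal integral. Let $\chi\in C_c^\infty(B(0,R))$ satisfy $\chi\equiv 1$ on $B(0,3R/4)$ and set $v:=\chi u$, so that
\[
v_t-\Delta v=\chi|u|^{p-1}u-2\nabla\chi\cdot\nabla u-(\Delta\chi)u.
\]
For $x\in B(0,R/2)$ one has $v(t,x)=u(t,x)$, and Duhamel's formula started from time $-1$ yields
\[
u(t,x)=K_{t+1}*(\chi u(-1))(x)+\int_{-1}^{t}K_{t-s}*\bigl(\chi|u|^{p-1}u-2\nabla\chi\cdot\nabla u-(\Delta\chi)u\bigr)(s)(x)\,ds.
\]

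The linear term is controlled by $\epsilon$ using the hypothesis $|u(-1,\cdot)|\leq\epsilon$ on $B(0,R)$ together with $\|K_\tau\|_{L^1}=1$. The cut-off correction terms involve $u$ and $\nabla u$ on $\mathrm{supp}(\nabla\chi)\subset B(0,R)\setminus B(0,3R/4)$, a region where the hypothesis still provides $|u(s,y)|\leq\epsilon|s|^{-\alpha}$ with $\alpha:=1/(p-1)-a$. The key technical point is the absence of a direct estimate on $\nabla u$; this is handled by integrating by parts in space to transfer the derivative from $u$ onto the kernel, using
\[
K_{t-s}*(\nabla\chi\cdot\nabla u)(s)=-\nabla_x K_{t-s}*(u\nabla\chi)(s)+K_{t-s}*(u\Delta\chi)(s).
\]
Together with the classical estimate $\|\nabla K_\tau\|_{L^1}\lesssim\tau^{-1/2}$, the cut-off corrections thus contribute a term of order $C\epsilon\int_{-1}^{t}[(t-s)^{-1/2}+1]|s|^{-\alpha}\,ds$, and the extra factor $(t-s)^{-1/2}$ is precisely what generates the half-power gain claimed in the conclusion.

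The three cases of the lemma then arise from the single integral $I(\alpha,t):=\int_{-1}^{t}(t-s)^{-1/2}|s|^{-\alpha}\,ds$, which, after the substitution $s=-|t|(1+\xi)$, becomes
\[
I(\alpha,t)=|t|^{1/2-\alpha}\int_{0}^{(1-|t|)/|t|}\xi^{-1/2}(1+\xi)^{-\alpha}\,d\xi.
\]
The large-$\xi$ tail of the inner integrand behaves like $\xi^{-1/2-\alpha}$, so as $(1-|t|)/|t|\to\infty$ one obtains the three regimes: $I(\alpha,t)=O(1)$ when $\alpha<1/2$ (the inner integral blows up like $U^{1/2-\alpha}$, compensating the prefactor); $I(\alpha,t)=O(1+\bigl|\log|t|\bigr|)$ at the borderline $\alpha=1/2$ (logarithmic divergence of the inner integral); and $I(\alpha,t)=O(|t|^{1/2-\alpha})$ when $\alpha>1/2$ (the inner integral converges to a finite constant), matching exactly the three conclusions of the lemma.

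The hard step will be to absorb the purely nonlinear contribution $C\epsilon^{p}\int_{-1}^{t}|s|^{-p\alpha}\,ds$ into $C(a)\epsilon\,I(\alpha,t)$. The naive bound $\epsilon^{p}|t|^{1-p\alpha}$ is already smaller than $\epsilon|t|^{1/2-\alpha}$ when $(p-1)a\geq 1/2$; but in the complementary regime $a<1/(2(p-1))$ the nonlinear tail is formally larger than the target as $t\to 0$, so the extra factor $\epsilon^{p-1}$ alone does not suffice. Overcoming this will require taking $\epsilon_{0}$ small as a function of $a$ together with a short bootstrap on a nested sequence of intermediate balls $B(0,\rho_{k})$ with $R/2<\rho_{k}<R$, feeding the half-power improvement obtained at stage $k$ back into the nonlinear estimate at stage $k+1$, so that the nonlinear contribution becomes strictly subdominant before the final descent to $B(0,R/2)$.
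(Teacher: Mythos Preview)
Your setup and treatment of the linear and boundary terms matches the paper's: the localization $v=\chi u$, the Duhamel formula, the integration by parts trick to handle $\nabla\chi\cdot\nabla u$, and the three-case analysis of the integral $I(\alpha,t)$ are all essentially the same. Where you diverge is in the nonlinear term, and this is where your proposal has a real gap.

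You bound $\chi|u|^{p-1}u$ a priori by $\epsilon^{p}|s|^{-p\alpha}$ and then try to absorb the resulting integral by a nested-balls bootstrap. But as stated this bootstrap is circular: at stage~1 your cutoff is still supported in $B(0,R)$, so the nonlinear term still carries the original exponent $p\alpha$, and for $\alpha$ near the endpoint $1/(p-1)$ you have $p\alpha>1$ and the integral diverges---there is no half-power improvement available to feed forward. Even if one tracks the iteration carefully, at $\alpha=1/(p-1)$ (i.e.\ $a=0$) the recursion $\beta_{k+1}=p\beta_k-1$ is fixed at $1/(p-1)$ and never descends below $1/(2(p-1))$. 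So the bootstrap, as sketched, does not close.

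The paper avoids this entirely by a single observation: since $v=\chi u$, one has $\chi|u|^{p-1}u=|u|^{p-1}v$, so the nonlinear term is \emph{linear} in the unknown $v$ with a known coefficient $|u|^{p-1}\leq\epsilon^{p-1}|s|^{-(p-1)\alpha}\leq\epsilon^{p-1}|s|^{-1}$ (the last inequality using $\alpha\leq 1/(p-1)$). The Duhamel representation then becomes a linear integral inequality for $\|v(t)\|_{L^\infty}$, and a Gr\"onwall lemma (with integrating factor $|t|^{-\epsilon^{p-1}}$ in case~(iii)) closes the argument in one shot, the smallness of $\epsilon_0$ entering only through the harmless condition $\epsilon^{p-1}<\alpha-\tfrac12$. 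This is both simpler and strictly stronger than what your bootstrap would deliver.
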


Applying several times Lemma \ref{lem:subtypeI Linfty}, via scale change and time invariance, one obtains the following corollary.

\begin{corollary} \la{cor:subtypeI Linfty}

Let $R>0$ and $0<T_-<T_+$. There exists $\epsilon_0>0$, $0<r\leq R$ and $C>0$ such that the following holds. For any $0<\epsilon<\epsilon_0$, $T\in [T_-,T_+]$ and $u$ solution of \fref{eq:NLH} on $[0,T)\times \mathbb R^d$ satisfying
\be \la{eq:bound hp subtypeI 2}
\forall (t,x)\in [0, T)\times B(0,R), \ \ |u(t,x)|\leq \frac{\epsilon}{{(T-t)^{\frac{1}{p-1}}}}
\ee
one has:
\be \la{eq:bound subtypeI Linfty 2}
\forall (t,x)\in [0,T)\times B(0,r), \ \ |u(t,x)|\leq C\epsilon
\ee

\end{corollary}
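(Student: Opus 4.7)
The plan is to prove the corollary by iterating Lemma \ref{lem:subtypeI Linfty} a finite number of times, the number of iterations depending only on the dimension $d$. First I will reduce to the canonical setting of the lemma via the scaling invariance of \eqref{eq:NLH}: given $u$ on $[0,T)\times\mathbb R^d$ with $T\in[T_-,T_+]$, I define
$$
v(\tau,y):=T^{\frac{d-2}{4}}\, u\bigl(T(1+\tau),\sqrt{T}\,y\bigr),\qquad (\tau,y)\in [-1,0)\times \mathbb R^d.
$$
Then $v$ is a solution of \eqref{eq:NLH} on $[-1,0)$, and the hypothesis \eqref{eq:bound hp subtypeI 2} translates, using $T-t=T|\tau|$ and $1/(p-1)=(d-2)/4$, into
$$
|v(\tau,y)|\leq \frac{\epsilon}{|\tau|^{\frac{1}{p-1}}},\qquad (\tau,y)\in [-1,0)\times B(0,R'),
$$
with $R':=R/\sqrt{T_+}$, which is the assumption \eqref{eq:bound hp subtypeI} of Lemma \ref{lem:subtypeI Linfty} with $a=0$.

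Next I will iterate the lemma. Since $d\geq 7$ we have $\frac{1}{p-1}=\frac{d-2}{4}>\frac 12$, so one starts in case \eqref{eq:bound subtypeI power}. Applying the lemma with $a_k=k/2$ replaces the bound on $[-1,0)\times B(0,R'/2^k)$, namely
$$
|v(\tau,y)|\leq \frac{C_k\,\epsilon}{|\tau|^{\frac{1}{p-1}-a_k}},
$$
by the same type of bound with $a_{k+1}=a_k+\frac 12$, radius $R'/2^{k+1}$, and constant $C_{k+1}=C(d)\,C_k$. After $N:=\lceil \frac{1}{p-1}-\frac 12\rceil$ steps the effective exponent $\frac{1}{p-1}-a_N$ drops strictly below $\frac 12$, so case \eqref{eq:bound subtypeI Linfty} yields the uniform bound
$$
|v(\tau,y)|\leq C_{N+1}\,\epsilon,\qquad (\tau,y)\in [-1,0)\times B(0,R'/2^{N+1}).
$$
Should the exponent happen to land exactly on $\frac 12$ (borderline case \eqref{eq:bound subtypeI ln}) for a certain dimension, I will absorb the logarithmic factor $1+|\ln|\tau||$ into $|\tau|^{-\eta}$ for a small $\eta>0$ and continue with a shifted $a$; this requires only cosmetic bookkeeping.

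Finally I will translate back to the original variables. Undoing the rescaling gives, for all $t\in[0,T)$ and $x$ with $x/\sqrt T\in B(0,R'/2^{N+1})$,
$$
|u(t,x)|=T^{-\frac{d-2}{4}}\bigl|v\bigl(\tfrac{t-T}{T},\tfrac{x}{\sqrt T}\bigr)\bigr|\leq C_{N+1}\,T_-^{-\frac{d-2}{4}}\,\epsilon=:C\epsilon.
$$
Since $\sqrt T\geq \sqrt{T_-}$, the ball $B(0,r)$ with $r:=\sqrt{T_-/T_+}\,R/2^{N+1}$ is contained in $B(0,\sqrt T\,R'/2^{N+1})$ uniformly for $T\in[T_-,T_+]$, giving \eqref{eq:bound subtypeI Linfty 2}. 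The main bookkeeping point, and the only mild obstacle, is to choose $\epsilon_0$ small enough that the smallness hypothesis $\epsilon\leq \epsilon_0$ of Lemma \ref{lem:subtypeI Linfty} is preserved at each of the finitely many iterations: since the $\epsilon$ is multiplied by a universal factor $C(d)$ at each step and $N$ depends only on $d$, it suffices to take the initial $\epsilon_0$ bounded by the threshold of the lemma divided by $C(d)^{N+1}$.
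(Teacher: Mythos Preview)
Your proof is correct and follows exactly the route indicated in the paper, which simply remarks that the corollary follows by applying Lemma~\ref{lem:subtypeI Linfty} several times together with scale change and time invariance. One minor arithmetic slip: the number of iterations should satisfy $N>\tfrac{2}{p-1}-1=\tfrac{d-4}{2}$ (so that $\tfrac{1}{p-1}-\tfrac{N}{2}<\tfrac12$), not $N=\lceil \tfrac{1}{p-1}-\tfrac12\rceil$ as you wrote, but this does not affect the argument since the only point that matters is that $N$ is finite and depends on $d$ alone.
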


To prove Lemma \ref{lem:subtypeI Linfty} we need two technical Lemmas taken from \cite{Gi3} whose proof can be found there.

\begin{lemma}[\cite{Gi3}]

Define for $0<\alpha<1$ and $0<\theta<h<1$ the integral $I(h)=\int_h^1 (s-h)^{-\alpha}s^{\theta}ds $. It satisfies
\bea
\la{eq:bd I >1} & \ \text{If} \ \alpha+\theta>1, \ & \ \ I(h)\leq \left( \frac{1}{1-\alpha}+\frac{1}{\alpha+\theta -1}\right) h^{1-\alpha-\theta}. \\
\la{eq:bd I =1} &\text{If} \ \alpha+\theta =1, \ & \ \ I(h)\leq  \frac{1}{1-\alpha}+|\text{log}(h)|. \\
\la{eq:bd I <1} &\text{If} \ \alpha+\theta <1, \ & \ \ I(h)\leq  \frac{1}{1-\alpha-\theta}.
\eea

\end{lemma}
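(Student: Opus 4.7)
The computation is a routine estimation once one realizes that the natural change of variables and a dyadic-type splitting isolate the contributions of the two singular points (the singularity $(s-h)^{-\alpha}$ at $s=h$ and the power $s^{-\theta}$ near the origin, interpreting the integrand in the form that makes the bounds sharp). The plan is to split the integration domain $[h,1]$ into a ``near'' piece $[h,2h]$ and a ``far'' piece $[2h,1]$ (handling $h\geq 1/2$ trivially, since then $I(h)\leq (1-h)^{1-\alpha}/(1-\alpha)$ times a bounded factor), then estimate each piece directly. The critical observation is that the three cases in the statement arise solely from the behaviour of the elementary integral $\int_{2h}^{1}s^{-(\alpha+\theta)}ds$.

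\smallskip

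First, on the near piece $[h,2h]$ one has $s\sim h$, so the power factor is comparable to $h^{-\theta}$, and a direct computation gives
\[
\int_{h}^{2h}(s-h)^{-\alpha}s^{-\theta}ds \lesssim h^{-\theta}\int_{h}^{2h}(s-h)^{-\alpha}ds=\frac{h^{1-\alpha-\theta}}{1-\alpha}.
\]
On the far piece $[2h,1]$ one has $s-h\geq s/2$, hence $(s-h)^{-\alpha}\leq 2^{\alpha}s^{-\alpha}$, reducing the estimate to the explicit integral
\[
\int_{2h}^{1}s^{-(\alpha+\theta)}ds.
\]
Evaluating this integral in each of the three cases yields respectively a bound of order $h^{1-\alpha-\theta}/(\alpha+\theta-1)$, $|\log h|+C$, and $1/(1-\alpha-\theta)$. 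Combining with the near-piece bound, and noting that in the case $\alpha+\theta>1$ the near-piece bound $h^{1-\alpha-\theta}$ dominates the constant from the far piece as $h\to 0$, the three inequalities \fref{eq:bd I >1}, \fref{eq:bd I =1}, \fref{eq:bd I <1} follow.

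\smallskip

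The main and only subtlety is of a cosmetic nature, namely absorbing the constants $2^\alpha$, $2^{1-\alpha-\theta}$ and the like into the stated prefactors; no real obstacle is expected, and the whole argument is a few lines of elementary calculus.
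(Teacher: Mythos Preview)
The paper does not supply its own proof of this lemma; it is quoted from Giga--Kohn \cite{Gi3} with the remark that the proof can be found there. Your splitting argument is the standard one and is correct, and you have tacitly fixed the evident misprint in the statement (the integrand must be $(s-h)^{-\alpha}s^{-\theta}$, as is clear from how the lemma is applied in the proof of Lemma~\ref{lem:subtypeI Linfty}).

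One small point on your closing remark. Your far-piece estimate $(s-h)^{-\alpha}\le 2^{\alpha}s^{-\alpha}$ genuinely introduces factors of $2^{\alpha}$ and $2^{1-\alpha-\theta}$, so what you obtain is a universal multiple of the stated constants rather than the constants themselves; calling this ``cosmetic'' slightly oversells it. For the applications in the paper only the order of the bound matters, so this is harmless. If you do want the exact constants, replace the far-piece estimate by the even cruder $s^{-\theta}\le (s-h)^{-\theta}$ (equivalently, after the shift $u=s-h$, use $(u+h)^{-\theta}\le u^{-\theta}$ on $u\ge h$ and $(u+h)^{-\theta}\le h^{-\theta}$ on $u\le h$); the resulting integrals $\int_h^{1-h}u^{-(\alpha+\theta)}du$ then give precisely $\frac{1}{\alpha+\theta-1}h^{1-\alpha-\theta}$, $|\log h|$, and $\frac{1}{1-\alpha-\theta}$ in the three cases, matching the statement.
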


\begin{lemma}[\cite{Gi3}]

If $y$, $r$ and $q$ are continuous functions defined on $[t_0,t_1]$ with 
$$
y(t)\leq y_0+ \int_{t_0}^t y(s)r(s) ds+\int_{t_0}^{t} q(s) ds
$$
for $t_0\leq t \leq t_1$, then for all $t_0\leq t\leq t_1$:
\be \la{eq:bd gronwall}
y(t)\leq e^{\int_{t_0}^t r(\tau)d\tau} \left[ y_0+\int_{t_0}^t q(\tau) e^{-\int_{t_0}^{\tau}r(\sigma)d\sigma}d\tau \right] .
\ee

\end{lemma}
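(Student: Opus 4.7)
This is the classical Gr\"onwall inequality with a nonhomogeneous source, and the plan is to execute the standard integrating factor argument, under the (implicit in this context) sign assumption $r\ge 0$. First I would introduce the majorizing function
$$
Y(t):=y_0+\int_{t_0}^t y(s)r(s)\,ds+\int_{t_0}^t q(s)\,ds,
$$
so that the hypothesis reads $y(t)\le Y(t)$ for $t\in[t_0,t_1]$. By continuity of $y$, $r$ and $q$, the function $Y$ is of class $\mathcal C^1$ on $[t_0,t_1]$ with $Y(t_0)=y_0$ and
$$
Y'(t)=y(t)r(t)+q(t)\le Y(t)r(t)+q(t),
$$
the inequality being a consequence of $y(t)\le Y(t)$ together with $r(t)\ge 0$.

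Next I would set the integrating factor $\mu(t):=\exp\bigl(-\int_{t_0}^t r(\sigma)\,d\sigma\bigr)$, which is positive and satisfies $\mu'(t)=-r(t)\mu(t)$. Multiplying the differential inequality for $Y$ by $\mu$ gives
$$
\bigl(\mu Y\bigr)'(t)=\mu(t)\bigl[Y'(t)-r(t)Y(t)\bigr]\le \mu(t)q(t).
$$
Integrating this on $[t_0,t]$ and using $\mu(t_0)=1$, $Y(t_0)=y_0$ yields
$$
\mu(t)Y(t)\le y_0+\int_{t_0}^t q(\tau)\mu(\tau)\,d\tau.
$$
Dividing by $\mu(t)$, i.e.\ multiplying by $\exp\bigl(\int_{t_0}^t r(\sigma)\,d\sigma\bigr)$, produces
$$
Y(t)\le e^{\int_{t_0}^t r(\tau)\,d\tau}\!\left[y_0+\int_{t_0}^t q(\tau)e^{-\int_{t_0}^\tau r(\sigma)\,d\sigma}\,d\tau\right],
$$
and since $y(t)\le Y(t)$ the announced bound \fref{eq:bd gronwall} follows.

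There is no genuine obstacle here; the only delicate point is the monotonicity step $y(t)r(t)\le Y(t)r(t)$, which requires $r\ge 0$ (this is the context in which the lemma is applied in \cite{Gi3} and in Lemma \ref{lem:subtypeI Linfty}). If one wanted to dispense with this assumption, one could write $r=r_+-r_-$ and absorb $-yr_-$ into $q$, but this is not needed for the applications to the proof of \fref{eq:bound subtypeI Linfty}--\fref{eq:bound subtypeI power}, where $r$ arises as a nonnegative coefficient coming from the linearization of the nonlinearity.
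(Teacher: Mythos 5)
Your proof is correct; the paper itself gives no proof of this lemma (it is quoted from \cite{Gi3}), and your integrating-factor argument is the standard one. You rightly flag that the step $y(t)r(t)\le Y(t)r(t)$ needs $r\ge 0$, which is not stated in the lemma but holds in every application in the paper (in Lemma \ref{lem:subtypeI Linfty} the coefficient $r$ is of the form $\epsilon^{p-1}|s|^{-1/2}$ or $\epsilon^{p-1}|s|^{-1}$, hence nonnegative), so the argument is complete in context.
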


\begin{proof}[Proof of Lemma \ref{lem:subtypeI Linfty}]

We first localize the problem, with $\chi$ a smooth cut-off function, with $\chi=1$ on $B\left(0,\frac R 2\right)$, $\chi=0$ outside $B(0,R)$ and $|\chi|\leq 1$. We define
\be \la{eq:def v subtypeI}
v:=\chi u
\ee
whose evolution, from \fref{eq:NLH}, is given by:
\be \la{eq:id vs}
v_t=\Delta v+|u|^{p-1}v+\Delta \chi u-2\nabla .(\nabla \chi u).
\ee
We apply Duhamel formula to \fref{eq:id vs} to find that for $t\in [-1,0)$:
\be \la{eq:id v}
v(t)=K_{t+1}*v(-1)+\int_{-1}^t K_{t-s}*(|u|^{p-1}v+\Delta \chi u-2\nabla .(\nabla \chi u)) ds .
\ee
From \fref{eq:bound hp subtypeI} and \fref{eq:def v subtypeI} one has for free evolution term:
\be \la{eq:bd free}
\para K_{t+1}*v(-1) \para_{L^{\infty}}\leq \epsilon .
\ee
We now find an upper bound for the other terms in the previous equation.\\

\noindent \textbf{step 1} Case (i). For the linear terms, as $\frac{1}{p-1}-a+\frac 1 2<1$, from \fref{eq:bd I <1} one has:
\be \la{eq:bd bord i}
\ba{r c l}
\para \int_{-1}^t K_{t-s}*(\Delta \chi u-2\nabla .(\nabla \chi u)) ds \para_{L^{\infty}} & \leq & C \int_{-1}^t \frac{1}{(t-s)^{\frac 1 2}} \para u \para_{L^{\infty}(B(0,R))} \\
&\leq & C\epsilon \int_{-1}^t \frac{1}{(t-s)^{\frac 1 2}}\frac{1}{|s|^{\frac{1}{p-1}-a}} \leq C(a) \epsilon .
\ea
\ee
For the nonlinear term, as $\frac{1}{p-1}-a<\frac 1 2<\frac{1}{2(p-1)}=\frac{d-2}{8}$ because $d\geq 7$ we compute using \fref{eq:bound hp subtypeI}:
\be \la{eq:bd NL i}
\ba{r c l}
\para \int_{-1}^t K_{t-s}*( \chi |u|^{p-1}v)ds \para_{L^{\infty}} & \leq & \int_{-1}^t \para u \para_{L^{\infty}(B(0,R))}^{p-1}\para v \para_{L^{\infty}} ds \\
&\leq & \epsilon^{p-1} \int_{-1}^t \frac{1}{|s|^{\frac 1 2}}\para v \para_{L^{\infty}} ds .
\ea
\ee
Gathering \fref{eq:bd free}, \fref{eq:bd bord i} and \fref{eq:bd NL i}, from \fref{eq:id v} one has:
$$
\para v(t) \para_{L^{\infty}} \leq C(a) \epsilon+\epsilon^{p-1}\int_{-1}^{t} \frac{1}{|s|^{\frac 1 2}}\para v \para_{L^{\infty}}.
$$
Applying \fref{eq:bd gronwall} one obtains:
$$
\para v(t) \para_{L^{\infty}} \leq C(a) \epsilon e^{ \int_{-1}^t|s|^{-\frac 1 2}ds} \leq C(a) \epsilon
$$
which from \fref{eq:def v subtypeI} implies the bound \fref{eq:bound subtypeI Linfty} we had to prove.\\

\noindent \textbf{step 2} Case (ii). For the linear terms, as $\frac{1}{p-1}-a=\frac 1 2$, from \fref{eq:bd I =1} one has:
\be \la{eq:bd bord ii}
\ba{r c l}
\para \int_{-1}^t K_{t-s}*(\Delta \chi u-2\nabla .(\nabla \chi u)) ds \para_{L^{\infty}} & \leq & C \int_{-1}^t \frac{1}{(t-s)^{\frac 1 2}} \para u \para_{L^{\infty}(B(0,R))} \\
&\leq & C\epsilon \int_{-1}^t \frac{1}{(t-s)^{\frac 1 2}}\frac{1}{|s|^{\frac 1 2}} \leq C \epsilon(1+| \text{log} (t)|) .
\ea
\ee
For the nonlinear term, as $\frac{1}{p-1}-a<\frac 1 2<\frac{1}{2(p-1)}=\frac{d-2}{8}$ as $d\geq 7$, using \fref{eq:bound hp subtypeI}:
\be \la{eq:bd NL ii}
\ba{r c l}
\para \int_{-1}^t K_{t-s}*( \chi |u|^{p-1}v)ds \para_{L^{\infty}} & \leq &  \int_{-1}^t \para u \para_{L^{\infty}(B(0,R))}^{p-1}\para v \para_{L^{\infty}} \\
&\leq & \epsilon^{p-1} \int_{-1}^t \frac{1}{|s|^{\frac 1 2}}\para v \para_{L^{\infty}} .
\ea
\ee
Gathering \fref{eq:bd free}, \fref{eq:bd bord ii} and \fref{eq:bd NL ii}, from \fref{eq:id v} one has:
$$
\para v(t) \para_{L^{\infty}} \leq C\epsilon+C\epsilon |\text{log}(t)|+ \epsilon^{p-1} \int_{-1}^{t} \frac{1}{|s|^{\frac 1 2}}\para v \para_{L^{\infty}}.
$$
Applying \fref{eq:bd gronwall} one obtains:
$$
\ba{r c l}
\para v(t) \para_{L^{\infty}} \leq C\epsilon e^{\int_{-1}^t |s|^{-\frac 1 2}ds} \left[1 +\int_{-1}^t \frac{ds}{|s|}e^{-\int_{-1}^s |\tau|^{-\frac 1 2}d\tau} \right] \leq C\epsilon (1 +|\text{log}(t)|)
\ea
$$
which from \fref{eq:def v subtypeI} implies \fref{eq:bound subtypeI ln}.\\

\noindent \textbf{step 3} Case (iii). For the linear terms, as $\frac{1}{p-1}-a>\frac 1 2$, from \fref{eq:bd I =1} one has:
\be \la{eq:bd bord iii}
\ba{r c l}
\para \int_{-1}^t K_{t-s}*(\Delta \chi u-2\nabla .(\nabla \chi u)) ds \para_{L^{\infty}} & \leq & C \int_{-1}^t \frac{1}{(t-s)^{\frac 1 2}} \para u \para_{L^{\infty}(B(0,R))} \\
&\leq & C\epsilon \int_{-1}^t \frac{1}{(t-s)^{\frac 1 2}}\frac{1}{|s|^{\frac{1}{p-1}-a}} \leq \frac{C(a)\epsilon}{|t|^{\frac{1}{p-1}-a-\frac 1 2}}.
\ea
\ee
For the nonlinear term, as $\frac{1}{p-1}-a\leq \frac{1}{p-1}$ we compute using \fref{eq:bound hp subtypeI}:
\be \la{eq:bd NL iii}
\ba{r c l}
\para \int_{-1}^t K_{t-s}*( \chi |u|^{p-1}v)ds \para_{L^{\infty}} & \leq &  \int_{-1}^t \para u \para_{L^{\infty}(B(0,R))}^{p-1}\para v \para_{L^{\infty}}ds \\
&\leq & \epsilon^{p-1} \int_{-1}^t \frac{1}{|s|}\para v \para_{L^{\infty}} .
\ea
\ee
Gathering \fref{eq:bd free}, \fref{eq:bd bord iii} and \fref{eq:bd NL iii}, from \fref{eq:id v} one has:
$$
\para v(t) \para_{L^{\infty}} \leq \frac{C(a)\epsilon}{|t|^{\frac{1}{p-1}-a-\frac 1 2}}+ \epsilon^{p-1} \int_{-1}^{t} \frac{1}{|s|}\para v \para_{L^{\infty}}ds.
$$
Applying \fref{eq:bd gronwall} one obtains if $\epsilon^{p-1}<\frac{1}{p-1}-a-\frac 1 2$:
$$
\begin{array}{r c l}
\para v(t) \para_{L^{\infty}} & \leq & C(a)\epsilon e^{\epsilon^{p-1} \int_{-1}^t\frac{ds}{|s|}} \left[1+\int_{-1}^t \frac{1}{|s|^{\frac{1}{p-1}-a+\frac 1 2}}e^{-\epsilon^{p-1}\int_{-1}^s \frac{d\tau}{\tau}}ds \right]  \leq  \frac{C(a)\epsilon}{|t|^{\frac{1}{p-1}-a-\frac 1 2}} 
\end{array}
$$
implying \fref{eq:bound subtypeI power} from \fref{eq:def v subtypeI} .

\end{proof}

We can now end the proof of Proposition \ref{pr:energy to Linfty}.

\begin{proof}[Proof of Proposition \ref{pr:energy to Linfty}]

For any $a \in B(0,R)$, from \fref{eq:def w}, \fref{eq:bd E waT} and \fref{eq:bd type I u} $w_{a,T}$ satisfies $E(w_{a,T}(0,\cdot))\leq \eta$ and:
$$
|\Delta w_{a,T}|\leq \frac 1 2 |w_{a,T}|^p+\eta T_+^{\frac{p}{p-1}}.
$$
Applying Proposition \ref{pr:energy to Linfty autosimilaire} to $w_{a,T}$ one obtains that for any $\eta'>0$ if $\eta$ is small enough:
$$
\forall s\geq s\left(\frac{T_-}{4} \right), \ \  |w_{a,T}(s,0)|\leq \eta' .
$$
In original variables this means:
$$
\forall (t,x)\in B(0,R)\times [\frac{T_-}{4},T), \ \ |u(t,x)|\leq \frac{\eta'}{(T-t)^{\frac{1}{p-1}}}.
$$
Applying Corollary \ref{cor:subtypeI Linfty} for $\eta'$ small enough there exists $r>0$ such that
$$
\forall (t,x)\in B(0,R)\times [\frac{T_-}{4},T), \ \ |u(t,x)|\leq C \eta' .
$$
Then, a standard parabolic estimate, similar to these in Lemma \ref{para:lem:para} propagates this bound for higher derivatives, yielding the result \fref{eq:bd W2inftyloc u}.

\end{proof}


\subsection{Proof of Proposition \ref{pr:type I}} \la{sub:preuve}

We now prove Proposition \ref{pr:type I} by contradiction, following \cite{Fe}. Assume the result is false. From Lemma \ref{lem:description type I} and from the Cauchy theory in $W^{2,\infty}$ the negation of the result of Proposition \ref{pr:type I} means the following. There exists $u_0 \in W^{3,\infty}$ such that the solution of \fref{eq:NLH} starting from $u_0$ blows up at time $1$ with:
\be \la{eq:bd Linfty u}
\parallel u(t) \parallel \sim \kappa (1-t)^{-\frac{1}{p-1}} \ \text{as} \ t\rightarrow 1,
\ee
and satisfies:
\be \la{eq:bd ODE u}
|\Delta u|\leq \frac{1}{2}|u|^p +K \ \text{on} \ \mathbb R^d \times [0,1) .
\ee
There exists a sequence $u_n$ of solutions of \fref{eq:NLH} blowing up at time $T_n$ with:
\be \la{eq:lim Tn2}
T_n \rightarrow 1 \ \ \text{and} \ \ u_n\rightarrow u \  \text{in} \ \mathcal C_{\text{loc}} ([0,1),W^{3,\infty}( \mathbb R^d))
\ee
for any $0\leq T<1$ and there exists two sequences $0\leq t_n<T_n$ and $x_n$ such that:
\be \la{eq:bd ODE un 1}
|\Delta u_n |\leq \frac{1}{2}|u_n|^p +2K \ \text{on} \ \mathbb R^d \times [0,t_n) ,
\ee
\be \la{eq:bd ODE un 3}
 |\Delta u_n (t_n , x_n) |= \frac{1}{2}|u_n(t_n,x_n)|^p +2K .
\ee
The strategy is the following. First we centralize the problem, showing in Lemma \ref{lem:renormalisation} that one can assume without loss of generality $x_n=0$. One also obtains that $u$ and $ u_n$ become singular near $0$ as $t\rightarrow 1$ and $n\rightarrow +\infty$. In view of Lemma  \ref{lem:description type I}, the ODE type bound \fref{eq:bd ODE un 1} means that $u_n$ behaves approximately as a type I blowing up solution until $t_n$. This intuition is made rigorous by the second Lemma \ref{lem:typeI renorm}, stating that if we renormalize $u_n$ before $t_n$, one converges to the constant in space blow up profile associated to type I blow up and that the $L^{\infty}$ norm grows as during type I blow up. We end the proof by showing that the inequality \fref{eq:bd ODE un 3} then passes to the limit, contradicting \fref{eq:bd ODE u}.

\begin{lemma} \la{lem:renormalisation}

Let $u$, $u_n$ be solutions of \fref{eq:NLH}, $t_n$ and $x_n$ satisfy \fref{eq:bd Linfty u}, \fref{eq:bd ODE u}, \fref{eq:lim Tn}, \fref{eq:bd ODE un 1} and \fref{eq:bd ODE un 2}. Then: 
\be \la{eq:lim tn}
t_n\rightarrow 1
\ee
and there exist $\hat u$ and $\hat u_n$ solutions of \fref{eq:NLH} satisfying \fref{eq:bd Linfty u}, \fref{eq:bd ODE u}, \fref{eq:bd ODE un 1} and \fref{eq:bd ODE un 2} with $\hat x_n=0$, $\hat u (t_n,0)\rightarrow +\infty$. In addition, $\hat u$ blows up with type I at $(1,0)$ and $\hat u_n$ blows up at time $T_n$. One has the following asymptotics:
\be \la{eq:lim Tn}
\para \hat u _n(0)\para_{W^{2,\infty}}\lesssim 1, \ \  T_n \rightarrow 1 \ \ \text{and} \ \ u_n\rightarrow u \  \text{in} \ \mathcal C_{\text{loc}}^{1,2} ([0,1)\times \mathbb R^d)
\ee

\end{lemma}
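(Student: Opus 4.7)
The plan is threefold: first establish $t_n \to 1$ by a uniform-convergence argument, then produce $\hat{u}_n$ and $\hat{u}$ by a translation plus a compactness extraction, and finally identify the singular forward behaviour of $\hat{u}$; the last step is where the real work lies.

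I would first prove $t_n \to 1$ by contradiction. Assume along a subsequence $t_n \to t^* < 1$, fix $T \in (t^*, 1)$, and use \eqref{eq:lim Tn2} to get uniform convergence of $\Delta u_n$ and $|u_n|^p$ to $\Delta u$ and $|u|^p$ on $[0, T] \times \mathbb{R}^d$. The strict bound \eqref{eq:bd ODE u} then yields $|\Delta u_n| - \tfrac{1}{2}|u_n|^p \le K + o(1)$ uniformly on $[0, T] \times \mathbb{R}^d$, which evaluated at $(t_n, x_n)$ and combined with \eqref{eq:bd ODE un 3} gives $2K \le K + o(1)$, a contradiction.

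I would then set $\hat{u}_n(t, y) := u_n(t, y + x_n)$: this is a solution of \eqref{eq:NLH} blowing up at $T_n$ with $\|\hat{u}_n(0)\|_{W^{3,\infty}} = \|u_n(0)\|_{W^{3,\infty}} \lesssim 1$ uniformly, and the inequalities \eqref{eq:bd ODE un 1}, \eqref{eq:bd ODE un 3} are translated to $\hat{x}_n = 0$. For every $T < 1$, the sequence $\hat{u}_n$ is uniformly bounded in $L^\infty([0, T] \times \mathbb{R}^d)$ by translation invariance and \eqref{eq:lim Tn2}, so the parabolic a priori estimates of Lemma~\ref{para:lem:para} give uniform H\"older bounds on compact subsets of $(0, 1) \times \mathbb{R}^d$; Arzela-Ascoli with a diagonal extraction then produces a classical solution $\hat{u}$ of \eqref{eq:NLH} on $[0, 1)$ together with a subsequence along which $\hat{u}_n \to \hat{u}$ in $\mathcal{C}^{1,2}_{\mathrm{loc}}([0, 1) \times \mathbb{R}^d)$. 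Passing to the limit in \eqref{eq:bd ODE un 1} with $t_n \to 1$ yields $|\Delta \hat{u}| \le \tfrac{1}{2}|\hat{u}|^p + 2K$ on $[0, 1) \times \mathbb{R}^d$, so $\hat{u}$ satisfies \eqref{eq:bd ODE u} with constant $2K$.

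The main obstacle is showing that $\hat{u}$ blows up precisely at time $1$ with $0$ as a type I blow-up point; this reduces to proving $|\hat{u}_n(t_n, 0)| = |u_n(t_n, x_n)| \to +\infty$. The natural approach is by contradiction: if along a subsequence $|u_n(t_n, x_n)|$ stayed bounded, then by \eqref{eq:bd ODE un 3} so would $|\Delta u_n(t_n, x_n)|$; one would then propagate this regularity backwards on a fixed space-time cylinder around $(t_n, x_n)$ using the ODE bound on $[0, t_n]$, and forward past $t_n$ via the local Cauchy theory of Proposition~\ref{pr:cauchy}, producing a uniform $L^\infty$-bound for $\hat{u}_n$ on a cylinder overshooting $T_n$ for large $n$; combined with the localization of the blow-up set of $u_n$ near $x_n$ (the delicate technical point, to be established by a careful use of the ODE equality saturated at $(t_n, x_n)$ and the fact that the saturation constant has doubled from the strict bound inherited from $u$), this contradicts blow-up of $u_n$ at $T_n$. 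Once $|\hat{u}_n(t_n, 0)| \to +\infty$ is known, the $\mathcal{C}^{1,2}_{\mathrm{loc}}$ convergence combined with $t_n \to 1$ forces $\hat{u}$ to become unbounded as $t \to 1^-$ at $y = 0$, hence $\hat{u}$ blows up by $t = 1$ with $0$ in the blow-up set; the inherited ODE bound on $[0, 1)$ and Lemma~\ref{lem:description type I} upgrade this to type I blow-up at $(1, 0)$, and \eqref{eq:prop typeI x} applied to $\hat{u}$ then yields $\hat{u}(t_n, 0) \to +\infty$.
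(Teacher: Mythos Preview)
Your first two steps are essentially the paper's Steps~1 and~2, and they are fine. The real issue is in your third step, where you try to derive a contradiction with the blow-up of $u_n$ at time $T_n$. The ``delicate technical point'' you flag --- that the blow-up set of $u_n$ should localize near $x_n$ --- is not merely delicate: nothing in the hypotheses gives it to you. The point $x_n$ is defined as a point where the ODE-type inequality with constant $2K$ is first saturated; this says nothing about where $u_n$ actually blows up, which could be arbitrarily far from $x_n$. So even if you manage to overshoot $T_n$ with a uniform bound on a small cylinder around $(t_n,x_n)$, you have not contradicted blow-up of $u_n$.

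The paper avoids this entirely by a different and much cleaner contradiction. Under the negation (namely $\hat u$ does not blow up at $(1,0)$, which by a short ODE argument is equivalent to $|\hat u_n(t_n,0)|$ staying bounded), one gets uniform local $W^{3,\infty}$ bounds for both $\hat u$ and $\hat u_n$ on $[\tfrac12,1)\times B(0,r')$ by parabolic regularity. One then passes the \emph{equality} $|\Delta \hat u_n(t_n,0)| = \tfrac12|\hat u_n(t_n,0)|^p + 2K$ to the limit: a short Duhamel computation shows $\Delta \hat u_n(t_n,0) = \Delta \hat u(t_n-s,0) + o_{n\to\infty}(1) + o_{s\to 0}(1)$ and similarly for the values, yielding $\liminf\,\bigl(|\Delta \hat u(t_n,0)| - \tfrac12|\hat u(t_n,0)|^p\bigr) \ge 2K$. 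But $\hat u$, being a $C^{1,2}_{\rm loc}$ limit of translates of $u$, inherits the strict bound \eqref{eq:bd ODE u} with constant $K$, and $2K\le K$ is the contradiction. No information about the blow-up set of $u_n$ is needed.

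A smaller point: your last deduction, that $|\hat u_n(t_n,0)|\to+\infty$ together with $t_n\to 1$ forces $\hat u(t,0)\to+\infty$ as $t\to 1^-$, is not immediate from $C^{1,2}_{\rm loc}$ convergence alone (which is only on compacts of $[0,1)$). It requires the ODE monotonicity argument the paper gives in its Step~3: once $|\hat u_n(\cdot,0)|$ exceeds $(4K)^{1/p}$ it is nondecreasing up to $t_n$, so largeness at $t_n$ propagates backwards to a fixed earlier time where the convergence does apply.
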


\begin{proof}[Proof of Lemma \ref{lem:renormalisation}]

\noindent \textbf{step 1} Proof of \fref{eq:lim tn}. At time $t_n$, $u$ satisfies the inequality \fref{eq:bd ODE u} whereas $u_n$ doesn't from \fref{eq:bd ODE un 3}. As $u_n$ converges to $u$ in $C^{1,2}_{\text{loc}}([0,1)\times \mathbb R^d)$ from \fref{eq:lim Tn} this forces $t_n$ to tend to $1$.\\

\noindent \textbf{step 2} Centering and limit objects. Define $\hat u_n(t,x)=u_n(t,x+x_n)$. Then $\hat u_n$ is a solution satisfying \fref{eq:bd ODE un 1}, \fref{eq:bd ODE un 3} with $\hat x_n=0$, and blowing up at time $T_n\rightarrow 1$ from \fref{eq:lim Tn}. From the Cauchy theory, see Proposition \ref{pr:cauchy}, $(t,x)\mapsto u(t,x_n+x)$ is uniformly bounded in $C^{\frac 3 2,3}_{\text{loc}}([0,1),\mathbb R^d)$, hence as $n\rightarrow +\infty$ using Arzela Ascoli theorem it converges to a function $\hat u$ that also solves \fref{eq:NLH}, satisfies \fref{eq:bd ODE u} and 
\be \la{eq:bd Linfty hatu}
\parallel \hat u(t) \parallel \lesssim \kappa (1-t)^{-\frac{1}{p-1}} .
\ee
As $u_n$ converges to $u$ in $C_{\text{loc}} ([0,1),W^{3,\infty}( \mathbb R^d))$ from \fref{eq:lim Tn}, $\hat u_n$ converges to $\hat u$ in $C_{\text{loc}}^{1,2} ([0,1)\times \mathbb R^d))$, establishing \fref{eq:lim Tn}.\\

\noindent \textbf{step 3} Conditions for boundedness. We claim two facts. 1) If $\hat u$ does not blow up at $(1,0)$ then there exists $r,C>0$ such that for all $(t,y)\in [0,t_n]\times B(0,r)$, $|\hat u_n(t,y)|\leq C$. 2) If there exists $C>0$ such that for all $0\leq t \leq t_n$, $|\hat u_n(t,0)|\leq C$ then $\hat u$ does not blow up at $(0,1)$.

\noindent \emph{Proof of the first fact}. We reason by contradiction. If $\hat u$ does not blow up at $(1,0)$ there exists $r,C>0$ such that for all $(t,y)\in [0,1)\times B(0,r)$, $|\hat u(t,y)|\leq C$. Assume that there exists $(\tilde x_n,\tilde t_n)$ such that $\tilde x_n\in B(0,r)$ and $|\hat u_n (\tilde x_n,\tilde t_n)|\rightarrow +\infty$. As $\hat u_n$ solves \fref{eq:NLH}, from \fref{eq:bd ODE un 3} one then has that:
$$
\forall t\in [0,\tilde t_n], \ \ \partial_t |\hat u_n (t,\tilde x_n)|\leq \frac 3 2 |\hat u_n(t,\tilde x_n)|^p+2K, \ \ |\hat u_n (\tilde x_n,\tilde t_n)|\rightarrow +\infty
$$
This then implies that for any $M>0$ there exists $s>0$ such that for $n$ large enough, $|\hat u_n(\tilde x_n,t)|\geq M$ on $[\text{max}(0,\tilde t_n-s),\tilde t_n]$. But this contradicts the convergence in $C_{\text{loc}}([0,1)\times B(0,r))$ established in Step 2 to the bounded function $\hat u$.

\noindent \emph{Proof of the second fact}. We also prove it by contradiction. Assume that $\hat u$ blows up at $(0,1)$ and $|\hat u_n(t_n,0)|\leq C$. Then we claim that 
$$
\forall t\in [0,t_n), \ \ | \hat u_n(t,0)|\leq \text{max}((4K)^{\frac 1 p},C)
$$
Indeed, as $\hat u_n$ is a solution of \fref{eq:NLH} satisfying \fref{eq:bd ODE un 1} one has that:
$$
\forall t\in [0,t_n], \ \ \partial_t |\hat u_n(t,0)| \geq \frac 1 2 |\tilde \hat u_n (t,0)|^p-2K.
$$
So if the bound we claim is violated at some time $0\leq t_0\leq \tau_n'$, then $| \hat u_n (t,0)|$ is non decreasing on $[t_0,\tau_n']$, strictly greater than $C$, which at time $t_n$ is a contradiction. But now as this bound is independent of $n$, valid on $[0,t_n)$ with $t_n\rightarrow 1$, and as $\hat u_n (t,0)\rightarrow \hat u(t,0)$ on $[0,1)$ one obtains at the limit that $\hat u (t,0)$ is bounded on $[0,1)$. From \fref{eq:prop typeI x} this contradicts the blow up of $\hat u$ at $(1,0)$. \\

\noindent \textbf{step 4} End of the proof. It remains to prove the singular behavior near $0$: that $\hat u$ blows up at $(1,0)$ and that $|\hat u_n(t_n,0|\rightarrow +\infty$. We reason by contradiction. From Step 3 we assume that there exists $C,r>0$ such that $|\hat u|+|\hat u_n |\leq C$ on $[0,1)\times B(0,r)$. A standard parabolic estimate, similar to these in Lemma \ref{para:lem:para} then implies that 
\be \la{eq:bd W3infty hatu hatun}
\para \hat u(t) \para_{W^{3,\infty}(B(0,r'))}+ \para \hat u_n(t) \para_{W^{3,\infty}(B(0,r'))}+\leq C'
\ee
for all $t\in [\frac 1 2, 1)$ for some $0<r'\leq r$ and. Let $\chi$ be a cut-off function, $\chi=1$ on $B(0,\frac{r'}{2})$, $\chi=0$ outside $B(0,r')$. The evolution of $\tilde u_n=\chi \hat u_n$ is given by:
$$
\tilde u_{n,\tau}-\Delta \tilde u_n = \chi |\hat u_n|^{p-1}\hat u_n +\Delta \chi \hat u_n-2\nabla . \left( \nabla \chi \hat u_n\right) =F_n
$$
with $\para F_n\para_{W^{1,\infty}}\leq C$ from \fref{eq:bd W2loc vn}. Fix $0<s\ll 1$. One has:
$$
\begin{array}{r c l}
\Delta \hat u_n(t_n,0) & = & K_{s}*(\Delta \tilde u_n(t_n-s))(0)+\sum_1^d \int_0^s \left[\partial_{x_i}K_{s-s'}*\partial_{x_i}F(t_n-s+s')\right](0) \\
&=& \Delta \hat u (t_n-s,0)+o_{n\rightarrow +\infty}(1)+o_{s\rightarrow 0}(1)
\end{array}
$$
from \fref{eq:lim Tn}, the estimate on $F_n$ and \fref{eq:bd W3infty hatu hatun}. Similarly, 
$$
\hat u_n(t_n,0) = \hat u (t_n,0)+o_{n\rightarrow +\infty}(1)+o_{s\rightarrow 0}(1).
$$
The equality \fref{eq:bd ODE un 3} and the two above identities imply the following asymptotics: $\text{lim} \ \text{inf} |\Delta \hat u(t_n)|-\frac{|\hat u(t_n,0)|^p}{2}\geq 2K$, which is in contradiction with \fref{eq:bd ODE u}. Hence $\hat u$ blows up at $(1,0)$ with type I blow up from \fref{eq:bd Linfty hatu} and $|\hat u(t_n,0)|\rightarrow +\infty$.

\end{proof}

We return to the study of $u$ and $u_n$ introduced at the begining of this subsection to prove Proposition \ref{pr:type I} by contradiction. From Lemma \ref{lem:renormalisation}, keeping the the notation $u$ and $u_n$ for $\hat u$ and $\hat u_n$ introduced there, one can assume without loss of generality that in addition to \fref{eq:bd Linfty u}, \fref{eq:bd ODE u} and \fref{eq:bd ODE un 1}, $u$ and $u_n$ satisfy \fref{eq:lim tn}, \fref{eq:lim Tn} and:
\be \la{eq:bd ODE un 2}
|\Delta u_n (t_n , 0) |= \frac{1}{2}|u_n(t_n,0)|^p +2K ,
\ee
\be \la{eq:lim un(tn,0)}
u_n(t_n,0)\rightarrow +\infty ,
\ee
\be \la{eq:lim u}
|u(t,0)|\sim \frac{\kappa}{(1-t)^{\frac{1}{p-1}}} .
\ee
To renormalize appropriately $u_n$ near $(1,0)$ we do the following. Define:
\be \la{eq:def Mn}
M_n(t):= \left(\frac{\kappa}{\para u_n(t) \para_{L^{\infty}}} \right)^{p-1} .
\ee
For $(\tilde t_n)_{n\in \mathbb N}$ a sequence of times, $0\leq \tilde t_n<T_n$, the renormalization near $(\tilde t_n,0)$ is:
\be \la{eq:def vn}
v_n(\tau,y):=M_n^{\frac{1}{p-1}}(\tilde t_n)u_n\left(M_n^{\frac 1 2}(\tilde t_n)y,\tilde t_n+\tau M_n(\tilde t_n)\right)
\ee
for $(\tau,y) \in[-\frac{\tilde t_n}{M_n(\tilde t_n)},\frac{T_n-\tilde t_n}{M_n(\tilde t_n)}]\times \mathbb R^d$. One has the following asymptotics.

\begin{lemma} \la{lem:typeI renorm}

Assume $0\leq \tilde t_n\leq t_n$ and $\tilde t_n\rightarrow 1$. Then 
\be \la{eq:lim Mn}
\para u_n(\tilde t_n)\para_{L^{\infty}}\sim \frac{\kappa}{(T_n-\tilde t_n)^{\frac{1}{p-1}}}, \ \ \text{i.e.} \ M_n(\tilde t_n)\sim (T_n-\tilde t_n).
\ee
Moreover, up to a subsequence\footnote{With the convention that if the limit in the denominator is $0$ the limit function is $0$.}:
\be \la{eq:cv vn}
v_n \rightarrow \frac{\kappa}{\left[\left( \text{lim}  \ \frac{u_n(\tilde t_n,0)}{\para u_n(\tilde t_n)\para_{L^{\infty}}}\right)^{1-p}-t\right]^{\frac{1}{p-1}}}  \ \ \text{in} \ C^{1,2}_{\text{loc}}(]-\infty,1)\times \mathbb R^d).
\ee

\end{lemma}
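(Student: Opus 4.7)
The plan is to apply the self-similar renormalization \eqref{eq:def vn} at scale $M_n(\tilde t_n)$, extract a subsequential $C^{1,2}_{\text{loc}}$ limit $v_\infty$ by parabolic compactness, and identify $v_\infty$ through the Liouville-type rigidity of Proposition~\ref{lem:liouville autosimilaire} as the constant-in-space ODE blow-up profile. Matching the value at $(0,0)$ will then pin down both the blow-up instant and the scaling constant.

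First I would establish the two-sided comparison $M_n(\tilde t_n) \asymp T_n - \tilde t_n$. At a spatial extremum of $u_n$, the sign of the Laplacian yields the universal viscosity inequality $\partial_t^+ \|u_n\|_{L^\infty} \leq \|u_n\|_{L^\infty}^p$; integrating the ensuing $(y^{1-p})' \geq -(p-1)$ between $t$ and the blow-up $T_n$ produces
\[
\|u_n(t)\|_{L^\infty} \geq \kappa (T_n - t)^{-1/(p-1)}, \qquad t \in [0, T_n),
\]
so $M_n(\tilde t_n) \leq T_n - \tilde t_n$. The ODE inequality \eqref{eq:bd ODE un 1}, rewritten at a spatial extremum as $\partial_t^- \|u_n\|_{L^\infty} \geq \tfrac12 \|u_n\|_{L^\infty}^p - 2K$, upgrades to $\partial_t^- \|u_n\|_{L^\infty} \geq \tfrac14 \|u_n\|_{L^\infty}^p$ as soon as $\|u_n\|_{L^\infty} \geq (16K)^{1/p}$, a threshold crossed for large $n$ by the local convergence $u_n \to u$ and the type~I blow-up of $u$ at $(1, 0)$ from Lemmas~\ref{lem:renormalisation} and~\ref{lem:description type I}. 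Integration on $[\tilde t_n, t_n]$, together with the fact that $T_n - t_n$ is of strictly smaller order than $T_n - \tilde t_n$, produces the matching bound $\|u_n(\tilde t_n)\|_{L^\infty} \leq C(T_n - \tilde t_n)^{-1/(p-1)}$.

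With this in hand, $v_n$ from \eqref{eq:def vn} solves \eqref{eq:NLH}, satisfies $\|v_n(0)\|_{L^\infty} = \kappa$ by construction, inherits from \eqref{eq:bd ODE un 1} the rescaled ODE bound $|\Delta_y v_n| \leq \tfrac12 |v_n|^p + 2K M_n(\tilde t_n)^{p/(p-1)}$ with vanishing error, and is defined on $[-\tilde t_n/M_n, T_n^*]$ with $T_n^* := (T_n - \tilde t_n)/M_n \in [1, C]$ and $\tilde t_n/M_n \to +\infty$. The rescaled universal bounds give $\kappa(T_n^* - \tau)^{-1/(p-1)} \leq \|v_n(\tau)\|_{L^\infty} \leq C(T_n^* - \tau)^{-1/(p-1)}$, hence uniform $L^\infty$ control on $[-A, T_n^* - \delta] \times \mathbb R^d$ for any $A, \delta > 0$. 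The parabolic interior regularity of Lemma~\ref{para:lem:para} upgrades this to uniform $C^{(3/2,3)}_{\text{loc}}$ bounds, so Arzel\`a-Ascoli and a diagonal extraction yield $v_n \to v_\infty$ in $C^{1,2}_{\text{loc}}((-\infty, T_*) \times \mathbb R^d)$ along a subsequence with $T_* := \lim T_n^* \in [1, C]$. The limit solves \eqref{eq:NLH}, satisfies the pure ODE bound $|\Delta v_\infty| \leq \tfrac12 |v_\infty|^p$, and inherits the two-sided rate. Applying Proposition~\ref{lem:liouville autosimilaire} and its sign-symmetric variant to the shifted profile $\hat v(\tau) := v_\infty(\tau + T_*)$ forces $v_\infty(\tau, y) = \sigma \kappa (T_* - \tau)^{-1/(p-1)}$ for some $\sigma \in \{-1, +1\}$; matching the $C^{1,2}_{\text{loc}}$ limit at $(\tau, y) = (0, 0)$ against $\kappa \ell$, where $\ell := \lim u_n(\tilde t_n, 0)/\|u_n(\tilde t_n)\|_{L^\infty}$ along the subsequence, gives $\sigma = \operatorname{sgn}(\ell)$ and $T_* = |\ell|^{1-p}$, yielding both conclusions of the lemma.

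The principal obstacle lies in the upper estimate on $\|u_n(\tilde t_n)\|_{L^\infty}$ in the first step. The inequality \eqref{eq:bd ODE un 1} is only available up to $t_n$ and is saturated at the single point $(t_n, 0)$, so it may fail immediately afterwards on a non-trivial portion of $[t_n, T_n]$; yet the ODE-type integration required to invert the type~I comparison naively extends to $T_n$. Bridging this gap demands showing $T_n - t_n$ is of strictly smaller order than $T_n - \tilde t_n$, which follows from the local $C^{1,2}$ convergence $u_n \to u$ in Lemma~\ref{lem:renormalisation} combined with the sharp type~I rate of $u$ at $(1, 0)$ from Lemma~\ref{lem:description type I}.
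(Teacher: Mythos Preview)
Your argument contains a genuine gap in the hard direction of \eqref{eq:lim Mn}, namely the upper bound $\|u_n(\tilde t_n)\|_{L^\infty}\le C(T_n-\tilde t_n)^{-1/(p-1)}$ (equivalently $M_n(\tilde t_n)\gtrsim T_n-\tilde t_n$). Your ODE integration of $\partial_t^-\|u_n\|_{L^\infty}\ge\tfrac14\|u_n\|_{L^\infty}^p$ is only licensed on $[\tilde t_n,t_n]$, since \eqref{eq:bd ODE un 1} is stated on $[0,t_n)$ and is saturated at $t_n$. To close the loop you invoke ``$T_n-t_n$ is of strictly smaller order than $T_n-\tilde t_n$'', but this is \emph{not} a consequence of the hypotheses: the lemma must (and in the paper does, in Case~1) apply with $\tilde t_n=t_n$, where your claim is identically false. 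The convergence $u_n\to u$ in $C_{\text{loc}}([0,1),W^{3,\infty})$ gives no control at the moving times $\tilde t_n\to 1$, so it cannot rescue the argument either. Downstream, your uniform bound $\|v_n(\tau)\|_{L^\infty}\le C(T_n^*-\tau)^{-1/(p-1)}$ and the finiteness of $T_*=\lim T_n^*$ both rest on this unproven direction.

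The paper avoids this entirely by reversing the order of the two conclusions. It first proves the convergence \eqref{eq:cv vn} using \emph{only} the easy bound $M_n(\tilde t_n)\le T_n-\tilde t_n$: for $\tau\le 0$, the ODE inequality forces $|\tilde v_n|\le\max\bigl((4K)^{1/p}M_n^{1/(p-1)},\kappa\bigr)$ (otherwise $|\tilde v_n|$ would be increasing past $\kappa$, contradicting $\|\tilde v_n(0)\|_{L^\infty}=\kappa$); for $\tau\in[0,1)$, comparison with the constant solution gives $\|\tilde v_n(\tau)\|_{L^\infty}\le\kappa(1-\tau)^{-1/(p-1)}$. Parabolic compactness and the Liouville theorem (applied after integrating $\partial_t|v|\ge\tfrac12|v|^p$ backward from $|v(0)|\le\kappa$) then yield the limit. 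Only \emph{afterwards} is the hard bound $\liminf M_n/(T_n-\tilde t_n)\ge 1$ obtained, by a completely different mechanism: taking $x_n$ near the spatial maximum so that $\tilde v_n(0,\cdot)\to\kappa$, one computes that the self-similar energy functional $I$ of \eqref{eq:def I} satisfies $I\bigl(w^{(n)}_{0,(1+\delta)^{p-1}}(0,\cdot)\bigr)\to I((1+\delta)\kappa)>0$, whence the blow-up criterion \eqref{eq:blow up criterion} forces $\tilde v_n$ to blow up before $(1+\delta)^{p-1}$, contradicting $T_n^*\ge(1+\delta)^{p-1}$. This energy argument is the missing ingredient your ODE approach cannot supply.
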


\begin{proof}[Proof of Lemma \ref{lem:typeI renorm}]

\noindent \textbf{step 1} Upper bound for $M_n(\tilde t_n)$. We claim that one always has $\para u_n(\tilde t_n) \para_{L^{\infty}}\geq \frac{\kappa}{(T_n-\tilde t_n)^{\frac{1}{p-1}}}$, i.e. 
\be \la{eq:lim Mn leq}
M_n(\tilde t_n)\leq (T_n-\tilde t_n).
\ee
Indeed if it is false then there exists $\delta>0$ such that $\para u_n(\tilde t_n)\para_{L^{\infty}}< \frac{\kappa}{(T_n+\delta-\tilde t_n)^{\frac{1}{p-1}}}$. Therefore, from a parabolic comparison argument this inequality propagates for the solutions, yielding that $-\frac{\kappa}{(T_n+\delta-t)^{\frac{1}{p-1}}} \leq u_n\leq \frac{\kappa}{(T_n+\delta-t)^{\frac{1}{p-1}}}$ for all times $t\geq \tilde t_n$. This implies that $u_n$ stays bounded up to $T_n$, which is a contradiction. \\

\noindent \textbf{step 2} Proof of \fref{eq:cv vn}. Let $(x_n)_{n\in \mathbb N}\in (\mathbb R^d)^{\mathbb N}$ and define:
\be \la{eq:def vn2}
\tilde v_n(\tau,y):=M_n^{\frac{1}{p-1}}(\tilde t_n)u_n\left(x_n+M_n^{\frac 1 2}(\tilde t_n)y,\tilde t_n+\tau M_n(\tilde t_n)\right)
\ee
From \fref{eq:def vn}, $ \tilde v_n$ is defined on $[-\frac{\tilde t_n}{M_n(\tilde t_n)},\frac{T_n-\tilde t_n}{M_n(\tilde t_n)}]\times \mathbb R^d$. The lower bound, $-\frac{\tilde t_n}{M_n(\tilde t_n)}$, then goes to $-\infty $ from \fref{eq:lim Mn leq}. $ \tilde v_n$ is a solution of \fref{eq:NLH} satisfying:
\be \la{eq:bd Linfty vn}
\para  \tilde v_n(0)\para_{L^{\infty}}\leq \kappa, 
\ee
\be \la{eq:bd ODE vn 1}
\forall (\tau,y) \in[-\frac{\tilde t_n}{M_n(\tilde t_n)},0]\times \mathbb R^d,†\ \ \  |\Delta \tilde v_n |\leq \frac{1}{2}| \tilde v_n|^p +2KM_n^{\frac{p}{p-1}}(\tilde t_n) ,
\ee
from \fref{eq:bd ODE un 1} and \fref{eq:def vn}.

\noindent \emph{Precompactness of the renormalized functions}. We claim that $\tilde v_n$ is uniformly bounded in $C^{\frac 3 2,3}_{\text{loc}}(]-\infty,1)\times \mathbb R^d)$. We now prove this result. First, we claim that 
\be \la{eq:bd Linfty vn 2}
| \tilde v_n|\leq \text{max}((4K)^{\frac 1 p}M_n^{\frac{1}{p-1}}(\tilde t_n),\kappa)
\ee
Indeed, as $\tilde v_n$ is a solution of \fref{eq:NLH} satisfying \fref{eq:bd ODE vn 1} one has that:
$$
\partial_t |\tilde v_n| \geq \frac 1 2 |\tilde v_n|^p-2KM_n^{\frac{p}{p-1}}(\tilde t_n).
$$
So if the bound we claim is violated, then $\para \tilde v_n \para_{L^{\infty}}$ is strictly increasing, greater than $\kappa$, which at time $0$ is a contradiction to \fref{eq:bd Linfty vn}. Moreover, as $\para \tilde v_n(0)\para_{L^{\infty}}\leq \kappa$, from a comparison argument, for $0\leq t<1$, on has that $\para \tilde v_n(0)\para_{L^{\infty}}\leq \kappa(1-t)^{-\frac{1}{p-1}}$. This and the above bound implies that for any $T<1$, $\tilde v_n$ is uniformly bounded, independently of $n$, in $L^{\infty}((-\frac{\tilde t_n}{M_n(\tilde t_n)},T]\times \mathbb R^d)$. Applying Lemma \ref{para:lem:para}, it is uniformly bounded in $C^{\frac 3 2,3}((-\frac{\tilde t_n}{M_n}+1,T)\times \mathbb R^d)$, yielding the desired result.

\noindent \emph{Rigidity at the limit}. From Step 2 and Arzela Ascoli theorem, up to a subsequence, $ v_n$ converges in $C^{1,2}_{\text{loc}}((-\infty,0]\times \mathbb R^d)$ to a function $v$. The equation \fref{eq:NLH} passes to the limit and $v$ also solves \fref{eq:NLH}. \fref{eq:bd Linfty vn 2} and \fref{eq:lim Mn leq} imply that $|v|\leq \kappa$. \fref{eq:NLH}, \fref{eq:lim Mn leq} and \fref{eq:bd ODE vn 1} imply that:
$$
\partial_t |v|\geq \frac{1}{2}|v|^p.
$$
Reintegrating this differential inequality one obtains that $|v|\leq \frac{C}{|c-\tau|^{\frac{1}{p-1}}}$ for some $C,c>0$. Applying the Liouville Lemma \ref{lem:liouville autosimilaire}, one has that $v$ is constant in space. Up to a subsequence, $v(0,x_n)=\kappa \ \text{lim} \ \frac{u_n(\tilde t_n,x_n)}{\para u_n(\tilde t_n)\para_{L^{\infty}}}$. Taking $x_n=0$, $\tilde v_n=v_n$ defined by \fref{eq:def vn} and $v$ is then given by \fref{eq:cv vn}, ending the proof of this identity.\\

\noindent \textbf{step 3} Lower bound on $M_n$. We claim that $\text{lim} \ \text{inf} \ \frac{M_n}{T_n-\tilde t_n}\geq 1$. We prove it by contradiction. From  \fref{eq:def Mn}, and up to a subsequence, assume that there exists $0<\delta \ll 1$ and $x_n\in \mathbb R^d$ such that $u_n(\tilde t_n,x_n)>\frac{(1+\delta)\kappa}{(T_n-\tilde t_n)^{\frac{1}{p-1}}}$ and $\frac{u_n(\tilde t_n,x_n)}{\para u_n(\tilde t_n)\para_{L^{\infty}}}\rightarrow 1$. Therefore the renormalized function $\tilde v_n$ defined by \fref{eq:def vn2} blows up at $\frac{T_n-\tilde t_n}{M_n(\tilde t_n)}\geq (1+\delta)^{p-1}$. From Step 2 $v(0,\cdot)$ is uniformly bounded and converges to $\kappa$. Hence, defining the self similar renormalization near $((1+\delta)^{p-1},0)$, 
$$
w_{0,(1+\delta)^{p-1}}^{(n)}(t,y)=((1+\delta)^{p-1}-t)^{\frac{1}{p-1}}\tilde v_n(t,\sqrt{(1+\delta)^{p-1}-t}y),
$$
one has that $I(w_{0,(1+\delta)^{p-1}}(0,\cdot))\rightarrow I((1+\delta)^{p-1}\kappa)>0$ where $I$ is defined by \fref{eq:def I}. From \fref{eq:blow up criterion} for $n$ large enough $\tilde v_n$ should have blown up before $(1+\delta)^{p-1}$ which yields the desired contradiction.

\end{proof}

To end the proof of Proposition \ref{pr:type I}, we now distinguish two cases for which one has to find a contradiction (which cover all possible cases up to subsequence):
\be \la{eq:case1}
\text{Case 1:} \ \ \ \text{lim}\ \frac{|u_n(x_n,t_n)|}{\parallel u_n(t_n) \parallel_{L^{\infty}}}>0, 
\ee
\be \la{eq:case2}
\text{Case 2:} \ \ \  \text{lim} \  \frac{|u_n(x_n,t_n)|}{\parallel u_n(t_n) \parallel_{L^{\infty}}}= 0
\ee

\begin{proof}[Proof of Proposition \ref{pr:type I} in Case 1]

In this case we can renormalize at time $t_n$. Let $\tilde t_n=t_n$ and define $v_n$ and $M_n(\tilde t_n)$ by \fref{eq:def vn} and \fref{eq:def Mn}. \fref{eq:cv vn} and \fref{eq:case1} imply that $\Delta v_n(0,0)\rightarrow 0$ and $ v_n(0,0)\rightarrow v(0,0)>0$. From \fref{eq:bd ODE un 2} $v_n$ satisfies at the origin:
$$
|\Delta v_n (0,0)|= \frac{1}{2}|v_n(0,0)|^p +2KM_n^{\frac{p}{p-1}}(t_n).
$$
As $M_n(t_n)\rightarrow 0$ from \fref{eq:lim Mn}, at the limit we get $0=\frac 1 2 v(0,0)>0$ which is a contradiction. This ends the proof of Proposition \ref{pr:type I} in Case 1.

\end{proof}

\begin{proof}[Proof of Proposition \ref{pr:type I} in Case 2]

\noindent \textbf{step 1} Suitable renormalization before $t_n$. We claim that for any $0<\kappa_0 \ll 1$ one can find a sequence of times $\tilde t_n$ such that $0\leq \tilde t_n\leq t_n$, $\tilde t_n\rightarrow 1$ and such that $v_n$ defined by \fref{eq:def vn} satisfy up to a subsequence:
\be \la{eq:cv vn 2}
v_n \rightarrow \frac{\kappa}{\left[ \left(\frac{\kappa}{\kappa_0}\right)^{p-1}-1-t\right]^{\frac{1}{p-1}}}  \ \text{in} \ C^{1,2}_{\text{loc}}(]-\infty,1)\times \mathbb R^d).
\ee
We now prove this fact. On one hand, $\frac{|u(t,0)|}{\parallel u (t)\para_{L^{\infty}}}\rightarrow 1$ as $t\rightarrow 1$ (from \fref{eq:lim u} and \fref{eq:prop typeI} as $u$ blow up with type I at $0$) and for any $0\leq T<$1 $u_n$ converges to $u$ in $\mathcal C([0,T],L^{\infty}(\mathbb R^d))$ from \fref{eq:lim Tn}. As $t_n \rightarrow 1$, using a diagonal argument, up to a subsequence there exists a sequence of times $0\leq t'_n\leq t_n$ such that $\frac{|u_n(t'_n,0)|}{\parallel u (t'_n)\para_{L^{\infty}}}\rightarrow 1$. On the other hand, from the assumption \fref{eq:case2} and \fref{eq:lim tn}, $\text{lim} \  \frac{|u_n(t_n,0)|}{\parallel u_n(t_n) \parallel_{L^{\infty}}}= 0$ and $t_n\rightarrow 1$. From a continuity argument, for $\kappa_0$ small enough, there exists a sequence $t'_n\leq \tilde t_n\leq t_n$ such that $\text{lim} \  \frac{|u_n(\tilde t_n,0)|}{\parallel u_n(\tilde t_n) \parallel_{L^{\infty}}}= \frac{1}{\left[\left(\frac{\kappa}{\kappa_0}\right)^{p-1}-1 \right]^{\frac{1}{p-1}}}$. From Lemma \fref{lem:typeI renorm} one obtains the desired result \fref{eq:cv vn 2}.\\

\noindent \textbf{step 2} Boundedness via smallness of the energy. Take $\tilde t_n$ and $v_n$ as in Step 1. From \fref{eq:def vn} and \fref{eq:lim Mn} $v_n$  blows up at time $\tau_n=\frac{T_n-\tilde t_n}{M_n(\tilde t_n)}\rightarrow 1$. Up to time $\tau_n'=\frac{T_n-t_n}{M_n(\tilde t_n)}$, $0\leq \tau_n'\leq 0$, $v_n$ satisfies:
\be \la{eq:bd Delta vn 3}
|\Delta v_n |\leq \frac{1}{2}|v_n|^p +2KM_n^{\frac{p}{p-1}}(\tilde t_n) 
\ee
and we recall that $M_n(\tilde t_n)\rightarrow 0$ from \fref{eq:lim Mn}. Let $R>0$ and $a\in B(0,R)$. Define
$$
w_{a,\tau_n}^{(n)}(y,t):= (\tau_n-t)^{\frac{1}{p-1}}v_n(t,a+\sqrt{\tau_n-t}y).
$$
Then as $v_n(-1)\rightarrow \kappa_0$ from \fref{eq:cv vn 2}, one has that for $n$ large enough
$$
E[w_{a,\tau_n}^{(n)}(-1,\cdot)]= O(\kappa_0^2)
$$
where the energy is defined by \fref{eq:def E}. One can then apply the result \fref{eq:bd W2inftyloc u} of Proposition \ref{pr:energy to Linfty}: there exists $r>0$ such that for $\kappa_0$ small enough and $n$ large enough one has:
\be \la{eq:bd W2loc vn}
\forall t\in [0,\tau_n'], \ \ \para v_n(t)\para_{W^{2,\infty}(B(0,r))}\leq C .
\ee

\noindent \textbf{step 3} End of the proof. Let $\chi$ be a cut-off function, $\chi =1$ on $B(0,\frac{R}{16})$ and $\chi=0$ outside $B(0,\frac R 8)$. The evolution of $\tilde v_n=\chi v_n$ is given by:
$$
\tilde v_{n,\tau}-\Delta \tilde v_n = \chi |v_n|^{p-1}v_n +\Delta \chi v_n-2\nabla . \left( \nabla \chi v_n\right) =F_n
$$
with $\para F_n\para_{W^{1,\infty}}\leq C$ from \fref{eq:bd W2loc vn}. Fix $0<s\ll 1$. One has:
$$
\begin{array}{r c l}
\Delta v_n(\tau'_n,0) & = & K_{s}*(\Delta \tilde v_n(\tau_n'-s))(0)+\sum_1^d \int_0^s \left[\partial_{x_i}K_{s-s'}*\partial_{x_i}F(\tau_n'-s+s')\right](0) \\
&=& o_{n\rightarrow +\infty}(1)+o_{s\rightarrow 0}(1)
\end{array}
$$
from \fref{eq:cv vn 2} and the estimate on $F_n$. Hence $\Delta v_n(\tau'_n,0) \rightarrow 0$ as $n\rightarrow +\infty$. On the other hand, $\text{lim} v_n(\tau_n',0)=v(\tau_n',0)>0 $ from \fref{eq:cv vn 2} and the fact that $0\leq \tau_n'\leq 1$. We recall that at time $\tau_n'$ $v_n$ satisfies:
$$
|\Delta v_n (\tau_n',0)|= \frac{1}{2}|v_n(\tau_n',0)|^p +2KM_n^{\frac{p}{p-1}}(\tilde t_n).
$$
As $M_n^{\frac{p}{p-1}}(\tilde t_n)\rightarrow 0$ from \fref{eq:lim Mn} at the limit one has $0=\frac{1}{2}|v(\tau_n',0)|^p>0 $ which is a contradiction. This ends the proof of Proposition \ref{pr:type I} in Case 2.

\end{proof}


\appendix

\section{Kernel of the linearized operator $-\Delta-pQ^{p-1}$}  \label{sec:H}

In this section we prove study the linearized operator $-\Delta-pQ^{p-1}$. We characterize its kernel on each spherical harmonics in the following lemma. This will be useful in the next section to derive suitable coercivity properties for this operator. The numbers
$$
k(0):=1, \ k(1):=d, \ k(n):=\frac{2n+p-2}{n} \begin{pmatrix} n+p-3\\ n-1 \end{pmatrix} \ \text{for} \ n\geq 2
$$
are the number of spherical harmonics of degree $n$. We recall that spherical harmonics are the eigenfunctions of the Laplace-Beltrami operator on the sphere $\mathbb{S}^{d-1}$. The spectrum of this self-adjoint operator with compact resolvent is $\left\{ n(d+n-2), \ n\in \mathbb N\right\}$. For each $n\in \mathbb{N}$ the eigenvalue $n(d+2-n)$ has geometric multiplicity $k(n)$. We then denote the associated orthonormal family of eigenfunctions by $(Y^{(n,k)})_{n\in \mathbb N, \ 1\leq k \leq k(n)}$:
$$
L^2(\mathbb{S}^{d-1})= \underset{n=0}{\overset{+\infty}{\oplus}}^{\perp} \text{Span}(Y^{(n,k)}, \ 1\leq k \leq k(n),
$$
\be \label{intro:eq:def Ynk}
\Delta_{\mathbb S^{d-1}}Y^{(n,k)}=n(d+n-2)Y^{(n,k)}, \ \int_{S^{d-1}(1)} Y^{(n,k)}Y^{(n',k')}=\delta_{(n,k),(n',k')},
\ee
The potential part in the Schr\"odinger operator $H$ being radially symmetric, one can associate to $H$ the following family of linear second order operators on radial functions:
\be \label{intro:eq:def Hn}
(H^{(n)})_{n\in \mathbb N}:=\left(-\partial_{rr}-\frac{d-1}{r}\partial_r+\frac{n(d+n-2)}{r^2}-pQ^{p-1}\right)_{n\in \mathbb N}
\ee
so that one has the following identity for smooth enough radial functions $f$:
$$
H \left(f(|x|)Y^{(n,k)}\left( \frac{x}{|x|}\right)\right)=(H^{(n)}(f))(|x|)Y^{(n,k)}\left( \frac{x}{|x|}\right).
$$

\begin{lemma}[Zeros of $-\Delta-pQ^{p-1}$ on spherical harmonics] \label{lem:zeros}

Let $n\in \mathbb N$ and $f\in \mathcal C^2((0,+\infty),\mathbb R)$ satisfy $H^{(n)} f=0$. Then $f\in \text{Span}(T^{(n)},\Gamma^{(n)})$ where $T^{(n)}$ and $\Gamma^{(n)}$ are smooth and satisfy:
\begin{itemize}
\item[(i)] \emph{Radial case:} $T^{(0)}=\Lambda Q$ and $\Gamma^{(0)}(r)\sim r^{-d+2}$ as $r\rightarrow 0$.
\item[(ii)] \emph{First spherical harmonics:} $T^{(1)}=-\partial_r Q$ and $\Gamma^{(1)}(r)\sim r^{-d+1}$ as $r\rightarrow 0$.
\item[(iii)] \emph{Higher spherical harmonics:} for $n\geq 2$, $T^{(n)}>0$, $T^{(n)}\sim r^n$ as $r\rightarrow +\infty$ and $\Gamma^{(n)}(r)\sim r^{-d+2-n}$ as $r\rightarrow 0$.
\end{itemize}

\end{lemma}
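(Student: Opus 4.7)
The operator $H^{(n)}$ is a second order linear ODE with two regular singular points, at $r=0$ and at $r=+\infty$. Since $pQ^{p-1}$ is smooth at the origin and $Q^{p-1}(r)\sim Cr^{-4}$ at infinity (for $p=\frac{d+2}{d-2}$), near both singular points $H^{(n)}$ is a compact perturbation of the Euler-type operator $-\partial_{rr}-\frac{d-1}{r}\partial_r+\frac{n(d+n-2)}{r^2}$. The plan is to carry out a Frobenius analysis at $r=0$ to exhibit two linearly independent solutions with the prescribed leading behaviors, then identify the regular one using either a symmetry argument ($n=0,1$) or a Sturm comparison argument ($n\geq 2$), and finally build $\Gamma^{(n)}$ by reduction of order.

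The indicial equation at $r=0$ is $\alpha(\alpha-1)+(d-1)\alpha-n(d+n-2)=0$, whose roots are $\alpha_+=n$ and $\alpha_-=2-d-n$. Standard Frobenius theory (the roots differ by $2n+d-2\notin \mathbb Z_{<0}$) produces a unique (up to scaling) solution $T^{(n)}$ analytic at $0$ with $T^{(n)}(r)\sim r^n$ as $r\to 0$, and a second solution $\Gamma^{(n)}(r)\sim r^{2-d-n}$ as $r\to 0$ (with a possible logarithmic correction that can be ruled out here since $pQ^{p-1}$ admits an even Taylor expansion in $r$). For $n=0$, the scaling identity $H\Lambda Q=0$ and the radiality of $\Lambda Q$ yield $H^{(0)}\Lambda Q=0$; since $\Lambda Q(0)=\tfrac{d-2}{2}\neq 0$, $\Lambda Q$ is (a multiple of) the regular solution, so $T^{(0)}=\Lambda Q$ after normalization. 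For $n=1$, translation invariance gives $H\partial_{x_i}Q=0$; writing $\partial_{x_i}Q=\frac{x_i}{r}\partial_r Q$ with $\frac{x_i}{r}$ a spherical harmonic of degree $1$ yields $H^{(1)}(-\partial_r Q)=0$, and $-\partial_r Q(r)\sim c_1 r$ at the origin with $c_1>0$ since $Q$ is strictly radially decreasing, so $T^{(1)}=-\partial_r Q$ is the regular (and positive) solution.

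The most delicate part is proving $T^{(n)}>0$ for $n\geq 2$. The approach is Sturm--Picone comparison between $H^{(n)}$ and $H^{(1)}$ in Sturm--Liouville form $-(r^{d-1}f')'+r^{d-1}V^{(n)}f=0$. Since
\[
r^{d-1}\bigl(V^{(n)}-V^{(1)}\bigr)=\frac{r^{d-3}(n(d+n-2)-(d-1))}{1}>0 \quad\text{on }(0,\infty)\text{ for }n\geq 2,
\]
Sturm's theorem implies that between any two consecutive zeros of $T^{(n)}$ in $(0,\infty)$ there must lie a zero of $T^{(1)}=-\partial_r Q$; but the latter has none on $(0,\infty)$, so $T^{(n)}$ has at most one zero. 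To rule out a single zero, I would analyze the behavior at infinity: the change of variables $r=e^t$ recasts $H^{(n)} f=0$ as a perturbation of an autonomous linear ODE whose characteristic exponents are $n$ and $-(d+n-2)$, and the perturbation $pQ^{p-1}r^2=O(r^{-2})$ is integrable against $dt$ near $+\infty$. Asymptotic integration then yields a two-dimensional basis of solutions with behaviors $r^n$ and $r^{-(d+n-2)}$ at infinity. The solution $T^{(n)}$ (positive near $0$) must project onto one of these, and a sign-counting argument using the fact that a subdominant solution that crossed zero would have to decay to $-\infty$ forces $T^{(n)}(r)\sim C_n r^n$ with $C_n>0$, hence $T^{(n)}>0$ globally. (Equivalently, if $T^{(n)}$ had a zero at some $r_0>0$, the solution to the right of $r_0$ would be negative, but both asymptotic modes $r^n$ and $r^{-(d+n-2)}$ sit in the set of possible behaviors that preserves the sign of the dominant coefficient; a Wronskian/energy argument comparing with $T^{(1)}$ then produces a contradiction.)

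Finally, $\Gamma^{(n)}$ is built by reduction of order once $T^{(n)}$ is known to be nonvanishing near the origin: setting
\[
\Gamma^{(n)}(r):=T^{(n)}(r)\int_r^{r_0}\frac{ds}{s^{d-1}\,T^{(n)}(s)^2},
\]
Abel's formula gives $H^{(n)}\Gamma^{(n)}=0$, and inserting $T^{(n)}(s)\sim c_n s^n$ (with $c_n>0$ for $n\geq 1$, and $c_0=\tfrac{d-2}{2}$ for $n=0$) produces an integrand $\sim s^{-d+1-2n}$ near $0$, whose primitive yields the prescribed leading order $\Gamma^{(n)}(r)\sim r^{2-d-n}$ as $r\to 0$. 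Smoothness on $(0,\infty)$ follows from standard ODE regularity. The main obstacle is the positivity claim for $n\geq 2$; the remaining items are routine once Frobenius at $0$ and asymptotic integration at $\infty$ are carried out carefully.
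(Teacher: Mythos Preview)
Your strategy is a genuine alternative to the paper's, and most of it is sound: Frobenius at $r=0$, the identification of $T^{(0)}=\Lambda Q$ and $T^{(1)}=-\partial_rQ$ via the symmetries, and the reduction-of-order construction of $\Gamma^{(n)}$ are all correct and essentially standard. The substantive difference is in the $n\ge 2$ case, and here your writeup has a real gap.

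Your Sturm comparison with $T^{(1)}$ only gives ``at most one zero'', and your attempt to rule out a single zero (``sign-counting'', ``the solution to the right would be negative'') is not an argument. More importantly, you never establish the growth $T^{(n)}\sim r^n$ at infinity: positivity alone does not forbid $T^{(n)}$ from being the decaying mode $r^{-(d+n-2)}$, and the statement ``$T^{(n)}$ must project onto one of these'' is simply false, since it is a priori a linear combination. Both issues can actually be closed by pushing the Picone identity all the way: with $u=T^{(1)}>0$, $v=T^{(n)}$, and $W=r^{d-1}(u'v-uv')$, one has $W'=r^{d-3}\bigl((d-1)-n(d+n-2)\bigr)uv<0$ wherever $uv>0$. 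From the Frobenius asymptotics $u\sim c_1 r$, $v\sim c_n r^n$ one checks $W(0^+)=0$; if $v$ had a first zero at $r_0$ then $W$ would be strictly decreasing on $(0,r_0)$, hence $W(r_0)<0$, while direct evaluation gives $W(r_0)=-r_0^{d-1}u(r_0)v'(r_0)>0$, a contradiction. This already gives $T^{(n)}>0$ on $(0,\infty)$, with no ``single zero'' case to exclude. Now if $T^{(n)}$ were the decaying mode at infinity, the same asymptotics yield $W(r)\sim c(n-1)r^{-(d+n-1)}\to 0^+$, contradicting $W<0$ on $(0,\infty)$; hence $T^{(n)}$ must carry a nontrivial $r^n$ component and grows.

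For comparison, the paper avoids Sturm entirely. After the substitution $t=\log r$ it views the equation as a planar dynamical system and exhibits an explicit forward-invariant cone $Z=\{x\ge 0,\ y\ge -\tfrac{d-2}{2}x\}$ (the invariance uses the sharp bound $r^2pQ^{p-1}(r)\le \tfrac{d(d+2)}{4}$). The regular-at-$0$ solution enters $Z$ at $t=-\infty$ and therefore stays there, giving $T^{(n)}\ge 0$; the decaying-at-$\infty$ solution leaves $Z$ at $t=+\infty$, so the two are linearly independent, which forces $T^{(n)}\sim c\,r^n$. Your Sturm route is more classical and arguably more elementary once the Wronskian endpoint analysis is done properly; the paper's phase-plane argument is more geometric and yields positivity and growth simultaneously from a single invariance computation.
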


\begin{remark}

This lemma states that on the spherical harmonics of degree $0$ and $1$ there exists a zero of $-\Delta-pQ^{p-1}$ that is not singular at the origin and decays at infinity, whereas for higher degrees all zeros are singular at the origin or do not decay at infinity.

\end{remark}

\begin{proof}[Proof of Lemma \ref{lem:zeros}] 

Let $n\in \mathbb N$ and $f$ satisfy $H^{(n)}f=0$. First we rewrite the equation as an almost constant coefficient ODE. Setting $w(t)=f(e^t)$, $f$ solves $H^{(n)} f=0$ if and only if $w$ solves:
\be \label{eq:ODEn}
w''+(d-2)w'-\left[e^{2t}V(e^{t})+n(d+n-2)\right]w=0.
\ee
First, as $|V(r)|\lesssim (1+|r|^4)^{-1}$, one gets that $|e^{2t}V(e^{t}|\lesssim e^{-2|t|}$. This implies that asymptotically, as $t\rightarrow \pm \infty$, \fref{eq:ODEn} is almost the constant coefficients ODE
\be \label{eq:ODE asymptotique}
w''+(d-2)w'-n(d+n-2)w.
\ee
One also has the bound 
\be \label{eq:bound e2tV}
\forall t\in \mathbb R, |e^{2t}V(e^t)|=r^2|V(r)|\leq (\sqrt{d(d-2)})^2V(\sqrt{d(d-2)})=\frac{d(d+2)}{4}
\ee
which is obtained by maximizing this function.\\

\noindent{\bf step 1} Existence of a solution behaving like $e^{nt}$ as $t\rightarrow -\infty$. We claim that for any $n\in \mathbb N$, there exists a solution $a^{(n)}$ of \fref{eq:ODEn} such that $a^{(n)}(t)= e^{n t}+v(t)$ with $|v(t)|+|v'(t)|\lesssim e^{(n+1)t}$ as $t\rightarrow -\infty$. We now prove this fact. For $n=0$, the function $(\Lambda Q (0))^{-1}\Lambda Q (e^{t})$ satisfies the desired property. We now assume $n\geq 1$. We use a standard fixed point argument to construct this solution as a perturbation of $t\mapsto e^{n t}$ which solves the asymptotic ODE \fref{eq:ODE asymptotique}. Using Duhamel formula, $a^{(n)}$ is a solution of \fref{eq:ODEn} if and only if $v$ is a solution of:
$$
v(t)=\frac{1}{2n+d-2}\int_{-\infty}^t \left(e^{n(t-t')}-e^{-(d+n-2)(t-t')} \right)e^{2t'}V(t')\left[e^{nt'}+v(t')\right]dt'.
$$
For $t_0\in \mathbb R$ we define the following functional space:
$$
X_{t_0}:=\left\{v\in \mathcal C((-\infty,t_0],\mathbb R), \ \ \underset{t\leq t_0}{\text{sup}} \ |v(t)|e^{-(n+1)t}<+\infty \right\}
$$
on which we define the following canonical weighted $L^{\infty}$ norm:
$$
\parallel v \parallel_{X_{t_0}} := \underset{t\leq t_0}{\text{sup}} \ |v(t)|e^{-(n+1)t}.
$$
$(X_{t_0},\parallel \cdot \parallel_{X_{t_0}})$ is a Banach space. We define the following function $\Phi$ on $X_{t_0}$:
$$
(\Phi(v))(t):= \frac{1}{2n+d-2}\int_{-\infty}^t \left(e^{n(t-t')}-e^{-(d+n-2)(t-t')} \right)e^{2t'}V(t')(e^{nt'}+v(t'))dt'.
$$
As $| e^{2t'}V(t')|\lesssim e^{2t'}$, for $t_0 \ll 0$ small enough, estimating by brute force, for $v\in X_{t_0}$, $t_0\ll 0$ and $t\leq t_0$:
$$
\begin{array}{r c l}
|(\Phi(v))(t)| &\lesssim & e^{nt}\int_{-\infty}^{t} e^{(2-n)t'}|V(t')|(e^{nt'}+|v(t')|)dt' \\
&&+e^{-(d+n-2)t}\int_{-\infty}^{t} e^{(d+n)t'}|V(t')|(e^{nt'}+|v(t')|)dt' \\
&\lesssim & e^{nt}\int_{-\infty}^{t} e^{(2-n)t'}(e^{nt'}+e^{(n+1)t'}\parallel v \parallel_{X_{t_0}})dt' \\
&&+e^{-(d+n-2)t}\int_{-\infty}^{t} e^{(d+n)t'}(e^{nt'}+e^{(n+1)t'}\parallel v \parallel_{X_{t_0}})dt' \\
&\lesssim & e^{(n+2)t}+e^{(n+3)t}\parallel v \parallel_{X_{t_0}}
\end{array}
$$
so that $\parallel \Phi (v)\parallel_{X_{t_0}} \lesssim e^{t_0}+e^{2t_0}\parallel v \parallel_{X_{t_0}}$. This implies that for $t_0$ small enough, $\Phi$ maps $B_{X_{t_0}}(0,1)$, the unit ball of $X_{t_0}$, into itself. Now, as $\Phi$ is an affine function one computes similarly for $t_0\ll 1$, $v_1,v_2\in X_{t_0}$ and $t\leq t_0$:
$$
\begin{array}{r c l}
&|(\Phi (v_1)-\Phi (v_2))(t)| \\
\lesssim & e^{nt}\int_{-\infty}^{t} e^{(2-n)t'}|V(t')||v_1- v_2|dt' +e^{-(d+n-2)t}\int_{-\infty}^{t} e^{(d+n)t'}|V(t')||v_1-v_2|)dt' \\
\lesssim & e^{((2+n)t}\parallel v_1-v_2\parallel_{X_{t_0}},
\end{array}
$$
implying that 
$\parallel \Phi (v_1)-\Phi (v_2)\parallel_{X_{t_0}} \lesssim e^{2t_0}\parallel v_1-v_2 \parallel_{X_{t_0}}$,
meaning that $\Phi$ is a contraction on $B_{X_{t_0}}(0,1)$. By Banach fixed point theorem, one gets that there exists a unique fixed point $v_0$ of $\Phi$. The function $a^{(n)}(t)=e^{n t}+v_0(t)$ is then a solution of \fref{eq:ODEn} on $(-\infty,t_0)$. There exists a unique global solution of \fref{eq:ODEn} that coincides with it on $(-\infty,t_0]$ that we still denote by $a^{(n)}$. As $v$ is a fixed point of $\Phi $ using verbatim the same type of computations we just did one sees that $|v'(t)|\lesssim e^{(n+1)t}$ as $t\rightarrow -\infty$. Hence $a^{(n)}$ has the properties we claimed in this step.\\

\noindent{\bf step 2} Existence of a solution behaving like $e^{-(d+n-2)t}$ as $t\rightarrow +\infty$. We claim that for any $n\in \mathbb N$, there exists a solution $b^{(n)}$ of \fref{eq:ODEn} such that $b^{(n)}(t)= e^{-(d+n-2) t}+v(t)$ with $|v(t)|+|v'(t)|\lesssim e^{-(d+n-1)t}$ as $t\rightarrow +\infty$. We now prove this fact. We reverse time and let $\tilde t=-t$, $\tilde w(\tilde t)= w( t )$. Then, $w$ solves \fref{eq:ODEn} if and only if $\tilde w$ solves
$$
\tilde w''-(d-2)\tilde w'-\left[e^{-2\tilde t}V(e^{\tilde t})+n(d+n-2)\right]\tilde w=0
$$
and $w=e^{-(d+n-2)\cdot}+v$ with $|v(t)|+|v'(t)|\lesssim e^{-(d+n-1)t}$ as $t\rightarrow +\infty$ if and only if $\tilde w=e^{(d+n-2)}+\tilde v$ with $|\tilde v(\tilde t)|+|\tilde v'(\tilde t)|\lesssim e^{(d+n-1)\tilde t}$ as $\tilde t\rightarrow -\infty$. One notices that the eigenvalues of the constant coefficients part of the ODE, \fref{eq:ODE asymptotique}, are $-n$ and $d+n-2$, and that $|e^{- 2 \tilde t}V(e^{\tilde t})|\lesssim e^{2\tilde t}$ as $\tilde t \rightarrow -\infty$. Therefore, we are again in the context of a constant coefficient second order ODE with a strictly positive and a strictly negative eigenvalue, plus a exponentially small perturbative linear term. One obtains the result claimed in this step by verbatim the same techniques we just employed in Step 1.\\

\noindent{\bf step 3} Existence of a solution behaving like $e^{nt}$ as $t\rightarrow +\infty$. We claim that for any $n\geq 1$, there exists a solution $c^{(n)}$ of \fref{eq:ODEn} such that $c^{(n)}(t)= e^{n t}+v(t)$ with $|v(t)|\leq \frac{e^{nt}}{2}$ as $t\rightarrow +\infty$. We now prove this fact. Let $t_0\in \mathbb R$ and $w$ be the solution of \fref{eq:ODEn} with initial condition $w(t_0)=e^{nt_0}$ and $w'(t_0)=ne^{nt_0}$. Then we claim that there exists $0\ll t_0$ large enough, such that $|w-e^{nt}|\leq \frac{e^{nt}}{2}$ for all $t_0\leq t$. To prove it, we use a connectedness argument. We define $\mathcal T\subset [t_0,+\infty)$ as the set of times $t\geq t_0$ such that this inequality holds on $[t_0,t]$. $\mathcal T$ is non empty as it contains $t_0$. It is closed by continuity. Then using Duhamel formula, one has for any $t\in \mathcal T$:
$$
\begin{array}{r c l}
|w(t)-e^{nt}| &\lesssim & \int_{t_0}^t \left| \left(e^{n(t-t')}-e^{-(d+n-2)(t-t')} \right)e^{2t'}V(t')(w(t')) \right|dt' \\
&\lesssim & e^{nt} e^{-2t_0}.
\end{array}
$$
This implies that for $t_0$ large enough, $\mathcal T$ is open. By connectedness, one has $\mathcal T=[t_0,+\infty)$, which means that $|w(t)-e^{nt}|\leq \frac{e^{nt}}{2}$ for all times $t\geq t_0$. We extend $w$ backward in times to obtain a global solution of \fref{eq:ODEn}, it then has the properties we claimed in this third step.\\

\noindent{\bf step 4} Existence of a solution behaving like $e^{-(d+n-2)t}$ as $t\rightarrow -\infty$. We claim that for any $n\in \mathbb N$, there exists a solution $d^{(n)}$ of \fref{eq:ODEn} such that $d^{(n)}(t)= e^{-(d+n-2) t}+v(t)$ with $|v(t)|\leq \frac{e^{-(d+n-2)t}}{2}$ as $t\rightarrow +\infty$. This can be proved using verbatim the same techniques we already employed: first by reversing time as in Step 2, then by performing a bootstrap argument as in Step 3.\\

\noindent{\bf step 5} $a^{(n)}\neq b^{(n)}$ for $n\geq 2$. Let $n\geq 2$, $a^{(n)}$ and $b^{(n)}$ be the two solutions we defined in Step 1 and Step 2 respectively. Then we claim that $a^{(n)}\neq b^{(n)}$. To show this, we see \fref{eq:ODEn} as a planar dynamical system. We associate to $w\in \mathcal C^{2}(]-\infty,+\infty),\mathbb R)$ the vector $W:=\begin{pmatrix} w \\ w' \end{pmatrix}$. Then $w$ solves \fref{eq:ODEn} if an only if $W$ solves:
$$
W'=\begin{pmatrix} 0 & 1 \\ e^{2t}V(e^t)+n(d+n-2) & -(d-2)  \end{pmatrix} W.
$$
We denote by $A^{(n)}$ and $B^{(n)}$ the vectors associated to $a^{(n)}$ and $b^{(n)}$. From the results of Step 1 and Step 2, one has that:
$$
A^{(n)}= e^{nt} \begin{pmatrix} 1 \\ n \end{pmatrix}+O(e^{(n+1)t}) \ \ \text{as} \ \ t\rightarrow -\infty,
$$
$$
B^{(n)}= e^{-(d+n-2)t} \begin{pmatrix} 1 \\ -(d+n-2) \end{pmatrix}+O(e^{-(d+n-1)t}) \ \ \text{as} \ \ t\rightarrow +\infty.
$$
We claim that solutions starting in $Z:=\{(x,y)\in \mathbb R^2, \ x\geq 0 \ \text{and} \ y\geq -\frac{d-2}{2}x \}$ cannot escape this zone. Once we have proven this, the result we claim follows as from their asymptotic behaviors, $A^{(n)}$ is in $Z$ for small enough times and $B^{(n)}$ is not in $Z$ for large times. To prove this, one computes the direction of the flow at the boundary $\partial Z:=Z_1\cup Z_2\cup \{ (0,0)\}$, where $Z_1:=\{(0,y), \ y>0\}$ and $Z_2:=\{(x,-\frac{d-2}{2}x), \ x>0\}$. On $Z_1$, one takes $(1,0)$ as the unit vector orthogonal to $Z_1$ pointing inside $Z$. At a time $t\in \mathbb R$, one computes the entering flux at the point $(0,1)$:
$$
\begin{pmatrix} 1 \\ 0\end{pmatrix}.\left( \begin{pmatrix} 0 & 1 \\ e^{2t}V(e^t)+n(d+n-2) & -(d-2)  \end{pmatrix}\begin{pmatrix} 0 \\ 1\end{pmatrix} \right)=1.
$$
By linearity, the flux entering $Z$ through $Z_1$ is also always strictly positive at each point of $Z_1$ for each time $t\in \mathbb R$. On $Z_2$ one takes $(\frac{d-2}{2},1)$ as an orthogonal vector pointing inside $Z$, and one computes the entering flux at the point $(\frac{d-2}{2})$ using \fref{eq:bound e2tV}:
$$
\begin{array}{r c l}
& \begin{pmatrix} \frac{d-2}{2} \\ 1 \end{pmatrix}.\left( \begin{pmatrix} 0 & 1 \\ e^{2t}V(e^t)+n(d+n-2) & -(d-2)  \end{pmatrix}\begin{pmatrix} 1 \\ -\frac{d-2}{2}\end{pmatrix} \right)\\
=& \frac{(d-2)^2}{4}+e^{2t}V(e^t)+n(d+n-2) \\
\geq &  \frac{(d-2)^2}{4}-\frac{d(d+2)}{4}+2d \\
=& \frac{d}{2}+1.
\end{array}
$$
By linearity, the flux entering $Z$ through $Z_2$ is also always strictly positive at each point of $Z_2$ for each time $t\in \mathbb R$. Consequently we have proven that $Z$ is forward in time stable by the flow. This ends the proof of this step.\\

\noindent{\bf step 6} Conclusion. We collect the results proved for the equivalent ODE \fref{eq:ODEn} in the five previous steps and go back to original variables. We set for $n\in \mathbb N$, $T^{(n)}(r)=a^{(n)}(\text{log}(r))$, $\Gamma^{(n)}(r)=b^{(n)}(\text{log}(r))$ and $\tilde \Gamma^{(n)}(r)=d^{(n)}(\text{log}(r))$, and for $n\geq 1$, $\tilde T^{(n)}(r)=c^{(n)}(\text{log}(r))$. From the previous steps there hold
$$
\forall n \geq 1, \ T^{(n)}\underset{r\rightarrow 0}{\sim} r^n , \ \Gamma^{(n)}\underset{r\rightarrow 0}{\sim} r^{-(d+n-2)}, \ \tilde T^{(n)}\underset{r\rightarrow +\infty}{\sim} r^n , \ \tilde \Gamma^{(n)}\underset{r\rightarrow +\infty}{\sim} r^{-(d+n-2)}.
$$

\noindent \emph{Case $n=0$}. From a direct computation, one has $H^{(0)}\Lambda Q=0$. This comes from the invariance by scale change of the equation. Together with the asymptotic of the other solution $\Gamma^{(0)}$ at the origin, this proves the lemma for the remaining case $n=0$.

\noindent \emph{Case $n=1$}. From a direct computation, one has $H^{(1)}\partial_r Q=0$. This comes from the invariance by translation of the equation. We claim that there exists $a_1,a_2\in \mathbb R$ such that $T^{(1)}=a_1\partial_r Q=a_2\tilde \Gamma^{(1)}$. Indeed, as the equation is a second order linear ODE, there exists $b_1,b_2\in \mathbb R$ such that $\partial_r Q=b_1T^{(1)}+b_2\Gamma^{(1)}$, and as $\Gamma^{(1)}$ is singular at the origin, one has that $b_2=0$. We apply the same reasoning at $+\infty$ to prove that $\partial_r Q$ is collinear with $\tilde \Gamma^{(1)}$. This, together with the asymptotic of the other solution $\Gamma^{(1)}$ at the origin proves the lemma for $n=1$.

\noindent \emph{Case $n\geq 2$}. We proved in Step 5 that $T^{(n)}$ and $\tilde \Gamma^{(n)}$ are not collinear. Hence there exists $c^{(n)},c^{(n)'}$ with $c^{(n)}\neq 0$ such that $T^{(n)}=c^{(n)}\tilde T^n+c^{(n)'} \tilde \Gamma^{(n)}\sim c^{(n)}r^n$ as $r\rightarrow +\infty$. This, together with the asymptotic of the other solution $\Gamma^{(n)}$ at the origin proves the lemma for $n\geq 2$.

\end{proof}


\section{Proof of the coercivity lemma \ref{lem:coercivite}}
\label{sec:coercivite}

This Appendix is devoted to the proof of Lemma \ref{lem:coercivite} which adapts to the non radial setting the related proof in \cite{RaphRod}.\\

Thanks to Lemma \ref{lem:zeros} and Proposition \ref{pr:H}, we can state and prove the following coercivity property for the linearized operator $-\Delta-pQ^{p-1}$ under suitable orthogonality conditions. We keep the notations for the spherical harmonics introduced in Appendix \ref{sec:H}.\\

\begin{proof}[Proof of Proposition \ref{pr:H}] For each $n\geq 1$ we define the following first order operator on radial functions:
\be \label{eq:def An}
A^{(n)}:=-\partial_r+W^{(n)}
\ee
where the potential is $W^{(n)}:=\partial_y (\text{log}T^{(n)})$, and where $T^{(n)}$ is defined in Lemma \ref{lem:zeros}, with the convention $T^{(1)}=-\partial_r Q$. As $H^{(n)}$ for $n\geq 1$ has a positive zero eigenfunction, this implies the following factorization property for smooth enough functions:
$$
\int u^{(n,k)}H^{(n)}u^{(n,k)}r^{d-1}dr=\int |A^{(n)}u^{(n,k)}|^2r^{d-1}dr .
$$
In turn, this gives the following formula for all functions $u\in \dot H^1\cap \dot H^2$:
\bea
 \label{eq:formule H}
\int uHu dx& = & \sum_{n\in \mathbb N, \ 1\leq k \leq k(n)}\int_0^{+\infty} u^{(n,k)}H^{(n)}u^{(n,k)}r^{d-1}dr\\
\nonumber &=&\int_0^{+\infty}u^{(0,1)}H^{(0)}u^{(0,1)}r^{d-1}dr+ \sum_{1\leq n, \ 1\leq k \leq k(n)}\int_0^{+\infty}  |A^{(n)}u^{(n,k)}|^2r^{d-1}dr.
\eea
The second term in the right hand side is always non negative, and the first term is non negative if $u\perp \mathcal Y$ from the first part of the proof. This ends the proof of Proposition \ref{pr:H}.
\end{proof}

\begin{proof}[Proof of Lemma \ref{lem:coercivite}]

We recall that the first order operarors factorizing $H$ on each spherical harmonics are defined by \fref{eq:def An}. If $u\in \dot H^1(\mathbb R^d)$, or $u\in \dot H^1\cap \dot H^2(\mathbb R^d)$ or $u\in \dot H^1\cap \dot H^3(\mathbb R^d)$, with decomposition into spherical harmonics 
$$
u(x)=\sum_{n \in \mathbb N, \ 1\leq k \leq k(n)}u^{(n,k)}(|x|)Y^{(n,k)}\left( \frac{x}{|x|}\right)
$$
one deduces respectively:
\bee
\int_{\mathbb R^d} |\nabla u|^2-pQ^{p-1}u^2 & = & \sum_{n \geq 1, \ 1\leq k \leq k(n)} \int_0^{+\infty} |A^{(n)}u^{(n,k)}|^2r^{d-1}dr \\
&& + \int_0^{+\infty} (|\partial_r u^{(0,1)}|^2-pQ^{p-1} |u^{(0,1)}|^2 )r^{d-1}dr
\eee
$$
\int_{\mathbb R^d} |Hu|^2  =  \sum_{n \in \mathbb N, \ 1\leq k \leq k(n)} \int_0^{+\infty} |H^{(n)}u^{(n,k)}|^2r^{d-1}dr 
$$
\bea
\label{eq:expression H3}
\int_{\mathbb R^d} |\nabla Hu|^2-pQ^{p-1}|Hu|^2 & = & \sum_{n \geq 1, \ 1\leq k \leq k(n)} \int_0^{+\infty} |A^{(n)}H^{(n)}u^{(n,k)}|^2r^{d-1}dr \\
\nonumber && \int_0^{+\infty} (|\partial_r H^{(0)}u^{(0,1)}|^2-pQ^{p-1} |H^{(0)}u^{(0,1)}|^2 )r^{d-1}dr .
\eea
We first show the estimate \fref{eq:coercivite H3} for which the proof is a bit more delicate than the proof of \fref{eq:coerciviteA} and \fref{eq:coercivite}.\\ 

\noindent{\bf step 1} Subcoercivity. We claim that for any $d\geq 7$ there exists a constant $C=C(d)>0$ such that for any $u\in \dot H^1\cap \dot H^3 (\mathbb R^d)$ there holds:
\be \label{eq:souscoercivite}
\begin{array}{r c l}
\int_{\mathbb R^d} |\nabla Hu|^2-pQ^{p-1}|Hu|^2 & \geq & \frac 1 C\left(\int_{\mathbb R^d} |\nabla^3 u|^2+\int_{\mathbb R^d} \frac{|\nabla^2 u|^2}{|x|^2}+\int_{\mathbb R^d} \frac{|\nabla u|^2}{|x|^4}+\int_{\mathbb R^d} \frac{ u^2}{|x|^6}\right)\\
&&-C\left( \int_{\mathbb R^d} \frac{|\nabla^2 u|^2}{1+|x|^4}+\int_{\mathbb R^d} \frac{|\nabla u|^2}{1+|x|^6}+\int_{\mathbb R^d} \frac{ u^2}{1+|x|^8}\right).
\end{array}
\ee
Indeed, first recall the standard Hardy inequality for $f\in \dot H^s$ for $0\leq s<\frac d 2$:
\be \label{eq:hardy}
\int_{\mathbb R^d} \frac{|f|^2}{|x|^{2s}}\lesssim \parallel f \parallel_{\dot H^s}^2 .
\ee
In particular, as we are in dimension $d\geq 7$, we can apply this inequality for $s=1,2,3$. As $H=-\Delta-pQ^{p-1}$, with a potential decays faster than the Hardy potential, i.e. for all $j\in \mathbb N$, $|\partial^j_r Q^{p-1}|\lesssim (1+|x|)^{-4-j}$, using the above Hardy inequality plus Young and Cauchy-Schwarz inequalities:
\bee
&& \int_{\mathbb R^d} |\nabla Hu|^2-pQ^{p-1}|Hu|^2 \\
& = & \int_{\mathbb R^d} |\nabla \Delta u|^2+2\nabla \Delta u.\nabla (pQ^{p-1}u)+p^2|\nabla (Q^{p-1}u)|^2\\
&&-\int_{\mathbb R^d} pQ^{p-1}|\Delta u|^2-2p^2Q^{2(p-1)}u\Delta u-p^3Q^{3(p-1)}u^2  \\
&\geq & \frac{1}{2} \int_{\mathbb R^d} |\nabla \Delta u|^2 -\int_{\mathbb R^d} p^2|\nabla (Q^{p-1}u)|^2 + pQ^{p-1}|\Delta u|^2 \\
&&-\int_{\mathbb R^d}2p^2Q^{2(p-1)}u\Delta u +p^3Q^{3(p-1)}u^2 \\
&\geq & \frac{1}{2} \int_{\mathbb R^d} |\nabla \Delta u|^2 -C\left( \int_{\mathbb R^d} \frac{|u|^2}{1+|x|^{10}}+\int_{\mathbb R^d} \frac{|\nabla u|^2}{1+|x|^8}+\int_{\mathbb R^d} \frac{|\nabla^2 u|^2}{1+|x|^4} \right) \\
&\geq & c\left( \int_{\mathbb R^d} |\nabla \Delta u|^2+\frac{|u|^2}{|x|^6}+\frac{|\nabla u|^2}{|x|^4}+ \frac{|\nabla^2 u|^2}{|x|^2}\right)  \\
&&-C\left( \int_{\mathbb R^d} \frac{|u|^2}{1+|x|^{10}}+ \frac{|\nabla u|^2}{1+|x|^8}+\frac{|\nabla^2 u|^2}{1+|x|^4} \right) 
\eee
for some constants $C,c>0$. This gives the estimate \fref{eq:souscoercivite} we claimed in this step.\\

\noindent{\bf step 2} Orthogonality for $Hu$. We claim that for any $d\geq 7$, if $u\in \dot H^3(\mathbb R^d)$ is such that $u\perp \mathcal Y$ then
\be \label{eq:orthogonalite Hu}
Hu\in \text{Span} (\partial_{x_1}Q,...,\partial_{x_n}Q,\Lambda Q,\mathcal Y)^{\perp}.
\ee
We now prove this. The linear form 
$$
\Phi:u\mapsto (\langle H u,\partial_{x_1} Q\rangle,...,\langle H u,\partial_{x_d} Q\rangle,\langle Hu,\Lambda Q\rangle,\langle Hu,\mathcal Y \rangle )
$$
is well defined on $\dot H^3(\mathbb R^d)$. To see this we estimate each term via Cauchy-Schwarz inequality and Hardy inequalities \fref{eq:hardy}, using the asymptotic of the solitary wave \fref{eq:def Q} and the fact that $\mathcal Y$ decays exponentially fast:
\bee
&& \sum_1^d |\langle H u,\partial_{x_i} Q\rangle|+|\langle Hu,\Lambda Q\rangle|+|\langle H u,\mathcal Y \rangle | \lesssim  \int_{\mathbb R^d} \frac{|\Delta u|}{1+|x|^{d-2}} + \int_{\mathbb R^d} \frac{|u|}{1+|x|^{d+2}} \\
&\lesssim & \left(\int_{\mathbb R^d} \frac{|\Delta u|^2}{1+|x|^2} \right)^{\frac 1 2}\left(\int_{\mathbb R^d} \frac{1}{1+|x|^{2d-6}} \right)^{\frac 1 2}+\left(\int_{\mathbb R^d} \frac{u^2}{1+|x|^6} \right)^{\frac 1 2}\left(\int_{\mathbb R^d} \frac{1}{1+|x|^{2d-2}} \right)^{\frac 1 2}\\
& \lesssim & \parallel u\parallel_{\dot H^3}.
\eee
This also gives the continuity of $\Phi$ for the natural topology on $\dot H^3(\mathbb R^d)$. For $u$ smooth and compactly supported satisfying the orthogonality condition $u\perp \mathcal Y$ one can perform the following integrations by parts:
$$
\begin{array}{r c l}
\Phi(u) &= &(\langle u, H\partial_{x_1} Q\rangle,...,\langle u,H\partial_{x_d} Q\rangle,\langle u,H\Lambda Q\rangle,\langle u,H\mathcal Y \rangle ) \\
&=& (\langle u, 0\rangle,...,\langle u,0 \rangle,\langle u,0 \rangle,\langle u,-e_0\mathcal Y \rangle )\\
&=&(0,...,0) .
\end{array}
$$
By density of such functions, one then gets the desired orthogonality \fref{eq:orthogonalite Hu} for all functions $u\in\dot H^3(\mathbb R^d)$ satisfying $u\perp \mathcal Y$.\\

\noindent{\bf step 3} Proof of the coercivity estimate. First, from the orthogonality condition \fref{eq:orthogonalite Hu}, the fact that on radial functions $H$ is self adjoint and admits only $\mathcal Y$ as an eigenfunction associated to a negative eigenvalue, and the formula \fref{eq:expression H3}, one obtains the nonegativity of the quantity:
$$
\int_{\mathbb R^d} |\nabla Hu|^2-pQ^{p-1}|Hu|^2 \geq 0.
$$
We now argue by contradiction and assume that the estimate \fref{eq:coercivite H3} does not hold for functions on $u\in \dot H^3(\mathbb R^d)$ satisfying the orthogonality conditions \fref{eq:orthogonalite}. Up to renormalization, this amounts to say that there exists a sequence $(u_n)_{n\in \mathbb N}\in [\dot H^3(\mathbb R^n]^{\mathbb N}$ such that for each $n$, $u_n$ satisfies the orthogonality conditions \fref{eq:orthogonalite}, $H(u_n)$ satisfies \fref{eq:orthogonalite Hu}, 
\be \label{eq:bound un}
\int_{\mathbb R^d} |\nabla^3 u_n|^2+\int_{\mathbb R^d} \frac{|\nabla^2 u_n|^2}{|x|^2}+\int_{\mathbb R^d} \frac{|\nabla u_n|^2}{|x|^4}+\int_{\mathbb R^d} \frac{ u_n^2}{|x|^6}=1
\ee
and:
\be \label{eq:Hun}
\int_{\mathbb R^d} |\nabla Hu_n|^2-pQ^{p-1}|Hu_n|^2 \rightarrow 0 \ \text{as} \ n\rightarrow +\infty.
\ee
From the subcoercivity formula \fref{eq:souscoercivite} from Step 1, the convergence to zero of its left hand side \fref{eq:Hun} and the order 1 size for the first terms of the right hand side \fref{eq:bound un} one deduces that there exists $c>0$ such that for all $n\in \mathbb N$:
\be \label{eq:localisation masse}
\int_{\mathbb R^d} \frac{|\nabla^2 u_n|^2}{1+|x|^4}+\int_{\mathbb R^d} \frac{|\nabla u_n|^2}{1+|x|^6}+\int_{\mathbb R^d} \frac{ u_n^2}{1+|x|^8}>c .
\ee
From weak compactness of $\dot H^3(\mathbb R^d)$ and the compactness of the embedding for localized Sobolev spaces, there exists $u_{\infty}\in \dot H^3$ such that $u_n$ converges toward $u_{\infty}$ weakly in $\dot H^3(\mathbb R^d)$ and strongly in $H^2_{\text{loc}}(\mathbb R^d)$. The above lower bound, together with \fref{eq:bound un} means that the mass of $(u_n)_{n\in \mathbb N}$ cannot go to infinity. Combined with the strong local convergence, this implies:
$$
\int_{\mathbb R^d} \frac{|\nabla^2 u_n|^2}{1+|x|^4}+\frac{|\nabla u_n|^2}{1+|x|^6}+\frac{ u_n^2}{1+|x|^8} \rightarrow \int_{\mathbb R^d} \frac{|\nabla^2 u_{\infty}|^2}{1+|x|^4}+ \frac{|\nabla u_{\infty}|^2}{1+|x|^6}+\frac{ u_{\infty}^2}{1+|x|^8} .
$$
Combined with \fref{eq:localisation masse} one gets the non nullity of the limit: $u_{\infty}\neq 0$. From the weak convergence, $u_{\infty}$ satisfies also the orthogonality conditions \fref{eq:orthogonalite} as $\Psi_0$ is exponentially decaying and $\Psi_1,...,\Psi_d$ are compactly supported. $Hu_{\infty}$ then satisfies the orthogonality condition \fref{eq:orthogonalite Hu}, from the result of Step 2. From \fref{eq:Hun}, Fatou lemma and the formula \fref{eq:expression H3} , one gets:
\bea \label{eq:expression H3uinfty}
\non &&\int_{\mathbb R^d} |\nabla Hu_{\infty}|^2-pQ^{p-1}|Hu_{\infty}|^2 \\
\non &= & \underset{{n \geq 1, \ 1\leq k \leq k(n)}}{\sum} \int_0^{+\infty} |A^{(n)}H^{(n)}u_{\infty}^{(n,k)}|^2r^{d-1}dr \\
&&+ \int_0^{+\infty} (|\partial_r H^{(0)}u_{\infty}^{(0,1)}|^2-pQ^{p-1} |H^{(0)}u_{\infty}^{(0,1)}|^2 )r^{d-1}dr =0.
\eea
We now decompose $u_{\infty}$ on spherical harmonics:
$$
u_{\infty}(x)=\sum_{n \in \mathbb N, \ 1\leq k \leq k(n)}u_{\infty}^{(n,k)}(|x|)Y^{(n,k)}\left( \frac{x}{|x|}\right)
$$
and prove the nullity of each component $u_{\infty}^{(n,k)}$, which will be a contradiction to the fact that we just obtained the non nullity of $u_{\infty}$.\\

\noindent \emph{Case $n\geq 2$}. Let $n\geq 2$ and $1\leq k \leq k(n)$, then we claim that $u_{\infty}^{(n,k)}=0$. From \fref{eq:expression H3uinfty} and the fact that all the terms in the right hand side are nonnegative thanks to the spectral Proposition \ref{pr:H} for $H$ one gets:
$$
\int_0^{+\infty} |A^{(n)}H^{(n)}u_{\infty}^{(n,k)}|^2r^{d-1}dr=0
$$
which implies $A^{(n)}H^{(n)}u_{\infty}^{(n,k)}=0$. From the definition \fref{eq:def An} of $A^{(n)}$ this implies that there exists $c^{(n,k)}\in \mathbb R$ such that:
$$
H^{(n)}u_{\infty}^{(n,k)}=c^{(n,k)} T^{(n)}
$$
where $T^{(n)}$ is defined in Lemma \fref{lem:zeros}. However still from this Lemma one has that $T^{(n)}(r)\sim r^n$ as $r\rightarrow +\infty$. Hence $\int_0^{+\infty} \frac{|T^{(n)}|^2}{1+r^4}r^{d-1}dr=+\infty$. From $u_{\infty}\in \dot H^3(\mathbb R^d)$ and Sobolev embedding, one gets $\int_0^{+\infty} \frac{|H^{(n)}u_{\infty}^{(n,k)}|^2}{1+r^4}r^{d-1}dr<+\infty$. Hence $c^{(n,k)}=0$, which means that $H^{(n)}u_{\infty}=0$. In turn, again from Lemma \ref{lem:zeros}, this means that there exists two constants $c_1^{(n,k)}$ and $c_2^{(n,k)}$ such that:
$$
u_{\infty}^{(n,k)}=c^{(n,k)}_1 T^{(n)}+c^{(n,k)}_2 \Gamma^{(n)}.
$$
As $u_{\infty},T^{(n)}\in L^2_{\text{loc}}(\mathbb R^d)$ and as $\Gamma^{(n)}$ is singular at the origin, with in particular $\Gamma^{(n)}\notin L^2_{\text{loc}}(\mathbb R^d)$ one gets that $c^{(n,k)}_2=0$. As $\int_{\mathbb R^d} \frac{|u_{\infty}^{(n,k)}|^2}{|x|^6}<+\infty$ from Fatou Lemma, and as $\int_{\mathbb R^d} \frac{|T^{(n)}|^2}{|x|^6}=+\infty$ because $T^{(n)}\sim |x|^n$ at infinity from Lemma \ref{lem:zeros}, one obtains that $c^{(n,k)}_1=0$ too. Therefore $u_{\infty}^{(n,k)}=0$ which is the fact we claimed.\\ 

\noindent \emph{Case $n=1$}. Let $1\leq k \leq d$, we claim that $u_{\infty}^{(1,k)}=0$. Similarly, from \fref{eq:expression H3uinfty} and the fact that all the terms in the right hand side are nonnegative thanks to the spectral Proposition \ref{pr:H} for $H$ one gets:
$$
 \int_0^{+\infty} |A^{(1)}H^{(1)}u_{\infty}^{(1,k)}|^2r^{d-1}dr=0
$$
which implies $A^{(1)}H^{(n)}u_{\infty}^{(1,k)}=0$. From its definition \fref{eq:def An} and Lemma \fref{lem:zeros} this implies that there exists $c^{(1,k)}\in \mathbb R$ such that:
$$
H^{(1)}u_{\infty}^{(1,k)}=c^{(1,k)} \partial_r Q.
$$
From the orthogonality conditions \fref{eq:orthogonalite Hu} for $Hu$ one gets $c^{(1,k)}=0$, meaning that $H^{(1)}u_{\infty}^{(1,k)}=0$. From Lemma \ref{lem:zeros}, there exists two constants $c_1^{(1,k)}$ and $c_2^{(1,k)}$ such that:
$$
u_{\infty}^{(n,k)}=c^{(n,k)}_1 \partial_r Q+c^{(n,k)}_2 \Gamma^{(1)}.
$$
As $u_{\infty},\partial_r Q \in L^2_{\text{loc}}(\mathbb R^d)$ and $\Gamma^{(n)}\notin L^2_{\text{loc}}(\mathbb R^d)$ for it is singular at the origin from Lemma \ref{lem:zeros}, the second integration constant is nul: $c^{(1,k)}_2=0$. As $u_{\infty}$ satisfies from \fref{eq:orthogonalite} $\int u_{\infty}\Psi_k=0$, which reads $\int_0^{+\infty} u^{(1,k)}_{\infty}\chi_M\partial_r Q r^{d-1}dr=0$ in spherical harmonics, one gets that the first integration constant is nul: $c^{(1,k)}_1=0$. Hence $u_{\infty}^{(1,k)}=0$ which is the fact we claimed.\\ 

\noindent \emph{Case $n=0$}. From the two previous points one has that $u_{\infty}$ is a radial function and so $Hu_{\infty}$ is also radial. As $Hu_{\infty}$ enjoys the orthogonality conditions $\langle Hu_{\infty},\Lambda Q \rangle=0=\langle Hu_{\infty},\mathcal Y \rangle$ from \fref{eq:orthogonalite Hu}, Proposition \ref{pr:H} for $H$ implies that $Hu_{\infty}=0$. From Lemma \ref{lem:zeros}, this means $u_{\infty}\in \text{Span}(\Lambda Q,\Gamma^{(0)})$. As $u_{\infty}$ and $\Lambda Q$ are square integrable at the origin, whereas $\Gamma^{(0)}$ is not (it is singular from Lemma \ref{lem:zeros}), one gets that $u_{\infty}\in \text{Span}(\Lambda Q)$. The orthogonality conditions $\langle u_{\infty}, \Psi_0\rangle =0$ then imply $u_{\infty}=0$. \\

Consequently, we have proven that $u_{\infty}$ is $0$, which is the desired contradiction. Hence the coercivity property \fref{eq:coercivite H3} is true. The proofs of the first two coercivity properties \fref{eq:coerciviteA} and \fref{eq:coercivite} follow the same line and is left to the reader.
\end{proof}


\section{Adapted decomposition close to the manifold of ground states} 

\label{sec:decomposition}

In this section we give the proof of the decomposition Lemma \ref{lem:decomposition}. Such result is standard in modulation theory, and we give the proof here for the sake of completeness.

\begin{proof}[Proof of Lemma \ref{lem:decomposition}]

We give a classical proof relying on the implicit function theorem.\\

\noindent{\bf step 1} Stationary decomposition. We define the following function:
$$
\begin{array}{l l l l l}
\Phi :& \dot H^1(\mathbb R^d) \times \mathbb R^{d+1}\times (0,+\infty) & \rightarrow & \mathbb R^{d+2} \\
&(u,z,a,\lambda)&\mapsto &(\langle v,\Psi_1 \rangle,...,\langle v,\Psi_d Q \rangle,\langle v,\mathcal Y \rangle,\langle v,\Psi_0 \rangle)
\end{array}
$$
where:
$$
v=(\tau_{-z}u)_{\frac{1}{\lambda}}+(\tau_{-z}Q)_{\frac{1}{\lambda}}-Q-a\mathcal Y .
$$
The function $\Phi$ is well defined, as the orthogonality conditions are taken against functions that are either exponentially decaying or compactly supported, and as $\dot H^1$ is continuously embedded in $L^{\frac{2d}{d-2}}$ from Sobolev inequalities. It is $\mathcal C^{\infty}$. One computes the Jacobian matrix with respect to the last arguments at the point $(0,0,0,1)$:
$$
\begin{array}{r c l}
J\Phi (0,0,0,1)& = & (\frac{\partial \Phi}{\partial z_1},...,\frac{\partial \Phi}{\partial z_d},\frac{\partial \Phi}{\partial a},\frac{\partial \Phi}{\partial \lambda})(0,0,0,1) \\
&=& \begin{pmatrix} \int \frac 1 d \chi_M |\nabla Q|^2 & & & &(0) \\
 & . & & & \\
& & \int \frac 1 d \chi_M |\nabla Q|^2 & & \\
 & & & \int \mathcal Y^2 & \\
(0) & & & & \int \chi_M \Lambda Q^2 
\end{pmatrix}
\end{array}
$$
The implicit function theorem then gives that there exists $\delta >0$ and unique smooth functions $z$, $a$ and $\lambda$ on $B_{\dot H^1}(0,\delta)$ such that for any $u \in B_{\dot H^1}(0,\delta)$ one has $\Phi (u,z,a,\lambda)=0$, meaning that $v\in \text{Span}(\Psi_1,...,\Psi_d,\mathcal Y,\Psi_0)^{\perp}$.\\

\noindent{\bf step 2} Dynamical decomposition. Now, by invariance of the $\dot H^1$ norm by scaling and translation, for any $z\in \mathbb R^d$ and $\lambda >0$ there exists such a decomposition for all balls $B_{\dot H^1}(Q_{z,\lambda},\delta)$, and they coincide when the balls overlap. Therefore, there exist smooth functions that we still denote by $\lambda$, $a$, and $z$, defined on $(Q_{z,\lambda})_{\lambda>0,z\in \mathbb R^d}+B_{\dot H^1}(0,\delta)$, such that $v\in \text{Span}(\Psi_1,...,\Psi_d,\mathcal Y,\Psi_0)^{\perp}$.\\

To finish, assume $u_0\in \dot H^1(\mathbb R^d)$ with $\parallel u_0-Q\parallel_{\dot H^1}< \delta$ and that the solution of \fref{eq:NLH} given by Proposition \ref{pr:cauchy} is defined on some time interval $[0,T)$ and satisfies:
$$
\underset{0\leq t< T}{\text{sup}}\underset{\lambda>0, z\in \mathbb R^d}{\text{inf}}\parallel u-Q_{z,\lambda}\parallel_{\dot H^1}< \delta .
$$
Then, from Proposition \ref{pr:cauchy}, $u\in \mathcal C^1((0,T),\dot H^1)$, hence, the functions $\lambda (u(t))$, $z(u(t))$ and $a(u(t))$ give the desired decomposition and are $C^1$ on $(0,T)$.

\end{proof}


\section{Nonlinear inequalities} \la{sec:NL}

In this section, we state certain estimates on the nonlinear term $f(Q+u)-f'(Q)u-f(Q)$ and on its derivatives.

\begin{lemma}[Pointwise estimates on the purely nonlinear term] \label{lem:NL}

For any $x,y,y_1,y_2\in \mathbb R$ there holds the following estimates:
\be \label{eq:NL1}
\left||1+x|^{p-1}(1+x)-px-1\right|\lesssim |x|^p.
\ee
\be \label{eq:NL2}
\left||1+x|^{p-1}(1+x)-px-1\right|\lesssim |x|^2.
\ee
\be \label{eq:NL3}
\left||1+x+y|^{p-1}(1+x+y)-p(x+y)-1\right|\lesssim |x|^2+|y|^p.
\ee
\be \label{eq:NL4}
\left||1+x+y|^{p-1}-1\right|\lesssim |x|+|y|^{p-1}.
\ee
\be \label{eq:NL5}
\left||1+x+y|^{p-1}-1\right|\lesssim |x|+|y|.
\ee
\be \label{eq:NL6}
\left||1+x+y|^{p-1}-1-(p-1)(x+y)\right|\lesssim |x|^{1+\frac{2}{d-2}}+|y|^2.
\ee
If $x> 0$:
\be \label{eq:NL7}
\left| |x+y_1|^{p-1}(x+y_1)-|x+y_2|^{p-1}(x+y_2)-px^{p-1}(y_1-y_2)\right|\lesssim |y_1-y_2|(|y_1|^{p-1}+|y_2|^{p-1})
\ee
\be \label{eq:NL8}
\left| |x+y|^{p-1}-|x|^{p-1}-(p-1)|x|^{p-2}y\right||x|\lesssim |y|^p
\ee
\be \label{eq:NL9}
\left| |x+y|^{p-1}-|x|^{p-1} \right|\lesssim |y|^{p-1}
\ee
\be \label{eq:NL10}
\left| |1+x|^{p+1}-1-(p+1)x \right|\lesssim |x|^{p+1}+|x|^2
\ee

\end{lemma}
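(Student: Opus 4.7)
The proof of Lemma \ref{lem:NL} consists of a collection of elementary pointwise inequalities for the energy critical nonlinearity $f(t) = |t|^{p-1}t$, where $p = \frac{d+2}{d-2} \in (1,2)$ since $d \geq 7$. The unified strategy is to exploit the fact that $f$ is $C^1$ with $(p-1)$-H\"older continuous derivative $f'(t) = p|t|^{p-1}$, while being $C^\infty$ away from the origin. Accordingly each estimate will be split into a \emph{small regime}, where Taylor expansion around a point bounded away from $0$ yields a quadratic remainder, and a \emph{large regime}, where a crude triangle inequality combined with polynomial growth suffices.

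The cornerstone is the integral Taylor identity
\[
f(a+b) - f(a) - f'(a)b = b\int_0^1 [f'(a+sb) - f'(a)]\,ds,
\]
which combined with the global H\"older bound $|f'(z_1) - f'(z_2)| \lesssim |z_1 - z_2|^{p-1}$ gives the universal estimate $|b|^p$, and combined with the local Lipschitz bound $|f'(z_1) - f'(z_2)| \lesssim |a|^{p-2}|z_1 - z_2|$ (valid when $a, a+b$ lie in a compact set away from $0$) gives the sharper estimate $|a|^{p-2} b^2$.

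I would then dispatch each inequality as follows. Estimates \eqref{eq:NL1}--\eqref{eq:NL3} apply the above with $a = 1$: \eqref{eq:NL1} is the direct H\"older bound, \eqref{eq:NL2} follows from the $C^2$ Taylor bound for $|x| \leq 1/2$ together with the elementary observation $|1+x|^p + 1 + p|x| \lesssim |x|^2$ for $|x| \geq 1/2$ (using $p \leq 2$), and \eqref{eq:NL3} splits $x + y$ into a smooth $x$-part handled by \eqref{eq:NL2} and a rough $y$-part handled by \eqref{eq:NL1}-type H\"older estimates. The bounds \eqref{eq:NL4}--\eqref{eq:NL6} are the analogous statements for $h(t) = |t|^{p-1}$ and are proved identically; the asymmetric exponent $1 + \frac{2}{d-2}$ in \eqref{eq:NL6} is exactly the unique power matching both the quadratic Taylor bound for $|x| \leq 1$ (since then $|x|^2 \leq |x|^{d/(d-2)}$) and the linear crude bound for $|x| \geq 1$ (since then $|x| \leq |x|^{d/(d-2)}$). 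Inequality \eqref{eq:NL7} reduces, using a one parameter interpolation $z_s = x + s y_1 + (1-s)y_2$, to $(y_1-y_2)\int_0^1 [f'(z_s) - f'(x)]\,ds$, and the H\"older continuity of $f'$ then delivers the $|y_1-y_2|(|y_1|^{p-1} + |y_2|^{p-1})$ bound. The remaining estimates \eqref{eq:NL8}--\eqref{eq:NL10} are minor variants handled by the same dichotomy.

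The only obstacle is purely combinatorial: since $p \in (1,2)$ precludes a uniform $C^2$ control on $f$, one must systematically track whether each argument lies in the Taylor regime (small, where $f$ is locally smooth) or the growth regime (large, where H\"older and triangle inequality take over), and then choose the matching comparison $|t|^\alpha \lesssim |t|^\beta$ in the correct direction on each side of the threshold $|t| \sim 1$. No conceptually deeper ingredient is required, and the entire lemma reduces to careful case analysis.
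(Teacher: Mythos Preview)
Your proposal is correct and follows essentially the same strategy as the paper: a small/large dichotomy where Taylor expansion around a point away from zero handles the small regime and a crude triangle inequality plus polynomial growth handles the large regime. Your packaging via the integral Taylor remainder $f(a+b)-f(a)-f'(a)b = b\int_0^1[f'(a+sb)-f'(a)]\,ds$ together with the global $(p-1)$-H\"older continuity of $f'$ is slightly more systematic than the paper's presentation; in particular, for \eqref{eq:NL7} the paper first rescales to $x=1$ and then performs a three-case analysis ($|y_i|\geq \tfrac12$ with $|y_1|\gtrless 2|y_2|$, versus $|y_i|\leq\tfrac12$), whereas your interpolation path $z_s = x + sy_1 + (1-s)y_2$ combined with $\bigl||z_s|^{p-1}-|x|^{p-1}\bigr|\leq |z_s-x|^{p-1}$ yields the bound in one line. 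Both approaches are elementary and rest on the same two ingredients (local $C^2$ smoothness of $f$ away from the origin, and global $C^{0,p-1}$ regularity of $f'$); the difference is purely organizational.
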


\begin{proof}[Proof of Lemma \ref{lem:NL}]

\noindent - \emph{Proof of \fref{eq:NL1}}. As $g:x\mapsto |1+x|^{p-1}(1+x)$ is $\mathcal C^2$ on $[-\frac 1 2,\frac 1 2]$ at $x=0$, with $g(0)=1$ and $g'(0)=p$ there exists $C>0$ such that:
\be \label{eq:NL1 1}
\forall x\in [-\frac 1 2,\frac 1 2], \ \ \left||1+x|^{p-1}(1+x)-px-1\right|\leq C|x|^2\leq C|x|^p.
\ee
as $1<p<2$. Now as
\be \label{eq:NL asymptotique}
\left||1+x|^{p-1}(1+x)-px-1\right|\sim |x|^p \ \text{as} \ |x|\rightarrow +\infty,
\ee
there exists $C'>0$ such that:
\be \label{eq:NL1 2}
\forall |x|\geq \frac 1 2, \ \ \left||1+x|^{p-1}(1+x)-px-1\right|\leq  C'|x|^p.
\ee
The two estimates \fref{eq:NL1 1} and \fref{eq:NL1 2} then imply \fref{eq:NL1}.\\

\noindent - \emph{Proof of \fref{eq:NL2}}. From \fref{eq:NL asymptotique}, as $1<p<2$, there exists $C'>0$ such that:
$$
\forall |x|\geq \frac 1 2, \ \ \left||1+x|^{p-1}(1+x)-px-1\right|\leq  C'|x|^2.
$$
This, combined with \fref{eq:NL1 1}, implies \fref{eq:NL2}.\\

\noindent - \emph{Proof of \fref{eq:NL3}}. As $g:x\mapsto |1+x|^{p-1}(1+x)$ is $\mathcal C^2$ on $[-\frac 1 2,\frac 1 2]$ at $x=0$, with $g(0)=1$ and $g'(0)=p$ there exists $C>0$ such that:
\be \label{eq:NL3 1}
\forall x,y\in [-\frac 1 2,\frac 1 2], \ \ \left||1+x|^{p-1}(1+x)-px-1\right|\leq C(|x|^2+|y|^2)\leq C(|x|^2+|x|^p).
\ee
as $1<p<2$. Now, for $|x|,|y|\geq\frac 1 4$ one has:
\be \label{eq:NL3 2}
\left||1+x+y|^{p-1}(1+x+y)-p(x+y)-1\right|\leq |1+x+y|^p+1+|x|+|y| \lesssim |x|^p+|y|^p \leq |x|^2+|y|^p
\ee
as $1<p<2$. \fref{eq:NL3 1} and \fref{eq:NL3 2} then imply \fref{eq:NL3}.

\noindent - \emph{Proof of \fref{eq:NL4} and \fref{eq:NL5}}. They can be proved using the same arguments used in the proof of \fref{eq:NL3}.

\noindent - \emph{Proof of \fref{eq:NL6}}. It can be proved using the same arguments of \fref{eq:NL3}, using the fact that:
$$
1<1+\frac{2}{d-2}<p<2.
$$

\noindent - \emph{Proof of \fref{eq:NL7}}. First, by dividing everything by $x^p$ \fref{eq:NL7} is equivalent to:
\be \label{eq:NL7 expression}
\left| |1+y_1|^{p-1}(1+y_1)-|1+y_2|^{p-1}(1+y_2)-p(y_1-y_2)\right|\lesssim |y_1-y_2|(|y_1|^{p-1}+|y_2|^{p-1}).
\ee
If $|y_i|\geq \frac 1 2$ for $i=1,2$ and $|y_1|\geq 2|y_2|$ or $|y_1|\geq 2|y_2|$ (as the estimate is symmetric we assume $|y_1|\geq 2|y_2|$) then one has:
\bee
&&\left| |1+y_1|^{p-1}(1+y_1)-|1+y_2|^{p-1}(1+y_2)-p(y_1-y_2)\right| \\
&=& \left| |1+y_1|^{p-1}(1+y_1)-|1+y_2|^{p-1}(1+y_2)-p((y_1+1)-(y_2+1))\right| \\
&\leq & |1+y_1|^p+|1+y_2|^p+p|1+y_1|+p|1+y_2| \leq  2|y_1|^p+2|y_2|^p+p|y_1|+p|y_2| \\
&\leq &(4+2p)|y_1|^p \leq (8+4p)|y_1-y_2|(|y_1|^{p-1}+|y_2|^{p-1}).
\eee
If $|y_i|\geq \frac 1 2$ for $i=1,2$ and $\frac 1 2 |y_1|\leq |y_2|\leq 2|y_1|$, then using \fref{eq:NL1}:
\bee
&&\left| |1+y_1|^{p-1}(1+y_1)-|1+y_2|^{p-1}(1+y_2)-p(y_1-y_2)\right| \\
&=&\Bigl||1+y_2+(y_1-y_2)|^{p-1}(1+y_2+(y_1-y_2))-|1+y_2|^{p-1}(1+y_2)\\
&&-p|1+y_2|^{p-1}(y_1-y_2)  +p|1+y_2|^{p-1}(y_1-y_2)-p(y_1-y_2)\Bigr| \\
&\leq &\Bigl||1+y_2+(y_1-y_2)|^{p-1}(1+y_2+(y_1-y_2))-|1+y_2|^{p-1}(1+y_2)\\
&&-p|1+y_2|^{p-1}(y_1-y_2)\Bigr| +p|1+y_2|^{p-1}|y_1-y_2|+p|y_1-y_2| \\
&\lesssim &|y_1-y_2|^p+|1+y_2|^{p-1}|y_1-y_2|+|y_1-y_2|\\
&\lesssim &|y_1-y_2|(1+|y_1-y_2|^{p-1}+|1+y_2|^{p-1})\\
&\lesssim &|y_1-y_2|(1+|y_1|^{p-1}+|y_2|^{p-1})\lesssim  |y_1-y_2|(|y_1|^{p-1}+|y_2|^{p-1}).
\eee
If $|y_i|\leq \frac 1 2$ for $i=1,2$, then as $g:x\mapsto |1+x|^{p-1}(1+x)$ is $\mathcal C^2$ on $[-\frac 1 2,\frac 1 2]$ there exists a constant $C>0$ such that:
$$
\begin{array}{r c l}
&\left| |1+y_1|^{p-1}(1+y_1)-|1+y_2|^{p-1}(1+y_2)-p(y_1-y_2)\right| \\
\leq & C(y_1-y_2)^2\leq C|y_1-y_2|(|y_1|+|y_2|) \leq C|y_1-y_2|(|y_1|^{p-1}+|y_2|^{p-1})
\end{array}
$$
as $0<p-1<1$. The three above estimates in established in a partition of $\mathbb R^2$ in three zones, imply \fref{eq:NL7 expression}.

\noindent - \emph{Proof of \fref{eq:NL8}}. This can be proved using the same reasoning we did to prove \fref{eq:NL1}.

\noindent - \emph{Proof of \fref{eq:NL9}}. It is a direct consequence of \fref{eq:NL4}.

\noindent - \emph{Proof of \fref{eq:NL10}}. The function $g(x)=\left| |1+x|^{p+1}-1-(p+1)x \right|$ is smooth near $0$ and one has $g(0)=g'(0)=0$. Hence there exists $C>0$ such that $|g(x)|\leq C|x|^2$ for $|x|\leq \frac 1 2$. As $|g(x)|\sim |x|^{p+1}$ as $x\rightarrow \pm \infty$, there exists a constant $C'$ such that $|g(x)|\leq C' |x|^{p+1}$ for $|x|\geq \frac 1 2$. Therefore, for all $x\in \mathbb R$, $|g(x)|\leq C|x|^2+C'|x|^{p+1}$ and \fref{eq:NL10} is proven.

\end{proof}


\section{Parabolic estimates} \la{sec:para}

This last section is devoted to the results coming from the parabolic properties of the equation. First we state some estimates on the heat kernel $K_t$ defined in \fref{eq:def Kt} in Lemma \ref{para:lem:Kt}. We then recall the maximum principle in Lemma \ref{para:lem:Kt}. Some parabolic estimates that are used several times in the paper are stated in Lemma \ref{para:lem:para} and eventually we state some Strichartz-type inequalities in Lemma \ref{lem:strichartz}. We prove here most of the results for the sake of completeness, and refer to \cite{Gi5} and \cite{La} for more informations about elliptic and parabolic equations, and to \cite{Ba}, Theorem 8.18 for more on Strichartz estimates. \\

\begin{lemma}[Estimates for the heat kernel] \la{para:lem:Kt}

For any $d\in \mathbb N$ and $t>0$ one has:
\be \la{eq:bd Kt}
\forall j\in \mathbb N, \ \forall q\in [1,+\infty], \ \ \para \nabla^j K_t \para_{L^q} \leq \frac{C(d,j)}{t^{\frac{d}{2q'}+\frac j 2}}
\ee
where $q'$ is the Lebesgue conjugated exponent of $q$,
\be \la{eq:bd holder Kt}
\forall y \in \mathbb R^d, \ \ \frac{1}{|y|^{\frac 1 4}}\int |\nabla K_t(x)-\nabla K_t(x+y)|\leq \frac{C(d)}{t^{\frac{5}{8}}},
\ee
and for $t'>t$:
\be \la{eq:bd holder Kt 2}
\frac{1}{|t'-t|^{\frac 1 4}} \int |\nabla K_t(x)-\nabla K_{t'}(x)|\leq \frac{C(d)}{t^{\frac{3}{4}}}.
\ee

\end{lemma}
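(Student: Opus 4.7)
The plan is to reduce all three bounds to the scaling identity $K_t(x) = t^{-d/2}K_1(x/\sqrt{t})$ combined with the fact that $K_1 \in \mathcal{S}(\mathbb{R}^d)$. Differentiating gives $\nabla^j K_t(x) = t^{-(d+j)/2}(\nabla^j K_1)(x/\sqrt{t})$, and the change of variables $\xi = x/\sqrt t$ in $\parallel \nabla^j K_t\parallel_{L^q}^q$ produces the Jacobian factor $t^{d/2}$; combining the exponents yields
\begin{equation*}
\parallel \nabla^j K_t\parallel_{L^q} = t^{-d/(2q')-j/2}\parallel \nabla^j K_1\parallel_{L^q},
\end{equation*}
proving \eqref{eq:bd Kt}, since the Schwartz decay of $K_1$ bounds $\parallel \nabla^j K_1\parallel_{L^q}$ uniformly in $q\in[1,+\infty]$ (e.g.\ by log-convexity between $q=1$ and $q=+\infty$).

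For \eqref{eq:bd holder Kt}, I will interpolate two elementary estimates derived from \eqref{eq:bd Kt} at $q=1$. The triangle inequality and translation invariance give $\int|\nabla K_t(x)-\nabla K_t(x+y)|dx \leq 2\parallel \nabla K_t\parallel_{L^1} \lesssim t^{-1/2}$, while integrating along the segment from $x$ to $x+y$ and applying Fubini gives $\int|\nabla K_t(x)-\nabla K_t(x+y)|dx \leq |y|\,\parallel \nabla^2 K_t\parallel_{L^1} \lesssim |y|\, t^{-1}$. Since for positive quantities one trivially has $A \leq A^{\theta}A^{1-\theta}$, combining these two bounds with weights $\theta = 3/4$ and $1-\theta = 1/4$ produces the exponent $-1/2 \cdot 3/4 + (-1)\cdot 1/4 = -5/8$ on $t$ and $1/4$ on $|y|$, yielding the claim.

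For \eqref{eq:bd holder Kt 2}, the same interpolation strategy applies in the time variable. Using $\partial_s K_s = \Delta K_s$ together with \eqref{eq:bd Kt},
\begin{equation*}
\int |\nabla K_t(x)-\nabla K_{t'}(x)|dx \leq \int_t^{t'}\parallel \nabla\Delta K_s\parallel_{L^1}ds \lesssim \int_t^{t'}s^{-3/2}ds = 2\bigl(t^{-1/2}-(t')^{-1/2}\bigr) \lesssim \frac{t'-t}{t^{3/2}},
\end{equation*}
where the last step uses $(t')^{1/2}-t^{1/2} = (t'-t)/(t^{1/2}+(t')^{1/2}) \leq (t'-t)/(2t^{1/2})$ together with $(tt')^{1/2} \geq t$. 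Interpolating this with the crude bound $\int|\nabla K_t - \nabla K_{t'}|dx \leq 2\parallel \nabla K_t\parallel_{L^1} \lesssim t^{-1/2}$ at weights $1/4$ and $3/4$ gives the claimed $\lesssim (t'-t)^{1/4}t^{-3/4}$. No step is delicate; the only care required is balancing the interpolation exponents to match the prescribed H\"older exponent $1/4$.
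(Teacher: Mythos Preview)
Your proof is correct. For \eqref{eq:bd Kt} you and the paper both invoke the scaling identity, which is the standard route. For \eqref{eq:bd holder Kt} and \eqref{eq:bd holder Kt 2} the paper instead changes variables to dimensionless coordinates ($\tilde y=y/(2\sqrt t)$, $\tilde x=x/(2\sqrt t)$, respectively $\tilde t=t'/t$) and then splits into the cases $|\tilde y|\le 1$, $|\tilde y|\ge 1$ (resp.\ shows the rescaled integral is bounded uniformly in $\tilde t>1$). Your interpolation between the trivial triangle-inequality bound and the mean-value bound is the cleaner packaging of exactly the same dichotomy: the weight $\theta=1/4$ on the Lipschitz estimate encodes the ``$|\tilde y|\le 1$'' regime, while the weight $3/4$ on the crude bound encodes ``$|\tilde y|\ge 1$''. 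The advantage of your version is that it reuses \eqref{eq:bd Kt} directly and avoids any case analysis; the paper's version has the (minor) advantage of making the scaling structure visually explicit. One cosmetic point: in your crude bound for \eqref{eq:bd holder Kt 2} you wrote $2\parallel\nabla K_t\parallel_{L^1}$ where strictly it is $\parallel\nabla K_t\parallel_{L^1}+\parallel\nabla K_{t'}\parallel_{L^1}$, but since $t'>t$ this is still $\lesssim t^{-1/2}$ and nothing changes.
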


\begin{proof}[Proof of Lemma \ref{para:lem:Kt}]

\fref{eq:bd Kt} is a standard computation that we do not write here. To prove \fref{eq:bd holder Kt} we change variables and let $\tilde y = \frac{y}{2\sqrt t}$ and $\tilde x=\frac{x}{2\sqrt t}$:
$$
\ba{r c l}
&\frac{1}{|y|^{\frac 1 4}}\int |\nabla K_t(x)-\nabla K_t(x+y)| \\
=& \frac{1}{|y|^{\frac 1 4}} \frac{1}{Ct^{\frac d 2 +1}} \int |xe^{-\frac{|x|^2}{4t}}-(x+y)e^{\frac{-|x+y|^2}{4t}}|dx \\
=& \frac{1}{|\tilde y|^{\frac 1 2}} \frac{1}{Ct^{\frac 5 8}} \int |\tilde xe^{-|\tilde x|^2}-(\tilde x+\tilde y)e^{-|\tilde x+\tilde y|^2}|d\tilde x. \\
\ea
$$
Now, if $\tilde y\geq 1$, then:
$$
\frac{1}{|\tilde y|^{\frac 1 2}}  \int |\tilde xe^{-|\tilde x|^2}-(\tilde x+\tilde y)e^{-|\tilde x+\tilde y|^2}|d\tilde x\leq 2\int |xe^{-|x|^2}|\leq C
$$
and if $y\leq 1$:
$$
\ba{r c l}
&\frac{1}{|\tilde y|^{\frac 1 2}}  \int |\tilde xe^{-|\tilde x|^2}-(\tilde x+\tilde y)e^{-|\tilde x+\tilde y|^2}|d\tilde x \\
\leq & |\tilde y|^{-\frac 1 2} \int |\tilde y|\underset{|x'-\tilde x|\leq 1}{\text{sup}}|\nabla (x'e^{-|x'|^2})|d\tilde x \\
\leq & |\tilde y|^{\frac 1 2} \int 3(|\tilde x|+1)^2e^{-(|\tilde x|-1)^2}d\tilde x \leq C \\
\ea 
$$
The three previous equations then imply \fref{eq:bd holder Kt}. To prove \fref{eq:bd holder Kt 2} we change variables and let $\tilde x = \frac{x}{2\sqrt t}$ and $\tilde t=\frac{t'}{t}$:
$$
\ba{r c l}
&\frac{1}{|t'-t|^{\frac 1 4}} \int |\nabla K_t(x)-\nabla K_{t'}(x)|dx \\
\leq & \frac{1}{t^{\frac 3 4}} \frac{1}{(\tilde t-1)^{\frac 1 4}} \int_{\mathbb R^d} |\tilde xe^{-|\tilde x|^2}-\frac{\tilde x e^{-\frac{|\tilde x|^2}{\tilde t}}}{\tilde t^{\frac d 2+1}}|d\tilde x
\ea
$$
Now, $ \frac{1}{(\tilde t-1)^{\frac 1 4}} \int_{\mathbb R^d} |\tilde xe^{-|\tilde x|^2}|-\frac{\tilde x e^{-\frac{|\tilde x|^2}{\tilde t}}}{\tilde t^{\frac d 2+1}}|d\tilde x\rightarrow 0$ as $\tilde t\rightarrow +\infty$ and $\tilde t\rightarrow 1$ (using Lebesgue differentiation theorem), hence this quantity is bounded, and the above estimate implies \fref{eq:bd holder Kt 2}.

\end{proof}

We now state a comparison principle adapted to our problem.

\begin{lemma}[Comparison principle] \la{para:lem:max}

Let $u$ be a solution of \fref{eq:NLH} given by Proposition \ref{pr:cauchy} on $[0,T)$ with $u_0\in W^{2,\infty}\cap \dot H^1$ and $u_0\geq 0$. Then:
\begin{itemize}
\item[(i)] $u(t)\geq 0$ for all $t\in [0,T)$.
\item[(ii)] If $\partial_t u(0)\geq 0$ (resp. $\partial_t u(0)\leq 0$) then $\partial_t u(t)\geq 0$ (resp. $\partial_t u(t)\leq 0$) for all $t\in [0,T)$.
\item[(iii)] If $ u(0)\leq Q$ (resp. $ u(0)\geq Q$) then $u(t)\leq Q$ (resp. $u(t)\geq Q$) for all $t\in [0,T)$.
\end{itemize}

\end{lemma}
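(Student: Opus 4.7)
All three assertions are consequences of the linear parabolic maximum principle, made applicable by the regularization gained in Proposition \ref{pr:cauchy}. Throughout, the key observation is that although $|u|^{p-1}$ is only $\alpha$-H\"older (with $\alpha=p-1<1$ since $1<p<2$) near zero, we only need the relevant zero-order coefficient to be \emph{bounded} on compact subintervals $[0,T^*]\subset [0,T)$, which is guaranteed by the $C((0,T),W^{3,\infty})$ regularity.

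First, for (i), I write $u$ as a solution of the linear parabolic equation
\be
\partial_t u-\Delta u-c(t,x)u=0, \qquad c(t,x):=|u(t,x)|^{p-1},
\ee
to which the standard comparison principle for bounded solutions on $\mathbb R^d$ applies: since $c\in L^\infty([0,T^*]\times \mathbb R^d)$ and $u$ is bounded (so that the usual exponential-cutoff argument works in the unbounded domain), $u(0)\geq 0$ forces $u(t)\geq 0$ for all $t\in [0,T^*]$, and $T^*<T$ is arbitrary.

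Next, for (ii), I introduce $v:=\partial_t u$; the $C^{(3/2,3)}$ regularity from Proposition \ref{pr:cauchy} makes this classical, and differentiating the equation in $t$ yields
\be
\partial_t v=\Delta v+p|u|^{p-1}v,
\ee
a linear parabolic equation with bounded coefficient $p|u|^{p-1}$. The sign of $v$ is therefore preserved from $t=0$ by the same maximum principle. For (iii), assume first $u(0)\leq Q$ and set $w:=Q-u$. Using $\Delta Q=-Q^p$ we get
\be
\partial_t w=\Delta w+Q^p-u^p.
\ee
By (i) we already know $u(t)\geq 0$, and $Q>0$, so both $u$ and $Q$ are nonnegative and we may write $Q^p-u^p=\tilde c(t,x)\,w$ with
\be
\tilde c(t,x):=p\int_0^1 (\theta Q+(1-\theta)u)^{p-1}\,d\theta\geq 0,
\ee
which is bounded on $[0,T^*]\times \mathbb R^d$. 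Thus $w$ solves a linear parabolic equation with bounded nonnegative coefficient and $w(0)\geq 0$, so $w\geq 0$, i.e.\ $u\leq Q$. The case $u(0)\geq Q$ is identical after swapping the roles of $u$ and $Q$.

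The only genuine subtlety is the low regularity of $|u|^{p-1}$ near $u=0$, which \emph{a priori} obstructs a direct application of standard $C^{1}$ maximum principle formulations. The resolution, used above, is that for non-negative quantities ($u$ in (i), $u_t$ where we take a $C^0$ coefficient in (ii), and $u,Q\geq 0$ in the mean-value formula in (iii)) a merely bounded coefficient suffices, so no Lipschitz regularity of the nonlinearity is actually needed.
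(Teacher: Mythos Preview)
Your proof is correct and follows essentially the same approach as the paper's own sketch: each of the three quantities $u$, $\partial_t u$, and $u-Q$ (or $Q-u$) is shown to satisfy a linear parabolic equation with a bounded zero-order coefficient, and positivity is then propagated by the maximum principle. The paper phrases (i) slightly differently---as the heat semigroup and the nonlinearity $f$ each preserving the positive cone, hence so does the Duhamel iteration---whereas you absorb the nonlinearity into a coefficient $c=|u|^{p-1}$ and invoke the linear maximum principle directly; these are equivalent formulations, and your version is arguably more explicit about the unbounded-domain issue and the role of boundedness (rather than Lipschitz regularity) of the coefficient.
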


\begin{proof}

We do not do a detailed proof here, just sketch the main arguments. 

\noindent To prove (i), one notices that the semi-group $(K_t*\cdot)_{t\geq 0}$ and the nonlinearity $f(\cdot)$ preserve the cones of positive and negative functions and hence so does the solution mapping of \fref{eq:NLH}. Now, to prove (ii), one notices that $\partial_t u$ solves $\partial_t (\partial_t u)=\Delta \partial_t u+pu^{p-1}\partial_t u$ which is again a parabolic equation with a force term $pu^{p-1}\cdot $ that preserves the cones of positive and negative functions as $u$ is positive from (i). The same argument as in (i) then yields that $\partial_t u $ stays positive (resp. negative) if it is so initially.

To prove (iii), notice that the difference $u(t)-Q$ solves $\partial_t (u-Q)=\Delta (u-Q)+f(Q+(u-Q))-f(Q)$ as $Q$ is a solution of \fref{eq:NLH}. The force term, again, preserves the cones of positive and negative functions: if $u-Q\geq 0$ (resp. $u-Q\leq 0$ then $f(Q+(u-Q))-f(Q)\geq 0$ (resp. $f(Q+(u-Q))-f(Q)\leq 0$) as $f$ is increasing on $\mathbb R$. For the same arguments $u(t)-Q$ then has to stay positive (resp. negative) if it is so initially.

\end{proof}

We now state some estimates that we use several time in the paper. The main purpose is to propagate some pointwise in time space-averaged exponential bounds at a regularity level to higher regularity levels.

\begin{lemma}[Parabolic estimates] \la{para:lem:para}

Let $\mu\geq 0$ and $I=(t_0,t_1)$, with $-\infty \leq t_0\leq t_0+1<t_1<+\infty$. There exists $C>0$ such that for $0<\delta \lesssim \text{min}(1,e^{-\mu t})$, for any $u$ solution of \fref{eq:NLH} on $(t_0,t_1)$  of the form:
\be \la{para:eq:id u}
u(t)=Q+v
\ee
satisfying for some $q\geq \frac{2d}{d-4}$ for any $t\in I$
\be \la{para:eq:bd v Lq}
\para v \para_{L^q} \leq \delta e^{\mu t}
\ee
there holds for any\footnote{With the convention that $-\infty +\tilde t=-\infty$.} $t\in (t_0+\tilde t,t_1)$:
\be \la{para:eq:bd v W2infty}
\para v \para_{W^{2,\infty}(\mathbb R^d)}+\para \partial_t v \para_{L^{\infty}(\mathbb R^d)} \leq C \delta e^{\mu t},
\ee
and if $\mu>0$ or $t_0\neq -\infty$ for $C'(t_1)>0$:
\be \la{para:eq:bd v holder}
\para \nabla^2 v \para_{C^{0,\frac 1 4}((t_0+\tilde t,t_1)\times \mathbb R^d)}+\para \partial_t v \para_{C^{0,\frac 1 4}((t_0+\tilde t,t_1)\times \mathbb R^d)} \leq C'.
\ee
where $C^{0,\frac 1 4}$ denotes the H\"older $\frac 1 4$-norm.

\end{lemma}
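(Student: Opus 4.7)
The plan is a standard parabolic bootstrap: starting from the Lebesgue control $\para v \para_{L^q}\lesssim \delta e^{\mu t}$, I will use the Duhamel formula together with the dispersive bounds on the heat kernel to successively gain integrability, reaching $L^{\infty}$ after finitely many iterations, and then promote this to derivatives and H\"older regularity.

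First, since $Q$ is stationary for \fref{eq:NLH} with $\Delta Q + Q^p = 0$, writing $u = Q + v$ gives
\[
v_t - \Delta v = pQ^{p-1}v + NL(v), \qquad NL(v) := |Q+v|^{p-1}(Q+v) - Q^p - pQ^{p-1}v,
\]
and the pointwise estimates of Lemma \ref{lem:NL} yield $|NL(v)| \lesssim Q^{p-2}v^2 + |v|^p$. For $t \in (t_0+\tilde t, t_1)$, I would fix an intermediate starting time $t_\ast := t - \tilde t/2 \in I$ and write the Duhamel representation
\[
v(t) = K_{\tilde t/2} * v(t_\ast) + \int_{t_\ast}^{t} K_{t-s} * \bigl(pQ^{p-1}v(s) + NL(v)(s)\bigr)\, ds.
\]
Since $pQ^{p-1}$ lies in every $L^r$ with $r > d/4$ and decays like $(1+|x|)^{-4}$, H\"older combined with \fref{eq:bd Kt} shows that the contribution of the potential term improves the Lebesgue exponent at each step by a prescribed amount. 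The nonlinear term $|v|^p$ lies in $L^{q_1/p}$ whenever $v\in L^{q_1}$; the smallness assumption $\delta e^{\mu t}\lesssim 1$ translates into $\delta^{p-1}e^{(p-1)\mu s}\lesssim 1$, so $\para |v(s)|^p \para_{L^{q_1/p}}\lesssim \delta^p e^{p\mu s} = \bigl(\delta^{p-1}e^{(p-1)\mu s}\bigr)\cdot \delta e^{\mu s}\lesssim \delta e^{\mu s}$. A finite chain of such iterations, starting from $q \geq 2d/(d-4)$ (which corresponds via the embedding $\dot H^2\hookrightarrow L^{2d/(d-4)}$), promotes the bound to $\para v(t)\para_{L^{\infty}} \lesssim \delta e^{\mu t}$.

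For the second-derivative bound I would differentiate the same Duhamel formula in space and use $\para \nabla^j K_s\para_{L^1} \lesssim s^{-j/2}$ from \fref{eq:bd Kt} together with the freshly obtained $L^{\infty}$ bound; $\para \partial_t v\para_{L^{\infty}}$ then follows directly from the PDE, giving \fref{para:eq:bd v W2infty}. For the H\"older bounds \fref{para:eq:bd v holder} I would replace the $L^1$ bounds on $\nabla K_s$ by the sharper estimates \fref{eq:bd holder Kt} and \fref{eq:bd holder Kt 2}; the hypothesis "$\mu > 0$ or $t_0 \neq -\infty$" ensures that the resulting time integrals converge and yield a finite constant $C' = C'(t_1)$.

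The main obstacle is not any single inequality but tracking the exponential factor cleanly through the iteration: at every step one must verify that the smallness $\delta \lesssim \min(1, e^{-\mu t})$ translates into $(\delta e^{\mu t})^{p-1}\lesssim 1$ uniformly on $I$, so that the nonlinear contribution consistently produces a bound of the same order $\delta e^{\mu t}$ and never deteriorates. The subcriticality $1 < p < 2$ in dimension $d \geq 7$ is precisely what makes this bootstrap terminate in finitely many steps with constants depending only on $d$, $q$ and $\mu$.
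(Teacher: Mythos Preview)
Your approach is correct and coincides with the paper's: the paper packages the $L^q\to L^{(1+\nu)q}$ gain as a separate bootstrap lemma (Lemma~\ref{para:lem:expo}, with $\nu=1/d^2$) and iterates it to reach $L^\infty$, after which the $W^{2,\infty}$ and H\"older bounds follow from Duhamel with the kernel estimates \fref{eq:bd Kt}, \fref{eq:bd holder Kt}, \fref{eq:bd holder Kt 2} exactly as you outline. One point to make precise in your $W^{2,\infty}$ step: you cannot place both derivatives on the kernel since $\para\nabla^2 K_s\para_{L^1}\sim s^{-1}$ is not integrable at $s=0$; the paper first obtains $\para\nabla v\para_{L^\infty}\lesssim\delta e^{\mu t}$, deduces $\para\nabla F\para_{L^\infty}\lesssim\delta e^{\mu t}$ for the forcing $F=-Vv+NL$, and only then writes $\partial_{ij}v=\partial_i K_{\tilde t/4}*\partial_j v+\int\partial_i K*\partial_j F$ with one derivative on each factor.
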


The first step to prove Lemma \ref{para:lem:para} is to obtain the $L^{\infty}$ bound, which is the purpose of the following lemma.

\begin{lemma}[Parabolic bootstrap] \la{para:lem:expo}

There exists $\nu>0$ such that the following holds. Let $I=(t_0,t_1)$, with $-\infty\leq t_0\leq t_0+1<t_1<+\infty$ and $\mu\geq 0$. For any $0<\tilde t<1$ there exists $\delta^*=\delta (t_1,\tilde t) >0$ and $C=C(\tilde t)>0$ such that for any $0<\delta<\delta^*$ and $u$ solution of \fref{eq:NLH} of the form:
\be \la{para:eq:id u 2}
u(t)=Q+v
\ee
satisfying for some $q\geq \frac{2d}{d-4}$ for any $t\in I$:
\be \la{para:eq:bd v}
\para v \para_{L^q} \leq \delta e^{\mu t}
\ee
there holds for any\footnote{With the convention that $-\infty +\tilde t=-\infty$.} $t\in (t_0+\tilde t,t_1)$:
\begin{itemize}
\item[(i)] If $q \leq pd$,
\be \la{para:eq:bootstrap v}
\para v \para_{L^{(1+\nu)q}} \leq C \delta e^{\mu t}.
\ee
\item[(i)] If $q> pd$,
\be \la{para:eq:bootstrap v Linfty}
\para v \para_{L^{\infty}} \leq \delta e^{\mu t}.
\ee

\end{itemize}

\end{lemma}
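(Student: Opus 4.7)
The plan is to write $v := u - Q$ via Duhamel's formula on the short time window of length $\tilde t$ and exploit parabolic smoothing to upgrade the $L^q$ control to a strictly better integrability. Since $v$ solves $\partial_t v - \Delta v = p Q^{p-1} v + NL$ with $NL := f(Q+v) - f(Q) - f'(Q) v$, Duhamel from time $t-\tilde t$ reads
\be \label{parabs:duhamel}
v(t) = K_{\tilde t} * v(t - \tilde t) + \int_{t-\tilde t}^t K_{t-s} * \bigl( p Q^{p-1} v(s) + NL(s) \bigr)\, ds.
\ee
The first input is the pointwise bound $|NL| \leq C|v|^p$, obtained from \fref{eq:NL1} applied to $x = v/Q$. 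Combined with $\|Q\|_{L^\infty} \lesssim 1$ and the hypothesis $\|v(s)\|_{L^q} \leq \delta e^{\mu s}$, this gives $\|pQ^{p-1} v(s)\|_{L^q} \lesssim \delta e^{\mu s}$ and $\|NL(s)\|_{L^{q/p}} \lesssim \delta^p e^{p\mu s}$.

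For case (i) I would choose $\nu > 0$ small and estimate each term of \fref{parabs:duhamel} in $L^{(1+\nu)q}$ using Young's convolution inequality together with $\|K_\tau\|_{L^r} \lesssim \tau^{-d(1-1/r)/2}$ from \fref{eq:bd Kt}. The free evolution and the $pQ^{p-1}v$ convolution each produce a singular factor $(t-s)^{-d\nu/(2q(1+\nu))}$, integrable on $(t-\tilde t, t)$ provided $\nu$ is chosen so that $d\nu/(2q(1+\nu)) < 1$. The nonlinear contribution comes with the factor $(t-s)^{-\alpha}$, where $\alpha = \tfrac{d}{2}\bigl(\tfrac{p}{q} - \tfrac{1}{(1+\nu)q}\bigr)$; at $\nu=0$ this reduces to $d(p-1)/(2q)$, and the hypothesis $q \geq \tfrac{2d}{d-4} > \tfrac{d(p-1)}{2}$ (valid since $d \geq 7$) keeps $\alpha < 1$ for $\nu$ small enough. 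Summing and integrating in time bounds the right-hand side of \fref{parabs:duhamel} in $L^{(1+\nu)q}$ by $C(\tilde t)\bigl( \delta e^{\mu t} + \delta^p e^{p\mu t}\bigr)$, and the constraint $\delta \leq \delta^*(t_1, \tilde t) \leq e^{-\mu t_1}$ absorbs $\delta^p e^{p\mu t} = (\delta e^{\mu t})^{p-1} \cdot \delta e^{\mu t} \leq \delta e^{\mu t}$, giving \fref{para:eq:bootstrap v}.

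Case (ii) runs along exactly the same scheme but targets $L^\infty$ directly. The endpoint estimates $\|K_\tau * w\|_{L^\infty} \lesssim \tau^{-d/(2q)} \|w\|_{L^q}$ and $\|K_\tau * |v|^p\|_{L^\infty} \lesssim \tau^{-dp/(2q)} \|v\|_{L^q}^p$ now control the linear and nonlinear pieces respectively, and both singular exponents are strictly less than one precisely because $q > pd$. The same time integration and the same absorption $\delta^p e^{p\mu s} \leq \delta e^{\mu s}$ deliver \fref{para:eq:bootstrap v Linfty}.

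The main obstacle will be the treatment of the nonlinearity, since $1 < p < 2$ places us in a sub-quadratic regime. The refined expansion $|NL| \lesssim Q^{p-2} v^2 + |v|^p$ from \fref{eq:NL2} is of no use here because $Q^{p-2}$ diverges at infinity when $p < 2$; only the global bound $|NL| \lesssim |v|^p$ of \fref{eq:NL1} is compatible with Lebesgue estimates on $v$ alone. Once this pointwise control is in place, the proof reduces to verifying that the two singular exponents in the Young convolution estimates remain strictly below $1$, which is exactly the role of the threshold $q \geq 2d/(d-4)$ in the large-dimension regime $d \geq 7$.
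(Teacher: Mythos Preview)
Your proposal is correct and follows essentially the same approach as the paper: Duhamel on a window of length $\tilde t$, the pointwise bound $|NL|\lesssim |v|^p$ from \fref{eq:NL1}, Young's inequality with the kernel bounds \fref{eq:bd Kt}, and the verification that the resulting time-singular exponents stay strictly below $1$ thanks to $q\geq \tfrac{2d}{d-4}$ (case (i)) and $q>pd$ (case (ii)). The paper makes the explicit choice $\nu=1/d^2$ and $\delta^*=\min(1,e^{-\mu t_1})$, but the structure and estimates are identical to yours.
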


\begin{proof}[Proof of Lemma \ref{para:lem:expo}]

We take $\nu=\frac{1}{d^2}$ and $\delta^*=\text{min}(1,e^{\mu t_1})$. From \fref{para:eq:id u 2} and \fref{eq:NLH} the evolution of $v$ is given by:
\be \la{para:id vt}
v_t=\Delta v -Vv+NL, \ \ \ NL:=f(Q+v)-f(Q)-vf'(Q)
\ee
which implies using Duhamel formula that for any $t\in (t_0+\tilde t,t_1)$
\be \la{para:id v}
v(t)=K_{\tilde t} *v(t-\tilde t)+\int_0^{\tilde t} K_{\tilde t-s} *[-Vv(t-\tilde t+s)+NL(t-\tilde t+s)]ds .
\ee
We now estimate each term in the above identity. \\

\noindent \textbf{step 1} Case $\frac{2d}{d-4}\leq q \leq pd$. Using Young inequality for convolution, from \fref{para:eq:bd v} and \fref{eq:bd Kt} for the free evolution term:
$$
\para K_{\tilde t} *v(t-\tilde t) \para_{L^{(1+\nu)q}}\leq C(\tilde t) \delta e^{\mu (t-\tilde t)} \leq C(\tilde t) \delta e^{\mu t}.
$$
For the linear force term associated to the potential, using \fref{para:eq:bd v}, Young inequality and \fref{eq:bd Kt}:
$$
\ba{r c l}
&\para \int_0^{\tilde t} K_{\tilde t-s} *(-Vv(t-\tilde t+s))ds \para_{L^{(1+\nu)q}} \\
\leq &  \int_0^{\tilde t} \para K_{\tilde t-s}\para_{L^{\frac{1}{1+\frac{1}{(1+\nu)q}-\frac 1 q}}} \para v(t-\tilde t+s))ds \para_{L^q} \para V\para_{L^{\infty}}ds \\
\leq & C \int_0^{\tilde t} \frac{1}{|\tilde t-s|^{\frac{d}{2q}\frac{\nu}{1+\nu}}} \delta e^{\mu(t-\tilde t+s)}ds\leq C\delta e^{\mu t}
\ea
$$
because $\frac{d}{2q}\frac{\nu}{1+\nu}<\frac 1 4$ as $\nu=\frac{1}{d^2}$ and $q\geq \frac{2d}{d-4}$, and $0<\tilde t\leq 1$. For the nonlinear term, using  \fref{para:eq:bd v}, Young inequality, \fref{eq:bd Kt} and \fref{eq:NL1}:
$$
\ba{r c l}
&\para \int_0^{\tilde t} K_{\tilde t-s} *(NL(t-\tilde t+s))ds \para_{L^{(1+\nu)q}} \\
\leq & C \int_0^{\tilde t} \para K_{\tilde t-s}\para_{L^{\frac{1}{1+\frac{1}{(1+\nu)q}-\frac p q}}} \para NL(t-\tilde t+s))ds \para_{L^{\frac q p}} ds \\
\leq & C \int_0^{\tilde t} \para K_{\tilde t-s}\para_{L^{\frac{1}{1+\frac{1}{(1+\nu)q}-\frac p q}}} \para v(t-\tilde t+s))ds \para_{L^q}^p ds \\
\leq & C \int_0^{\tilde t} \frac{1}{|\tilde t-s|^{\frac{d}{2q}\frac{p-1+p \nu}{1+\nu}}} \delta^p e^{p\mu(t-\tilde t+s)}ds \\
\leq & C \int_0^{\tilde t} \frac{1}{|\tilde t-s|^{\frac{d}{2q}\frac{p-1+p \nu}{1+\nu}}} \delta^p e^{p\mu(t-\tilde t+s)}ds\leq C\delta e^{\mu t}
\ea
$$
from the fact that $\frac{d}{2q}\frac{p-1+p\nu}{1+\nu}\leq \kappa (d)<1$ as $\nu=\frac{1}{d^2}$ and $q\geq \frac{2d}{d-4}$, and $0<\delta\leq 1$. The three previous estimates then imply \fref{para:eq:bootstrap v}.\\

\noindent{step 2} Case $q>pd $. Using H\"older inequality, \fref{para:eq:bd v} and \fref{eq:bd Kt} for the first term:
$$
\para K_{\tilde t} *v(t-\tilde t) \para_{L^{\infty}}\leq C(\tilde t) \delta e^{\mu (t-\tilde t)} \leq C(\tilde t) \delta e^{\mu t}.
$$
For the second, using \fref{para:eq:bd v}, H\"older inequality and \fref{eq:bd Kt}:
$$
\ba{r c l}
\para \int_0^{\tilde t} K_{\tilde t-s} *(-Vv(t-\tilde t+s))ds \para_{L^{\infty}} & \leq & C \int_0^{\tilde t} \para K_{\tilde t-s}\para_{L^{q'}} \para v(t-\tilde t+s))ds \para_{L^q}ds \\
&\leq & C \int_0^{\tilde t} \frac{1}{|\tilde t-s|^{\frac{d}{2q}}} \delta e^{\mu(t-\tilde t+s)}ds\leq C \delta e^{\mu t}
\ea
$$
from the fact that $\frac{d}{2q}<\frac 1 2$. For the nonlinear term, using again \fref{para:eq:bd v}, H\"older inequality, \fref{eq:bd Kt} and \fref{eq:NL1}:
$$
\ba{r c l}
\para \int_0^{\tilde t} K_{\tilde t-s} *(NL(t-\tilde t+s))ds \para_{L^{\infty}} & \leq & C \int_0^{\tilde t} \para K_{\tilde t-s}\para_{L^{\left( \frac{q}{p}\right)'}} \para v(t-\tilde t+s))ds \para_{L^q}^p ds \\
&\leq & C \int_0^{\tilde t} \frac{1}{|\tilde t-s|^{\frac{dp}{2q}}} \delta^p e^{p\mu(t-\tilde t+s)}ds \\
&\leq & C \int_0^{\tilde t} \frac{1}{|\tilde t-s|^{\frac{dp}{2q}}} \delta^p e^{p\mu(t-\tilde t+s)}ds\leq C(\tilde t)\delta e^{\mu t}
\ea
$$
from the fact that $\frac{dp}{2q}<\frac 1 2$. The three above estimates give \fref{para:eq:bootstrap v Linfty}.\\

\end{proof}

We can now end the proof of Lemma \ref{para:lem:para}.

\begin{proof}[Proof of Lemma \ref{para:lem:para}]

First, iterating several times Lemma \ref{para:lem:expo} one obtains that for any $t\in(t_0+\frac{\tilde t}{4},t_1)$:
\be \la{para:bd v Linfty}
\para v \para_{L^{\infty}}\leq C\delta e^{\mu t}.
\ee
We define for $t\in (t_0,t_1)$:
\be \la{para:id F}
F(t)=-Vv(t)+NL(t), \ \ \ NL:=f(Q+v)-f(Q)-vf'(Q)
\ee
so that $v$ solves $v_t=\Delta v+F(t)$ and the Duhamel formula writes:
\be \la{para:id v 2}
v(t+t')=K_{t'}*v(t)+\int_{0}^{t'} K_{t'-s}*F(t+s)ds.
\ee

\noindent \textbf{step 1} Proof of the $W^{1,\infty}$ bound. From \fref{para:bd v Linfty}, \fref{para:id F}, \fref{eq:NL1} one has that for $t\in ]t_0+\frac{\tilde t}{4},t_1)$:
$$
\para F \para_{L^{\infty}} \leq C(\delta e^{\mu t}+\delta^p e^{\mu t})\leq C\delta e^{\mu t}.
$$
Using this, \fref{para:bd v Linfty}, \fref{para:id v 2}, \fref{eq:NL1}, \fref{eq:bd Kt} and H\"older inequality one has that for $t\in (t_0+\frac{\tilde t}{2},t_1)$:
$$
\ba{r c l}
\para \nabla v(t)\para_{L^{\infty}} & \leq & \para \nabla K_{\frac{\tilde t}{4}} *v(t-\frac{\tilde t}{4})\para_{L^{\infty}}+\int_0^{\frac{\tilde t}{4}} \para \nabla K_{\frac{\tilde t}{4}-s}\para_{L^1} \para F(t-\frac{\tilde t}{4}+s)\para_{L^{\infty}}ds \\
&\leq & C(\tilde t)e^{\mu t}+\int_0^{\frac{\tilde t}{4}} C\frac{C(\delta e^{\mu t}+\delta^pe^{p\mu t})}{|\frac{\tilde t}{4}-s|^\frac 1 2} \leq C(\tilde t)\delta e^{\mu t}.
\ea
$$
which with \fref{para:bd v Linfty} means that on $t\in (t_0+\frac{\tilde t}{2},t_1)$
\be \la{para:bd v W1infty}
\para v \para_{W^{1,\infty}}\leq C\delta e^{\mu t}.
\ee

\noindent \textbf{step 2} Proof of the $W^{2,\infty}$ bound. From \fref{para:id F} one has:
$$
\nabla (NL) = p\left( |Q+v|^{p-1}-Q^{p-1} \right)\nabla v+p\left(|Q+v|^{p-1}-Q^{p-1}-(p-1)Q^{p-2}v \right)\nabla Q \\
$$
This, \fref{para:bd v W1infty} and \fref{para:id F} then imply that for $t\in (t_0+\frac{\tilde t}{2},t_1)$:
\be \la{para:bd F Linfty}
\para \nabla F \para_{L^{\infty}} \leq C(\delta e^{\mu t}+\delta^p e^{\mu t})\leq C\delta e^{\mu t}.
\ee
One then computes from \fref{para:id v 2}, \fref{para:bd v W1infty}, H\"older inequality, \fref{eq:bd Kt} and the above estimate that for $t\in (t_0+\frac{3\tilde t}{4},t_1)$ and $1\leq i,j\leq d$:
$$
\ba{r c l}
\para \partial_{x_ix_j} v(t)\para_{L^{\infty}} &= &\para \partial_{x_i} K_{\frac{\tilde t}{4}} *\partial_{x_j}v(t-\frac{\tilde t}{4})\para_{L^{\infty}}+\int_0^{\frac{\tilde t}{4}} \para \partial_{x_i} K_{\frac{\tilde t}{4}-s}\para_{L^1} \para \partial_{x_j}F(t-\frac{\tilde t}{4}+s)\para_{L^{\infty}}ds \\
&\leq & C(\tilde t)\delta e^{\mu t}+\int_0^{\frac{\tilde t}{4}} \frac{C(\delta e^{\mu (t-\frac{\tilde t}{4}+s)}+\delta^pe^{p\mu(t-\frac{\tilde t}{4}+s)})}{|\frac{\tilde t}{4}-s|^{\frac 1 2}}\leq C(\tilde t)e^{\mu t}
\ea
$$
which with \fref{para:bd v W1infty} means that on $t\in (t_0+\frac{3\tilde t}{4},t_1)$
\be \la{para:bd v W2infty 2}
\para v \para_{W^{2,\infty}}\leq C\delta e^{\mu t}.
\ee

\noindent \textbf{step 3} Proof of the H\"older bound. For $t$ fixed in $(t_0+\tilde t,t_1)$ and $1\leq i,j\leq d$, using \fref{para:bd F Linfty} and one computes that:
$$
\ba{r c l}
& \parallel \partial_{x_ix_j} v \parallel_{C^{0,\frac 1 4}(\{t\}\times \mathbb R)} \\
 \leq & \underset{y,x\in \mathbb R^d}{\text{sup}}|y|^{-\frac 1 4} \left| \int_{0}^{\frac{\tilde t}{4}}\int_{\mathbb R^d} (\nabla K_{\frac{\tilde t}{4}-s}(x+y+z)-\nabla K_{\frac{\tilde t}{4}-s}(x+z)) \nabla F(t-\frac{\tilde t}{4}+s)dz\right| \\
 &+\para \partial_{x_i} K_{\frac{\tilde t}{4}}*\partial_{x_j} v(t-\frac{\tilde t}{4})\para_{C^{0,\frac{1}{4}(\mathbb R^d)}}\\
\leq & \int_0^{\frac{\tilde t}{4}} \para \nabla F(t-\frac{\tilde t}{4}+s) \para_{L^{\infty}} \underset{y,x\in \mathbb R^d}{\text{sup}}|y|^{-\frac 1 4} \int_{\mathbb R^d} |\nabla K_{\frac{\tilde t}{4}-s}(y+z)-\nabla K_{\frac{\tilde t}{4}-s}(z)| dzds \\
&+C(\tilde t)\delta e^{\mu t}\\
\leq & \int_{0}^{\frac{\tilde t}{4}} \frac{C \delta e^{\mu t'}}{|t-t'|^{\frac 3 4}}dt'+C(\tilde t)\delta e^{\mu t}\leq C(\tilde t)\delta e^{\mu t} .
\ea
$$
Now fix $x\in \mathbb R^d$ and $t_0+\tilde t<t<t'<t_1$. We treat the case $\mu>0$. In the case $\mu=0$ and $t_0\neq +\infty$ being similar. Using \fref{para:bd F Linfty}, H\"older inequality, \fref{eq:bd Kt}, \fref{eq:bd holder Kt 2} and \fref{para:id v 2}:
$$
\ba{r c l}
&\frac{|\partial_{x_i x_j}v(t',x)-\partial_{x_i x_j}v(t,x)|}{|t'-t|^{\frac 1 4}}\\
\leq & \frac{1}{|t'-t|^{\frac 1 4}} \left|(\partial_{x_i}K_{t'-t+\frac{\tilde{t}}{4}}-\partial_{x_i}K_{\frac{\tilde{t}}{4}})*(\partial_{x_j}v(t-\frac{\tilde t}{4})) \right|\\
&+\frac{1}{|t'-t|^{\frac 1 4}} \left|\int_{t}^{t'} \partial_{x_i}K_{t'-s}*\partial_{x_j}F(s)ds \right|\\
&+\frac{1}{|t'-t|^{\frac 1 4}} \left|\int_{t-\frac{\tilde t}{4}}^t (\partial_{x_i}K_{t'-s}-\partial_{x_i}K_{t-s})*\partial_{x_j}F(s) ds\right|\\
\leq & \frac{1}{|t'-t|^{\frac 1 4}} \para \partial_{x_i}K_{t'-t+\frac{\tilde{t}}{4}}-\partial_{x_i}K_{\frac{\tilde{t}}{4}}\para_{L^1}\para \partial_{x_j}v(t-\frac{\tilde t}{4})\para_{L^{\infty}}\\
&+\frac{1}{|t'-t|^{\frac 1 4}} \int_{t}^{t'} \para \partial_{x_i}K_{t'-s}\para_{L^1}\para \partial_{x_j}F(s)\para_{L^{\infty}}ds \\
&+ \int_{t-\frac{\tilde t}{4}}^t \frac{1}{|t'-t|^{\frac 1 4}} \para \partial_{x_i}K_{t'-s}-\partial_{x_i}K_{t-s}\para_{L^1}\para \partial_{x_j}F(s)\para_{L^{\infty}} ds \\
\leq & \frac{C}{\tilde t^{\frac 3 4}} \delta e^{\mu t}+\frac{1}{|t'-t|^{\frac 1 4}} \int_{t}^{t'} \frac{C}{|t'-s|^{\frac{1}{2}}} \delta e^{\mu s} ds + \int_{t-\frac{\tilde t}{4}}^t \frac{1}{|t-s|^{\frac 3 4}} C\delta e^{\mu s}ds \\
\leq & \delta C(\tilde t,t_1) .
\ea
$$
The two above bounds imply that:
$$
\para \nabla^2 v \para_{C^{0,\frac 1 4}((t_0+\tilde t,t_1)\times \mathbb R^d)} \leq \delta C(\tilde t).
$$
The above bound and \fref{para:bd v W2infty 2}, as $v_t$ is related to $\nabla^2v$ via \fref{para:id vt}, implies the desired bound \fref{para:eq:bd v holder}.

\end{proof}

We end this section with some Strichartz type estimates for the heat equation. Such estimates are more often used in the context of dispersive equations, and for dissipative equations one can usually handle technical problems by means of other type of estimates. However in the present paper we study an energy critical equation in the energy space, and Strichartz type norms appear naturally as in \fref{eq:strichartz varepsilon}. We do not write here a proof of the following Lemma \ref{lem:strichartz}. A proof of these estimates can indeed be done using exactly the same techniques employed in the proof of Theorem 8.18 in \cite{Ba}. Let $d\geq 2$. We say that a couple of real numbers $(q,r)$ is admissible if they satisfy:
\be \label{admissibilite}
q,r\geq 2, \ (q,r,d)\neq (2,+\infty,2) \ \text{and} \  \frac{2}{q}+\frac d r =\frac d 2 .
\ee
For any exponent $p\geq 1$, we denote by $p'=\frac{p-1}{p}$ its Lebesgue conjugated exponent.

\begin{lemma}[Strichartz type estimates for solutions to the heat equation] \label{lem:strichartz}

Let $d\geq 2$ be an integer. The following inequalities hold.
\begin{itemize}
\item[(i)] \emph{The homogeneous case.} For any couple $(q,r)$ satisfying \fref{admissibilite} there exists constant $C=C(d,q)>0$ such that for any initial datum $u\in L^2(\mathbb R^d)$:
\be \label{strichartz homogene}
\parallel K_t*u \parallel_{L^q([0,+\infty),L^r(\mathbb R^d))}\leq C \parallel u \parallel_{L^2(\mathbb R^d)}.
\ee
\item[(ii)] \emph{The inhomogeneous case.} For any couples $(q_1,r_1)$, $(q_2,r_2)$ satisfying \fref{admissibilite} there exists a constant $C=C(d,q_1,q_2)$ such that for any source term $f\in L^{q_2'}([0,+\infty),L^{r_2'}(\mathbb R^d))$:
\be \label{strichartz inhomogene}
\left\Vert t\mapsto \int_0^t K_{t-t'}* f(t')dt' \right\Vert_{L^{q_1}([0,+\infty),L^{r_1}(\mathbb R^d))} \leq C \parallel f \parallel_{L^{q_2'}([0,+\infty),L^{r_2'}(\mathbb R^d))}.
\ee
\end{itemize}

\end{lemma}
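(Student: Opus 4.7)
The plan is to derive both estimates via the standard $TT^*$ method applied to the heat semigroup $S(t) = K_t*$, exploiting the dispersive bound
$$\|S(t)u\|_{L^r(\mathbb R^d)} \leq C t^{-\frac{d}{2}(\frac{1}{p} - \frac{1}{r})} \|u\|_{L^p(\mathbb R^d)}, \qquad 1 \leq p \leq r \leq \infty,$$
which follows by interpolating $\|K_t\|_{L^\infty} \lesssim t^{-d/2}$ (a special case of \eqref{eq:bd Kt}) with the mass conservation $\|K_t\|_{L^1} = 1$. Self-adjointness of $S(t)$ on $L^2$ (the heat kernel being real and radial) makes the $TT^*$ calculus clean.

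\textbf{Step 1 — Homogeneous estimate \eqref{strichartz homogene}.} I would let $T : L^2(\mathbb R^d) \to L^q_t L^r_x$ be defined by $Tu(t) = S(t) u$, so $T^* F = \int_0^\infty S(s) F(s)\,ds$ and
$$TT^* F(t) = \int_0^\infty S(t+s) F(s)\, ds.$$
By the $TT^*$ principle, \eqref{strichartz homogene} is equivalent to $\|TT^* F\|_{L^q_t L^r_x} \leq C\|F\|_{L^{q'}_t L^{r'}_x}$. The dispersive bound gives $\|S(t+s)F(s)\|_{L^r_x} \lesssim (t+s)^{-\frac{d}{2}(\frac{1}{r'}-\frac{1}{r})}\|F(s)\|_{L^{r'}_x}$, and the admissibility relation $\frac{2}{q}+\frac{d}{r}=\frac{d}{2}$ forces this exponent to equal $2/q$. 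Since $(t+s)^{-2/q} \leq |t-s|^{-2/q}$ on $(0,\infty)^2$, the one-dimensional Hardy--Littlewood--Sobolev inequality on $\mathbb R_+$ applies: the condition $\frac{1}{q'}+\frac{1}{q'}+\frac{2}{q}=2$ is satisfied, and the non-endpoint assumption $q>2$ (equivalently $r<\frac{2d}{d-2}$, with the exception that the Keel--Tao endpoint can also be treated for the heat kernel thanks to positivity of $K_t$) delivers the bound.

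\textbf{Step 2 — Inhomogeneous estimate \eqref{strichartz inhomogene}.} The composition $T \circ T^*$, where $T^*$ uses the admissible pair $(q_2,r_2)$ and $T$ uses $(q_1,r_1)$, yields the \emph{untruncated} bound
$$\left\| \int_0^\infty K_{t+s} * f(s) \,ds \right\|_{L^{q_1}_t L^{r_1}_x} \leq C \|f\|_{L^{q_2'}_t L^{r_2'}_x}$$
as an immediate consequence of Step 1 and its dual. To pass from the untruncated form to the retarded integral $\Phi f(t) := \int_0^t K_{t-s}*f(s)\,ds$ appearing in \eqref{strichartz inhomogene}, I would invoke the Christ--Kiselev lemma, whose hypothesis $q_1>q_2'$ (or symmetrically) is met outside the endpoint; alternatively, a direct Hardy--Littlewood--Sobolev argument on the retarded kernel $\mathbf 1_{0<s<t}(t-s)^{-\frac{d}{2}(\frac{1}{r_2'}-\frac{1}{r_1})}$ produces the same result, since admissibility of both $(q_1,r_1)$ and $(q_2,r_2)$ forces this exponent to equal $\frac{1}{q_1}+\frac{1}{q_2}$, which is precisely the HLS scaling pairing $L^{q_2'} \to L^{q_1}$ in time.

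\textbf{Main obstacle.} The only genuine technical subtlety is the endpoint $(q,r)=(2,\tfrac{2d}{d-2})$, $d\geq 3$, where Christ--Kiselev is not available; for the heat equation this can nonetheless be handled via a dyadic decomposition of the kernel together with the pointwise positivity and semigroup property of $K_t$, which are features not enjoyed by dispersive propagators. Since all applications of Lemma \ref{lem:strichartz} in the main text — in particular \eqref{eq:strichartz varepsilon} and in Section \ref{sec:type I} — occur away from this endpoint, the two-step $TT^*$ + HLS scheme above suffices, as outlined in the proof of Theorem 8.18 of \cite{Ba}.
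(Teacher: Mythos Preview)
Your proposal is correct and aligns precisely with what the paper indicates: the paper does not write out a proof of this lemma at all, merely remarking that it follows from the same techniques as Theorem 8.18 in \cite{Ba}. Your $TT^*$ argument with the dispersive bound, Hardy--Littlewood--Sobolev in time, and Christ--Kiselev for the retarded integral is exactly that standard scheme, and your observation that the applications in the main text avoid the endpoint is accurate.
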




\begin{thebibliography}{9}

\bibitem{Au} Aubin, T. (1976). Problemes isop\'erim\'etriques et espaces de Sobolev. Journal of differential geometry, 11(4), 573-598.

\bibitem{Ba}  Bahouri, H., Chemin, J. Y., Danchin, R. (2011). Fourier analysis and nonlinear partial differential equations (Vol. 343). Springer Science  Business Media.

\bibitem{Br} Brezis, H., Cazenave, T. (1996). A nonlinear heat equation with singular initial data. Journal D'Analyse Mathematique, 68(1), 277-304.

\bibitem{Co} Collot, C. (2014). Type II blow up manifolds for the energy supercritical wave equation. arXiv preprint arXiv:1407.4525.

\bibitem{Co2} Collot, C. (2016) Non radial type II blow up for the energy supercritical semilinear heat equation, preprint.

\bibitem{DM2} T. Duyckaerts, F. Merle, Dynamics of threshold solutions for energy-critical wave equation. Int. Math. Res. Pap. IMRP 2007, Art. ID rpn002, 67 pp. (2008).

\bibitem{DM}  T. Duyckaerts and F. Merle,   Dynamic of threshold solutions for energy-critical NLS, Geom. Funct. Anal. 18 (2009),   1787--1840.

\bibitem{DKM} Duyckaerts, T.; Kenig, C.; Merle, F., Classification of radial solutions of the focusing, energy-critical wave equation, Camb. J. Math. 1 (2013), no. 1, 75--144.

\bibitem{KDM2} T. Duyckaerts, C. Kenig, F. Merle, Classification of radial solutions of the focusing, energy-critical wave equation, preprint 2012.

\bibitem{Fe} Fermanian Kammerer, C., Merle, F., Zaag, H. (2000). Stability of the blow-up profile of non-linear heat equations from the dynamical system point of view. Mathematische Annalen, 317(2), 347-387.

\bibitem{Fi} Filippas, S., Herrero, M. A.,  Velazquez, J. J. (2000, December). Fast blow-up mechanisms for sign-changing solutions of a semilinear parabolic equation with critical nonlinearity. In Proceedings of the Royal Society of London A: Mathematical, Physical and Engineering Sciences (Vol. 456, No. 2004, pp. 2957-2982). The Royal Society.

\bibitem{Gi} Gidas, B., Ni, W. M.,  Nirenberg, L. (1979). Symmetry and related properties via the maximum principle. Communications in Mathematical Physics, 68(3), 209-243.

\bibitem{Giga} Giga, Y., On elliptic equations related to self-similar solutions for nonlinear heat equations. Hiroshima mathematical journal (1986), 16(3), 539-552.

\bibitem{Gi1} Giga, Y.,  Kohn, R. V. (1985). Asymptotically self?similar blow?up of semilinear heat equations. Communications on Pure and Applied Mathematics, 38(3), 297-319.

\bibitem{Gi2} Y. Giga and R.V. Kohn, Characterizing blowup using similarity variables, Indiana Univ. Math. J., 36 (1987), 1?40.

\bibitem{Gi3} Giga, Y.,  Kohn, R. V. (1989). Nondegeneracy of blowup for semilinear heat equations. Communications on Pure and Applied Mathematics, 42(6), 845-884.

\bibitem{Gi4} Giga, Y., Matsui, S. Y.,  Sasayama, S. (2002). Blow up rate for semilinear heat equation with subcritical nonlinearity.

\bibitem{Gi5} Gilbarg, D.,  Trudinger, N. S. (2015). Elliptic partial differential equations of second order. springer.

\bibitem{La}  Ladyzenskaja, O. A., Solonnikov, V. A., Uralceva, N. N., Smith, S. (1968). Linear and quasilinear equations of parabolic type.

\bibitem{NT2} Gustafson, S.;  Nakanishi, K;  Tsai, T.P,  Asymptotic stability, concentration and oscillations in harmonic map heat flow, Landau Lifschitz and Schr\"odinger maps on $\RR^2$, Comm. Math. Phys.  \textbf{300}  (2010), 205-242.

\bibitem{HVsur} Herrero, M.A.; Vel\'azquez, J.J.L., Explosion de solutions des équations paraboliques semilin\'eaires supercritiques, C. R. Acad. Sci. Paris 319, 141--145 (1994).

\bibitem{KST} J. Krieger, W.  Schlag and  D. Tataru,  Renormalization and blow up for charge one equivariant critical wave maps. Invent. Math. \textbf{171} (2008),   543--615.

\bibitem{KST2} J. Krieger, W.  Schlag and  D. Tataru,   Slow blow-up solutions for the $H^1(\RR^3)$ critical focusing semilinear wave equation, Duke Math. J. \textbf{147} (2009), 1--53.

\bibitem{MMR1}  Martel, Y.; Merle, F.; Rapha\"el, P., Blow up for the critical generalized Korteweg–de Vries equation. I: Dynamics near the soliton, {\em Acta Math.} {\bf 212} (2014), no. 1, 59--140. 

\bibitem{MMR2} Martel, Y.; Merle, F.; Rapha\"el, P., Blow up for the critical gKdV equation. II: minimal mass dynamics, J. Eur. Math. Soc. (JEMS) 17 (2015), no. 8, 1855--1925.

\bibitem{MMN} Martel, Y.; Merle, F.; Nakanishi, K.; Rapha\"el, P., Codimension one threshold manifold for the critical gKdV equation,  arXiv:1502.04594.

\bibitem {MaM1}  Matano, H.; Merle, F., Classification of type I and type II behaviors for a supercritical nonlinear heat equation, J. Funct. Anal. 256 (2009), no. 4, 992–1064.

\bibitem{MaM2}  Matano, H.; Merle, F., On nonexistence of type II blowup for a supercritical nonlinear heat equation, Comm. Pure Appl. Math. 57 (2004), no. 11, 1494–1541.

\bibitem{MaM3} Matano, H.; Merle, F., Asymptotic soliton resolution for solutions of the critical nonlinear heat equation, in preparation.

\bibitem{Me} Merle, F., On uniqueness and continuation properties after blow up time of self similar solutions of nonlinear Schr\"odinger equation with critical exponent and critical mass. Communications on pure and applied mathematics (1992), 45(2), 203--254.

\bibitem{MR4} Merle, F., Rapha\"el, P., Sharp lower bound on the blow up rate for critical nonlinear Schr\"odinger equation, {\em J. Amer. Math. Soc.} 19 (2006), no. 1, 37--90.

\bibitem{MR5} Merle, F.; Rapha\"el, P., Profiles and quantization of the blow up mass for critical nonlinear Schr\"odinger equation, Commun. Math. Phys. \textbf{253} (2005), 675--704.

\bibitem{MR1} Merle, F., Rapha\"el, P., Blow up dynamic and upper bound on the blow up rate for critical nonlinear Schr\"odinger equation, {\em Ann. Math.} 161 (2005), no. 1, 157--222.

\bibitem{MRRod} Merle, F., Rapha\"el, P.; Rodnianski, I., Blow up dynamics for smooth equivariant solutions to the energy critical Schr\"odinger map,  {\em Invent. Math.} 193 (2013), no. 2, 249--365.

\bibitem{MRRsur} Merle, F., Rapha\"el, P.; Rodnianski, I., Type II blow up for the energy super critical NLS,  {\em to appear in Cambridge Math. Jour., available online at arXiv:1407.1415}.

\bibitem{MRSz} Merle, F.; Rapha\"el, P.; Szeftel, J., The instability of Bourgain-Wang solutions for the $L^2$ critical NLS, Amer. J. Math. 135 (2013), no. 4, 967--1017.

\bibitem{MeZa2} Merle, F.,  Zaag, H. (1998). Optimal estimates for blowup rate and behavior for nonlinear heat equations. Communications on pure and applied mathematics, 51(2), 139-196.

\bibitem{Me9} Merle, F.,  Zaag, H. (2000). A Liouville theorem for vector-valued nonlinear heat equations and applications. Mathematische Annalen, 316(1), 103-137.

\bibitem{Mizo} Mizoguchi, N., Type-II blowup for a semilinear heat equation, Adv. Differential Equations 9 (2004), no. 11-12, 1279–1316. 

\bibitem{NakSch} Nakanishi, K.; Schlag, W., Global dynamics above the ground state energy for the cubic NLS equation in 3D. Calc. Var. Partial Differential Equations 44 (2012), no. 1-2, 1--45.


\bibitem{RaphRod} Rapha\"el, P., Rodnianski, I., Stable blow up dynamics for critical corotational wave maps and the equivariant Yang Mills problems, {\em Publ. Math. Inst. Hautes Etudes Sci.} 115 (2012), 1--122.

\bibitem{RSc1} Rapha\"el, P., Schweyer, R., Stable blow up dynamics for the 1-corotational energy critical harmonic heat flow, {\em Comm. Pure Appl. Math.} 66 (2013), no. 3, 414--480.

\bibitem{RSc2} Rapha\"el, P., Schweyer, R., Quantized slow blow up dynamics for the corotational energy critical harmonic heat flow,  Anal. PDE 7 (2014), no. 8, 1713--1805.

\bibitem{RSmin} Rapha\"el, P.; Szeftel, J., Existence and uniqueness of minimal blow-up solutions to an inhomogeneous mass critical NLS. J. Amer. Math. Soc. 24 (2011), no. 2, 471--546.

\bibitem{Sc}  Schlag, W. (2005). Spectral theory and nonlinear PDE: a survey. preprint.

\bibitem{Schw} Schweyer, R.,Type II blow-up for the four dimensional energy critical semi linear heat equation,  J. Funct. Anal. 263 (2012), no. 12, 3922--3983.

\bibitem{Ta}  Talenti, G. (1976). Best constant in Sobolev inequality. Annali di Matematica pura ed Applicata, 110(1), 353-372.

\bibitem{We}  Weissler, F. B. (1980). Local existence and nonexistence for semilinear parabolic equations in Lp. Indiana Univ. Math. J, 29(1), 79-102.



\end{thebibliography}
\end{document}